\theoremstyle{plain}                 %%
\newtheorem{theorem}{Theorem}
\newtheorem{lemma}{Lemma}
\theoremstyle{remark}
\newtheorem{remark}{Remark}
\def\1{1\!{\rm l}}
\newcommand{\leqa}{\lesssim}
\newcommand{\geqa}{\gtrsim}
\newcommand{\EM}{\ensuremath}
\newcommand{\al}{\alpha}
\newcommand{\be}{\beta}
\newcommand{\ka}{\kappa}
\newcommand{\La}{\Lambda}
\newcommand{\si}{\sigma}
\newcommand{\te}{\theta}
\newcommand{\ta}{\tau}
\newcommand{\veps}{\varepsilon}
\newcommand{\vphi}{\varphi}
\newcommand{\cA}{\EM{\mathcal{A}}}
\newcommand{\cB}{\EM{\mathcal{B}}}
\newcommand{\cC}{\EM{\mathcal{C}}}
\newcommand{\cF}{\EM{\mathcal{F}}}
\newcommand{\cH}{\EM{\mathcal{H}}}
\newcommand{\cJ}{\EM{\mathcal{J}}}
\newcommand{\cK}{\EM{\mathcal{K}}}
\newcommand{\cL}{\EM{\mathcal{L}}}
\newcommand{\cM}{\EM{\mathcal{M}}}
\newcommand{\cN}{\EM{\mathcal{N}}}
\newcommand{\cP}{\EM{\mathcal{P}}}
\newcommand{\cS}{\EM{\mathcal{S}}}
\newcommand{\cX}{\EM{\mathcal{X}}}
\newcommand{\psg}{{\langle}}
\newcommand{\psd}{{\rangle}}
\definecolor{blendedblue}{rgb}{0.2,0.2,0.7}
\DeclareMathAlphabet{\mathpzc}{OT1}{pzc}{m}{it}
\newcommand{\RR}{\mathbb{R}}
\newcommand{\given}{\,|\,}
\newcommand{\rn}{\sqrt{n}}
\newcommand{\mockalph}[1]{}
\newcommand{\eps}{\varepsilon}
\newcommand{\bi}{\begin{enumerate}[label=\roman*)]}
\newcommand{\ei}{\end{enumerate}}
\newcommand{\ba}{\begin{array}{rcl}}
\newcommand{\ea}{\end{array}}
\newcommand{\Sbl}{\cS^\be(L)}
\newcommand{\Hbl}{\mathcal{H}^\be(L)}
\newcommand{\Bblr}{\mathcal{B}^\be_{rr}(L)}
\begin{document}

\begin{frontmatter}
%%%%%%%%%%%%%%%%%%%%%%%%%%%%%%%%%%%%%%%%%%%%%%
%%                                          %%
%% Enter the title of your article here     %%
%%                                          %%
%%%%%%%%%%%%%%%%%%%%%%%%%%%%%%%%%%%%%%%%%%%%%%
\title{Heavy-tailed Bayesian nonparametric adaptation}
%\title{A sample article title with some additional note\thanksref{T1}}
\runtitle{Heavy-tailed Bayesian nonparametric adaptation}
%\thankstext{T1}{A sample of additional note to the title.}

\begin{aug}
%%%%%%%%%%%%%%%%%%%%%%%%%%%%%%%%%%%%%%%%%%%%%%%
%% Only one address is permitted per author. %%
%% Only division, organization and e-mail is %%
%% included in the address.                  %%
%% Additional information can be included in %%
%% the Acknowledgments section if necessary. %%
%% ORCID can be inserted by command:         %%
%% \orcid{0000-0000-0000-0000}               %%
%%%%%%%%%%%%%%%%%%%%%%%%%%%%%%%%%%%%%%%%%%%%%%%
\author[A]{\fnms{Sergios}~\snm{Agapiou}\ead[label=e1]{agapiou.sergios@ucy.ac.cy}}
\and
\author[B]{\fnms{Isma\"el}~\snm{Castillo}\ead[label=e2]{ismael.castillo@upmc.fr}}
%%%%%%%%%%%%%%%%%%%%%%%%%%%%%%%%%%%%%%%%%%%%%%
%% Addresses                                %%
%%%%%%%%%%%%%%%%%%%%%%%%%%%%%%%%%%%%%%%%%%%%%%
\address[A]{Department of Mathematics and Statistics, University of Cyprus\printead[presep={,\ }]{e1}}

\address[B]{Laboratoire de Probabilit\'es, Statistique et Mod\'elisation, Sorbonne University, Paris, France\printead[presep={,\ }]{e2}}
\end{aug}

\begin{abstract}
We propose a new Bayesian strategy for adaptation to smoothness in nonparametric models based on heavy tailed series priors. We illustrate it in a variety of settings, showing in particular that the corresponding Bayesian posterior distributions achieve adaptive rates of contraction in the minimax sense (up to logarithmic factors) without the need to sample hyperparameters. Unlike many existing  procedures, where a form of direct model (or estimator) selection is performed, the method can be seen as performing a {\em soft} selection through the prior tail.  In Gaussian regression, such heavy tailed priors are shown to lead to (near-)optimal simultaneous adaptation both in the $L^2$-- and $L^\infty$--sense. Results are also derived for linear inverse problems, for anisotropic Besov classes, and for certain losses in more general models through the use of tempered posterior distributions. 
We present numerical simulations corroborating the theory.
\end{abstract}

\begin{keyword}[class=MSC]
\kwd[Primary ]{62G05, 62G20}
%\kwd{???}
%\kwd[; secondary ]{???}
\end{keyword}

\begin{keyword}
\kwd{Bayesian nonparametrics}
\kwd{Frequentist analysis of posterior distributions}
\kwd{Adaptation to smoothness}
\kwd{Heavy tails}
\kwd{Fractional posteriors}
\end{keyword}

\end{frontmatter}
%%%%%%%%%%%%%%%%%%%%%%%%%%%%%%%%%%%%%%%%%%%%%%
%% Please use \tableofcontents for articles %%
%% with 50 pages and more                   %%
%%%%%%%%%%%%%%%%%%%%%%%%%%%%%%%%%%%%%%%%%%%%%%
%\tableofcontents

%%%%%%%%%%%%%%%%%%%%%%%%%%%%%%%%%%%%%%%%%%%%%%
%%%% Main text entry area:

\tableofcontents

\section{Introduction}\label{sec:intro}

Adaptation to smoothness is a central topic in nonparametric statistics. In a regression setting to fix ideas, convergence rates of estimators of the unknown regression function generally depend on the assumed degree of smoothness and this raises the question of finding {\em adaptive} estimators, which can recover the unknown truth at (near-)optimal rate in the minimax sense,  without assuming any prior knowledge of regularity. Popular non-Bayesian adaptation methods include Lepski's method \cite{lepski90},  thresholding \cite{djkp95}, and model selection \cite{bbm99}. % While there is active research on adaptation by itself, %e.g. works by Goldenshluger and Lepski, or Baraud and Birg\'e on robust adaptation,    smoothness adaptation is also often used as benchmark in the understanding of complex procedures such as random forests or deep neural networks, see e.g. \cite{jsh20}. 
Here we follow a Bayesian nonparametric approach and draw the unknown function randomly according to some prior distribution. In this setting, a possible way to derive `adaptation' is by following a hierarchical Bayes principle: for instance, one first randomly draws a function of given regularity say $\alpha>0$ and then draws $\alpha$ itself at random; this provides a hierarchical prior distribution which is then updated by conditioning on the observed data to form the posterior distribution. To give an example, initial prior draws can be for instance from an $\al$--smooth Gaussian process (such as Brownian motion when $\alpha=1/2$; and more generally e.g. Brownian motion integrated a fractional number of  times); and $\al$ itself can be sampled according to a Gamma distribution. %Using seminal work on general posterior contraction rates by Ghosal, Ghosh and van der Vaart \cite{ggv00, gvni07} enables one, in many statistical settings,  to derive adaptation with respect to certain (often, testing related-) loss functions of the corresponding posterior distributions. Popular examples include random histograms, random splines, sieve priors, series priors (to be discussed in more detailed below), kernel mixtures in density estimation, adaptive P\'olya trees to name a few.

%In Bayesian nonparametric statistics, adaptation to smoothness is a central topic.  

In this work we focus on prior distributions given in the form of a stochastic process characterised by a sequence of coefficients into an expansion basis. A popular related example in statistics and machine learning (e.g. \cite{rw06}) is the one of Gaussian process priors, for which van der Vaart and van Zanten \cite{vvvz08} proved the following generic result. Suppose we are in a simple one-dimensional nonparametric regression setting with Gaussian errors (e.g. a white noise model). If the true regression function is $\be$--smooth in the Sobolev sense, and one considers an $\alpha$--smooth Gaussian process as a prior distribution, then the posterior distribution contracts at rate 
\begin{equation} \label{rateg}
 \veps_n\leqa 
\begin{cases}
n^{-\be/(1+2\al)},&\qquad \text{if } \al\ge \be,\\
n^{-\al/(1+2\al)},&\qquad \text{if } \al \le \be,
\end{cases}
\end{equation}
and these rates cannot be improved \cite{ic08}. 

In inverse problems and medical imaging, processes which feature  tails heavier than Gaussian are increasingly used. The tails can be `moderate', for instance in-between Gaussian and Laplace tails,  see e.g.  \cite{lassasetal09}, \cite{dhs12}, \cite{ABDH18} and \cite{klns12}, \cite{BG15} for numerical aspects, considering the case of so--called Besov priors; but also heavier than Laplace, including polynomially decreasing tails, such as recently considered e.g. in \cite{zoubin14}, \cite{S17}, \cite{SCR22}. Yet overall there is up to now much less theoretical understanding of processes featuring a form of heavy tails. %, that is series process priors with individual coefficients onto a basis having sub--Gamma tails. 

The class of $\alpha$--regular $p$--exponential measures for $p\in[1,2]$ is introduced in  \cite{adh21}, where the authors model the coefficients onto a basis {$\{\varphi_k\}$} of the function of interest with a prior with moderate tails (in-between Gaussian and Laplace) and variance decreasing as $k^{-1-2\al}$;  therein the authors prove posterior contraction at rate 
\begin{equation} \label{ratep}
 \veps_n\leqa 
\begin{cases}
n^{-\be/\{1+2\be+p(\al-\be)\}},&\qquad \text{if } \al\ge \be,\\
n^{-\al/(1+2\al)},&\qquad \text{if } \al \le \be. 
\end{cases} 
\end{equation}
Here the index $p$ corresponds to the tail behaviour of individual coefficients of the prior, with $p=2$ recovering the Gaussian case \eqref{rateg} and $p=1$ corresponding to Laplace (double--exponential) tails; and $\be$ again refers to Sobolev smoothness of the true regression function. 
%reflects the smoothness of the true regression function in a Besov--type sense, including e.g. classical Sobolev scales. 
%[I. (Referee 1, 1.)]}% \ma{Delete once read: This is actually only for Sobolev, for Besov the picture is much more complicated.}

A few remarks in the light of \eqref{rateg}--\eqref{ratep} are
\begin{itemize}
\item the optimal (minimax) rate is achieved in both cases for $\al=\be$ (only);
\item when $\al\ge \be$ (the `oversmoothing case'), when $p$ decreases from $2$ to $1$,  the rate slightly improves in terms of powers of $n^{-1}$ from $\be/(1+2\al)$ to $\be/(1+\be+\al)$;
\item when $\be\ge \al$ (the `undersmoothing case'), the rate is always $n^{-1}$ to the power $\al/(1+2\al)$, driven by the prior's own regularity.
\end{itemize}
As $\be$ is rarely known in practice, the previous prior distributions have to be made more complex if one wishes to derive {\em adaptation}. And indeed, two remarkable papers  \cite{vvvz09, ksvv16} proved that adaptation in $L^2$--sense can be achieved from $\al$--smooth Gaussian processes, up to logarithmic terms, by either using an additional rescaling variable%(for $\beta<\alpha+1/2$)
, or by `estimating' the prior's regularity $\al$, either in a hierarchical Bayes or an empirical Bayes way (see also below for related references). Analogous adaptation results were derived very recently for $p$--exponential series priors in \cite{as22}; see also \cite{giordano22} for results on related priors in density estimation.

A natural question is thus what happens when tails heavier than Laplace are considered. This would formally correspond to taking the index $p$ of $p$--exponential priors to be smaller than $1$ and even going to $0$. Let us `do' the formal manipulation $p\to 0$ in \eqref{ratep}, and  see what happens. The `limiting' rate would then become $n^{-\be/(1+2\be)}$ for $\al\ge \be$.  This would mean that at least for the `oversmoothing' case, the `adaptive' rate would be automatically obtained. Of course we just did a formal substitution that could perhaps not be valid; in particular, the techniques employed in \cite{adh21} rely on the logarithmic concavity of $p$-exponential priors, a property which no longer holds for $p<1$. We will see that, somewhat surprisingly at first, heavy tails enable, in a variety of settings, to derive adaptation in a fully automatic way.

The meaning of `heavy tail' in the title and through the paper can be understood in the relatively loose sense of having tails that have a polynomial-type decrease, although the actual conditions under which we work are somewhat milder. \\

%\sbl{To do: create section Discussion, Proofs, add up details in appendix.}

%\subsection{Setup}
%\subsubsection{Heavy tailed series priors}
{\em Heavy tailed series priors.} 
 Depending on the setting, we construct priors in $L^2:=L^2[0,1]$ via series expansions in either an orthonormal basis $\{\vphi_k: \,k\ge 1\}$ or an orthonormal boundary-corrected wavelet basis $\{\psi_{lk}: \,l\ge 0, \,k\in\cK_l\}$ where $\cK_l=\{0,\dots, 2^{l}-1\}$ and where we denote the scaling function as the first wavelet $\psi_{00}$. Without loss of generality we have taken the coarsest scale to be 1, while it is straightforward to accommodate for coarsest scales finer than 1. For more details, see \cite[Section 4.3]{ginenicklbook}.\\
% \sbl{Double-check that we use $l\ge 0$ throughout}\\
 %\ma{if we agree with this notation we need to make sure $l\ge0$ and not $l\ge1$ in rest of paper}
%\sbl{Check conventions for omitting father wavelet in notation, e.g. in Gine-Nickl's book; we also need a boundary corrected wavelet system I guess}

{\em Prior $\Pi$ on functions.} For $\psg\cdot,\cdot\psd$ the usual inner product on $L^2$, let $f_k:=\psg f,\vphi_k \psd$, respectively $f_{lk}:=\psg f,\psi_{lk} \psd$, denote the coefficients of $f\in L^2$ onto the considered bases, so that
\[ f = \sum_{k=1}^\infty f_k \vphi_k,\qquad   \text{or}\qquad
f = \sum_{l=0}^\infty \sum_{k\in \cK_l}  f_{lk} \psi_{lk}.
\]
Let us define a prior $\Pi$ on $f$ by  letting, for $(\sigma_k), (s_l)$ sequences to be chosen below, and $(\zeta_k), (\zeta_{lk})$  independent identically distributed random variables of common law $H$ with heavy tails, also to be specified,
\begin{equation} \label{prior}
f_k \overset{\text{ind.}}{\sim} \sigma_k \zeta_k,
\end{equation}
in the case of a single--index basis $(\vphi_k)$, or for a double--index basis
\begin{equation} \label{priord}
f_{lk} \overset{\text{ind.}}{\sim} s_l \zeta_{lk}.
\end{equation}
%\begin{equation} \label{prior}
%f = \sum_{k=1}^\infty \sigma_k \zeta_k \vphi_k(\cdot)
%\end{equation}
%or
%\begin{equation} \label{priord}
%f = \sum_{l=1}^\infty \sum_{k\in \cK_l}  s_l \zeta_{lk} \psi_{lk}(\cdot).
%\end{equation}
A key choice of scale parameters $\sigma_k$ and $s_l$ throughout the paper is, for any $k\ge 1$ and $l\ge 0$,
\begin{equation} \label{defsigr}
\sigma_k = e^{-(\log k)^2},\qquad s_l  = 2^{-l^2}.
\end{equation}
Another possible choice we consider, again for any such $k,l$ and some $\alpha>0$ is
\begin{equation} \label{defsig}
\sigma_k = k^{-1/2-\alpha},\qquad s_l  = 2^{-l(1/2+\alpha)}.
\end{equation}  
{The choice \eqref{defsig} corresponds to the same scaling as in  \eqref{ratep} and, since here we consider heavy tails, to (formally at least) setting $p=0$ for a $p$-exponential prior. }
Contrary to \eqref{defsig}, where the value of $\alpha$ should be chosen, note that \eqref{defsigr} is in principle free of any parameter. The choice of the square in \eqref{defsigr} is mostly to fix ideas and results below also hold for $\sigma_k=e^{-a(\log k)^{1+\delta}}$ with any given constants $a, \delta>0$. The fact that $(\sigma_k)$ in \eqref{defsigr} decreases faster than any polynomial in $k^{-1}$, but not exponentially fast (as for instance $e^{-k}$), is key for the results ahead. Similarly, for double-index bases we can use $s_l=2^{-l^{1+\delta}}$ for any fixed $\delta>0$ in \eqref{defsigr}.
%\sbl{invert order in choice of priors maybe?}

%\ma{S: in Theorem 6 we have $\sigma_k=e^{-a(\log k)^{1+\delta}}$ while elsewhere $\sigma_k=e^{-(\log k)^2}$. Which shall we keep? In simulations we saw that power 3/2 works nicely.}
%
%The real random variables $\zeta_k$ or $\zeta_{lk}$ are taken to be independent and identically distributed with a ``heavy-tailed" density $h$.
% In particular, 

To complete the prior's description, let us now specify the distribution $H$ of the $\zeta$ variables as above. Suppose that $H$ admits a density $h$ on $\RR$ and that for $c_1>0$ and $\kappa\ge 0$, 
\begin{align}
h   \text{ is symmetric, positive, bounded and decreasing on } [0,\infty),  \label{conds} \\
\log(1/h(x)) \le c_1(1+\log^{1+\kappa}(1+x)), \qquad x\ge 0. \label{condti}
\end{align}
A leading example throughout the paper is the case $\kappa=0$ corresponding to  polynomial tails (sometimes called fat tails):  Student distributions, including Cauchy,  satisfy these conditions for $c_1$ large enough constant. Yet, some flexibility is allowed with $\ka>0$ permitting slightly lighter tails. Depending on the setting, we sometimes assume a mild integrability or moment condition, that will still accommodate most Student-type tails.  

We call priors $\Pi$ verifying \eqref{defsigr}--\eqref{conds}--\eqref{condti} {\em Oversmoothed heavy-Tailed} priors or simply OT--priors while HT$(\al)$ for {\em $\al$--Heavy Tailed} priors stand for those satisfying \eqref{defsig}--\eqref{conds}--\eqref{condti}.\\

{\em Frequentist analysis of posterior distributions.} Consider a statistical model $\{P_f^{(n)},\ f\in \cF \}$ indexed by a function $f$ with observations $X=X^{(n)}$. Examples considered below include nonparametric regression, density estimation and classification. Given a prior distribution $\Pi$ on $f$, the Bayesian model sets $X\given f\sim  P_f^{(n)}$ and $f\sim \Pi$. The posterior distribution $\Pi[\cdot\given X]$ is the conditional distribution $f\given X$. 
Assuming the model is dominated, the posterior is given as usual by Bayes' formula. Taking a frequentist approach, we analyse the posterior $\Pi[\cdot\given X]$ under the assumption that $X$ has actually been generated from $P_{f_0}^{(n)}$ for some fixed true function $f_0$. We refer to the book \cite{gvbook} for more context and references. \\

%the tail bound that there exists a constant $c_2>0$ such that
%\begin{equation}
%\overline{H}(x):=\int_x^{\infty} h(u)du \le c_2/x^2,\qquad x\ge 1, \label{condts}
%\end{equation}
%\ma{or a moment assumption}
%%\begin{equation} \label{secmom}
%%\int_{-\infty}^{\infty} x^2h(x)dx <\infty,
%%\end{equation}
%\begin{equation} \label{qmom}
%\int_{-\infty}^{\infty} |x|^qh(x)dx <\infty
%\end{equation}
%for some $q\ge1$.
%
%\sbl{I.: it would probably make sense to state in each section separately the set of assumptions we really need for the corresponding statements, i.e. moments are used in the regression proofs, while tail-type bounds are used for the rho-posteriors}

{\em Classical regularity balls.}  Before describing our main results, let us recall three types of standard smoothness assumptions on the underlying truth $f_0$: Sobolev, H\"older and Besov.\\

[Sobolev--type]$\ $ When working with an orthonormal basis $\{\vphi_k\}$, we consider Sobolev-type assumptions. Recalling that $f_k=\langle f, \vphi_k\rangle$, for $\be, L>0$, denote
 \begin{equation}\label{sobolev} \Sbl=\Big\{f=(f_k),\quad \sum_{k\ge 1} k^{2\be} f_k^2\le L^2 \Big\}.\end{equation}
 For certain choices of $\vphi_k$, the sets $\Sbl$ correspond to balls of classical Hilbert-Sobolev spaces of functions in $L^2[0,1]$ possessing $\be$ square integrable derivatives.
%We also introduce the hyper-rectangle type class \sbl{[to be removed eventually in $L^2$--setting]}
%\[ \cH_L(\be)=\{f=(f_k),\ \max_{k\ge 1}\,  k^{2\be+1} f_k^2\le L \}.\] 

When working with an orthonormal wavelet basis $\{\psi_{lk}\}$ we consider either hyper-rectangle (H\"older-type) or Besov-type assumptions. \\

[H\"older--type] $\ $ For $f_{lk}=\langle f, \psi_{lk}\rangle$ and $\be, L>0$, let
\begin{equation}\label{rectangle}
\Hbl=\Big\{f=(f_{lk}),\quad \max_{k\in\cK_l}|f_{lk}|\leq 2^{-l(1/2+\be)}L\ \ 
\text{for all }\,
l\ge 0\Big\}.
\end{equation}
For wavelet bases with classical H\"older regularity higher than $\beta$, the sets  $\Hbl$ correspond to $L$-balls of the H\"older-Zygmund spaces $\cC^\be[0,1]$, see \cite[Section 4.3]{ginenicklbook}. For non-integer $\be$ the latter spaces coincide with the classical H\"older spaces $C^\be[0,1]$, while for $\be$ an integer it holds $\cC^{\be'}\subset C^\be\subset \cC^\be$ for all $\be'>\be$ where inclusions are all strict. \\

[Besov--type] $\ $ For $\be, L>0$ and $1\leq r\leq 2$, let
%Consider a \sbl{(boundary corrected)} wavelet basis $(\psi_{lk})$ over the interval $[0,1]$, with basis elements admitting $N+1$ derivatives, for some integer $N\ge 0$. \sbl{double-check Gine-Nickl}
%The Besov space $B_{rr}^\be$ for $1\le r\le 2$ and $\be\le N+1$ can be defined as
\begin{equation}\label{besov} \Bblr = \Bigg\{f=(f_{lk}),\ \ \ \sum_{l\ge 0} 2^{rl(\be+1/2-1/r)} 
\sum_{k\in \cK_l} |f_{lk}|^r <L^r\Bigg\}.
\end{equation}
Again for appropriate wavelet bases, %with classical H\"older regularity higher than $\beta$, 
 the sets $\Bblr$ correspond to $L$-balls of Besov spaces $B^\beta_{rr}[0,1]$ defined via moduli of continuity, see \cite[Section 4.3]{ginenicklbook}. 
For $r=2$, Besov spaces coincide with the Hilbert-Sobolev spaces, while for $r<2$ Besov spaces are useful for modelling spatially inhomogeneous functions, that is functions which are smooth in some areas of the domain and irregular or even discontinuous in other areas, see \cite{DJ98} or \cite[Section 9.6]{IJ19}. % \ma{S: latter is a book draft but has a  nice explanation}.
Here, we restrict to Besov spaces $\cB^\beta_{rq}$ with $r=q$ for simplicity, see Section \ref{sec:disc} for a discussion. \\

{\em Outline and informal description of the results.} In what follows, we will substantiate the intuition that heavy-tailed series priors achieve adaptation to smoothness without the need to sample any hyperparameters. Our results show that OT-priors are fully adaptive, and that HT$(\alpha)$-priors are partially adaptive (essentially) for smoothness of the truth $\beta\leq\alpha$. More precisely, in Section \ref{sec:reg} we consider white noise regression and show (near-) adaptive posterior contraction rates in the minimax sense in the following settings:
\begin{itemize}
\item in $L^2$--loss for Sobolev regularity in both the direct and a % simple 
linear inverse problem setting;
\item in $L^\infty$--loss under H\"older smoothness in the direct setting;
\item in $L^2$--loss under (spatially inhomogeneous) Besov smoothness in the direct setting.
\end{itemize}
A result on the limiting shape of the posterior distribution is also given, in the form of an adaptive nonparametric Bernstein--von Mises theorem. In Section \ref{sec:prmass}, we establish generic bounds for the mass that heavy-tailed priors put on $L^2$ and $L^\infty$--balls around Sobolev and H\"older truths, respectively. By themselves, such bounds allow the derivation of contraction rates for \emph{tempered} posterior distributions in general models, in terms of R\'enyi divergence. Indeed, we exemplify this approach in three nonparametric settings, in which we achieve (near-) adaptive rates of contraction of tempered posteriors in the minimax sense:
\begin{itemize}
\item in density estimation, in $L^1$--loss and under H\"older smoothness of the true log-density;
\item in binary classification, in an $L^1$--type loss 
%$L^1(G)$-loss where $G$ is the distribution of the predictor values, 
and under Sobolev smoothness of the logit of the true binary regression function;
\item in (direct) white noise regression, in $L^2$--loss and under Besov smoothness of the truth.
\end{itemize}
A simulation study is presented in Section \ref{sec:sim}, while a brief discussion and review of open questions can be found in Section \ref{sec:disc}. Proofs are presented in Section \ref{sec:proof} as well as in the Supplementary material. The Supplement  also includes additional simulations and a discussion on % the obstacles involved in 
extending the results of Section \ref{sec:prmass} to contraction of standard posteriors.\\

{\em Comparison with other priors.} While our results below shall answer positively the question of obtaining adaptation with heavy-tailed series priors, it is of interest to compare our priors with other priors leading to adaptation. The list below is by far not exhaustive; we mention a few classes of priors that bear some similarity with the priors here considered.
\begin{itemize}
\item {\em Sieve priors} (e.g. \cite{scricciolo06, arbel13, ray13, shenghosal15}). Here adaptation is obtained by truncating the series prior and taking the truncation parameter $K$ random; note that the distribution on the modelled $K$ coefficients in general plays little role on the obtained rate. In regression the above references show that (near)-optimal adaptive rates are achieved  in the $L^2$--norm; but generally this is not the case in the $L^\infty$ norm (\cite{cr21}, Theorem 5). In contrast, we will see that heavy-tailed series priors in white noise regression are adaptive in both norms.
\item {\em Spike--and--slab priors and sparsity inducing priors.} Due to their links to  thresholding rules, spike--and--slab (SAS) priors are also particularly natural: \cite{hrs15} show in white noise regression that SAS posteriors achieve adaptive rates both in $L^2$ (nearly) and $L^\infty$ (there are few results in other models, except \cite{cm21, naulet22}  in density estimation).  While heavy-tailed priors share the same desirable properties, they do not model sparsity so have no `mass at $0$' part; this can be an advantage computationally, as in more complex models, sampling from SAS posteriors typically requires exploration of a combinatorial number of models. While sampling from OT posteriors is relatively easy using MCMC, we are not aware of posterior samplers for SAS in density estimation for instance (the posterior in \cite{cm21} is computable but uses partial conjugacy and is limited to regularities up to $1$). The horseshoe prior \cite{carvalhopolsonscott} is in a sense closer to our proposal as it has density with Cauchy tails. Note though that similarly to SAS priors and unlike our heavy-tailed prior construction it directly models sparsity through a diverging density at $0$. We expect that horseshoe priors have good adaptation properties, although we do not know any proof in a  nonparametric context (\cite{carvalhopolsonscott} present simulations in one such setting) -- the techniques we develop here could be used precisely to derive such results.
\item {\em Mixtures.} It may be argued that heavy-tailed distributions can be represented as mixtures of lighter tailed distributions: for instance, Laplace and Student distributions are scale mixtures of normals. So, one could view the heavy-tailed prior in a hierarchical manner with independent Gaussians and one hyper-parameter per coefficient. Note, however, that this in general does not suffice for adaptation: for instance Laplace series priors do not adapt optimally; and even if the resulting distribution on coordinates is e.g. Student, the choice of scale parameters $\sigma_k$ or $s_l$ in \eqref{defsigr}--\eqref{defsig} is essential, as for instance \eqref{defsig} does not adapt if $\al<\be$. This shows that only some well-designed mixture priors work. Furthermore, even if a heavy-tailed law has a mixture representation, this does not mean that it is advantageous computationally to use it (e.g. using a Gibbs sampling to approximate the posterior; this  may face computational difficulties due to the high number of hyper-parameters), and in fact we do not do so in Section \ref{sec:sim}, where we use direct sampling from the posterior via MCMC in all considered examples. Also related to mixtures,  
%Gao and Zhou 
\cite{gaozhou16} construct a hierarchical block--prior that enables to derive contraction rates in $L^2$--sense (or with testing distances) without additional logarithmic terms.  The resulting construction requires a specific hyper-prior, and may be difficult to sample from in complex settings (e.g. beyond white noise regression); also, although optimal in $L^2$ it is presumably suboptimal in the $L^\infty$--sense.

\end{itemize}

\section{Nonparametric regression} \label{sec:reg}

%\sbl{\ma{Former} to do list}
%\begin{itemize}
%\item Remark that our regression results  extend to $\rho$--posteriors as well? \sbl{I have now a result that implies this in the sense of usual posterior convergence (implied by the result for $\rho$--posteriors on Besov space)}
%%\item Case of the super-smooth prior \eqref{defsigr} for the sup-norm \sbl{Now done: many thanks!}
%\end{itemize}
%\ora{Notes}
%\begin{itemize}
%\item 
%\end{itemize}
%Others
%\begin{itemize}
%\item We could possibly do a block prior to remove the logs in $L^2$--loss by using the prior mass approach? But this would be parameter-consuming (i.e. we would need extra parameters for each block...), unless we take a multivariate Student on each block with non-independence (cf mixture rep and compare with Gao).
%\item An example of anisotropic class? + dimension reduction on certain coordinates?
%\end{itemize}

To avoid technicalities independent of the ideas at stake, we focus in this section on the  Gaussian white noise model, that can be seen as the prototypical nonparametric model \cite{t09, ginenicklbook}. Up to dealing with discretisation effects, similar results as the ones below are expected to hold also e.g. for fixed-design nonparametric regression. For $f\in L^2$ and $n\ge 1$, the Gaussian white noise model writes
\begin{equation} \label{gwn}
 dY^{(n)}(t) = f(t)dt + dW(t)/\sqrt{n},\qquad t\in[0,1],
\end{equation}
where $W$ is standard Brownian motion. %Given any orthonormal basis $\{\vphi_k\}$ of $L^2$, this induces observations in the sequence space $\ell^2$ by the isometry $f\to (f_k=\psg f,\vphi_k \psd)$.  

\subsection{$L^2$--loss and Sobolev smoothness}
By projecting \eqref{gwn} onto a single-index orthonormal basis $\{\vphi_k\}$ of $L^2$, one obtains the normal sequence model, with $f_k=\psg f,\vphi_k \psd$,
\begin{equation}\label{normseq1}
X_{k}|f_k\sim \cN(f_k, 1/n),
\end{equation}
independently for $k\ge 1$, with $X_k=\int_0^1 \vphi_k(t)dY^{(n)}(t)$. 
We denote $X=X^{(n)}=(X_1, X_2, \ldots )$ the corresponding observation sequence. Here for simplicity of notation we consider only single-index bases, but the results in the present Section and the next hold as well for double-indexed wavelet bases, such as ones considered in Section \ref{sec:linf}, with the corresponding appropriately chosen scalings as in \eqref{defsigr}--\eqref{defsig}.

%This model has been studied extensively in terms of minimaxity and adaptation over Sobolev smoothness in the frequentist literature \cite{t09}. 
Early  results for Bayesian series priors {(we discuss a few other priors in Section \ref{sec:disc})} in this setting include \cite{z00}, who established non-adaptive convergence rates for the posterior mean under Gaussian priors, while \cite{bg03} derived adaptive rates using a hyperprior over a discrete set of regularities. Still for Gaussian series priors, in \cite{svv13} partial adaptation was achieved with fixed regularity using either a hierarchical or an empirical Bayes choice of a universal scaling parameter provided the truth is not too smooth compared to the prior, while in the work \cite{ksvv16} full adaptation (up to logarithmic factors) was established using either a hierarchical or an empirical Bayes choice of the prior regularity. Gaussian series priors on manifolds with an extra random time parameter were shown to be adaptive to smoothness in broad geometric contexts \cite{ckp14}. 
 More recently, both fixed-regularity and adaptive results were derived for $p$-exponential priors in \cite{adh21} and \cite{as22}.

In this section, we consider series priors as in \eqref{prior}-\eqref{defsigr}, defined via a heavy-tailed density $h$ %which, beyond \eqref{conds}, \eqref{condti},
 satisfying the moment assumption, for some $q\ge1$,
 \begin{equation} \label{qmom}
\int_{-\infty}^{\infty} |x|^qh(x)dx <\infty.
\end{equation}
% \eqref{qmom}.
 %\ma{Comment that this may be possible to avoid or better not?}

%\begin{lem}[Large signal]
%Assume that $|X_k|\ge A'\sqrt{\frac{log{n}}{n}}.$
%Then for any $t\in\RR$, it holds
%\begin{equation*}
%E[\exp(t(f_k-X_k)\,|\,X_k]\leq
%\end{equation*}
%\end{lem}

\begin{theorem} \label{thm-seq}
In the regression model \eqref{normseq1},  consider the heavy-tailed series prior \eqref{prior} with parameters specified by \eqref{defsigr} and \eqref{conds}--\eqref{condti} as well as \eqref{qmom} with $q=2$. Suppose $f_0\in \Sbl$ for some $\be, L>0$. Then,  as $n\to\infty$,
\[ E_{f_0}\Pi\left[ \{f:\ \|f-f_0\|_2 > \cL_n  n^{-\be/(2\be+1)} \} \given X^{(n)} \right] \to 0, \]
where $\cL_n=(\log{n})^{d}$ for some $d>0$. Further, the same result holds, with a possibly different $d$, for the choice of $(\sigma_k)$ as in \eqref{defsig}, provided $\alpha\ge \be$. Both results also hold for truncated priors at $k=n$, that are the same as the ones considered except they set $f_k=0$ for $k>n$. 
%\sbl{Add result for posterior mean and squared integrated posterior risk?}
\end{theorem}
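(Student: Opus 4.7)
My plan is to bypass the Ghosal–Ghosal–van der Vaart entropy framework, which is poorly suited to OT priors (any reasonable sieve has prior complement decaying only super-polynomially rather than exponentially in $n\varepsilon_n^2$, owing to the heavy-tailed coordinate distributions), and instead to work directly in the sequence model, exploiting the coordinate factorisation $\Pi(df\mid X^{(n)}) = \bigotimes_{k\ge 1}\Pi_k(df_k\mid X_k)$ where $\Pi_k(df_k\mid X_k)\propto h(f_k/\sigma_k)\,e^{-n(X_k-f_k)^2/2}\,df_k$. First, I decompose $\|f-f_0\|_2^2=\sum_k(f_k-f_{0,k})^2$ and apply Markov's inequality to get
\[
E_{f_0}\Pi(\|f-f_0\|_2 > r\mid X^{(n)}) \le r^{-2}\sum_{k\ge 1} R_k,\qquad R_k := E_{f_0}\int (f_k-f_{0,k})^2\,\Pi_k(df_k\mid X_k),
\]
so it suffices to show $\sum_k R_k \lesssim \cL_n^2\,\varepsilon_n^2$.

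To control $R_k$, introduce $\theta_k=f_{0,k}/\sigma_k$ and $t_k=n\sigma_k^2$; then $R_k=\sigma_k^2\,\tilde R(\theta_k,t_k)$, where $\tilde R(\theta,t)$ is the Bayes risk in the auxiliary one-dimensional problem $y\sim\cN(\theta,1/t)$, $g\sim h$. The core lemma I would prove is a uniform bound
\[
\tilde R(\theta,t) \;\le\; C\min\!\Bigl(\,1+\theta^2,\ t^{-1}(1+\log(1+|\theta|+t))^{c}\,\Bigr),
\]
established in two regimes. When $t(1+\theta^2)\le 1$ the posterior is essentially the prior and the bound follows from the moment condition \eqref{qmom} together with a lower bound on the normaliser $Z(y)=\int h(g)e^{-t(g-y)^2/2}dg$. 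When $t(1+\theta^2)\gtrsim 1$, I would perform a Laplace-type analysis around $g=y$: since \eqref{condti} forces $\log(1/h)$ to grow only polylogarithmically, the factor $h(g)/h(y)$ is slowly varying on intervals of width $\asymp 1/\sqrt t$ around $y$, so the posterior is approximately $\cN(y,1/t)$ up to a logarithmic bias, giving posterior MSE $\asymp (\log)^c/t$. Transforming back yields $R_k\le C\min(f_{0,k}^2+\sigma_k^2,\log^c(n)/n)$, and using $\min(a+b,c)\le\min(a,c)+\min(b,c)$,
\[
\sum_k R_k \;\le\; C\log^c(n)\Bigl[\,\sum_k\min(f_{0,k}^2,1/n)\ +\ \sum_k\min(\sigma_k^2,1/n)\,\Bigr].
\]

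The first sum is a classical Sobolev oracle bound whose supremum over $\Sbl$ equals (up to constants) the minimax rate $\varepsilon_n^2$, attained by saturating $|f_{0,k}|^2\asymp 1/n$ on the first $\asymp n^{1/(2\be+1)}$ coordinates. For the OT scaling \eqref{defsigr}, the second sum splits at $k^\star:=\inf\{k:\sigma_k^2\le 1/n\}\asymp\exp(\sqrt{(\log n)/2})$ and is of order $k^\star/n=n^{-1+o(1)}\ll\varepsilon_n^2$; for the HT$(\al)$ scaling \eqref{defsig} with $\al\ge\be$, splitting at $k\asymp n^{1/(1+2\al)}$ gives order $n^{-2\al/(1+2\al)}\le\varepsilon_n^2$ precisely because $\al\ge\be$. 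Taking $\cL_n=(\log n)^d$ with $d$ large enough and applying Markov concludes; the truncated prior version follows a fortiori since the tail sums vanish. The \textbf{main obstacle} is the Laplace-type control in the second regime of the key lemma, which must hold uniformly in $(\theta,t)$ and in particular requires care when $|\theta|$ is very large and $t$ only slightly above $1$, where $|y|$ is typically large and $Z(y)$ may be small; the polylogarithmic lower bound on $\log h$ given by \eqref{condti} is precisely what rescues the argument.
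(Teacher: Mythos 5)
Your overall architecture — Markov's inequality, coordinatewise factorisation of the posterior, a bound $R_k\lesssim\min(f_{0,k}^2+\sigma_k^2,\,\log^c n/n)$, and then a Sobolev oracle sum plus a $\sum_k\min(\sigma_k^2,1/n)$ term — is the right skeleton and matches the paper's at a high level, and your accounting of the two sums (including the cutoffs $k^\star$ for OT and $n^{1/(1+2\alpha)}$ for HT) is correct. The genuinely different choice you make, and where I want to push back, is that you propose to integrate $\int f_k^2\,d\Pi(f_k\mid X_k)$ over \emph{all} realisations of $X_k$ and establish a uniform Bayes-risk bound $\tilde R(\theta,t)\le C\min(1+\theta^2,\,t^{-1}\log^c(\cdot))$, whereas the paper deliberately avoids this: for high frequencies it bounds the posterior second moment only on the event $\cA_n=\{|X_k|\lesssim\sqrt{\log(k)/n}\ \forall k\}$ (see (\ref{defevkj})--(\ref{defev})) and controls $P_{f_0}(\cA_n^c)$ separately. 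This is not a cosmetic choice. You describe the first regime $t(1+\theta^2)\le1$ as ``essentially the prior; the bound follows from the moment condition together with a lower bound on $Z(y)$.'' That is not sufficient: if you bound the numerator crudely by $E_h[g^2]$ and the normaliser from below by $Z(y)\gtrsim\sqrt t\,e^{-ty^2/2}$, the resulting integral $\int\phi_{1/t}(y-\theta)\,E_h[g^2]/Z(y)\,dy$ diverges. What actually makes the first arm work is a crossover argument: for $|y|$ below a threshold $M_0(t)/\sqrt t$ (with $M_0(t)\to\infty$ as $t\to0$) \emph{both} the numerator and $Z(y)$ are dominated by the region $g\approx0$, so their ratio is $\asymp E_h[g^2]$; for $|y|$ above the threshold the posterior mass migrates to $g\approx y$ and $m(y,t)\asymp y^2$, and the averaged contribution $\asymp t^{-1}M_0\phi(M_0)$ is summable precisely because \eqref{qmom} with $q=2$ forces tail decay faster than $x^{-3}$. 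Under the theorem's hypotheses this does go through, so you do not have a wrong claim; but the hard part of your key lemma is in this regime, not (only) in the ``$|\theta|$ large, $t\approx1$'' regime you flagged. The paper's event $\cA_n$ is exactly the device that removes the need for the crossover analysis: on $\cA_n$ one only ever meets $|y|\lesssim\sqrt{\log n}/\sqrt t$, which is below the crossover, so a one-line normaliser bound suffices, and the complement is handled by Lemma~3. Your second-regime Laplace heuristic corresponds to the paper's Lemmas~1--2 (an MGF/variance bound), which is cruder than a genuine Laplace expansion but easier to make uniform; either would do. In short, your route is feasible and arguably more unified, but the uniform risk bound you posit needs the crossover machinery you currently gloss over, and that is the step the paper's event construction is designed to circumvent.
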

%\sbl{[I. We do not need \eqref{conds}--\eqref{condts} here I guess? (and the same for Thm 2)]}\ma{We need them implicitly because of Lemma \ref{laptr}? Perhaps not \eqref{condts}?}\\
%\sbl{[I. If we do not give $d$ explicitly, we could absorb $M_n$ within the log-factor by taking it logarithmic as well?]}

Theorem \ref{thm-seq} shows that the oversmoothed heavy-tailed (OT) prior as in \eqref{defsigr}--\eqref{condti} leads to full adaptation to smoothness $\be>0$, without any restriction to the range over Sobolev balls $S^\be$ (see also Remark \ref{rem:basis}) and without the need of tuning of any {smoothness hyper--parameter}. 
The fact that the second part of Theorem \ref{thm-seq} holds shows that the heuristic presented below \eqref{ratep} letting $p\to 0$ is correct: if $(\sigma_k)$ is polynomially decreasing as $\sigma_k=k^{-1/2-\alpha}$ as in \eqref{defsig}, then adaptation holds in the range  $\be\in(0,\alpha]$, that is exactly in the  case the prior `oversmooths' the truth, as expected from formula \eqref{ratep}. For a comparison with other priors and more discussion, we refer to Section \ref{sec:disc}. \\

A difficulty with the proof of Theorem \ref{thm-seq} is that it does not seem possible to use the general approach to posterior convergence rates as in \cite{ggv00, gvbook},  as the latter requires exponential decrease of probabilities of sieve sets (at least with infinite series priors or priors modelling high-frequencies, so excluding sieve priors, for which specific arguments can be used, see e.g. \cite{arbel13, ray13, shenghosal15}), which is essential in being able to discard regions of the parameter spaces, as crucially used in results for Gaussian or $p$--exponential priors (the latter just allow for exponential  decrease when $p=1$). Our proof is based on a detailed analysis of the posterior induced on coefficients, the most delicate part being high-frequencies, for which careful compensations from numerator and denominator in the ratios arising from Bayes' formula are needed. We note that the results in the present section impose a moment condition on the heavy-tailed density $h$; this is mostly for technical convenience:  it is expected that existence of a second moment is required in the theorem above, as its proof goes through controlling the posterior second moment. It is likely that one can remove the moment condition by requiring a control in probability only; such approach would allow to include the Cauchy density, but the proof would likely be more technical, so we refrain from pursuing this goal here; we only note that in this vein results for the Cauchy prior are derived in Section \ref{sec:prmass}.\\ 

{\em Numerical intuition behind the result.} 
Underlying our proofs, is the behaviour of heavy-tailed priors on $\mu\in\RR$ in the model $X\given \mu \sim \cN(\mu, 1/n)$, which we compare here to the behaviour of Gaussian priors. Consider $\mu \sim \sigma \Pi$ where $\sigma$ is a positive scaling and $\Pi$ is either standard normal or say a standard Student distribution with 3 degrees of freedom. Recall that in the Gaussian prior case, the posterior mean is given as $E[\mu\given X]=n\sigma^2X/(1+n\sigma^2)$. Figure~\ref{fig:studentmeans} depicts the posterior mean in the Student prior case as a function of $X$, for decreasing values of $\sigma$ and noise level $1/\sqrt{n}=10^{-3.5}$. We observe that in the Student prior case, for large prior scalings $\sigma$ the posterior mean is given by the observation (this is similar to the Gaussian prior case), while for small $\sigma$ the posterior mean resembles a thresholding estimator, nearly setting to zero small observations and preserving larger observations (this is unlike the Gaussian prior case where observations are shrunk by a constant factor determined by the size of $\sigma$ relative to the noise precision $n$). In particular, Figure \ref{fig:studentmeans} suggests that with the Student prior, good recovery is achieved by the posterior mean independently of the size of the scaling $\sigma$, for $|X|\ge 0.002 \gg 1/\sqrt{n}\approx 0.0003$. Contrast this to the Gaussian prior case, where small $\sigma$ leads to poor recovery of large observations. It thus appears, that an oversmoothing heavy tailed prior may still have good nonparametric behaviour, despite the scaling being mismatched. %\ma{[First try (Referee 1, 5, Referee 2, 2.10)]} \sbl{Looks good to me!}
%One observes that signals smaller than $1/\sqrt{n}$ are either recovered if $\sigma\geqa 1/\sqrt{n}$ or \emph{nearly} thresholded if $\sigma\ll 1/\sqrt{n}$. More notable is the treatment of signals larger than $1/\sqrt{n}$, which, due to the heavy tails of the prior, are recovered even when $\sigma$ is very small. 

\begin{remark}[Optimal rate and log factors] \label{rem:log} The rate in Theorem \ref{thm-seq} is optimal up to a logarithmic factor (it can be checked for instance that one can take a power $d=1$ in $\cL_n$ for the OT prior), which we did not try to optimise. A main reason is that work by Tony Cai \cite{cai08} shows that any method that is smoothness-adaptive in $L^2$ and separable, in the sense that it makes coordinates independent, must pay a logarithmic price in its convergence rate. Moreover, the squared-rate cannot be better than $(\log{n}/n)^{2\be/(2\be+1)}$ for some $\be$, a rate (nearly) achieved for tempered posteriors in Theorem \ref{thm-besov-rho} below, see Remark \ref{rem:log2} and \eqref{rateveps-2}.  
\end{remark}

\begin{remark}[Smoothness and order of basis] \label{rem:basis} Theorem \ref{thm-seq} and results below hold for any smoothness parameter $\be>0$, over regularity balls defined by coefficients as before. As usual with estimators defined over bases, if one wishes results over classical H\"older spaces or Besov spaces defined via moduli of continuity, one needs to assume a basis of large enough order/regularity (which means adaptation holds in that case over $\be\le \be_{max}$, where $\be_{max}$ can be made as large as desired by choosing the order of the basis large enough). 
% even if the H\"older regularity \re{order?} of the basis is lower than $\beta$, but the sets $\Hbl$ and $\Bblr$ do not correspond to balls in classical H\"older spaces or Besov spaces defined via moduli of continuity. How best to handle in the theorem statements?} 
\end{remark}%\ma{last remark does not apply to this paragraph with global basis and Sobolev smoothness? Move to 2.3?}

%\begin{remark}[Uniformity] The conclusion of Theorem \ref{thm-seq} holds uniformly over $f_0\in S^{\be}(L)$, as is easily checked from its proof, so that a supremum over such $f_0$'s can be taken in front of the expectation in the display of the Theorem. Although for simplicity of exposition we do not insist further on this in the paper, such uniform versions also hold for results below.   
%\end{remark}

\begin{figure}
    \centering
    \includegraphics[width=1\textwidth]{./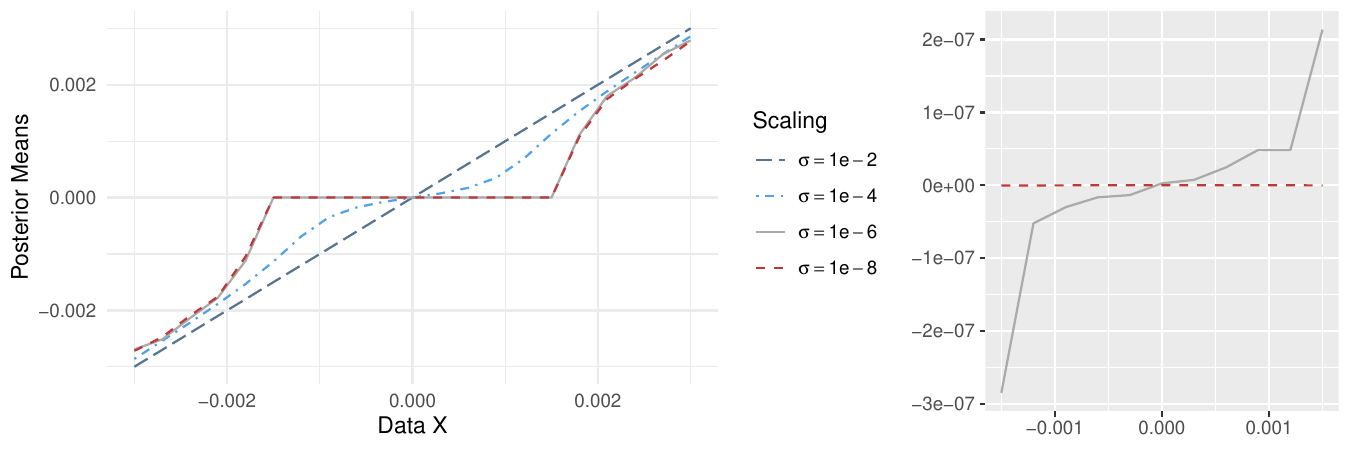}

    \caption{Left: posterior means for the univariate model $X\given \mu \sim N(\mu,10^{-7}), \,\mu \sim\sigma\Pi$, plotted against the observed data $X$, for $\Pi$ a standard Student distribution with 3 degrees of freedom and for 4 values of the scaling $\sigma$. Right: detailed view of the center region of the plot on the left.}
    \label{fig:studentmeans}
\end{figure}

\subsection{Linear inverse problems, Sobolev smoothness}\label{ssec:ip}

A synthetic prototypical model in linear inverse problems arises when projecting onto the SVD of the forward operator: the observation model is, for some $\frak{\nu}\ge 0$, independently for $k\ge 1$,
\begin{equation} \label{modip}
X_{k}|f_k\sim \cN(\kappa_k f_k, 1/n),\qquad \kappa_k\asymp k^{-\frak{\nu}}.
\end{equation}
This generalises the former signal--in--white--noise setting with the `inverse' nature of the problem represented by the sequence $(\kappa_k)$. This model has been much studied in terms of minimaxity and adaptation over Sobolev smoothness in the frequentist literature \cite{cavalier11}. Following a Bayesian approach with Gaussian priors, \cite{kvv11} derived posterior contraction rates in the non-adaptive case of fixed regularity; again in \cite{ksvv16}, the authors proved that empirical and hierarchical Bayes approaches could be used to derive adaptation for the previous class of Gaussian priors. Results for sieve priors were obtained in \cite{ray13} (see also Section \ref{sec:disc} for more on this). %Below we show that the heavy tailed prior we propose allows one to derive adaptation as well in a quite simple way. 

\begin{theorem} \label{thminv}
In the inverse regression model \eqref{modip} with degree of ill--posedness of the forward operator $\nu\ge 0$, consider the heavy-tailed series prior \eqref{prior} with parameters specified by \eqref{defsigr} and \eqref{conds}--\eqref{condti}  as well as \eqref{qmom} with $q=2$. Suppose $f_0\in \Sbl$ for some $\be, L>0$. Then,  as $n\to\infty$,
\[ E_{f_0}\Pi[ \{f:\ \|f-f_0\|_2 > \cL_n  n^{-\be/(2\be+2\nu + 1)} \} \given X^{(n)}] \to 0, \]
where $\cL_n=(\log{n})^{d}$ for some $d>0$. Further, the same result holds, with a possibly different $d$, for the choice of $(\sigma_k)$ as in \eqref{defsig}, provided $\alpha\ge \be$. 
\end{theorem}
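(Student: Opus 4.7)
The plan is to reduce the analysis to a coordinate-wise study and to re-use the per-coordinate estimates underlying the proof of Theorem \ref{thm-seq}, after accounting for the inflated effective noise level caused by ill-posedness. Since the prior \eqref{prior} makes the coefficients independent and the observations \eqref{modip} are conditionally independent given $f$, the posterior factorises as $\Pi(df\given X)=\prod_{k\ge 1}\Pi(df_k\given X_k)$, with
$$\Pi(df_k\given X_k) \propto \frac{1}{\sigma_k}\, h(f_k/\sigma_k)\, \exp\lac -\frac{n\kappa_k^2}{2}(X_k/\kappa_k - f_k)^2 \rac df_k.$$
In other words, the $k$-th marginal posterior is the same as the one obtained in the direct problem from a single observation $X_k/\kappa_k \sim \cN(f_{0,k}, (n\kappa_k^2)^{-1})$ with prior $\sigma_k \zeta_k$. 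Everything therefore reduces to the direct setting of Theorem \ref{thm-seq} on each coordinate, with effective noise variance $\asymp k^{2\nu}/n$ instead of $1/n$.

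Next I would bound the expected posterior squared $L^2$-risk by splitting the frequencies at $K_n \asymp (n/\cL_n^c)^{1/(2\be+2\nu+1)}$, writing
$$E_{f_0}\int \|f-f_0\|_2^2\, d\Pi(f\given X) = \sum_{k\le K_n} E_{f_0}\int (f_k-f_{0,k})^2 d\Pi(f_k\given X_k) + \sum_{k > K_n} E_{f_0}\int (f_k-f_{0,k})^2 d\Pi(f_k\given X_k).$$
For low frequencies $k\le K_n$ the per-coordinate estimate used in the proof of Theorem \ref{thm-seq}, now applied at noise $k^{2\nu}/n$, produces a term of order $k^{2\nu}/n$ up to logarithmic factors, summing to $\lesssim K_n^{2\nu+1}/n$ modulo logs. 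For high frequencies $k>K_n$ the OT-scale $\sigma_k$ is so small that the marginal posterior collapses either near $f_{0,k}$ (when $|f_{0,k}|$ is well above the local detection level $\sqrt{k^{2\nu}/n}$, up to logs) or near $0$; combined with the Sobolev bound $\sum_{k>K_n}f_{0,k}^2\le L^2 K_n^{-2\be}$, the total contribution of this tail to the $L^2$-risk is $\lesssim K_n^{-2\be}$, again up to logs. Balancing $K_n^{2\nu+1}/n \asymp K_n^{-2\be}$ yields $K_n\asymp n^{1/(2\be+2\nu+1)}$, hence a squared $L^2$-risk of order $n^{-2\be/(2\be+2\nu+1)}$ times a polylogarithmic factor; a Markov-type step then converts this into the stated posterior contraction statement, exactly as for Theorem \ref{thm-seq}.

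The main obstacle will be the high-frequency analysis, which in the direct case already relies on a delicate compensation in Bayes' formula between the vanishing prior mass (caused by the oversmoothed scale $\sigma_k$) and the heavy tails of $h$, together producing a thresholding-like behaviour of the marginal posterior. This compensation must be redone with the larger noise $k^{2\nu}/n$, which raises the detection threshold by a factor $k^{\nu}$; the Sobolev decay of $f_{0,k}$ is just fast enough relative to this modified threshold, and this is precisely what fixes the exponent $\be/(2\be+2\nu+1)$. No new conceptual tool is required beyond a careful tracking of the noise level through the argument of Theorem \ref{thm-seq}, and the moment condition \eqref{qmom} with $q=2$ keeps the second-moment control valid. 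The second claim, for the polynomial scale \eqref{defsig} with $\alpha\ge\be$, follows along identical lines since $\sigma_k = k^{-1/2-\alpha}$ is again an oversmoothed scale when paired with the modified per-coordinate noise, so the per-coordinate bounds from the second part of Theorem \ref{thm-seq} transpose verbatim with the same balance as above.
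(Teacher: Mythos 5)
Your proposal is correct and follows essentially the same route as the paper: both reduce to a coordinate-wise analysis using the product structure of the prior and likelihood, apply the per-coordinate second-moment estimate from Theorem~\ref{thm-seq}, and balance at the cut-off $K_n\asymp n^{1/(2\be+2\nu+1)}$. The only cosmetic difference is the parametrisation: the paper works with $\mu_k=\kappa_k f_k$ at fixed noise $1/n$ (so that the prior scale becomes $\tau_k=\kappa_k\sigma_k$ and Theorem~\ref{thm-seq}'s estimates apply verbatim), while you normalise the observation to $X_k/\kappa_k$ and track an inflated per-coordinate noise $k^{2\nu}/n$; these are mathematically equivalent, since the key ratio $\sigma_k\kappa_k\sqrt{n}$ governing the high-frequency compensation in Bayes' formula is the same in both views.
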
 
 
Theorem \ref{thminv} generalises Theorem \ref{thm-seq} (the case $\nu=0$). With the technique and estimates of the proof of Theorem \ref{thm-seq} in hand, its proof via coordinate-estimates is relatively simple; this is in contrast to existing empirical or hierarchical Bayes proofs in this setting for infinite series Gaussian priors \cite{ksvv16}, for which the study of the marginal maximum likelihood estimates (or of the posterior on the smoothness parameter) requires non-trivial work. 

\subsection{$L^\infty$--loss in white noise regression} \label{sec:linf}
 
Let us now consider estimation under the $L^\infty$-loss, which is a particularly desirable cost function in curve estimation, as a bound in supremum norm guarantees visual closeness between curves. In this case, we expand in a wavelet orthonormal basis, and the projection of model \eqref{gwn} becomes a normal sequence model
\begin{equation}\label{normseq2}
X_{lk}|f_{lk}\sim \cN(f_{lk}, 1/n),
\end{equation}
independently over relevant indices $l,k$. Again, we denote by $X^{(n)}$ the observation sequence. Minimaxity and adaptation in supremum loss for H\"older smoothness have been studied extensively in the frequentist literature \cite{ih80, lepski91, ginenicklbook}. Deriving results for Bayesian procedures (or more generally for likelihood--based procedures, see e.g. the notes of Chapter 7 of \cite{ginenicklbook}) for this loss is in general quite delicate. 
Generic results in this direction include \cite{gn11, ic14}, 
but most existing results are concerned with specific models and priors. In white noise regression, supremum norm adaptation has been derived so far for spike--and--slab priors \cite{hrs15}, and tree priors {\em \`a la} Bayesian CART \cite{cr21} (up to an unavoidable log factor). 
%, cn14, hrs15}, but most results so far are concerned with specific models and  Results in this direction \cite{ic14}, \cite{cn14}, \cite{hrs15} 
%\ma{Extend?:} . In the Bayesian setting under  \cite{ic14} established non-adaptive contraction rates for a range of exponential power law priors with tails at least as light as exponential, while \cite{hrs15} established adaptation using the spike and slab prior.

\begin{theorem} \label{thm-seq-sup}
In the regression model, let us consider the prior on $f$ induced by the heavy-tailed wavelet series prior \eqref{priord} on coefficients $f_{lk}$ in \eqref{normseq2}, with parameters specified by \eqref{defsigr} and \eqref{conds}--\eqref{condti} as well as \eqref{qmom} with $q\ge1$. Suppose $f_0\in \Hbl$ for some $\be, L>0$. Then,  as $n\to\infty$,
\[ E_{f_0}\Pi[ \{f:\ \|f-f_0\|_\infty > \cL_n  (\log{n}/n)^{\be/(2\be+1)} \} \given X^{(n)}] \to 0, \]
where $\cL_n=(\log{n})^{d}$ for some $d>0$. Further, the same result holds, with a possibly different $d$, for the choice of $(s_l)$ as in \eqref{defsig} and under \eqref{qmom} with any $q\ge1$, provided  $\alpha\ge \beta+1/q$.\\
\end{theorem}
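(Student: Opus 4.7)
The proof proceeds by a direct coordinate-wise Bayes-ratio analysis in the spirit of Theorem \ref{thm-seq}, but adapted to the $L^\infty$-geometry of compactly supported wavelet bases. Set $2^{L_n}\asymp(n/\log n)^{1/(2\beta+1)}$ so that $\veps_n\asymp 2^{-L_n\beta}$. By the wavelet localisation bound $\|\sum_{lk}g_{lk}\psi_{lk}\|_\infty\lesssim\sum_l 2^{l/2}\max_k|g_{lk}|$, splitting $f-f_0$ at level $L_n$, the high-frequency bias of $f_0$ is $\sum_{l>L_n}2^{l/2}\cdot L\,2^{-l(1/2+\beta)}\lesssim 2^{-L_n\beta}\asymp\veps_n$ using $f_0\in\Hbl$. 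The remaining two pieces, the low-frequency discrepancy $\sum_{l\leq L_n}2^{l/2}\max_k|f_{lk}-f_{0,lk}|$ and the high-frequency magnitude $\sum_{l>L_n}2^{l/2}\max_k|f_{lk}|$, are controlled under the posterior as follows.

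\textbf{Low frequencies.} For each $(l,k)$ with $l\leq L_n$, the posterior density of $f_{lk}$ given $X_{lk}$ is proportional to $e^{-n(X_{lk}-f)^2/2}h(f/s_l)/s_l$, and coordinates are conditionally independent. On the Gaussian event $\mathcal{E}=\{\max_{l\leq L_n,k\leq 2^l}|X_{lk}-f_{0,lk}|\leq C\sqrt{\log n/n}\}$ (of probability $1-o(1)$ by a union bound), I bound the posterior mass of $\{|f_{lk}-X_{lk}|>\veps\}$ by comparing a numerator $\lesssim\|h\|_\infty\, e^{-n\veps^2/2}/\sqrt n$ with a denominator obtained by restricting integration to a radius-$1/\sqrt n$ ball around $X_{lk}$, which yields $\gtrsim h((|X_{lk}|+1/\sqrt n)/s_l)/(s_l\sqrt n)$. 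By \eqref{condti} and $\log(1/s_l)\lesssim L_n^2$ (OT) or $\lesssim L_n$ (HT), together with $L_n\asymp\log n$, the denominator penalty is at most $\exp(c_1\,\mathrm{polylog}(n))$. Taking $\veps\asymp (\log n)^{d_0}/\sqrt n$ with $d_0$ large enough beats this penalty and absorbs a union bound over the $\lesssim n$ low-frequency coordinates; combining with the noise control on $\mathcal{E}$ via the triangle inequality yields
\[\sum_{l\leq L_n}2^{l/2}\max_k|f_{lk}-f_{0,lk}|\lesssim 2^{L_n/2}(\log n)^{d_0}/\sqrt n\lesssim\cL_n\veps_n\]
with posterior probability tending to one.

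\textbf{High frequencies.} For $l>L_n$ the key coordinate-wise bound is
\[\Pi(|f_{lk}|>t\mid X_{lk})\lesssim M(X_{lk})\,E|\zeta|^q\,(s_l/t)^q,\]
where the numerator is bounded using $e^{-n(X-f)^2/2}\leq 1$ and \eqref{qmom}, and $M(X_{lk})\lesssim e^{nX_{lk}^2/2}$ comes from a denominator lower bound obtained by restricting integration to $\{|f|\leq s_l\}$ (where $h(f/s_l)\geq h(1)$, as $h$ is decreasing on $[0,\infty)$). A level-dependent Gaussian union bound yields $\max_{k\leq 2^l}nX_{lk}^2\lesssim l+\log n$, hence $M(X_{lk})\leq n^{C}e^{Cl}$ uniformly. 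Choosing $t_l\asymp s_l\,(n^C 2^l)^{1/q}(\log n)^\rho$ and union-bounding over $k\leq 2^l$ makes $\Pi(\max_k|f_{lk}|>t_l\mid X)$ summable in $l$. Then $\sum_{l>L_n}2^{l/2}t_l$ reduces to summing $2^{l(1/2+1/q)}s_l$ times polynomial-in-$n$ and polylog factors. For OT ($s_l=2^{-l^2}$) the super-exponential decay of $s_l$ makes the sum negligible compared to $\veps_n$, independently of $\beta$. For HT ($s_l=2^{-l(1/2+\alpha)}$) the summand is geometric in $l$, dominated by $l=L_n+1$ when $\alpha>1/q$, and matching the resulting rate $\asymp 2^{L_n(1/q-\alpha)}=(n/\log n)^{(1/q-\alpha)/(2\beta+1)}$ with $\veps_n$ gives exactly the sharp threshold $\alpha\geq\beta+1/q$ stated in the theorem.

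\textbf{Main obstacle.} The delicate step is the high-frequency analysis under HT at the boundary $\alpha\geq\beta+1/q$: the polynomial factor $M(X_{lk})\lesssim n^C$ arising from the denominator bound must be absorbed cleanly into $\cL_n$ without degrading the threshold in $\alpha$. This calls for a finer analysis of the noise event, e.g.\ exploiting the coordinate-dependent size of $|X_{lk}|$ (which is genuinely of order $1/\sqrt n$ for most $k$ rather than $\sqrt{\log n}/\sqrt n$) and careful bookkeeping of the polylog penalties coming from \eqref{condti}. By contrast, the low-frequency part is essentially routine once the coordinate-wise Bayes ratio is set up, since the Gaussian likelihood's exponential concentration easily absorbs polylog factors from the heavy-tailed prior's tail condition.
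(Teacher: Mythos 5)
Your decomposition at level $L_n$ into bias, low-frequency, and high-frequency pieces matches the paper's, and your treatment of the bias term and the OT high-frequency part (where $s_l = 2^{-l^2}$ kills any polynomial factors) is fine. The low-frequency analysis you sketch, via a coordinate-wise numerator/denominator bound and a union bound over $k$, is looser than the paper's (which uses exponential moments of the posterior via Lemma~\ref{laptr} to bound the posterior expectation of $\max_k|f_{lk}-X_{lk}|$ directly, avoiding the union bound over $k$) but would still yield the theorem for the OT prior since the target allows any power of $\log n$.

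The genuine gap is in the HT$(\alpha)$ high-frequency analysis, which you correctly flag as the main obstacle but do not close. Your bound $M(X_{lk})\lesssim e^{n X_{lk}^2/2}$ combined with a deterministic control $\max_k nX_{lk}^2\lesssim l+\log n$ on a Gaussian max event gives $M\lesssim n^C e^{Cl}$ with $C$ a constant bounded away from zero. The factor $n^C$ cannot be absorbed into $\cL_n=(\log n)^d$, and the factor $e^{Cl}$ enters $t_l$ as $(e^{Cl})^{1/q}$, so $\sum_{l>L_n}2^{l(1/2+1/q)}e^{Cl/q}s_l$ requires $\alpha>1/q+C/(q\log 2)$; tracing the dominant contribution around $l\asymp L_n$ shows one ends up needing something on the order of $\alpha\geq\beta+(2\beta+2)/q$ rather than the stated $\alpha\geq\beta+1/q$. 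The paper avoids this by never bounding $\phi(\sqrt{n}X_{lk})^{-1}$ deterministically: instead it takes the $E_{f_0}$-expectation using a ring decomposition $\{\sqrt{p}<\sqrt{n}|X_{lk}|\leq\sqrt{p+1}\}$, where the Gaussian density of the data produces a factor $\phi(\sqrt{p}-\delta_n)$ that nearly cancels the $\phi(\sqrt{p+1})^{-1}$ from the Bayes ratio, giving $E_{f_0}[\mathds{1}_{p\text{-th ring}}/\phi(\sqrt{p+1})]\lesssim e^{\sqrt{p}\delta_n}/\sqrt{p}$ with $\delta_n=1/\sqrt{\log n}$, hence a total contribution of $\sqrt{\log n}$ rather than $n^C$. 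Crucially, this requires $\sqrt{n}|f_{0,lk}|\leq\delta_n$ for $l>l_n$, which forces the paper to use the slightly larger cut-off $\lambda_n$ with $2^{\lambda_n}=(n\log n)^{1/(2\beta+1)}$ for the HT case (not your $L_n\asymp(n/\log n)^{1/(2\beta+1)}$); the intermediate levels $\ell_n<l\leq\lambda_n$ are then handled by the low-frequency argument. Your intuition that the fix lies in "exploiting the coordinate-dependent size of $|X_{lk}|$" is exactly right, but the numerator/denominator $\phi$-cancellation under expectation is the step you would need to actually carry out.
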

Theorem \ref{thm-seq-sup} shows that the OT prior attains the adaptive minimax supremum-norm rate up to a logarithmic term. This is the first result of this kind for a prior distribution that does not have a `spike' part that sets coefficients to $0$ (as opposed to spike--and--slab and tree priors, that select a subset of coefficients and set all others to $0$). For the HT$(\al)$ prior, the condition $\alpha\ge\beta+1/q$ is for technical reasons and may be suboptimal (note though that as $q\to \infty$ the condition becomes milder and approaches the one in Theorem \ref{thm-seq} for the $L^2$--loss).

%\sbl{I. I think it would be preferable to write $(\log{n}/n)^{\be/(2\be+1)}$ times $\cL_n$ in the statement, as the latter is the minimax rate for sup-norm loss... }

%\begin{lemma} \label{laptr}
%In double-index notation, suppose the prior parameters are specified by \eqref{defsig} or \eqref{defsigr} and \eqref{conds}--\eqref{condts}. 
%Suppose $2^{l(1/2+\be)}\le c(\sqrt{n}\wedge s_l^{-1})$ for some $c>0$. 
%Then for some $C, C_1>0$,
%\[ \log E_{f_0}E^\Pi\big[e^{t\sqrt{n}(f_{lk}-X_{lk})}\,\big|\,X^{(n)}\big] 
%\le  t^2/2  + C\log^{1+\kappa}\big(2^{-l(1/2+\be)}s_l^{-1}\big)+C_1.
%%\begin{cases}
%%& t^2/2  + C\big(\frac{\alpha-\be}{1+2\be}\big)^{1+\kappa}\log^{1+\kappa}\big(\frac{n}{\log n}\big)+C_1\qquad \text{under } \eqref{defsig},\\
%%& t^2/2  +  + C\qquad \text{under } \eqref{defsigr}.
%%\end{cases}
%\]
%The same result holds for single index notation, up to straightforward modifications.
%\end{lemma}
%\sbl{Maybe just do a generic lemma in a 1-coordinate model?}

\subsection{Besov classes}
We now consider the case of unknown functions $f_0$ that can be spatially inhomogeneous in the projected white noise model \eqref{normseq2}.  In particular, we study adaptation of heavy tailed priors for underlying true functions with Besov smoothness $\Bblr$ for $1\leq r <2$. Minimaxity and adaptation over spatially inhomogeneous Besov spaces in this model have been studied in the frequentist setting in \cite{DJ98}. A distinctive feature is that linear estimators are provably suboptimal by a polynomial factor. More recently, rates of contraction in the non-adaptive case of fixed regularity were established using undersmoothing and appropriately rescaled $p$-exponential priors with $p\leq r$ in \cite{adh21}. Adaptation with $p$-exponential priors, $p\leq r$, was achieved in \cite[Theorem 2.5]{as22} using either a hierarchical or an empirical (marginal maximum likelihood) Bayes choice of both the regularity and scaling parameters (simultaneously). Importantly, it was established in \cite{aw21} that Gaussian priors suffer from the same suboptimality as linear estimators in the frequentist setting. The next result establishes adaptation with heavy-tailed priors in this setting as well. 

\begin{theorem} \label{thm-besov}
In the white noise regression model, let  $\Pi$ be a heavy tailed prior generated by \eqref{priord} with parameters specified by \eqref{defsigr} and \eqref{conds}--\eqref{condti}   as well as \eqref{qmom} with $q=2$.  
 Suppose $f_0\in \Bblr$ for some $\be, L>0$, $r\in[1,2]$  and $\be>1/r-1/2$.  Then, as $n\to\infty$,
\[ E_{f_0}\Pi[ \{f:\ \|f-f_0\|_2 > \cL_n  n^{-\be/(2\be+1)} \} \given X^{(n)}] \to 0, \]
where $\cL_n=(\log{n})^{d}$ for some $d>0$. Further, the same result holds, with a possibly different $d$, for the choice of $(\sigma_k)$ as in \eqref{defsig}, provided $\alpha\ge \be$. 
\end{theorem}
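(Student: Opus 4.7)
I would build directly on the coordinate-wise posterior analysis developed for Theorem \ref{thm-seq}. Since the observations $X_{lk}$ in \eqref{normseq2} are independent across $(l,k)$ and the prior \eqref{priord} is independent across coordinates, the posterior factorises as a product of one-dimensional conditional laws $\Pi(df_{lk}\mid X_{lk})$. By Parseval, controlling $\|f-f_0\|_2^2$ in posterior expectation reduces to bounding
\[
E_{f_0}\sum_{l,k}E^{\Pi}\!\big[(f_{lk}-f_{0,lk})^2\,\big|\,X_{lk}\big]\;\lesssim\;\cL_n^2\,n^{-2\be/(2\be+1)}.
\]
A Markov argument then yields the stated posterior contraction statement. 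I would fix a resolution cutoff $L_n$ with $2^{L_n}\asymp n/\log n$ and split the sum into low frequencies $l\le L_n$ and high frequencies $l>L_n$, handled by different arguments.

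For high frequencies $l>L_n$, the signal contribution $\sum_{l>L_n,k}f_{0,lk}^2$ is controlled by the Besov assumption via the elementary per-level bound $\sum_{k}f_{0,lk}^2\le L^2 2^{-2l(\be+1/2-1/r)}$ (obtained from $\ell^r\hookrightarrow\ell^2$), which is summable beyond $L_n$ precisely under the standing condition $\be>1/r-1/2$. For the posterior fluctuation at high frequencies, I would reuse the tail controls from the proof of Theorem \ref{thm-seq}: the super-polynomial decay of $s_l=2^{-l^2}$ in the OT case (resp.\ $s_l=2^{-l(1/2+\al)}$ with $\al\ge\be$ in the HT case), combined with the mild tail bound \eqref{condti} and the moment condition \eqref{qmom}, forces the coordinate posterior to concentrate near zero with a geometric gain in $l$, making the $l>L_n$ contribution negligible compared with the target rate.

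For low frequencies $l\le L_n$ the key is to extract, as strongly hinted by the thresholding-like behaviour illustrated in Figure \ref{fig:studentmeans}, an oracle bound of the form
\[
E_{f_0}E^{\Pi}\!\big[(f_{lk}-f_{0,lk})^2\,\big|\,X_{lk}\big]\;\lesssim\;(\log n)^{d'}\Big(\min\!\big(f_{0,lk}^2,\,n^{-1}\big)+r_{l,k,n}\Big),
\]
with a remainder $r_{l,k,n}$ summable at the correct rate. Summing $\min(f_{0,lk}^2,n^{-1})$ across $k\in\cK_l$ using the Besov constraint $\sum_k|f_{0,lk}|^r\le L^r 2^{-lr(\be+1/2-1/r)}$, and then across $l\le L_n$, yields $n^{-2\be/(2\be+1)}$ (up to logs) by the standard wavelet-thresholding computation of \cite{DJ98}. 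This is precisely the step that bypasses the suboptimality of linear/Gaussian procedures on spatially inhomogeneous classes, and reflects the soft selection interpretation advertised in the introduction.

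The main obstacle is establishing the thresholding-type per-coordinate bound uniformly in the (typically mismatched) scale $s_l$. In the Sobolev proof of Theorem \ref{thm-seq} it is enough to track the coupled second moment of the posterior, but for $r<2$ one needs a sharper estimate that distinguishes signal from noise coordinates through the $\min(f_{0,lk}^2,n^{-1})$ structure. I would obtain this by revisiting Bayes' formula at a single coordinate, splitting the analysis according to $|X_{lk}|\gtrsim\sqrt{(\log n)/n}$ versus $|X_{lk}|\lesssim\sqrt{(\log n)/n}$: on the signal side, the heavy tail \eqref{condti} delivers a lower bound on the evidence that allows the posterior mass to follow $X_{lk}$ even when $s_l$ is tiny, while on the noise side the positivity and boundedness of $h$ near the origin, combined with the Gaussian likelihood, forces the posterior to concentrate near zero on a scale $\lesssim\sqrt{(\log n)/n}$. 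The careful balance of numerator and denominator in Bayes' formula at each coordinate, quantitatively refined relative to Theorem \ref{thm-seq}, is the crux of the argument.
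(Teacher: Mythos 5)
Your overall intuition—coordinate-wise analysis of the product posterior, a soft-thresholding/oracle bound at each $(l,k)$, and a DJ-style summation over the Besov class—is exactly the structural idea behind the paper's proof. However, the decomposition by resolution level at a smoothness-free cutoff $2^{L_n}\asymp n/\log n$, combined with the naive bias bound beyond $L_n$, has a genuine gap in the regime $r<2$ with $\be$ close to $1/r-1/2$.

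Concretely, the per-level Besov control gives $\sum_{l>L_n,k}f_{0,lk}^2\lesssim 2^{-2L_n\be'}\asymp(\log n/n)^{2\be'}$ with $\be'=\be-(1/r-1/2)$. This is $\lesssim n^{-2\be/(2\be+1)}$ only when $\be'\ge \be/(2\be+1)$, i.e.\ $2\be^2\ge(1/r-1/2)(2\be+1)$; e.g.\ for $r=1$ this fails for all $\be<(1+\sqrt 5)/4\approx 0.81$. The underlying reason is that for $r<2$ there can be \emph{significant} coefficients $|f_{0,lk}|>1/\sqrt{n}$ at scales far beyond $L_n$ (indeed up to $2^l\lesssim(\sqrt{n})^{1/\be'}$), and treating them through the crude split $(f_{lk}-f_{0,lk})^2\lesssim f_{lk}^2+f_{0,lk}^2$ overcounts: for such a coordinate the heavy-tailed posterior \emph{tracks} $X_{lk}$ rather than collapsing to zero (which is precisely the `soft selection' phenomenon), so $\int f_{lk}^2\,d\Pi\approx f_{0,lk}^2$ and neither your bias term nor your fluctuation term is small individually, even though the actual risk at that coordinate is $\lesssim(\log n)^d/n$. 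The paper's proof avoids this by partitioning coordinates by \emph{signal strength} rather than purely by level: the set $\cN_n=\{(l,k):|f_{0,lk}|>1/\sqrt n\}$ (which Lemma \ref{lembesov} shows has cardinality $\lesssim 2^{J_0}$) is handled via the centering bound \eqref{eq:logbnd2} at $X_{lk}$, while the remaining insignificant coordinates are split at the larger, $\be$-dependent cutoff $2^{J_1}=n^{\be/((2\be+1)\be')}$ calibrated exactly so that $\sum_{l>J_1,k}f_{0,lk}^2\lesssim n^{-2\be/(2\be+1)}$, and the posterior fluctuation is controlled on the event $\cA_n$ as in Theorem \ref{thm-seq}. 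To repair your plan, you would either need to replace the level cutoff $L_n$ with the signal-strength partition and the cutoff $J_1$, or else argue that your oracle bound holds uniformly over \emph{all} $(l,k)$ (not just $l\le L_n$) and then sum via DJ98 without a separate high-frequency bias step.
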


The assumption $\beta>1/r-1/2$ is sharp, in the sense that it is the weakest assumption on $\beta$ ensuring $B^\beta_{rr}\subset L^2$, \cite[Theorem 3.3.1]{T83},  a necessary condition since we consider contraction in $L^2$--loss. This is in contrast to \cite[Theorem 2.5]{as22} which has a more stringent assumption on $\beta$ (arising due to the necessity of controlling the prior mass uniformly with respect to the scaling prior-parameter, see Remark 2.6(a) in that source). Another advantage of the above result is that it provides a very simple and practical framework for achieving adaptation for spatially inhomogeneous functions which, as discussed in Section \ref{besovpriormass} below, goes beyond the regression setting. In particular, from the practical point of view and  unlike \cite[Theorem 2.5]{as22}, approximating posteriors in this framework does not require the use of a Gibbs sampler, which may mix poorly in high dimensions when the data are informative \cite{abps14}, neither does it require implementing marginal maximum likelihood estimators of prior parameters which can be technically difficult, especially in more involved models.

%\sbl{Comment/compare with respect to previous results in Bayesian literature}
%\sbl{Mention results for negative Sobolev norms as Remark}

\subsection{Adaptive nonparametric Bernstein--von Mises theorem} 
Beyond convergence rates, one may be interested in the limiting shape of the posterior distribution, as formalised in nonparametric settings in \cite{cn13, cn14}, from which one can deduce, for instance, uncertainty quantification results on certain functionals of $f$, as in the application below Theorem \ref{adnpbvm}.

For monotone increasing weighting sequences $w=(w_l)_{l \ge 0}$, $w_l \ge 1,$ we define multi-scale sequence spaces
\begin{equation} \label{def-multi}
\mathcal M \equiv \mathcal M(w) \equiv \left\{x=\{x_{lk}\}: \ \ \|x\|_{\mathcal M(w)} \equiv \sup_{l}\frac{\max_{k}|x_{lk}|}{w_l} <\infty \right\}.
\end{equation}
The space $\mathcal M(w)$ is a non-separable Banach space (it is isomorphic to $\ell_\infty$). However, the following space $\mathcal M_0$ forms a separable closed subspace for the same norm
\begin{equation} \label{em0}
\mathcal M_0=\mathcal M_0(w)= \left\{x \in \mathcal M(w): \ \ 
\lim_{l \to \infty} \max_k \frac{|x_{lk}|}{w_l}=0\right\}.
\end{equation}
The white noise model \eqref{gwn} can be rewritten as the sequence model $X^{(n)}=f+\mathbb{W}/\rn$, where in slight abuse of notation $f$ is identified with the sequence of its coefficients $f=(f_{lk})$ and $\mathbb{W}=(\int \psi_{lk} dW(t))_{l,k}$  has the distribution of an iid sequence of $\cN(0,1)$ variables. It is not hard to see that $\mathbb{W}$ almost surely belongs to $\mathcal{M}_0$ (and $X^{(n)}$ as well for $f\in L^2$) under the condition that $w_l$ diverges faster than $\sqrt{l}$, see \cite{cn14}. Denote $\tau: f\to \sqrt{n}(f-X^{(n)})$. Then $\Pi[\cdot\given X^{(n)}]\circ \tau^{-1}$ denotes the induced posterior distribution on $\cM_0$, shifted and rescaled by~$\tau$.   
   
 % \ma{should there be an assumption on $w_l$, e.g. diverging faster than $\sqrt{l}$ for $\mathbb{W}$ to belong to $\mathcal{M}_0$?}
%Furthermore, we call a sequence $(w_l)$ {\em admissible}  if $w_l/\sqrt l \uparrow \infty$ as $l \to \infty$.

For $S$ a given metric space, let $\be_S(P,Q)$ denote the bounded-Lipschitz metric over probability distributions $P,Q$ on $S$. It is well-known that $\be_S$ metrises weak convergence on $S$.  
%\ma{Recenter and rescale the posterior}
\begin{theorem} \label{adnpbvm}  
In the regression model, let us consider the heavy-tailed wavelet series prior \eqref{priord} on coefficients $f_{lk}$ in \eqref{normseq2}, with parameters  specified by \eqref{defsigr} and \eqref{conds}--\eqref{condti}. Consider the multiscale space $\cM_0$ as in \eqref{em0} with $w_l=l^{1+\kappa+\veps}$ for some $\veps>0$.   
Suppose $f_0\in \Hbl$ for some $\be, L>0$. Then
\[ E_{f_0} \be_{\cM_0}( \Pi[\cdot\given X^{(n)}]\circ\ta^{-1}, \cL(\mathbb{W})) \to 0\]
as $n\to\infty$, where $\cL(\mathbb{W})$ denotes the law of a Gaussian white noise in $\cM_0$.  
Further, the same result holds for $(s_l)$ as in \eqref{defsig} with $w_l=l^{(1+\kappa+\veps)/2}$, for some $\veps>0$ (and any $\al>0$).% \sbl{Remove this second choice? Doublecheck with proof}
\end{theorem}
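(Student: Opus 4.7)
The plan is to adopt the multiscale Bernstein--von Mises scheme of Castillo--Nickl: combine convergence of every finite-dimensional marginal of the shifted posterior with a uniform tightness estimate of the posterior in the multiscale norm of $\cM_0$. A decisive simplification in the present setting is that the heavy-tailed series prior has independent wavelet coefficients and the projected white noise likelihood factorises across indices, so that the posterior is itself a product measure on $(f_{lk})$. Both steps therefore reduce to one-dimensional analysis on each coordinate, followed by a union bound over $k\in\cK_l$ and summation over $l$.

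\textbf{Step 1 (finite-dimensional limit).} For each fixed $(l,k)$, the posterior density of $u := \sqrt{n}(f_{lk}-X_{lk})$ is proportional to $e^{-u^2/2}\, h((X_{lk}+u/\sqrt{n})/s_l)$. Since $s_l>0$ is fixed and $h$ is monotone on $[0,\infty)$ (hence continuous off a countable set, which $X_{lk}/s_l$ avoids $P_{f_0}$-a.s.), the second factor converges to $h(X_{lk}/s_l)$ pointwise in $u$. Boundedness of $h$ combined with dominated convergence and Scheffé's lemma then deliver convergence of this density in $L^1$, and hence convergence of the posterior on $u$ to $\cN(0,1)$ in total variation. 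By the product structure, any finite-dimensional marginal of $\Pi[\cdot\given X^{(n)}]\circ\ta^{-1}$ converges weakly to the corresponding marginal of $\cL(\mathbb{W})$ in $P_{f_0}$-probability.

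\textbf{Step 2 (multiscale tightness).} This is the expected main obstacle. I plan to bound the numerator of the one-dimensional posterior by $\|h\|_\infty e^{-u^2/2}$ and, using the monotonicity of $h$ on $|u|\le 1$, bound the denominator below by a constant times $h((|X_{lk}|+1/\sqrt n)/s_l)$; combining with the lower bound on $h$ from \eqref{condti} yields
\[
 \Pi\bigl[|u_{lk}|>t\,\big|\,X^{(n)}\bigr]\le C e^{-t^2/2}\,\exp\!\bigl(c_1\log^{1+\kappa}(1+(|X_{lk}|+1/\sqrt n)/s_l)\bigr).
\]
On the $P_{f_0}$-likely event $\{\max_{k\in\cK_l}|\xi_{lk}|\le C\sqrt{l}\ \forall\,l\}$ (obtained by union-bounding $2^l$ standard Gaussians at level $l$ and summing in $l$), and using $f_0\in\Hbl$ together with $s_l=2^{-l^2}$, the two competing terms $L 2^{l^2-l(1/2+\be)}$ and $C\sqrt{l}\, 2^{l^2}/\sqrt n$ are both dominated by $2^{l^2}$ up to constants, so $\log(1+|X_{lk}|/s_l)\le C(1+l^2)$ and the exponent is $O(1+l^{2(1+\kappa)})$. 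Choosing $t_l\asymp 1+l^{1+\kappa}$ with a large enough constant makes the per-coordinate bound smaller than $\veps\, 2^{-2l}$, so a union bound over the $2^l$ indices at level $l$ and summation over $l>L$ gives
\[
 \Pi\Bigl[\sup_{l>L}\max_{k}|u_{lk}|/w_l>\eta\,\Big|\,X^{(n)}\Bigr]\le\veps
\]
for $L=L(\eta,\veps)$ large enough independent of $n$, since with $w_l=l^{1+\kappa+\veps}$ we have $t_l/w_l\lesssim l^{-\veps}\to 0$.

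\textbf{Step 3 (assembly and the HT$(\al)$ case).} Steps 1 and 2 combine via the standard criterion for weak convergence of probability measures on the separable Banach space $\cM_0$ (finite-dimensional marginals plus asymptotic equicontinuity) to give weak convergence of the shifted posterior to $\cL(\mathbb{W})$ on $\cM_0$ in $P_{f_0}$-probability; boundedness of the bounded-Lipschitz metric then upgrades this to convergence in expectation. The HT$(\al)$ case follows the same scheme but with $s_l=2^{-l(1/2+\al)}$, for which the uniform bound $\log(1+|X_{lk}|/s_l)\le C(1+l)$ replaces $C(1+l^2)$; consequently $t_l\asymp 1+l^{(1+\kappa)/2}$ suffices, matching the lighter weight $w_l=l^{(1+\kappa+\veps)/2}$. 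The key difficulty is precisely this uniform-in-$l$ tail bound: only because $s_l$ does not decay faster than $2^{-l^2}$ does the exponent $\log^{1+\kappa}(1+|X_{lk}|/s_l)$ grow polynomially in $l$, which is exactly what the chosen weight $w_l$ is designed to absorb.
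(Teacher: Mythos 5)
Your proof is correct and reaches the stated conclusion, but it takes a genuinely different and more self-contained route than the paper. The paper appeals to two external results: Proposition 6 of Castillo--Nickl (\cite{cn14}) which reduces the nonparametric BvM to a moment-tightness condition in a weaker multiscale space $\cM_0(\bar w)$ with $\bar w_l=o(w_l)$, and Theorem 1 of \cite{ic12} for the finite-dimensional BvM. The paper's tightness bound is a posterior \emph{first-moment} estimate, $E^\Pi[\max_k\sqrt{n}|f_{lk}-X_{lk}|\given X]\lesssim l^{(1+\kappa)r/2}$, borrowed from the Laplace-transform machinery of Theorem \ref{thm-seq-sup} (via Lemma \ref{laptr}). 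You instead prove both parts from scratch: finite-dimensional convergence by pointwise convergence of the coordinate posterior density plus dominated convergence and Scheff\'e, and tightness via an explicit posterior \emph{tail} bound $\Pi[|u_{lk}|>t\given X]\lesssim e^{-t^2/2}\exp(c_1\log^{1+\kappa}(1+(|X_{lk}|+1/\sqrt n)/s_l))$, followed by a union bound in $k$ and summation in $l$. Structurally your denominator lower bound and the paper's Lemma \ref{laptr} do the same job (exploit positivity and the lower bound \eqref{condti} on $h$), but your route gives a Gaussian-type tail for the posterior directly, which is arguably cleaner for the equicontinuity step and avoids invoking Proposition 6 of \cite{cn14} as a black box. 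Both approaches buy the same rate: the weight $w_l$ is chosen precisely so that the $\log^{1+\kappa}(1/s_l)$-growth of the exponent ($l^{2(1+\kappa)}$ for the OT prior, $l^{1+\kappa}$ for HT$(\al)$) is absorbed, and you correctly identify this as the crux.

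One small slip: the control event should not be $\{\max_{k\in\cK_l}|\xi_{lk}|\le C\sqrt{l}\ \forall l\ge 0\}$, since at $l=0$ this forces $|\xi_{00}|\le 0$ and has probability zero. Replace $C\sqrt{l}$ by $C\sqrt{l+1}$ (or by $C\sqrt{\log(2^{l}+e)}$); the union bound and the tightness argument go through unchanged. This does not affect the substance, since for the equicontinuity criterion only the tail $l>L$ matters while finitely many low-frequency coordinates are handled by the finite-dimensional limit.

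Finally, for the assembly you correctly invoke the ``finite-dimensional marginals + tightness = weak convergence'' criterion on the separable space $\cM_0$ and upgrade from convergence in probability to convergence in expectation using $\be_{\cM_0}\le 2$. You should also note in passing, as the paper does, that $\cL(\mathbb W)$ is itself supported on $\cM_0(w)$, which requires $w_l/\sqrt{l}\to\infty$; this is guaranteed since $\kappa\ge 0$ and $\veps>0$ in both choices of $w_l$.
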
   
%\ma{no condition on $\alpha$ for second case?}

The heavy-tailed priors we consider thus automatically satisfy an adaptive nonparametric Bernstein--von Mises theorem \cite{ray17}. As opposed to results of this type obtained in the literature so far, note that we do not need to modify the prior to impose a {\em flat initialisation}; \cite{ray17} proves that this is necessary for spike--and--slab priors: for these one needs to remove the spikes from a slowly increasing number of coordinates to allow for Gaussian finite-dimensional distributions in the limit (otherwise small signals can erroneously be classified into the spike part by the posterior; the same holds for Bayesian CART, see \cite{cr21}). Here the heavy tailed prior is continuous, so asymptotic normality holds even for arbitrarily low frequencies: this is because the prior induced on the first coordinates has a continuous and positive density on the whole real line, so the conditions of (a version of)  the parametric Bernstein--von Mises theorem are satisfied, see the proof of Theorem \ref{adnpbvm} in the Supplement.\\

{\em Application.} An implication of Theorem \ref{adnpbvm} is the following (using  \cite{cn14}, Theorem 4): a Donsker-type theorem holds for the posterior distribution with heavy-tailed priors when estimating the primitive $F(\cdot)=\int_0^{\cdot} f(u)du$ of $f$: for $B$ standard Brownian motion and $\cL(\|B\|_\infty)$ the distribution of its supremum on $[0,1]$, in $P_0$--probability,
\[ \be_{\RR}\left( \cL(\sqrt{n}\|F(\cdot)-X^{(n)}(\cdot)\|_\infty \given X^{(n)}) , \cL(\|B\|_\infty) \right) \to 0, \]
where $\cL(F(\cdot)\given X^{(n)})$ denotes the posterior distribution on $F$ induced from the posterior on $f$ through the primitive map $f\to F$. 
From this one immediately deduces that supremum-norm quantile credible bands for $F$ are asymptotically optimal (efficient) confidence bands for  $F_0$, see \cite{cn14} for details and discussion.

\section{Prior mass bounds and $\rho$--posterior convergence}\label{sec:prmass}

In this section we first derive lower bounds for the prior mass that heavy-tailed priors put on $L^2$-- and $L^\infty$--balls, around Sobolev and H\"older functions. These bounds next enable us to obtain contraction rates for tempered posteriors ($\rho$-posteriors) in a variety of nonparametric settings: as examples, we consider density estimation, binary classification and regression {(the latter under Besov regularity of the truth)}. 
% In particular, contraction rates for $\rho$-posteriors can be studied without testing or entropy conditions, see Section \ref{sec:rhopost} of the Supplement \cite{sm} for details. As examples, we present contraction rates for $\rho$-posteriors in density estimation, binary classification and regression. 
As a slight variant to  the moment assumption \eqref{qmom}, here we require the tail condition: for some $c_2>0$,
\begin{equation}
\overline{H}(x):=\int_x^{\infty} h(u)du \le c_2/x^2,\qquad x\ge 1. \label{condts}
\end{equation}
Condition \eqref{condts} allows for most Student distributions;  Cauchy tails can also be accommodated, see Remark \ref{rem:cauchy} below and Remark \ref{rem:cauchyinfty} in the Supplement. 
%\ma{[Shall we add: Remark \ref{rem:cauchy} below and?]} Remark A.1 in the Supplement \cite{sm}.  %\ref{rem:cauchyinfty
% by taking $\alpha$ slightly larger (e.g. $\alpha>1$ suffices for Cauchy). To update!} \ora{what is $\alpha$ here? To be updated} 

%we do not require any moment assumption on the distribution of each coordinate in the prior of the type \eqref{qmom}, but instead we have the tail assumption \eqref{condts}. 

\subsection{Generic prior mass results}
 
\begin{theorem}[Generic prior mass condition in $L^2$] \label{thmpriorm}
For $\Pi$ a prior generated by \eqref{prior},  
\begin{itemize}
\item
suppose $(\sigma_k)$ is as in \eqref{defsig} for $\alpha>1/2$ and assume \eqref{conds}--\eqref{condti}--\eqref{condts}.  For $\be>0$, let 
\begin{equation} \label{rateveps-1}
 \veps_n =  (\log{n})^{\frac{1+(1+\kappa)\be}{2\be+1}} n^{-\frac{\be}{2\be+1}}.
\end{equation}
Then for any $\be\le \alpha$, $L>0$ and $f_0\in \Sbl$, it holds that for any $d_2>0$ there exists $d_1>0$ sufficiently large such that 
\[ \Pi[\|f-f_0\|_2<  d_1\veps_n ]  
\ge e^{ -d_2n\veps_n^2 }.\]
\item suppose $(\sigma_k)$ is defined, for some $a,\delta>0$, by
\begin{equation} \label{fastsig}
\sigma_k = e^{-a (\log{k})^{1+\delta}},
\end{equation}
and assume \eqref{conds}--\eqref{condti}--\eqref{condts}. For $\be>0$, let 
\begin{equation} \label{rateveps-2}
 \veps_n = (\log{n})^{\frac{(1+\kappa)(1+\delta)\be}{2\be+1}} n^{-\frac{\be}{2\be+1}}.
 \end{equation}
Then for any $\be> 0$, $L>0$ and $f_0\in \Sbl$, it holds that for any $d_2>0$ there exists $d_1>0$  sufficiently large such that
\[ \Pi[\|f-f_0\|_2<d_1 \veps_n ]  
\ge e^{ -d_2n\veps_n^2 }.\]
\end{itemize}
\end{theorem}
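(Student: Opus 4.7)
The plan is to decompose the coordinates of $f-f_0$ into low frequencies $k\le K_n$ and high frequencies $k>K_n$, for a cutoff $K_n$ of order $\veps_n^{-1/\beta}$ up to a $(\log n)$-factor to be tuned, and to exploit the product structure of $\Pi$. The Sobolev hypothesis gives $\sum_{k>K_n}f_{0,k}^2 \le L^2 K_n^{-2\beta}$, so $K_n \gtrsim \veps_n^{-1/\beta}$ keeps the bias of order $\veps_n^2$. Writing
\[
\{\|f-f_0\|_2 < d_1\veps_n\} \,\supset\, A_{\mathrm{low}} \cap A_{\mathrm{high}},
\]
with $A_{\mathrm{low}} = \bigcap_{k\le K_n}\{|f_k - f_{0,k}|\le \delta_k\}$ for $\delta_k = c\veps_n/\sqrt{K_n}$ and $A_{\mathrm{high}} = \{\sum_{k>K_n} f_k^2 \le c'\veps_n^2\}$, independence of the $(f_k)$ reduces matters to lower-bounding $\Pi[A_{\mathrm{low}}]$ and $\Pi[A_{\mathrm{high}}]$ separately.

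For the low-frequency term, the key coordinate estimate uses only the shape assumption \eqref{conds}: a direct check gives that for any $m\in\RR$ and $r>0$, $\int_{m-r}^{m+r}h(u)\,du \ge 2r\,h(|m|+r)$. Applied with $m=f_{0,k}/\sigma_k$ and $r=\delta_k/\sigma_k$, this yields
\[
\Pi[|f_k - f_{0,k}|\le \delta_k] \,\ge\, \frac{2\delta_k}{\sigma_k}\,h\!\left(\frac{|f_{0,k}|+\delta_k}{\sigma_k}\right),
\]
and invoking the tail bound \eqref{condti} gives
\[
-\log\Pi[|f_k - f_{0,k}|\le\delta_k] \,\le\, \log(\sigma_k/\delta_k) + c_1\log^{1+\kappa}\!\Big(1 + \frac{|f_{0,k}|+\delta_k}{\sigma_k}\Big) + O(1).
\]
The Sobolev bound $|f_{0,k}|\le Lk^{-\beta}$ together with the prescribed $(\sigma_k)$ makes $(|f_{0,k}|+\delta_k)/\sigma_k$ polynomially bounded in $n$ in part (i) and slowly-exponentially in part (ii). Summing over $k\le K_n$ produces, up to constants, a bound of order $K_n\,(\log n)^{1+\kappa}$ in part (i) and $K_n\,(\log n)^{(1+\delta)(1+\kappa)}$ in part (ii) for $-\log\Pi[A_{\mathrm{low}}]$.

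For the high-frequency term, decompose
\[
\Pi[A_{\mathrm{high}}^c] \,\le\, \sum_{k>K_n}\Pi[|\zeta_k|>T_k] \,+\, \Pi\!\left[\sum_{k>K_n}\sigma_k^2\zeta_k^2\mathbf{1}_{\{|\zeta_k|\le T_k\}} > c'\veps_n^2\right]
\]
for thresholds $T_k$ to be tuned, and control the two summands via \eqref{condts} and Markov's inequality. A short integration-by-parts argument based on \eqref{condts} gives $E[\zeta_k^2\mathbf{1}_{\{|\zeta_k|\le T_k\}}] \le C(1+\log T_k)$. In part (i), choosing $T_k \asymp k^{1/2+\eta}$ for small $\eta>0$ makes the first sum summable with tail $K_n^{-2\eta}$, while Markov's inequality on the second summand yields a bound of order $K_n^{-2\alpha}\log K_n/\veps_n^2$ which can be made $o(1)$ by a mild $(\log n)$-enlargement of $K_n$. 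In part (ii), the super-polynomial decay of $(\sigma_k)$ renders both pieces trivially small for any $\beta>0$, so no enlargement is needed.

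Combining the two steps by independence, $\Pi[\|f-f_0\|_2 < d_1\veps_n] \ge \Pi[A_{\mathrm{low}}]\Pi[A_{\mathrm{high}}]$; equating the low-frequency bound with $d_2 n\veps_n^2$, imposing $K_n^{-2\beta}\lesssim \veps_n^2$, and absorbing the mild $(\log n)$-enlargement forced by the high-frequency step produces, after solving a power equation in $\veps_n$, exactly the rates \eqref{rateveps-1} and \eqref{rateveps-2}. The main obstacle is the high-frequency control under only the weak tail bound \eqref{condts}: since $\zeta_k$ may lack a second moment, one cannot appeal to $L^2$-concentration directly, and the truncation-plus-Markov scheme produces logarithmic losses that must be calibrated against the low-frequency contribution via a careful choice of $K_n$. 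The boundary case $\beta=\alpha$ in part (i) is the most delicate and is what fixes the precise log exponent in \eqref{rateveps-1}; part (ii) is simpler because the very fast decay of $(\sigma_k)$ effectively decouples the two steps.
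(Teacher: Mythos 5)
Your proof is correct and reproduces the paper's low-/high-frequency split at $K_n$ and its coordinate-wise small-ball bound for $k\le K_n$, but takes a genuinely different route for the tail $k>K_n$. The paper stays coordinate by coordinate there too: it lower-bounds $\cP_2:=\Pi[\forall\,k>K:\ |f_k|\le\veps/(D\sqrt{k}\log k)]=\prod_{k>K}(1-2\overline{H}(\veps/\{D\sigma_k\sqrt{k}\log k\}))$ via \eqref{condts} together with $\log(1-2x)\ge -4x$, obtaining $\cP_2\ge e^{-CK}$; but to invoke \eqref{condts} on every coordinate $k>K$ it must require $\veps\gtrsim K^{-\be}\log K$, and this extra $\log K$ is precisely what fixes the exponent of $\log n$ in \eqref{rateveps-1}. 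You instead keep the natural tail event $A_{\mathrm{high}}=\{\sum_{k>K_n}f_k^2\le c'\veps_n^2\}$ and bound its complement by truncation plus Markov, using the integration-by-parts consequence $E[\zeta^2\1_{\{|\zeta|\le T\}}]\lesssim 1+\log T$ of \eqref{condts}; this yields $\Pi[A_{\mathrm{high}}]\to 1$ rather than merely $e^{-CK}$, and only requires $K_n^{2\al}\gg\log K_n/\veps_n^2$, a strictly weaker enlargement of $K_n$ than the paper's constraint forces. In fact, if you optimise the enlargement your argument gives a slightly smaller power of $\log n$ than \eqref{rateveps-1}, so ``exactly the rates'' is a small overclaim (the theorem still follows, by monotonicity of the prior mass in the radius); taking the paper's $(\log n)^{1/\be}$-enlargement rather than the minimal one just above $(\log n)^{1/(2\be)}$ allowed by your Markov bound recovers \eqref{rateveps-1} on the nose. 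In part (ii) both routes trivialise because $\sigma_k$ decays super-polynomially and agree up to presentation. One point your write-up leaves implicit in the low-frequency step: $\log(\sigma_k/\delta_k)$ is positive for small $k$, but a short calculation gives $\sum_{k\le K_n}\log(\sigma_k/\delta_k)\asymp\tfrac{\be-\al}{\be}K_n\log(1/\veps_n)\le 0$ for $\al\ge\be$, so this term may indeed be discarded and your claimed order $K_n(\log n)^{1+\kappa}$ for $-\log\Pi[A_{\mathrm{low}}]$ is justified.
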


\begin{remark}[Logarithmic factor] \label{rem:log2}
For the OT prior with $\kappa=0$, the logarithmic term is nearly the best possible one (see Remark \ref{rem:log}) up to a power $C \cdot \delta$ (for some constant $C=C(\be)$ depending on $\be$ only) that can be made  arbitrarily small by taking a small $\delta$. 
\end{remark}  

%\subsection{$L^\infty$-loss} 
 
\begin{theorem}[Generic prior mass condition in $L^\infty$] \label{thmpriormlinf}
For $\Pi$ a prior generated by \eqref{priord},  
\begin{itemize}
\item
suppose $(s_l)$ is as in \eqref{defsig} for $\alpha>1/2$ and assume \eqref{conds}--\eqref{condti}--\eqref{condts}.  For $\be>0$, let 
\begin{equation} \label{rateveps-inf-1}
 \veps_n = (\log\log{n})^{\frac{2}{1+2\be}} (\log{n})^{\frac{1+(1+\kappa)\be}{1+2\be}} n^{-\frac{\be}{2\be+1}}.
\end{equation}
Then for any $\be\le \alpha$, $L>0$ 
and $f_0\in \Hbl$, it holds that for any $d_2>0$ there exists $d_1>0$ sufficiently large such that, for large enough $n$,
\[ \Pi[\|f-f_0\|_\infty< d_1\veps_n ]  
\ge e^{ -d_2n\veps_n^2 }.\]
\item
suppose $(s_l)$ is as in \eqref{defsigr} and assume \eqref{conds}--\eqref{condti}--\eqref{condts}.  For $\be>0$, let 
\begin{equation} \label{rateveps-inf-2}
 \veps_n =  (\log{n})^{\frac{(2+2\kappa)\be}{1+2\be}} n^{-\frac{\be}{2\be+1}}.
\end{equation}
Then for any $\be>0$, $L>0$  and $f_0\in \Hbl$, it holds that for any $d_2>0$ there exists $d_1>0$ sufficiently large such that, for large enough $n$,
\[ \Pi[\|f-f_0\|_\infty< d_1\veps_n ]  
\ge e^{ -d_2n\veps_n^2 }.\]
\end{itemize}
\end{theorem}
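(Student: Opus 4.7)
The argument parallels the one behind Theorem~\ref{thmpriorm}, now adapted to the $L^\infty$--geometry and to the hyper-rectangle constraint $f_0\in\Hbl$. I would decompose the event $\{\|f-f_0\|_\infty\le d_1\veps_n\}$ into a low-frequency and a high-frequency piece, bound each separately using independence across coefficients, and aggregate. Fix a truncation level $L_n$. Using the standard $L^\infty$--wavelet bound $\|g\|_\infty\leqa \sum_l 2^{l/2}\max_k|g_{lk}|$ together with $\max_k|f_{0,lk}|\le L\, 2^{-l(1/2+\be)}$, the bias of truncating $f_0$ at level $L_n$ is $\leqa 2^{-L_n\be}$. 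Choose $L_n=\lceil \be^{-1}\log_2(C/\veps_n)\rceil$, so this bias is at most $\veps_n/4$ and $2^{L_n}\asymp\veps_n^{-1/\be}$. Pick deviations $(\eta_l)_{l\le L_n}$ and tail levels $(t_l)_{l>L_n}$ with $\sum_{l\le L_n}2^{l/2}\eta_l+\sum_{l>L_n}t_l\le \veps_n/4$, and define
\[
A_{\rm low}=\bigcap_{l\le L_n}\Big\{\max_{k\in\cK_l}|f_{lk}-f_{0,lk}|\le\eta_l\Big\},\qquad A_{\rm high}=\bigcap_{l>L_n}\Big\{2^{l/2}\max_k|f_{lk}|\le t_l\Big\}.
\]
Then $A_{\rm low}\cap A_{\rm high}\subset \{\|f-f_0\|_\infty\le d_1\veps_n\}$ and, by independence of the coefficients, $\Pi[A_{\rm low}\cap A_{\rm high}]=\Pi[A_{\rm low}]\,\Pi[A_{\rm high}]$.

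For $A_{\rm high}$, condition \eqref{condts} combined with a union bound over the $2^l$ indices at level $l$ yields $\Pi[A_{\rm high}^c]\leqa \sum_{l>L_n} 2^{2l}s_l^2/t_l^2$. In the OT case ($s_l=2^{-l^2}$) the summand is $\leqa 2^{-2l^2+2l}/t_l^2$, so the series collapses onto its first term and is negligible. In the HT$(\al)$ case, the choice $t_l=c\veps_n 2^{-\delta l}$ with a small $\delta<\al-1/2$ reduces the bound to $\leqa 2^{-L_n(2\al-1-2\delta)}/\veps_n^2$; the hypothesis $\al>1/2$ lets one drive this below $1/2$, at the mild cost of an extra $(\log\log n)$--factor on $\veps_n$ to close the inequality uniformly.

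For $A_{\rm low}$, symmetry and monotonicity of $h$ on $[0,\infty)$ give the per-coordinate lower bound
\[
\Pi[\,|f_{lk}-f_{0,lk}|\le\eta_l\,]\ge \min\!\big(2\eta_l/s_l,\,1\big)\cdot h\big(|f_{0,lk}|/s_l+\eta_l/s_l\big),
\]
and inserting the tail bound \eqref{condti} on $h$ leads, after taking logarithms, to
\[
-\log\Pi[A_{\rm low}]\leqa \sum_{l\le L_n}2^l\Big[\big(\log(s_l/\eta_l)\big)_+ + \log^{1+\kappa}(1+M_l)\Big],
\]
with $M_l\leqa 2^{l(\al-\be)}$ in the HT$(\al)$ case (so $\log M_l\leqa l$) and $M_l\leqa 2^{l^2}$ in the OT case (so $\log M_l\leqa l^2$). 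Evaluating the right-hand side, the dominant contribution is $2^{L_n}L_n^{2(1+\kappa)}$ (OT) or $2^{L_n}L_n^{1+\kappa}$ together with a $2^{L_n}\log(1/\veps_n)$ term (HT$(\al)$). Forcing the dominant term to be at most $d_2 n\veps_n^2$ and substituting $2^{L_n}\asymp\veps_n^{-1/\be}$, $L_n\asymp \log n$, solves to give exactly \eqref{rateveps-inf-2} and \eqref{rateveps-inf-1} respectively.

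The main obstacle is the joint balancing of the two scales: $\eta_l$ must be small enough to control $\sum 2^{l/2}\eta_l$ inside the $\veps_n$--budget yet large enough to keep $(\log(s_l/\eta_l))_+$ manageable; simultaneously, the tail levels $t_l$ need to guarantee $\Pi[A_{\rm high}]\ge 1/2$ without exhausting the same $L^\infty$--budget. It is this simultaneous bookkeeping that produces the precise logarithmic exponents $(1+(1+\kappa)\be)/(1+2\be)$ and $(2+2\kappa)\be/(1+2\be)$ on $\log n$; the additional $(\log\log n)^{2/(1+2\be)}$ factor in the HT$(\al)$ case arises exactly because $s_l$ then decays only polynomially, leaving less slack at the high-frequency step and forcing a mild enlargement of $\veps_n$.
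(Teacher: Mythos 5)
Your overall blueprint—low/high frequency split at level $L_n$, per-coordinate lower bound for $l\le L_n$ via monotonicity of $h$ and \eqref{condti}, tail control for $l>L_n$ via \eqref{condts}, aggregation by independence—coincides structurally with the paper's proof. The low-frequency analysis is correct: your per-coordinate bound $\Pi[|f_{lk}-f_{0,lk}|\le\eta_l]\ge\min(2\eta_l/s_l,1)\,h((|f_{0,lk}|+\eta_l)/s_l)$ is valid (in fact slightly sharper than the paper's, which discards the factor $s_l^{-1}\ge 1$), and the identification of the dominant term $2^{L_n}L_n^{r(1+\kappa)}$ with $r=1$ (HT) and $r=2$ (OT) matches the paper.

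The gap is the high-frequency step. You aim for $\Pi[A_{\rm high}]\ge 1/2$ via a union bound, $\Pi[A_{\rm high}^c]\leqa\sum_{l>L_n}2^{2l}s_l^2/t_l^2$, and then claim "the hypothesis $\alpha>1/2$ lets one drive this below $1/2$." That claim is false for the HT$(\alpha)$ prior when $\alpha-\beta$ is small. With $t_l=c\veps_n 2^{-\delta l}$ and $2^{L_n}\asymp\veps_n^{-1/\beta}$, your bound equals $\veps_n^{(2\alpha-1-2\delta)/\beta-2}$ up to constants; this vanishes only if $\alpha>\beta+1/2+\delta$, which is not implied by $\alpha\ge\beta$ and $\alpha>1/2$ (take $\alpha=\beta$ and the exponent is $-(1+2\delta)/\beta<0$, so the bound blows up). The failure is intrinsic to the union-bound strategy: the $L^\infty$ budget forces $t_{L_n+1}\leqa\veps_n$, and then the level-$(L_n+1)$ term of the union bound alone is $\asymp 2^{L_n(1-2\alpha)}/\veps_n^2 \asymp \veps_n^{(2\alpha-1)/\beta-2}$, which is $\geq 1/2$ whenever $\alpha\le\beta+1/2$. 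The extra $(\log\log n)$ in \eqref{rateveps-inf-1} cannot fix a diverging power of $\veps_n$; in the paper that factor has an entirely different origin, coming from the $\log^2 l$ term inserted to make $l^{-1}\log^{-2}l$ summable, which then shows up as $\log^2 l_n\asymp(\log\log n)^2$ in the requirement $\veps\ge D'2^{-l_n\beta}l_n\log^2 l_n$.

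The correct remedy (and what the paper does) is to abandon the target $\Pi[A_{\rm high}]\ge 1/2$ entirely and instead bound the high-frequency product multiplicatively: $\cP_2=\prod_{l>l_n}\prod_k\bigl(1-2\overline{H}(\cdot)\bigr)$. By choosing $D'$ large enough one ensures each $\overline{H}(\cdot)\le 1/4$, and then $\log(1-2x)\ge -4x$ gives $\cP_2\ge\exp\bigl(-4\sum_{l>l_n}\sum_k\overline{H}(\cdot)\bigr)\ge\exp(-C2^{l_n}2^{2l_n(\beta-\alpha)})\ge\exp(-C2^{l_n})$ when $\alpha\ge\beta$. This is exponentially small, but it is dominated by the low-frequency factor $\exp(-C'l_n^{1+\kappa}2^{l_n})$, which is all that is needed. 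Replacing your union-bound step with this multiplicative lower bound would repair the argument; the rest of your plan then carries through.
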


Prior mass results as obtained in Theorem \ref{thmpriorm}--\ref{thmpriormlinf} are a key preliminary step for obtaining posterior contraction rates. Yet, as noted above, the general theory in \cite{ggv00, gvbook} also typically requires exponentially decreasing prior masses for certain portions of the parameter space (which can then be `tested out'). A major difficulty with (non-truncated) heavy tailed series priors is that prior masses that are exponentially decreasing correspond to extremely small (or `far-away') sets, so usual approaches via entropy control of sieve sets seem out of reach. While we were able in Section \ref{sec:reg} to derive all the results for classical posteriors, here we use instead tempered posteriors: these are defined as, for $0<\rho\le 1$, by
\[ \Pi_\rho[B\given  X] = \frac{\int_B \left(p_f^{(n)}(X)\right)^\rho d\Pi(f)}{\int
\left(p_f^{(n)}(X)\right)^\rho d\Pi(f)}, \]
for measurable $B$. 
The usual posterior corresponds to $\rho=1$ while $\rho<1$ `tempers' the influence of the likelihood. Tempered posteriors require only a prior mass condition to converge \cite{walkerhjort01, tongz06}. Inference in terms of uncertainty quantification can also be conducted with these, see \cite{ltcr23}. We refer %\ref{sec:rhopost}  
to the Supplement for more context (Section A therein)  and a precise statement (Section \ref{sec:rhopost}). We also note that the results to follow are obtained for any $\rho<1$ but not for $\rho=1$ (except in white noise regression where results hold for both). Deriving results for $\rho=1$ in general models is beyond the scope of the present work but is an interesting avenue of future research. A detailed discussion on possible approaches to this can be found in the Supplement, Section \ref{sup:seca}. 
%\ma{Some words about $\rho$-posteriors?}
 
\begin{remark}
The condition $\al>1/2$ for the HT$(\al)$ prior is a technical condition; it can be checked using similar bounds as in the proof of  Theorem \ref{thmpriormlinf} that, under that condition, a draw $f$ from the prior \eqref{priord} is bounded $\Pi$--almost surely, which in particular ensures that $e^f$ is integrable, a fact used for inducing a density in \eqref{priordens} below.
\end{remark} 

\subsection{Density estimation}\label{ssec:de}

From a prior defined  by \eqref{priord} and \eqref{defsigr}, a prior on densities on $[0,1]$ is easily defined by exponentiation and renormalisation: for $f$ bounded and measurable, let
\begin{equation}\label{priordens}
 g(x) = g_f(x) = \frac{e^{f(x)}}{\int_0^1e^{f(u)}du}.% =:p_f(x).
\end{equation}

\begin{theorem}[density estimation] \label{thmdensity}
Consider data $X=(X_1,\ldots,X_n)$ sampled independently from a density $g_0$ on $[0,1]$ that is bounded away from $0$ and suppose $f_0:=\log{g_0}\in \Hbl$ for some $\be>0$.  Let $\Pi$ be a prior on densities $g$ generated by \eqref{priordens}, with parameters on the prior on $f$ as in the statement of Theorem \ref{thmpriormlinf} (e.g.  $\al>1/2$ for the HT$(\al)$ prior) with corresponding rates $\veps_n$ as in \eqref{rateveps-inf-1} or \eqref{rateveps-inf-2}. For any  $\rho<1$, there exists $M=M(\rho)>0$ with
\[ E_{g_0} \Pi_\rho[ \|g-g_0\|_1 > M\veps_n\given X] \to 0 \]
as $n\to\infty$, and where $P_0=P_{g_0}$.
\end{theorem}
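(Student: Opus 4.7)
\textbf{Proof plan for Theorem \ref{thmdensity}.} The plan is to reduce Theorem \ref{thmdensity} to a standard prior-mass-only result for tempered posteriors (as recalled in Section \ref{sec:rhopost} of the Supplement). The work is therefore concentrated in converting the $L^\infty$ prior mass bound of Theorem \ref{thmpriormlinf} into a prior mass bound on a KL-type neighborhood of $g_0$ in the induced density space.

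Step 1 (from $L^\infty$ closeness of $f$ to KL closeness of $g_f$). Since $g_0$ is bounded away from $0$ and $f_0=\log g_0\in\Hbl\subset L^\infty[0,1]$, we have $\|f_0\|_\infty\le C_0$ for some $C_0=C_0(L,\be)$. A standard calculation (e.g. Lemma 3.1 in \cite{vvvz08} or Lemma B.2 in \cite{gvbook}) shows that whenever $\|f-f_0\|_\infty\le \veps\le 1$, the densities $g_f$ defined by \eqref{priordens} satisfy
\[
\mathrm{KL}(g_0,g_f)\le C_1\veps^2,\qquad \int g_0\bigl(\log(g_0/g_f)\bigr)^2\le C_1\veps^2,
\]
with $C_1$ depending only on $C_0$. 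Consequently, the KL/variance neighborhood
\[
B_n:=\bigl\{f:\ \mathrm{KL}(g_0,g_f)\le C_1\veps_n^2,\ \ V(g_0,g_f)\le C_1\veps_n^2\bigr\}
\]
contains the $L^\infty$-ball $\{f:\|f-f_0\|_\infty<\veps_n\}$ for $n$ large.

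Step 2 (prior mass on $B_n$). Apply Theorem \ref{thmpriormlinf} with $\veps_n$ as in \eqref{rateveps-inf-1} or \eqref{rateveps-inf-2}: for any prescribed constant $d_2>0$ one can choose $d_1$ so that $\Pi[\|f-f_0\|_\infty<d_1\veps_n]\ge e^{-d_2 n\veps_n^2}$. Rescaling $\veps_n$ by the constant $d_1/C_1^{1/2}$ (which only changes the overall constant in the final rate) and combining with Step 1 yields, for any $d_2>0$,
\[
\Pi[B_n]\ge e^{-d_2 n\veps_n^2}
\]
for $n$ large. This is the only hypothesis required by the tempered posterior theorem.

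Step 3 (tempered posterior contraction and translation to $L^1$). Invoking the fractional posterior result of Section \ref{sec:rhopost} of the Supplement (a Bhattacharya--Pati--Yang type statement) with prior mass condition verified in Step 2, we obtain that for any $\rho<1$ there exists $M'>0$ with
\[
E_{g_0}\,\Pi_\rho\bigl[\,D_\rho(g,g_0)> M'\veps_n^2\,\big|\,X\bigr]\to 0,
\]
where $D_\rho$ denotes the Rényi divergence of order $\rho$. The standard inequalities $D_\rho(g,g_0)\ge c_\rho h^2(g,g_0)$ and $\|g-g_0\|_1\le 2 h(g,g_0)$ then give $\|g-g_0\|_1\le M\veps_n$ with $M=M(\rho)$, outside a posterior set of vanishing mean mass. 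This establishes the claim.

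Step 4 (expected main obstacle). The only nontrivial content is Step 1--Step 2: verifying that the $L^\infty$ bound of Theorem \ref{thmpriormlinf} genuinely yields the KL/variance prior mass needed to trigger the tempered posterior theorem, and that no further testing or entropy condition intrudes (which is precisely why we work with $\rho<1$). Everything else is bookkeeping, and in particular the fact that $f_0$ is uniformly bounded, used crucially in Step 1, follows from the embedding $\Hbl\subset L^\infty$. The restriction $\al>1/2$ for the HT$(\al)$ prior is exactly what guarantees almost-sure boundedness of prior draws $f$, so that $e^f$ is integrable and \eqref{priordens} is well-defined $\Pi$-almost surely.
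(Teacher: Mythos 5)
Your proposal is correct and follows essentially the same route as the paper: Lemma 3.1 of \cite{vvvz08} (the paper's Lemma \ref{lemlinkd}) to convert the $L^\infty$-ball into a KL/variance neighborhood, Theorem \ref{thmpriormlinf} for the prior mass, and the tempered-posterior theorem (Theorem \ref{alphapost}) followed by the R\'enyi-to-$L^1$ translation. The only cosmetic difference is that you pass through Hellinger distance in Step 3, whereas the paper quotes the direct inequality $D_{\rho}(P_f^{\otimes n},P_{f_0}^{\otimes n}) \ge n\rho\|f-f_0\|_1^2/2$; both are valid and equivalent for the purpose at hand.
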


Theorem \ref{thmdensity} derives adaptive posterior contraction for the $\rho$--posterior at minimax rate (up to a logarithmic term) in density estimation. The term $(1-\rho)^{-1}$ is expected since it is known that usual posteriors may not converge without entropy and sieve conditions. This result can be seen as a counterpart for heavy-tailed priors to results for usual posteriors in density estimation for Gaussian priors \cite{vvvz09}, recently obtained also for Laplace priors in \cite{giordano22}. Sampling from tempered posteriors is generally of comparable difficulty compared to classical posteriors, and since there is no hyper-posterior on the smoothness parameter to sample from with the considered heavy tailed priors, sampling in density estimation is relatively easy and carried out in Section \ref{sec:sim} -- and this even though the model does not tensorise over coordinates.

%\begin{proof}[Proof of Theorem \ref{thmdensity}]
%Let $r_n=M\veps_n$ be a sufficiently large multiple of $\veps_n$, where the required value of $M$ can be made explicit by tracking the constants in the intermediate results used below.  
%Using Lemma \ref{lemlinkd}, we know that the KL-neighborhood $B_n(\eta_0,r_n)$ contains an $L^\infty$--ball $\{f:\ \|f-f_0\|_\infty \le r_n\}$, for $c>0$ a small enough constant. Using Theorem \ref{thmpriormlinf}, one deduces that for $M$ large enough and fixed $\rho\in(0,1)$ the prior mass condition \eqref{equation_thm_1} of Theorem  \ref{alphapost} is satisfied with $r_n$ in place of $\veps_n$ \ora{[Tune constants carefully! (i.e. allow some flexibility in Theorem \ref{thmpriormlinf}: show that one can take $c_2=\rho$ for appropriate constant $M$ and $c_1$) ]}. This concludes the proof, as one then directly deduces a result in $L^1$--norm in the density estimation model according to the comments below the statement of Theorem  \ref{alphapost}. 
%\end{proof}

\subsection{Classification} \label{sec:classif}

Consider independent observations $(X_1,Y_1),\ldots,(X_n,Y_n)$ from a given  distribution of a random variable $(X,Y)$, where $Y\in\{0,1\}$ is binary and $X$ takes values in $\cX=[0,1]^d$ for $d\ge 1$. The interest is in estimating the binary regression function $h_0(x)=P(Y=1\given X=x)$. 
Consider the logistic link function $\La(u)=1/(1+e^{-u})$ and denote its inverse by $\La^{-1}$. From a function $f$ sampled from \eqref{priord}--\eqref{defsig} (or \eqref{priord}--\eqref{defsigr}), setting
\begin{equation} \label{priorclassif}
h_f(x) = \La( f(x) ) 
\end{equation}
induces a prior distribution $\Pi$ on binary regression functions. The density of the data $(X,Y)$ given $f$ equals $p_f(x,y)=h_f(x)^y(1-h_f(x))^{1-y}g(x)$, where $g(x)$ denotes the marginal density of $X$. Denote by $\|\cdot\|_{G,1}$ the $L^1(G)$ norm on $\cX$ and by $P_0$ the true data generating distribution with regression $f_0$ and marginal $g$ (note that Bayesian modelling of $g$ is not needed for inference on $f$ as it factorises from the likelihood).

\begin{theorem}[classification] \label{thmclassif}
Consider data $(X,Y)$ from the binary classification model. Suppose $f_0=\La^{-1}h_0$ belongs to the Sobolev ball $\Sbl$ for $\be, L>0$. Let $\Pi$ be a prior generated by \eqref{priorclassif}, with parameters on the prior on $f$ as in the statement of Theorem \ref{thmpriorm} with corresponding rates $\veps_n$ as in \eqref{rateveps-1} or \eqref{rateveps-2}. Then for any given $\rho<1$, there exists $M=M(\rho)$ such that, as $n\to\infty$,
\[ E_{P_0} \Pi_\rho[ \|p_f-p_{f_0}\|_{G,1} > M \veps_n\given X_1,Y_1,\ldots,X_n,Y_n] \to 0. \]
\end{theorem}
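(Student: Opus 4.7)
The plan is to apply the general contraction theorem for tempered posteriors stated in Section \ref{sec:rhopost} of the Supplement, reducing the problem to a KL--type prior mass condition that is already essentially supplied by Theorem \ref{thmpriorm}. Concretely, the abstract $\rho$--posterior result says that for any $\rho<1$, provided
\[
\Pi\{f:\ K(p_{f_0},p_f) \le \veps_n^2, \ V(p_{f_0},p_f) \le \veps_n^2\} \ge e^{-c n \veps_n^2}
\]
for some $c>0$, the $\rho$--posterior contracts at rate $\veps_n$ in the R\'enyi divergence $D_\rho$. Since $D_\rho$ dominates the squared Hellinger distance up to a factor depending on $\rho$, and $\|p_f-p_{f_0}\|_{G,1}^2 \lesssim h^2(p_f,p_{f_0})$, this immediately yields the desired $L^1(G)$ contraction of the $\rho$--posterior.

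The central computation is then a reduction of the KL and variance neighbourhoods to an $L^2$ neighbourhood of $f_0$, which is particularly clean for the logistic link. Writing $\ell(x)=\log(1+e^x)$, so that $\ell'=\La$ and $\ell''\le1/4$, the log-likelihood ratio is
\[
\log\frac{p_f(X,Y)}{p_{f_0}(X,Y)} = Y\big(f(X)-f_0(X)\big) - \big(\ell(f(X))-\ell(f_0(X))\big).
\]
Taking expectation under $P_0$ and using $E_0[Y\given X]=\ell'(f_0(X))$, the KL divergence reduces to the $G$--expectation of $\ell(f)-\ell(f_0)-\ell'(f_0)(f-f_0)$, which by second-order Taylor is at most $\tfrac{1}{8}(f-f_0)^2$ pointwise. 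Hence $K(p_{f_0},p_f)\le \tfrac{1}{8}\|f-f_0\|_{G,2}^2$. Likewise, the centred log-ratio equals $(Y-h_{f_0})(f-f_0)$, whose second moment is bounded by $\tfrac{1}{4}\|f-f_0\|_{G,2}^2$ since the conditional variance of $Y$ given $X$ is at most $1/4$.

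Assuming the marginal density $g$ of $X$ is bounded above (an implicit mild assumption), $\|\cdot\|_{G,2}\lesssim\|\cdot\|_2$, so it suffices to lower bound $\Pi[\|f-f_0\|_2<d_1\veps_n]$, which is precisely the content of Theorem \ref{thmpriorm}: for $f_0\in\Sbl$ and $\veps_n$ as in \eqref{rateveps-1} or \eqref{rateveps-2}, the bound $\Pi[\|f-f_0\|_2 < d_1\veps_n]\ge e^{-d_2 n\veps_n^2}$ holds for any $d_2>0$ with $d_1$ large enough. Choosing $d_2$ sufficiently small and inserting this into the $\rho$--posterior theorem gives the claim.

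The principal subtlety, and the reason the strategy is well-adapted to heavy-tailed priors, is that the logistic link provides the quadratic KL bound without any a priori restriction on $\|f\|_\infty$. This is essential here, since one cannot hope to confine heavy-tailed prior draws to a uniformly bounded sieve with exponentially small complement probability---an obstacle that is avoided by using tempered posteriors, which require only the prior mass condition and no entropy or sieve control.
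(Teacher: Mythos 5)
Your proof is correct and, notably, is the argument that the statement of the theorem actually prescribes: you invoke the prior--mass bound over $L^2$-balls (Theorem~\ref{thmpriorm}) with the Sobolev rates \eqref{rateveps-1}/\eqref{rateveps-2}, which is exactly what the theorem references. Your direct computation of $K(p_{f_0},p_f)$ and $V(p_{f_0},p_f)$ from the logistic log-likelihood ratio reproduces precisely the content of Lemma~\ref{lemlinkc} in the Supplement (both quantities are $\lesssim\|f-f_0\|_{2,G}^2$); you then reduce to Lebesgue $L^2$ via an implicit boundedness assumption on the marginal density $g$, apply Theorem~\ref{thmpriorm}, and translate $D_\rho$-contraction to $\|\cdot\|_{G,1}$-contraction via Hellinger (the paper instead cites the van Erven--Harremo\"es inequality $D_\rho(P_f^{\otimes n},P_{f_0}^{\otimes n})\ge n\rho\|p_f-p_{f_0}\|_1^2/2$ directly, which is a cosmetic difference).

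It is worth noting that the paper's own proof sketch takes a slightly different and, as written, internally inconsistent route: it passes through the $L^\infty$ consequence of Lemma~\ref{lemlinkc} (which requires no assumption on $g$ since $G$ is a probability measure) and then cites Theorem~\ref{thmpriormlinf}, the $L^\infty$ prior-mass bound. However, Theorem~\ref{thmpriormlinf} is stated for H\"older truths $f_0\in\Hbl$ and yields the rates \eqref{rateveps-inf-1}/\eqref{rateveps-inf-2} with an extra $\log\log n$ factor, neither of which matches the Sobolev hypothesis $f_0\in\Sbl$ and the rates \eqref{rateveps-1}/\eqref{rateveps-2} appearing in the statement of the theorem. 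Your $L^2(G)$-route via Theorem~\ref{thmpriorm} avoids this mismatch and yields the stated rates; the trade-off is that it needs the marginal $g$ to be bounded (or $G$ uniform), which you correctly flag as implicit. So your argument fills in what appears to be a small lacuna in the paper's proof sketch, at the price of making explicit a mild assumption on $g$ that the $L^\infty$ route would avoid.

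One small note: you write $\|p_f-p_{f_0}\|_{G,1}^2\lesssim h^2(p_f,p_{f_0})$; this is correct (total variation $\lesssim$ Hellinger), and combined with $D_\rho\gtrsim\rho\,h^2$ one reaches the $L^1$ statement, but it is cleaner and exactly the paper's choice to use the R\'enyi-to-$L^1$ bound directly.
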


Theorem \ref{thmclassif} derives adaptation for binary classification. Once again, simulation from (an approximation of) the $\rho$--posterior can be carried out using a direct MCMC method without the need of hyperparameter sampling, see Subsection \ref{bin:sim} 
in the Supplement for details.

%\begin{proof}[Proof of Theorem \ref{thmclassif}]
%One proceeds similarly as in the proof of Theorem \ref{thmdensity}. Using Lemma \ref{lemlinkc}, one obtains that the mass of the KL-neighborhood in Theorem \ref{alphapost} is bounded from below by the mass of an $L^\infty$--neighborhood of $f_0=\La^{-1}h_0$ of size constant times $\veps_n$. The result follows by combining Theorems \ref{thmpriormlinf} and \ref{alphapost} \ora{adjust constants carefully: this will be fine once this has been done for Theorem \ref{thmdensity}}, using again that the Renyi--entropy can be bounded from below in terms of the $L^1$--distance between individual densities in the model, which coincides with $\|p_f-p_{f_0}\|_{G,1}$.
%\end{proof}

%\begin{lemma}[Lemma 3.1 in \cite{vvvz08}] \label{lemlinkd}
%For measurable functions $v,w$ on $[0,1]^d$ and $p_v=e^v/\int_0^1 e^v$,% \eqref{priordens},
%\begin{align*}
% K(p_v,p_w) & \leqa \|v-w\|_\infty^2(1+\|v-w\|_\infty)e^{\|v-w\|_\infty}, \\
% V(p_v,p_w) & \leqa \|v-w\|_\infty^2(1+\|v-w\|_\infty)^2e^{\|v-w\|_\infty}.
%\end{align*} 
%\end{lemma}
%
%
%\begin{lemma}[Lemma 3.2 in \cite{vvvz08} for logistic link function] \label{lemlinkc}
%For measurable functions $v,w$ on $[0,1]^d$, $\La$ the logistic link function, $h_f(x)=\La( f(x) )$ and $\|\cdot\|_{2,G}$ the $L^2(G)$ norm,
%\begin{align*}
% K(h_v,h_w) & \leqa \|v-w\|_{2,G}^2 \\
% V(h_v,h_w) & \leqa \|v-w\|_{2,G}^2.
%\end{align*} 
%In particular, both quantities are bounded from above by a constant times $\|v-w\|_\infty^2$.
%\end{lemma}

\subsection{Besov classes}\label{besovpriormass}

We now provide results for possibly spatially inhomogeneous functions and $\rho$--posteriors. We restrict for simplicity to white noise regression and to variances as in \eqref{defsigr}. 
The following theorem shows that Theorem \ref{thm-besov} also holds for $\rho$--posteriors, $\rho<1$. 

%The Besov space $B_{rr}^\be$ for $1\le r\le 2$ and $\be\le N+1$ can be defined as
%\[ B_{rr}^\be = \left\{f\in L^2[0,1]:\ \sum_{l} 2^{rl(\be+1/2-1/r)} 
%\sum_{k\in \cK_l} |f_{lk}|^r <\infty \right\}, \]
%and the corresponding Besov ball $B_{rr}^\be(L), L>0$ is the set of those functions  for which the double sum in the last display is less than $L^r$. 

\begin{theorem} \label{thm-besov-rho}
In the white noise regression model, for  $\Pi$ a prior generated by \eqref{priord} and \eqref{defsigr}, assume \eqref{conds}--\eqref{condti}--\eqref{condts} hold. %, with the wavelet basis admitting $N+1$ derivatives. 
 Suppose $f_0\in \Bblr$ for some $\be, L>0$, $r\in[1,2]$ and $\be>1/r-1/2$.  Then, for any given $\rho<1$,  as $n\to\infty$,
\[ E_{f_0}\Pi_\rho\left[ \{f:\ \|f-f_0\|_2 > \cL_n  n^{-\frac{\be}{2\be+1}} \} \given X^{(n)}\right] \to 0, \]
where $\cL_n= (\log{n})^{d}$ for some $d>0$.
\end{theorem}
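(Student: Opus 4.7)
Since $\rho<1$, I would invoke the general $\rho$-posterior contraction result recalled in the Supplement, which requires only a KL-type prior mass condition at the truth to yield contraction at rate $\veps_n$ in R\'enyi divergence. In the white noise regression model, KL divergence, KL variance, and R\'enyi divergence between $P_f^{(n)}$ and $P_{f_0}^{(n)}$ are all proportional to $n\|f-f_0\|_2^2$, so it suffices to prove, for $\veps_n = (\log n)^d n^{-\be/(2\be+1)}$ with $d$ large enough and some $c_0>0$,
\[
\Pi\big[\|f-f_0\|_2 < \veps_n\big] \geq \exp(-c_0 n \veps_n^2).
\]

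\textbf{Step 2 (sparse approximation of $f_0$).} The new difficulty relative to Theorem \ref{thmpriorm} is that $f_0\in\Bblr$ with $r<2$ is only controlled in an $\ell^r$-type sense, so wavelet coefficients can concentrate spatially; no single-level dyadic truncation will simultaneously deliver approximation error $\lesssim\veps_n^2$ and a matched-coefficient count $\lesssim n^{1/(2\be+1)}$. Instead, I would apply DeVore's nonlinear $N$-term wavelet approximation theorem (applicable because $\be>1/r-1/2$): there exists an index set $A'$ with $|A'|\leq N$ and $\sum_{(l,k)\notin A'} f_{0,lk}^2 \leq C L^2 N^{-2\be}$. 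Choose $N\asymp n^{1/(2\be+1)}$ so that this error is $\lesssim\veps_n^2$. The Besov pointwise bound $|f_{0,lk}|\leq L\cdot 2^{-l(\be+1/2-1/r)}$ confines $A'$ to levels $l\leq L_{max}\lesssim \log n$. Augment to $A := A'\cup\{(l,k): l\leq L_0\}$ with $L_0 := \lceil c\sqrt{\log n}\rceil$, to separately handle low-frequency prior coefficients (for which $s_l=2^{-l^2}$ is not tiny); this preserves $|A|\lesssim N$.

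\textbf{Step 3 (a nice event and $L^2$ control).} With $\eta := \veps_n/(2\sqrt{|A|})$ and $M_l := 2^l l$, I would work on the event
\[
E := \bigcap_{(l,k)\in A} \{|f_{lk}-f_{0,lk}|\leq \eta\} \;\cap\; \bigcap_{(l,k)\notin A} \{|\zeta_{lk}|\leq M_l\},
\]
noting that $(l,k)\notin A$ forces $l>L_0$. A triangle-inequality decomposition on $E$ bounds $\|f-f_0\|_2^2$ by $|A|\eta^2 + 2\sum_{l>L_0} 2^l s_l^2 M_l^2 + 2\sum_{(l,k)\notin A'} f_{0,lk}^2$: the first term equals $\veps_n^2/4$, the second equals $\sum_{l>L_0} 2^{-2l^2+3l}l^2 \lesssim 2^{-2L_0^2} \lesssim \veps_n^2$ by the choice of $L_0$, and the third is $\lesssim \veps_n^2$ by DeVore.

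\textbf{Step 4 (prior mass lower bound and main obstacle).} Since the $\zeta_{lk}$ are independent, $\Pi(E)$ factorises over the disjoint index sets. For $(l,k)\in A$, monotonicity of $h$ gives $P(|f_{lk}-f_{0,lk}|\leq\eta) \geq (2\eta/s_l)\, h((|f_{0,lk}|+\eta)/s_l)$, and combining $|f_{0,lk}|/s_l \lesssim 2^{l^2}$ for $l\leq L_{max}$ with \eqref{condti} yields $-\log P \lesssim (\log n)^{2(1+\ka)}$ per index. The total matching cost over $A$ is therefore $\lesssim n^{1/(2\be+1)}(\log n)^{O(1)}$, which is $\leq c_0 n\veps_n^2$ provided $d\geq 1+\ka$. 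For the tail event, \eqref{condts} gives $P(|\zeta_{lk}|>M_l) \leq c_2/(2^{2l}l^2)$, and a union bound controls its complement by $\sum_{l>L_0}2^l c_2/(2^{2l}l^2) \leq 1/2$. Combining, $\Pi(E) \geq \exp(-c_0' n\veps_n^2)$, which gives the prior mass estimate and the theorem. The hard part throughout is the sparsity-versus-matching-cost tension for $r<2$: it is resolved exactly by DeVore's nonlinear approximation (which limits $|A|$ to order $n^{1/(2\be+1)}$ even when the support spans levels up to $\log n$), together with the polylog-in-$n$ per-coefficient density cost provided by the heavy-tailed assumption \eqref{condti}.
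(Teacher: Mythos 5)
Your overall strategy mirrors the paper's: reduce to the prior mass lower bound via the fractional-posterior contraction theorem (paper's Theorem~\ref{alphapost}), construct a "matching plus tail" event, pay a polylog-per-coordinate matching cost using~\eqref{condti} and the polylog decay of $\log(1/s_l)$ under~\eqref{defsigr}, and control the tail with~\eqref{condts}. Where you diverge is in the choice of the retained index set: the paper works with a \emph{threshold-based} set from Lemma~\ref{lembesov}, namely $\Sigma_0'=\{(l,k):\ l\le J_1,\ |f_{0,lk}|>1/\sqrt n\}\cup\{(l,k): l\le J_0\}$, which by construction is confined to levels $l\le J_1\lesssim\log n$ and has cardinality $\lesssim 2^{J_0}$, while you appeal to DeVore's nonlinear $N$-term approximation theorem.

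There is a genuine gap in Step~2. The pointwise bound $|f_{0,lk}|\le L2^{-l\beta'}$ with $\beta'=\beta+1/2-1/r>0$ does \emph{not} by itself confine the DeVore set $A'$ (the indices of the $N$ largest coefficients) to levels $l\lesssim\log n$. Consider $f_{0,l0}=L\,2^{-l\beta'}(l+1)^{-2/r}$ and $f_{0,lk}=0$ for $k\neq 0$; this lies in $\cB^\beta_{rr}(L)$, yet the $N$ largest coefficients are $\{(l,0):\ l< N\}$, which span levels up to $N\asymp n^{1/(2\beta+1)}\gg\log n$. If $A'$ contains such an index, the per-coordinate matching cost in Step~4 is driven by $\log^{1+\kappa}((|f_{0,lk}|+\eta)/s_l)\asymp l^{2(1+\kappa)}$, which for $l\asymp n^{1/(2\beta+1)}$ greatly exceeds the permitted budget $n\veps_n^2/|A|\asymp(\log n)^{O(1)}$, so the bound $\Pi(E)\ge e^{-c_0'n\veps_n^2}$ fails. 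The fix is standard but must be made explicit: replace $A'$ by $A'\cap\{(l,k):\ l\le L_{\max}\}$ for a suitable $L_{\max}\asymp\log n$; every removed coefficient satisfies $|f_{0,lk}|\le L2^{-L_{\max}\beta'}$, so the extra approximation error is at most $N\cdot L^2 2^{-2L_{\max}\beta'}\lesssim\veps_n^2$ once $L_{\max}$ is a large enough multiple of $\log n$. This is precisely the issue that the paper's Lemma~\ref{lembesov} sidesteps by building the retained set directly from a threshold $\delta_n/\sqrt{n}$ together with the cap $l\le J_1$. With that truncation added, your Steps~3--4 go through; the remaining computations (choice of $\eta$, $M_l=2^l l$ and the Weierstrass bound for the tail event, the budget $n\veps_n^2\asymp n^{1/(2\beta+1)}(\log n)^{2d}$ with $d\ge 1+\kappa$) are sound.
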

%\ma{S: assumptions on wavelet basis not necessary for the result as stated, see comment at end of intro}
We observe excellent empirical behaviour in simulations of the corresponding posterior distributions in terms of adaptation and signal fit on a variety of inhomogeneous test signals, see Section \ref{sim:spin}
 in the Supplement. 
A proof of Theorem \ref{thm-besov-rho} could be given following similar arguments as for Theorem  \ref{thm-besov} (i.e. relying on the approach of Theorems \ref{thm-seq}--\ref{thm-seq-sup}). The proof we provide in the Supplement relies on prior mass arguments. Indeed, the latter are easier to generalise to more other settings (such as density estimation as above) modulo slight adaptation of the conditions. A more systematic study of convergence in Besov spaces in different models and for different losses is beyond the scope of the present work and is the object of forthcoming work.

\section{A simulation study}% in inverse regression with Sobolev/spatially homogeneous truth} 
\label{sec:sim}

We consider the following four simulation settings
\begin{enumerate}[a)]
\item inverse regression with Sobolev/spatially homogeneous truth,
\item spatially inhomogeneous truth in white noise regression,
\item density estimation with H\"older--regular truth, 
\item binary classification with Sobolev--regular truth. 
\end{enumerate}
Here we present the setting a) and a simulation for c) for illustration, and refer to the Supplement, Section \ref{sec:adsim} 
for more details. %\sbl{Is it OK with you like this? We do not have to give details I think for the density case, we can send the reader to the supplement}

{\em Inverse regression.} We consider the model studied in \cite[Section 3]{ksvv16} and \cite[Section 4]{svv15}, where one observes the process
\[X_t=\int_0^t\int_0^s f(u)duds+\frac1{\sqrt{n}}B_t, \quad t\in[0,1],\]
for $B_t$ a standard Brownian motion and $f\in L^2[0,1]$ the unknown function. This is a linear inverse problem with the Volterra integral operator $Kf(t)=\int_0^tf(u)du$ as forward operator, which has eigenfunctions $e_k(t)=\sqrt{2}\cos(\pi(k-1/2)t)$ and corresponding eigenvalues $\kappa_k=\pi/(k-1/2)$, for $k\ge 1$. Equivalently, we study the normal sequence model 
\[X_k|f_k \stackrel{ind}{\sim} \cN(\kappa_k f_{k}, 1/n), \quad k\ge 1,\]
%independently, 
where $f_k$ are the coefficients of the unknown with respect to the orthonormal system formed by the eigenfunctions $(e_k)$, so that we are in the setting of Subsection \ref{ssec:ip}. As underlying truth we use a function with coefficients with respect to $(e_k)$ given by $f_{0,k}=k^{-3/2}\sin(k)$. In particular, the truth can be thought of as having Sobolev regularity (almost) $\be=1$. 

We consider priors on the coefficients of the unknown of the form $f_k=\sigma_k\zeta_k$ for i.i.d. $\zeta_k$, for three different choices of the standard deviations $\sigma_k$ and/or the distribution of $\zeta_1$:
\begin{itemize}
\item Gaussian hierarchical prior: $\sigma_k=k^{-1/2-\alpha}$ with $\alpha\sim {\rm Exp}(1)$, $\zeta_1$ standard normal;
\item HT$(\al)$ prior: $\sigma_k=k^{-1/2-\alpha}$ with $\alpha=5$, $\zeta_1$ Student distribution with 3 degrees of freedom;
\item OT prior: $\sigma_k=e^{-a(\log k)^{1+\delta}},$ with $a=1, \delta=0.5$ and $\zeta_1$ again a Student $t_3$ distribution.
\end{itemize}
Note that, on purpose, in order to test the robustness of the method, we take a very `unfavourable' $\al$ for the HT$(\al)$ prior, with $\al=5$ much larger than the true smoothness $\be=1$ here. Furthermore, for the OT prior we use $\delta=0.5$ instead of  $\delta=1$ used in our analysis. As noted in Section \ref{sec:intro}, the contraction rates are identical for any $\delta>0$, however we found `the finite' $n$ behaviour to be slightly better for $\delta=0.5$ compared to $\delta=1$ (although the difference seemed relatively small in all conducted experiments), so we kept this choice through the simulations.

To sample the posterior arising from the Gaussian hierarchical prior we employ a Metropolis-within-Gibbs sampler which alternates between updating the $\alpha|f,X$ and $f|\alpha, X$ (with an appropriate parametrization, centered or non-centered depending on the size of the noise, to optimize the mixing of the $\alpha$-chain, see \cite{abps14}). For the two Student priors, due to independence, the posterior decomposes into an infinite product of univariate posteriors. We use Stan, with random initialization uniformly on the interval $(-2,2)$, to sample each of the univariate posteriors \cite{stan} %\ma{(it is possible to use much simpler algorithms, such as the Random Walk Metropolis to sample each univariate posterior, however Stan is attractive due to its automatic tuning and also seems to perform slightly better than the tested alternatives)}
(it is also possible to code this manually e.g. via a  Random Walk Metropolis). In all three cases, we truncate at $K=200$, which, for the considered regularities of the truth and the priors, suffices for the truncation error to be of lower order compared to the estimation error. 

In Figure \ref{fig-postSob}, we present posterior sample means as well as 95\% credible regions for various noise levels, computed by taking the 95\% out of the 4000 draws (after burn-in/warm up) which are closest to the mean in $L^2$-sense. {The OT prior appears to {perform at least as well as} the Gaussian hierarchical prior at all noise levels both in terms of the posterior sample mean as well as uncertainty quantification (this is expected from the theory, since the OT posterior is guaranteed to converge (near)-optimally for both quadratic and supremum norm).} {For the HT$(\al)$ prior, although $\al$ is very far off the true smoothness, we see that as $n$ increases the posterior is still able to approximately match the unknown truth. }
%While the posterior sample mean of the Student oversmoothing prior 1  does a similarly good job of recovering the unknown, 
Compared to the OT prior, in this setting the HT$(\al)$ prior appears to be overconfident in all but the lowest noise levels. This is a `finite $n$' phenomenon, which can be explained by the behaviour of the univariate Student prior in the model $X\sim \cN(f,1/n)$ as detailed in Section \ref{sec:reg} (but adapted to accommodate $\kappa_k$): since $\kappa_k\sigma_k=k^{-6.5}$ becomes very small already for small $k$, among coefficients with small signal-to-noise ratio, very few get a significant value under the posterior
%are not effectively thresholded 
and hence there is very little variance in the posterior. Although asymptotically for $k\to\infty$ the scalings of the OT prior decay even faster, $\kappa_k\sigma_k=k^{-1}e^{-(\log k)^{3/2}}$ remains large (relatively to $1/\sqrt{n})$) for more frequencies, {hence more frequencies get a significant value under the posterior and the posterior on the function $f$ exhibits more variability.} 

In Section \ref{sec:adsim}
of the Supplement we additionally study $\rho$-posteriors in this setting for the two Student priors, with similar conclusions as for the classical posteriors. We also compare the behaviour of the HT$(\al)$ prior and a corresponding Gaussian process prior both with $\al=5$, as an illustration of the `tail--adaptation' property which takes place for the HT prior but, as expected, not for the Gaussian prior.

\begin{figure}
    \centering
     \includegraphics[width=0.97\textwidth]{./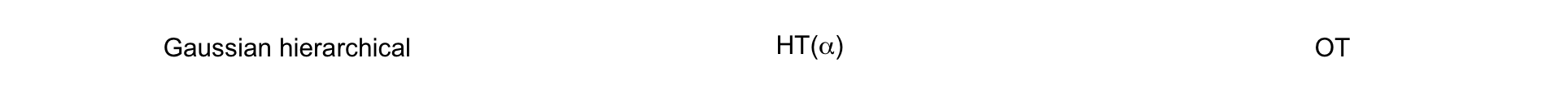}
    \includegraphics[width=0.97\textwidth]{./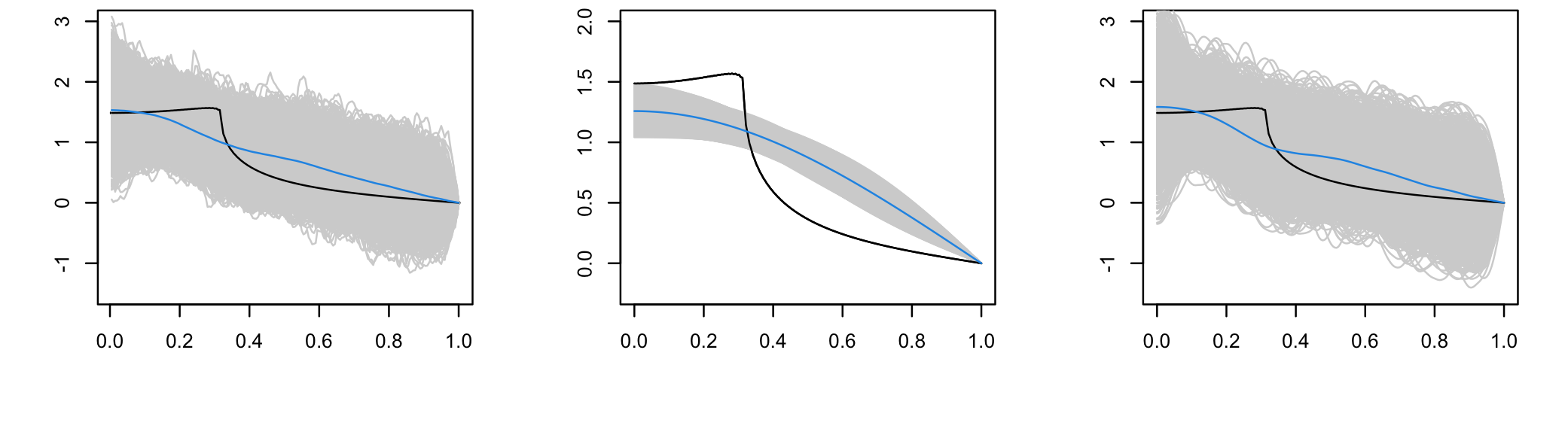}
    \includegraphics[width=0.97\textwidth]{./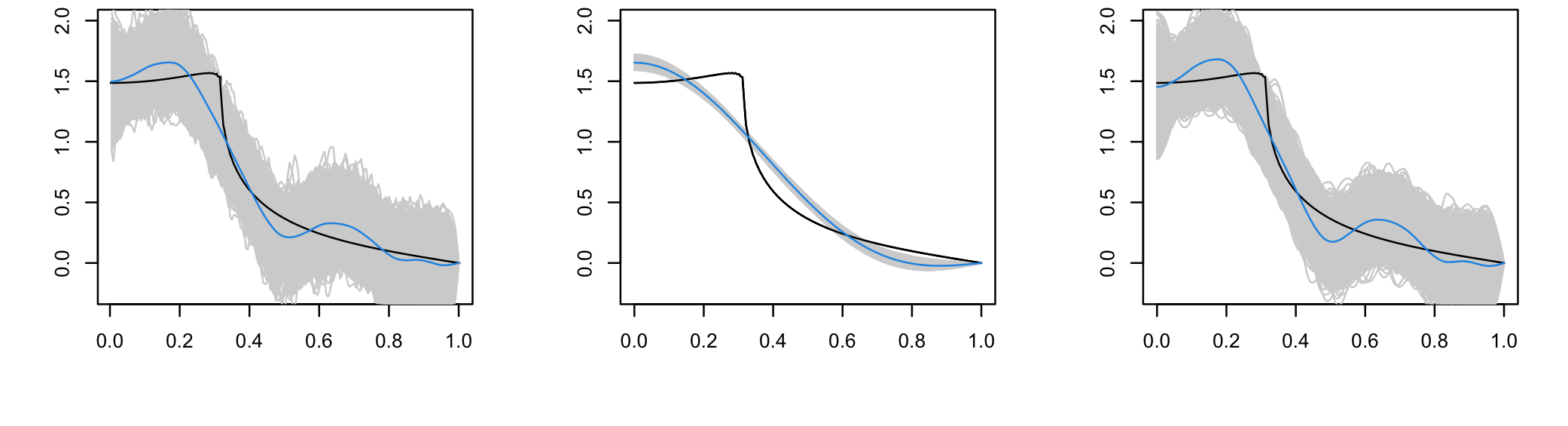}
    \includegraphics[width=0.97\textwidth]{./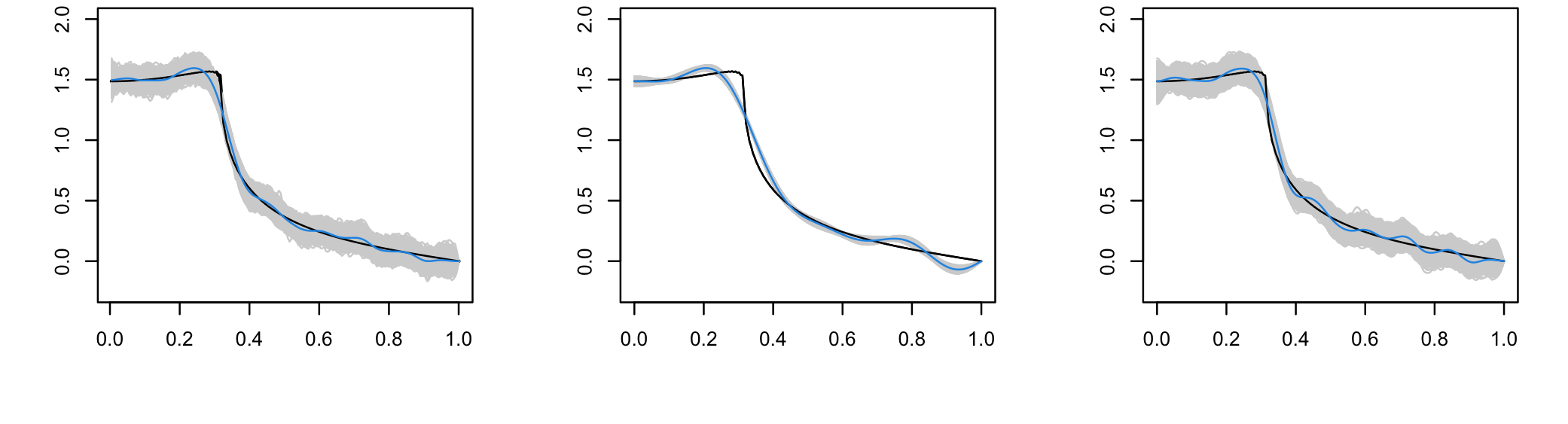}
    \includegraphics[width=0.97\textwidth]{./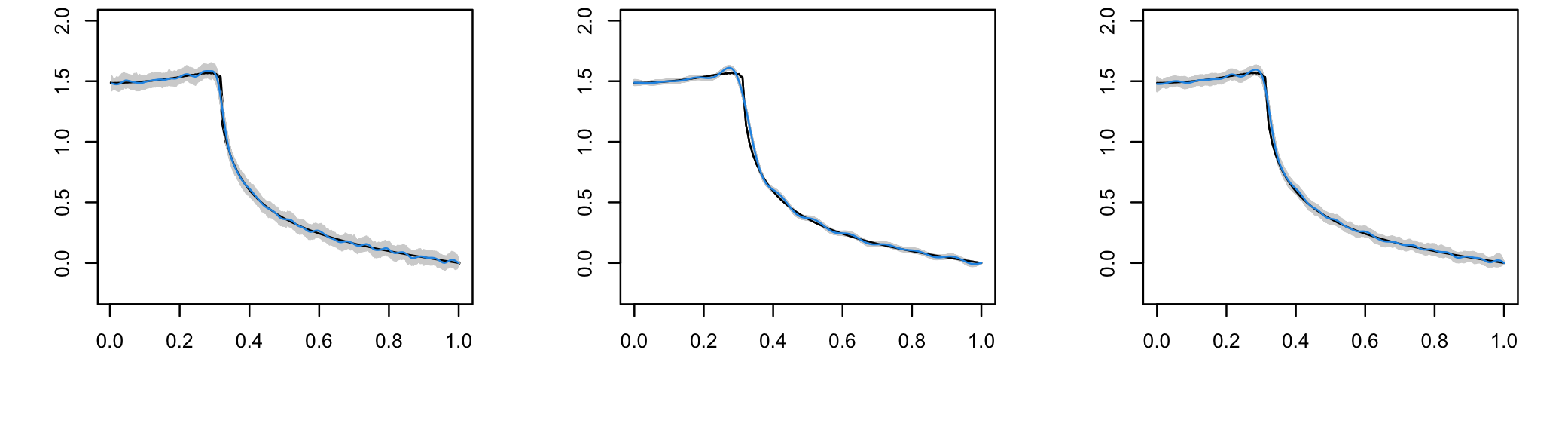}
    \includegraphics[width=0.97\textwidth]{./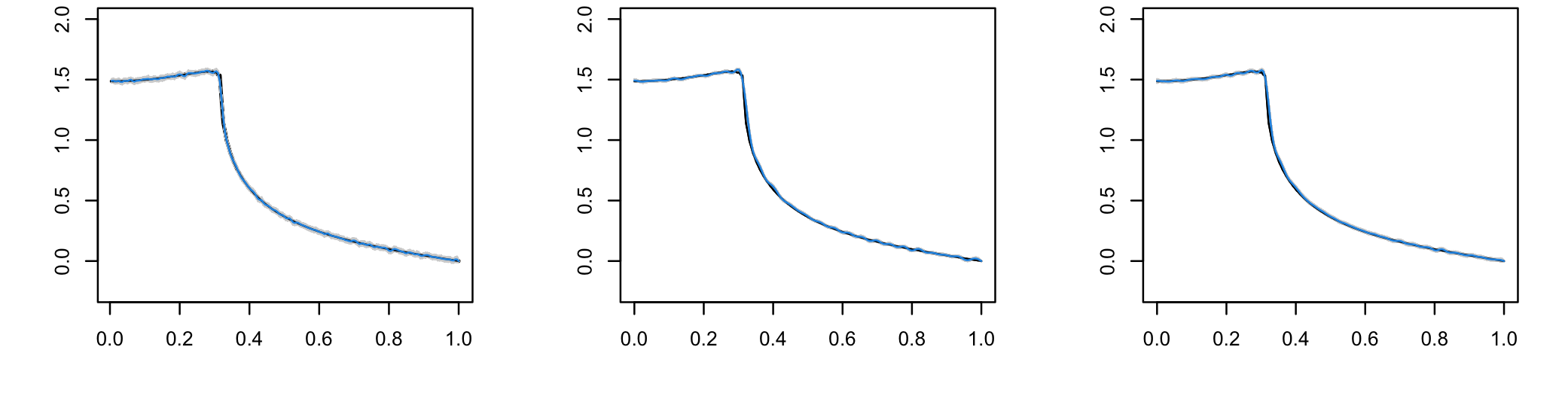}
    \caption{White noise model: true function (black), posterior mean (blue), 95\% credible regions (grey), for $n=10^3, 10^5, 10^7, 10^9, 10^{11}$ top to bottom and for the three considered priors left to right.}
    \label{fig-postSob}
\end{figure}

{\em Density Estimation.} We consider the density estimation setting of Subsection \ref{ssec:de}, for a true density $g_0$ defined via \eqref{priordens} with $f_0$ a 2-H\"older smooth function defined via its coefficients in a certain wavelet basis. We consider an $\alpha$-smooth Gaussian prior, and Cauchy HT$(\al)$ and OT priors, where $\alpha=5$.  In Figure \ref{fig-postSob-de} we present the corresponding posteriors. Even though we are not in a product space, hence we have to use a function space MCMC algorithm, Cauchy priors show excellent performance without the requirement of a Gibbs Sampler for sampling a smoothness hyper-parameter. For more details, additional experiments and a comparison to a standard frequentist estimator, see Section \ref{sec:adsim} 
in the Supplement. The code for all our experiments with heavy-tailed priors is available at \url{https://www.mas.ucy.ac.cy/sagapi01/assets/code/code-HT-BNP-adapt.zip}.

\begin{figure}
    \centering
    \includegraphics[width=0.92\textwidth]{./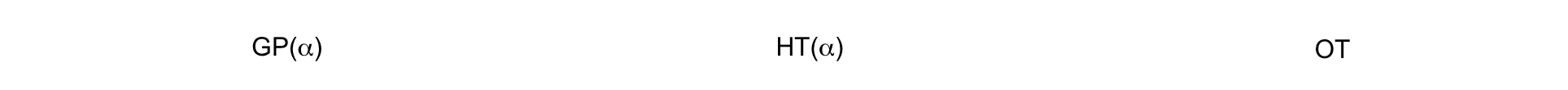} \quad

    \includegraphics[width=0.3\textwidth]{./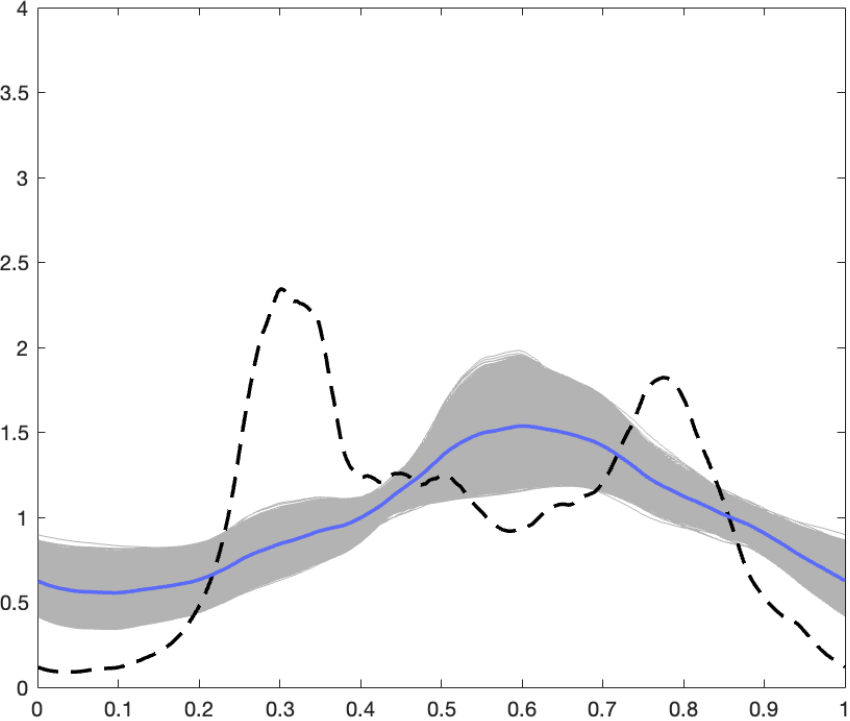}\;
        \includegraphics[width=0.3\textwidth]{./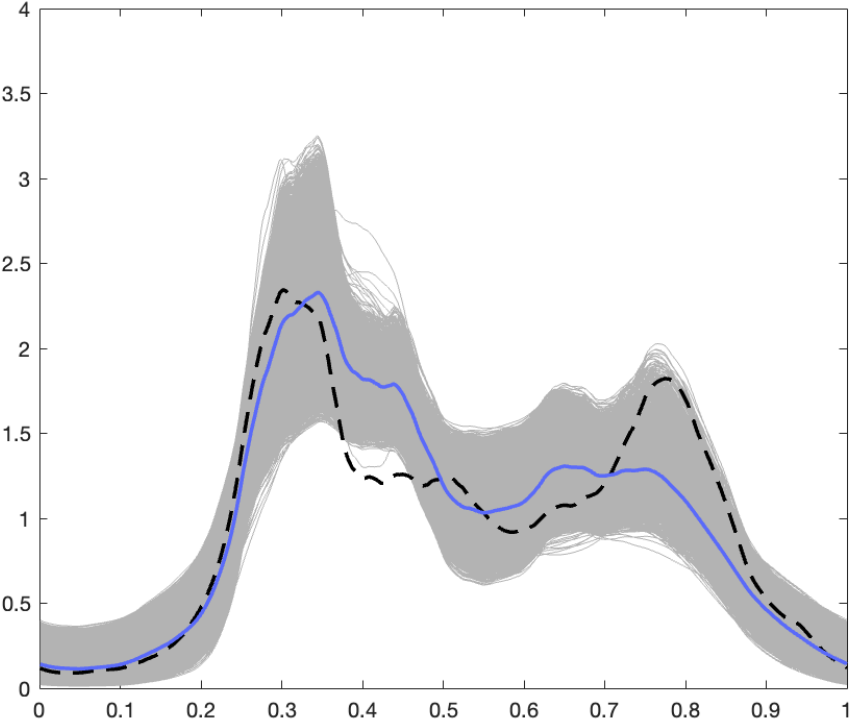}\;
    \includegraphics[width=0.3\textwidth]{./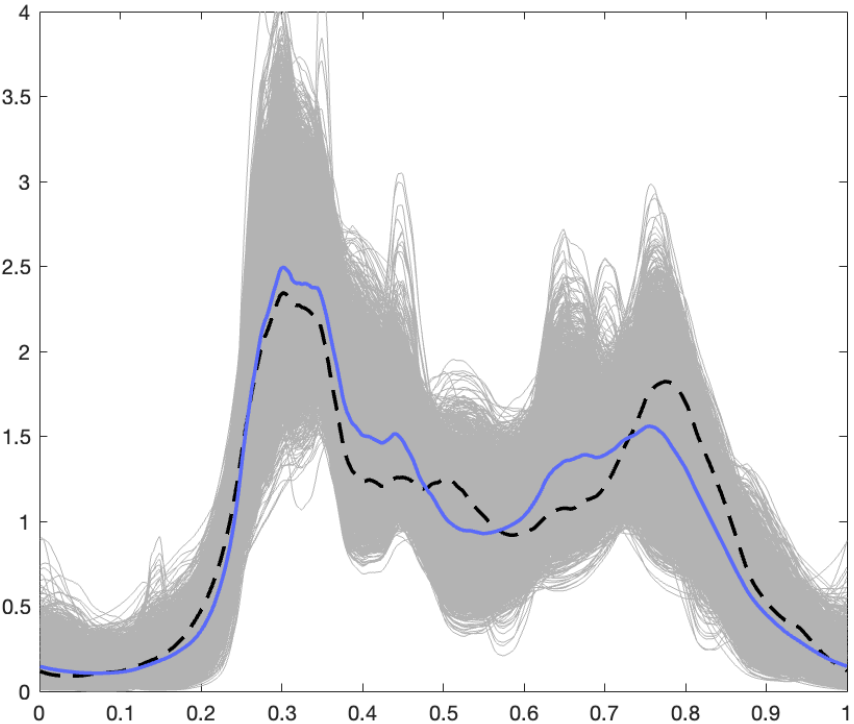}\vspace{0.4cm}

    \includegraphics[width=0.3\textwidth]{./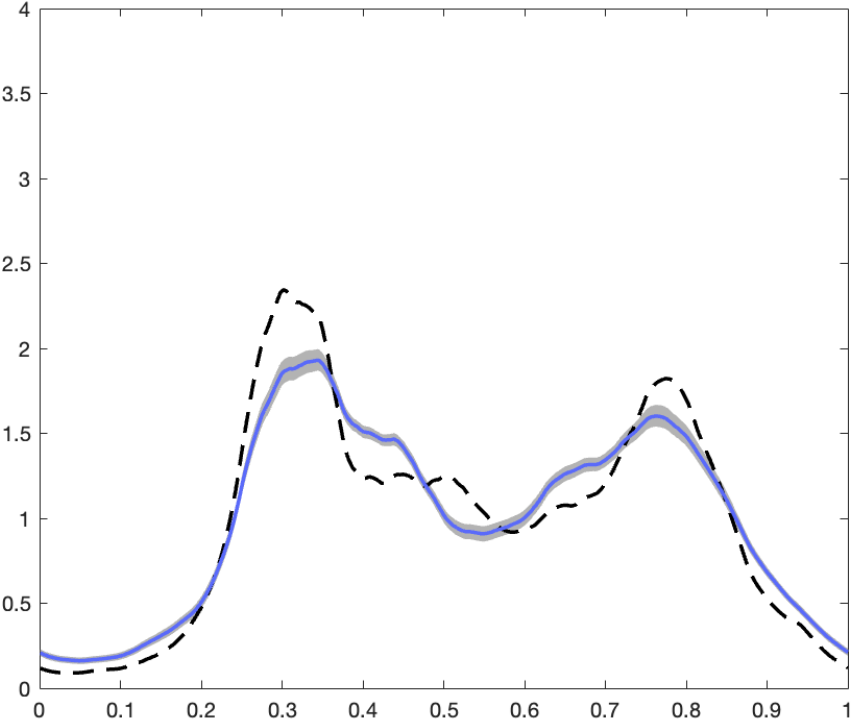}\;
        \includegraphics[width=0.3\textwidth]{./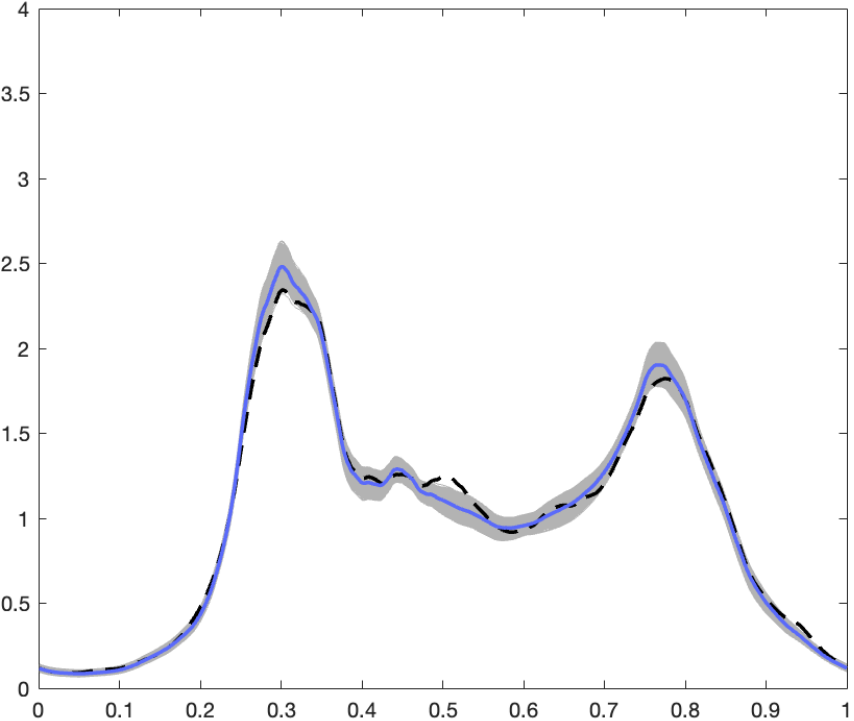}\;
    \includegraphics[width=0.3\textwidth]{./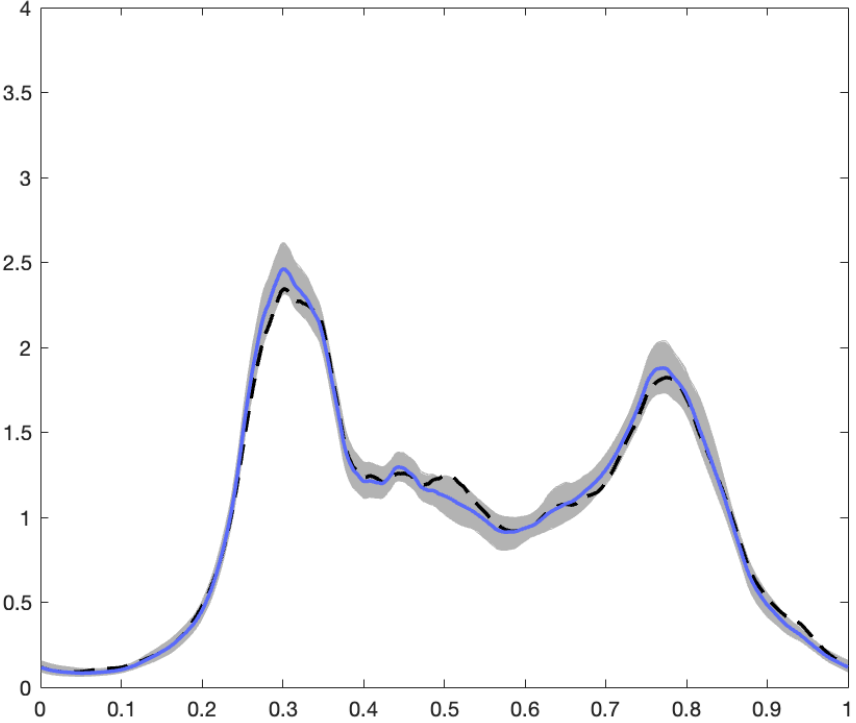}\vspace{0.4cm}
    
        \includegraphics[width=0.3\textwidth]{./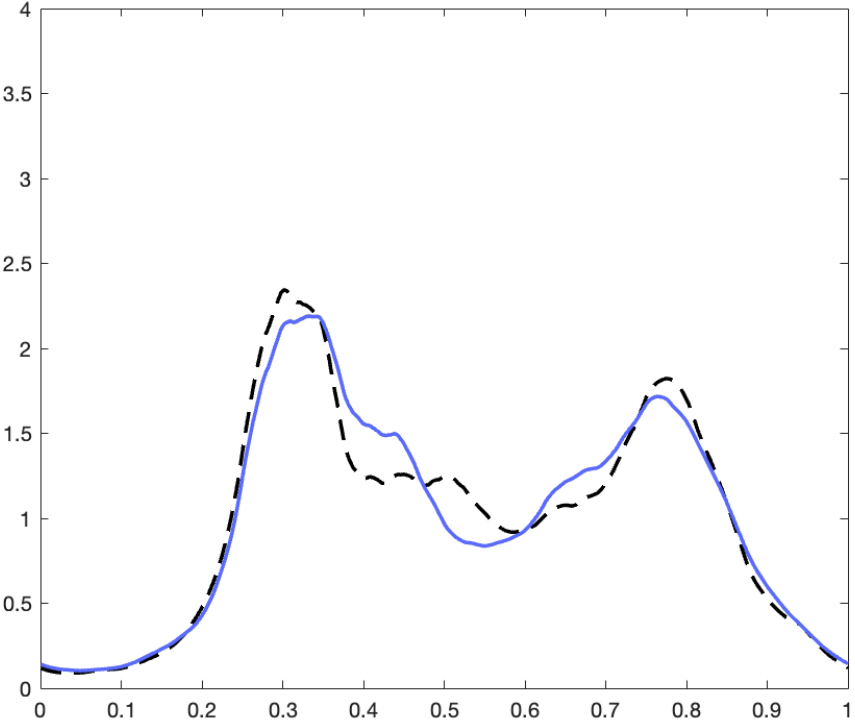}\;
        \includegraphics[width=0.3\textwidth]{./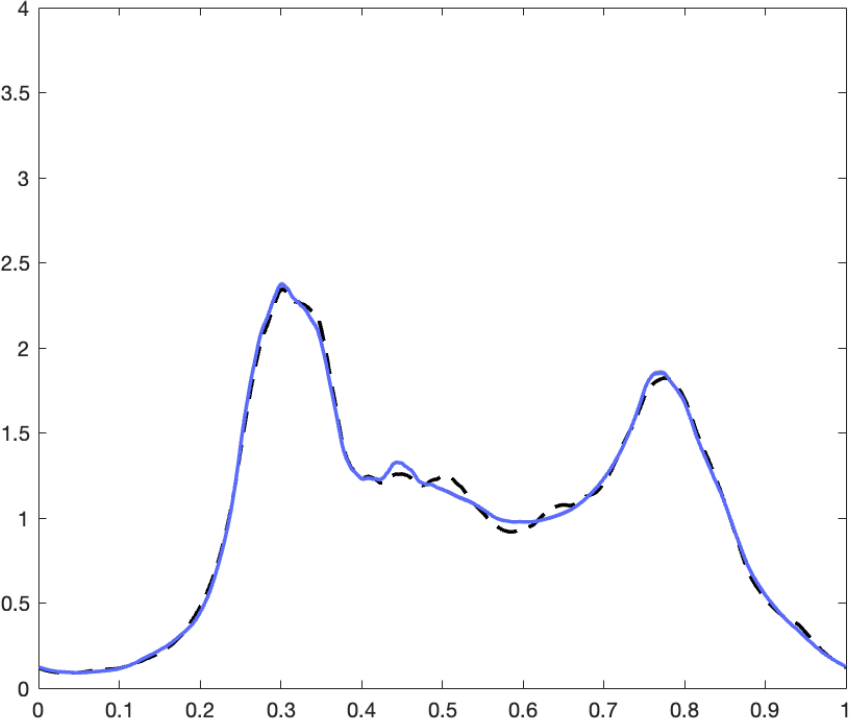}\;
    \includegraphics[width=0.3\textwidth]{./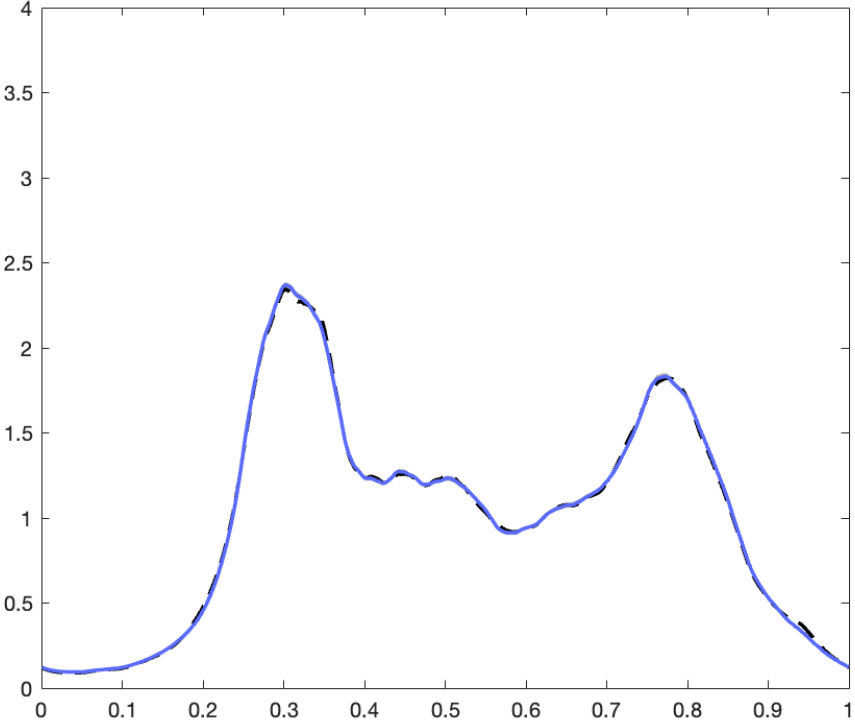}

    \caption{Density estimation: true density (black dashed), posterior mean (blue), 95\% credible regions (grey), for $n=10^2, 10^4, 10^6$  top to bottom and for the three considered priors left to right.}
    \label{fig-postSob-de}
\end{figure}

\section{Discussion} \label{sec:disc}

We have introduced a new prior, the {\em oversmoothed heavy-tailed} (OT) prior, which we show leads to Bayesian nonparametric adaptation to smoothness in a wide array of settings. One main appeal is that it is only defined from the basis coefficients one wishes to model, without extra need of hyper-parameters to derive adaptation.  

While some prior classes can be seen as Bayesian analogues of nonparametric adaptation methods (e.g. sieve priors$\leftrightarrow$model selection, spike--and--slab$\leftrightarrow$thresholding), the OT prior achieves adaptation through the prior distribution's (heavy) {\em tails}, so in a sense is distinctively Bayesian in spirit.

{\em Open directions.} While this work proposes a novel class of adaptive priors and closes a gap in the literature by showing that the phase transition from light (e.g. Gaussian) and moderate (e.g. exponential) tails in infinite series priors to heavy-tails comes with obtaining automatic adaptation, it opens a number of questions for future work. First, the fact that the prior performs a `soft' model selection -- by this we mean that many coefficients are close to zero but not exactly zero under the posterior -- is of interest for complex models, where performing a `hard' model selection (as is the case for spike and slab priors that set coefficients exactly to $0$) can be computationally intensive. We are currently working on adaptation on deep ReLU neural networks using such heavy-tailed priors; another interesting direction is that of sparse settings.  Among computationally less demanding alternatives to hard model selection, let us also mention the possibility to use a variational Bayes approach, see e.g. \cite{zg20, ar20, ypb20}, although then approximating a different object than the posterior or tempered posterior itself. Second, we conjecture that the results of Section \ref{sec:prmass} also hold for classical posteriors. This is in particular supported by numerical evidence from Section \ref{sec:sim}. Although beyond the scope of the present contribution, this suggests that one could possibly extend the general posterior rates theory \cite{ggv00, gvbook} beyond cases where exponential decrease of sieve set probabilities is available. 
Third, it would also be particularly interesting to study the computational complexity of heavy-tailed priors in the spirit of the recent works   \cite{nicklwang22, BMNW23}.

\section{Proof of the main results} \label{sec:proof}

Here we prove Theorems \ref{thm-seq} and \ref{thmpriorm}, and related technical lemmas. Proofs of the remaining Theorems can be found in the Supplement. 

\begin{proof}[Proof of Theorem \ref{thm-seq}]
Let $\veps_n=n^{-\be/(2\be+1)}$ and  $K_n$ be the (closest integer) solution to 
\begin{equation} \label{defkn}
K_n=n^{1/(2\be+1)}.
\end{equation}
For %$B_n$ a measurable set of functions  and 
 $\cA_n$ a suitable event to be defined below, using Markov's inequality,
\[ E_{f_0}\Pi[\|f-f_0\|_2>v_n \given X^{(n)}] \le P_{f_0}(\cA_n^c) +
 v_n^{-2}E_{f_0}\left[ \int \|f-f_0\|_2^2 d\Pi(f\given X^{(n)})\1_{\cA_n}\right],   \]
where we choose $v_n=\cL_n \veps_n$. For $f\in L^2$, let  $f^{[K_n]}$ denote its orthogonal projection onto the linear span of the first $K_n$ basis vectors and set $f^{[K_n^c]}:=f-f^{[K_n]}$. Then
%Let us split the norm squared as follows
\[ \|f-f_0\|_2^2 = \|f^{[K_n]}-f_0^{[K_n]}\|_2^2 + \|f^{[K_n^c]}-f_0^{[K_n^c]}\|_2^2 
\le \|f^{[K_n]}-f_0^{[K_n]}\|_2^2+2\|f^{[K_n^c]}\|_2^2+2\|f_0^{[K_n^c]}\|_2^2.\]
By definition of $\Sbl$, we have $\|f_0^{[K_n^c]}\|_2^2\leqa K_n^{-2\be}\leqa \veps_n^2$. Next 
\[\int  \|f^{[K_n]}-f_0^{[K_n]}\|_2^2 d\Pi(f\given X^{(n)})\le2\sum_{k\le K_n} \int (f_k-X_k)^2 d\Pi(f\given X^{(n)})+ 2\sum_{k\le K_n}(X_k-f_{0,k})^2.\] 
 Under $E_{f_0}$, the second sum on the right hand side is bounded by $K_n/n$. For the first term,
Lemma \ref{laptr} used on coordinate $k$ combined with Lemma \ref{lad} gives, for any $t>0$,
\[ nE_{f_0}\int (f_k-X_k)^2 d\Pi(f\given X^{(n)}) \leqa t^{-2}\left\{1+t^4+\log^{2+2\kappa}
\left(1+\frac{L+1/\sqrt{n}}{\sigma_k} \right)\right\}.\]
 This is optimised in $t$ by  taking $t^4$ of the order of the log term in the last display, leading to 
\begin{equation}\label{eq:logbnd}  nE_{f_0}\int (f_k-f_{0,k})^2 d\Pi(f\given X^{(n)})\leqa 
\log^{1+\kappa}
\left(1+\frac{L+1/\sqrt{n}}{\sigma_k} \right). \end{equation}
Using $\sigma_k^{-1}\le \sigma_{K_n}^{-1}$, the last term is at most of logarithmic order in $n$ for both choices of $\sigma$, so 
\[ E_{f_0} \int \|f^{[K_n]}-f_0^{[K_n]}\|_2^2 d\Pi(f\given X^{(n)}) \leqa (\log{n})^d \sum_{k=1}^{K_n} \frac{1}{n} \leqa (\log{n})^d K_n/n \leqa (\log{n})^d n^{-\frac{2\be}{2\be+1}}. \]
%\sbl{[Here when $\kappa=0$ then $d=1$ for polynomial $\sigma$ and $d=2$ for the super smooth one]}

Let us now turn to bounding the term $\int \|f^{[K_n^c]}\|_2^2 d\Pi(f\given X)$. 
We first claim that it is enough to focus on the set  of indices $k$ for which $|f_{0,k}|\le 1/\sqrt{n}$. Indeed, if $N_n$ is the cardinality of 
\begin{equation}\label{enn}
\cN_n:=\left\{k:\ |f_{0,k}|> 1/\sqrt{n} \right\},
\end{equation}  
we have $L^{2}\ge \sum_{k\in\cN_n} k^{2\be}f_{0,k}^2 \ge n^{-1}\sum_{k=1}^{N_n}k^{2\be}\asymp N_n^{2\be+1}/n$, so that $N_n\leqa K_n$. Further, if $k\in \cN_n$ then $k\le (L^{2}n)^{1/(2\be)}$. This means that for any index $k\in \cN_n$, one can use bound \eqref{eq:logbnd} (which holds for any $k$) similarly to the case $k\le K_n$, just using $\sigma_k^{-1}\le \sigma_{(L^{2}n)^{1/(2\be)}}^{-1}$ this time, giving a bound in \eqref{eq:logbnd} which is logarithmic in $n$. In other words, for some $d'>0$,
\[
 E_{f_0} \int \sum_{k\in \cN_n} (f_k-f_{0,k})^2 d\Pi(f\given X^{(n)}) \leqa (\log{n})^{d'}  \sum_{k\in \cN_n} \frac{1}{n} \leqa (\log{n})^{d'} \frac{K_n}{n} \leqa (\log{n})^{d'} n^{-\frac{2\be}{2\be+1}}. 
 \]
Further note that for both choices of $\sigma$'s \eqref{defsig} and \eqref{defsigr},  for any $k>K_n$, one has $\sigma_k \leqa 1/\sqrt{n}$: this results from the definitions, using $\alpha\ge \be$ for the choice \eqref{defsig}.
 
We now focus on indices $k\in%\{ k>K_n \} \cap \cN_n^c=
\{k>K_n :\ |f_{0,k}|\le 1/\sqrt{n} \}$ and  bound $E_{f_0} \int f_k^2 d\Pi(f\given X)$. Using Bayes' formula, it is enough to bound the following terms individually, where $\phi$ denotes the standard normal density,
\[ \int f_k^2 d\Pi(f\given X^{(n)}) = \frac{ \int \te^2 \phi(\sqrt{n}(X_k-\te))h(\te/\sigma_k)d\te }{ \int  \phi(\sqrt{n}(X_k-\te))h(\te/\sigma_k)d\te } =: \frac{N}{D}.  \]
To bound the numerator $N$, we use $|\phi|\le \|\phi\|_\infty$ and $\int \te^2 h(\te/\sigma_k)d\te = \sigma_k^3\int u^2h(u)du\leqa \sigma_k^3$, using \eqref{qmom} with $q=2$, % that is, that $h$ has a second moment,
  so that $N\leqa \sigma_k^3$ regardless of $X_k$. 

The denominator is bounded as follows. By symmetry of both $\phi$ and $h$,  it is enough to focus on the case $X_k\ge 0$ (denoting $D=D(X_k)$, we have $D=D(-X_k)$). 

We first deal with the  case of super-light variances \eqref{defsigr}. Let $(x_k)$ be a deterministic nonnegative sequence to be chosen. By restricting the denominator to the set $[X_k-x_k,X_k+x_k]$, %one gets
 \[ D\ge \phi(\sqrt{n}x_k) \int_{X_k-x_k}^{X_k+x_k} h(\te/\sigma_k)d\te. \]
 Assuming $X_k\le x_k$, the integral in the last display can be further bounded from below by $\int_0^{x_k} h(\te/\sigma_k)d\te=\sigma_k \int_0^{x_k/\sigma_k} h(u)du$, recalling $X_k\ge 0$. Further assuming that $\sigma_k\leqa x_k$, the latter integral is further bounded below,  for some $c>0$, by $\int_0^c h(u)du\geqa 1$. Putting everything together and using symmetry, one gets, if $\sigma_k\leqa x_k$,
 \[  \frac{N}{D}\1_{|X_k| \le x_k} \le \sigma_k^2 \frac{\1_{|X_k|\le x_k}}{\phi(\sqrt{n}x_k)}. \]
 Define the events, for $j\ge 0$, $k\ge 1$ and with $a\vee b=\max(a,b)$,
 \begin{align}
 \cA_{k,j} & := \left\{ \, |X_k| \le \sqrt{\frac{4\log\{n(j^2\vee 1)\}}{n}}\, \right\},  \label{defevkj}\\
 \cA_n(\cN_n) & := \bigcap_{K_n<k\le n\,,\, k\in\cN_n^c} \,\cA_{k,0}\
  \cap\ \bigcap_{j\ge 1}\ \bigcap_{jn < k \le (j+1)n\,,\, k\in\cN_n^c} \,\cA_{k,j}.\label{defev}
 \end{align}
Let us  set $x_k=\left(4n^{-1}\log\{n(j^2\vee 1)\}\right)^{1/2}$ whenever $jn<k\le (j+1)n$. The  constraint $\sigma_k\leqa x_k$ is trivially satisfied when $k>K_n$ for this choice of $x_k$ and large enough $n$ since then $\sigma_k\leqa 1/\sqrt{n}$  as noted above. 
%(\ma{maybe put $K_n\vee(jn)<k\le (j+1)n$ in previous sentence, so that we restrict to $k>K_n$ and the assertion of this sentence is more rigorous?}).  
 Then, with $\cA_n=\cA_n(\cN_n)$, using that $(\sigma_k)$ is decreasing,
\begin{align*} 
E_{f_0} & \sum_{k>K_n\,,\, k\in \cN_n^c} \int f_k^2 d\Pi(f\given X^{(n)})\1_{\cA_n}\\
& \le \sum_{K_n<k\le n\,,\,k\in\cN_n^c} \frac{\sigma_k^2}{\phi(\sqrt{n}x_0)}+ \sum_{j\ge 1} \sum_{jn < k \le (j+1)n\,,\, k\in\cN_n^c} \frac{\sigma_{k}^2}{\phi(\sqrt{n}x_k)}\\
& \le n\sigma_{K_n}^2/\phi(\sqrt{n}x_0)+\sum_{j\ge 1} \sum_{jn < k\le (j+1)n\,,\, k\in\cN_n^c} \sigma_{jn}^2/\phi(\sqrt{n}x_{(j+1)n}) \\
& \leqa n^3\sigma_{K_n}^2 + \sum_{j\ge 1}n\sigma_{jn}^2 (n j^2)^2 
 \leqa n^3
\Big(e^{-2\{\log K_n\}^2}+ \sum_{j\ge 1} e^{-2\{\log(nj)\}^2}j^4\Big).
\end{align*}
The latter bound is $o(n^{-M})$ for arbitrary $M>0$, which, combined with Lemma \ref{lemev}, concludes the proof for $\sigma_k$ as in \eqref{defsigr}. 

Let us now turn to the case of variances as in \eqref{defsig}. In this case, one slightly updates the definition of $\cN_n$ by choosing 
\begin{equation} \label{emn}
\cM_n=\{k:\, |f_{0,k}|> \delta_n/\sqrt{n}\},
\end{equation}
 with $\delta_n:=1/\sqrt{\log{n}}$. In slight abuse of notation we still denote $\cA_n=\cA_n(\cM_n)$ below. Note that as above one can first deal with indices $k\in \cM_n$ using the same bounds as before; reasoning as above, there are at most $m_n\le (n\log{n})^{1/(2\be+1)}\leqa K_n (\log{n})^{1/(2\be+1)}$ such indices, so their overall contribution to the quadratic risk is within a logarithmic factor of $K_n/n$.

Thus it is enough to focus on the indices $k>K_n$ and $k\notin\cM_n$. If $\sqrt{n}|X_k|\le 1$, then the above bounds for $N, D$ can be used with $1/\sqrt{n}$ in place of $x_k$, leading to, for $k>K_n$, 
\[  \frac{N}{D}\1_{\sqrt{n}|X_k| \le 1} \leqa \sigma_k^2 \phi(1)^{-1}. \]
One splits, recalling the definition of $x_k=\left(4n^{-1}\log\{n(j^2\vee 1)\}\right)^{1/2}$,
\begin{align*}
\frac{N}{D}\1_{\sqrt{n}|X_k| > 1}\1_{|X_k| \le x_k} & = \sum_{p\ge 1} \frac{N}{D}\1_{\sqrt{p}< \sqrt{n}|X_k| \le \sqrt{p+1}}  \1_{|X_k| \le x_k} \\
& \le %=\ma{ \leq ???} %\left\{ 
\sum_{p=1}^{4\log(n(j^2\vee 1))} \frac{N}{D}\1_{\sqrt{p}< \sqrt{n}|X_k| \le \sqrt{p+1}}.  %\right\} \1_{|X_k| \le x_k}.
\end{align*}
We bound from below $D$, on the event $\{\sqrt{p}< \sqrt{n}|X_k| \le \sqrt{p+1} \}$, as follows, for $X_k\ge 0$,
\begin{align*}
D\ge \phi(\rn X_k) \int_0^{X_k} h(\te/\sigma_k)d\te\ge \sigma_k \phi(\sqrt{p+1}) \int_0^{X_k/\sigma_k} h(u)du\geqa  \sigma_k  \phi(\sqrt{p+1}),
\end{align*}
by restricting the denominator to $[0,X_k]$ and 
using $X_k/\sigma_k\ge 1/(\sqrt{n}\sigma_k)\geqa 1$ for $k>K_n$. By symmetry, the same bound also holds in case $X_k\le 0$. So, for given $k$,
\[  \sum_{p=1}^{4\log(n(j^2\vee 1))} \frac{N}{D}\1_{\sqrt{p}< \sqrt{n}|X_k| \le \sqrt{p+1}}  
\leqa \sigma_k^2 \sum_{p=1}^{4\log(n(j^2\vee 1))} \frac{\1_{\sqrt{p}< \sqrt{n}|X_k| \le \sqrt{p+1}}}{\phi(\sqrt{p+1})},  \]
where we use the universal bound $N\leqa \sigma_k^3$ obtained above. Then,
\begin{align*} 
E_{f_0} & \sum_{k>K_n\,,\, k\in \cM_n^c} \int f_k^2 d\Pi(f\given X)\1_{\cA_n}
\leqa E_{f_0}\sum_{K_n<k\le n\,,\,k\in\cM_n^c} \sigma_k^2
\left\{1+
\sum_{p=1}^{4\log{n}} \frac{\1_{\sqrt{p}< \sqrt{n}|X_k| \le \sqrt{p+1}}}{\phi(\sqrt{p+1})} 
\right\} \\
& + E_{f_0} \sum_{j\ge 1} \sum_{jn < k\le(j+1)n\,,\, k\in\cM_n^c} 
\sigma_k^2\left\{1+ \sum_{p=1}^{4\log(nj^2)} \frac{\1_{\sqrt{p}< \sqrt{n}|X_k| \le \sqrt{p+1}}}{\phi(\sqrt{p+1})}\right\}.
\end{align*}
 We have, for $k\in \cM_n^c$, denoting $\bar\Phi(u)=\int_u^{+\infty} \phi(t)dt$,
\begin{align*}
 E_{f_0} \1_{\sqrt{p}< \sqrt{n}|X_k| \le \sqrt{p+1}}
& =P\left(\left|\cN(0,1)+f_{0,k}\rn \right|\in[\sqrt{p},\sqrt{p+1}]\right) \\
& \le 2\left\{\bar{\Phi}\left(\sqrt{p}-\delta_n\right)-\bar{\Phi}\left(\sqrt{p+1}+\delta_n\right)\right\} \le 2\phi(\sqrt{p}-\delta_n)/(\sqrt{p}-\delta_n),
%= \frac{\bar{\Phi}(\sqrt{p})-\bar{\Phi}(\sqrt{p+1})}{\phi(\sqrt{p+1})}.
\end{align*}
by removing the negative term and using the standard bound $\bar{\Phi}(x)\le \phi(x)/x$ for $x>0$. Now
\[ \frac{1}{\sqrt{p}-\delta_n}\frac{\phi(\sqrt{p}-\delta_n)}{\phi(\sqrt{p+1})}
= \frac{e^{1/2}}{\sqrt{p}-\delta_n}e^{-\delta_n^2/2+\sqrt{p}\delta_n}.
 \]
First dealing with the term $k\le n$, one deduces, recalling $\delta_n=(\log{n})^{-1/2}$,
\begin{align*}
\sum_{K_n<k\le n\,,\,k\in\cM_n^c} & \sigma_k^2
\sum_{p=1}^{4\log{n}} \frac{P_{f_0}(\sqrt{p}< \sqrt{n}|X_k| \le \sqrt{p+1})}{\phi(\sqrt{p+1})}   \leqa 
\sum_{K_n<k\le n\,,\,k\in\cM_n^c} \sigma_k^2
\sum_{p=1}^{4\log{n}} \frac{1}{\sqrt{p}}e^{\sqrt{p}\delta_n}\\
& \leqa \sum_{K_n<k\le n\,,\,k\in\cM_n^c} \sigma_k^2 \sum_{p=1}^{4\log{n}}\frac{1}{\sqrt{p}}\leqa \sum _{K_n<k\le n\,,\,k\in\cM_n^c} \sigma_k^2 \sqrt{\log{n}},
 \end{align*}
where one uses the previous bounds. Similarly,
\begin{align*}
  \sum_{j\ge 1} & \sum_{jn < k\le(j+1)n\,,\, k\in\cM_n^c}  \sigma_k^2\sum_{p=1}^{4\log(nj^2)} \frac{P_{f_0}(\sqrt{p}< \sqrt{n}|X_k| \le \sqrt{p+1})}{\phi(\sqrt{p+1})}\\
  & \le \sum_{j\ge 1} \sum_{jn < k\le (j+1)n} \sigma_k^2 \sum_{p=1}^{4\log(nj^2)}
\frac{1}{\sqrt{p}} e^{\sqrt{p}\delta_n}
 \leqa  \sum_{j\ge 1} \sum_{jn < k\le (j+1)n} \sigma_k^2 \sqrt{\log(nj^2)}e^{2\sqrt{\log(nj^2)}\delta_n}.
\end{align*}
Using that $nj^2\le (nj)^2\le k^2$ for $k > (nj)$ one gets that the last display is bounded up to a constant multiplicative factor by
\[ \sum_{j\ge 1} \sum_{jn < k \le (j+1)n}\sigma_k^2 \sqrt{\log{k}}e^{4\sqrt{\log{k}}\delta_n}=\sum_{k > n}\sigma_k^2 \sqrt{\log{k}}e^{4\sqrt{\log{k}}\delta_n}. \] 
Since $\sqrt{\log{k}}e^{4\sqrt{\log{k}}\delta_n}\le e^{\eta \log{k}}$ for any $k\ge n$ for $\eta>0$ fixed as small as desired for large enough $n$, the last display is bounded by $\sum_{k\ge n} \sigma_k^2 k^{\eta}\leqa n^{-2\alpha+\eta}=o(n^{-2\alpha/(2\alpha+1)})$ for small enough $\eta$. Putting the previous bounds together one gets
\[ E_{f_0}  \sum_{k>K_n\,,\, k\in \cM_n^c} \int f_k^2 d\Pi(f\given X^{(n)})\1_{\cA_n}
\leqa \sum_{k=K_n}^n \sigma_k^2(1+\sqrt{\log{n}}) + o(n^{-\frac{2\alpha}{2\alpha+1}}) 
\leqa \frac{\sqrt{\log{n}}}{n^{2\alpha/(2\alpha+1)}},\]
using  \eqref{defsig}. This bound is $O((\sqrt{\log{n}})n^{-2\be/(2\be+1)})$ if $\alpha\ge \be$, which, combined with Lemma \ref{lemev}, concludes the proof for $\sigma_k$ as in \eqref{defsig}.
\end{proof}

\begin{proof}[Proof of Theorem \ref{thmpriorm}]
Let $K\ge 2$ be an integer, and for a function $f$ in $L^2$, let as before $f^{[K]}$ denote its projection onto the linear span of $\vphi_1,\ldots,\vphi_K$ and $f^{[K^c]}=f-f^{[K]}$. Then 
\begin{align*}
\Pi&[\|f-f_0\|_2 < \veps] \ge 
\Pi\left[ \|f^{[K]}-f_0^{[K]}\|_2 < \veps/2 \,,\, \|f^{[K^c]}-f_0^{[K^c]}\|_2 < \veps/2\right] \\
& \ge \Pi\left[\forall\, k\le K,\ \  |f_k - f_{0,k} | \le \frac{\veps}{2\sqrt{K}}\ ;\ \forall\, k>K,\ \  |f_k| \le \frac{\veps}{D\sqrt{k}\log{k}} \right] \1_{\|f_0^{[K^c]}\|_2<\veps/4}\\
& = \prod_{k=1}^{K} \Pi\left[ |f_k - f_{0,k} | \le \frac{\veps}{2\sqrt{K}} \right] \Pi\left[\forall\, k>K,\ \  |f_k| \le \frac{\veps}{D\sqrt{k}\log{k}} \right] \1_{\|f_0^{[K^c]}\|_2<\veps/4},
\end{align*}
where $D$ is a large enough constant, and where we have used independence of the coefficients under the prior and the fact that $k^{-1/2}/\log(k)$ is a square-summable sequence. 

Suppose the indicator in the last display equals one, which imposes $\|f_0^{[K^c]}\|_2<\veps/4$, for which a sufficient condition is,  if $f_0$ is in $\Sbl$,
\begin{equation}
K^{-2\be} L^2 <\veps^2/16.
\end{equation}

Let us now bound each individual term $p_k:=\Pi[ |f_k-f_{0,k}|\le \veps/(2\sqrt{K})]$. By symmetry, for any $k\le K$, one can assume $f_{0,k}\ge 0$ and
\begin{align*}
 p_k  \ge \int_{f_{0,k}}^{f_{0,k}+\veps/(2\sqrt{K})} \sigma_k^{-1}h(x/\sigma_k) dx  \ge \frac{\veps}{2\sqrt{K}} h(C/\sigma_K), 
  %\ge \frac{\veps}{2\sigma_K} h(C/\sigma_K)
       \end{align*} 
where we have used that $(\sigma_k)$ is decreasing as well as $x\to h(x)$ on $[0,\infty)$ by assumption, and that $f_{0,k}+\veps/(2\sqrt{K})\le C$ since $|f_{0,k}|$ are bounded by $L$ for $f_0\in \Sbl$.  
For either choice of $\sigma_k$ it holds $\log(2\sqrt{K})\le \log(1+C/\sigma_K)$ for large $K$, hence combining with \eqref{condti} we get 
\begin{align*}
 p_k  \ge \veps e^{-C_1\log^{1+\kappa}(1+C/\sigma_K)}.
\end{align*}

One deduces, for a new value of the constant $C_1$,
\[ \cP_1:= \prod_{k=1}^{K} p_k \ge \veps^K \exp\left\{ -C_1K\log^{1+\kappa}(C/\sigma_K) \right\}. \]
Let us deal first with the case of $\sigma_k$ given by \eqref{defsig}.  We now also need to bound
\begin{align*}
\cP_2:=\Pi\left[\forall\, k>K,\ \  |f_k| \le \frac{\veps}{D\sqrt{k}\log{k}} \right]
&=\prod_{k>K} (1-2 \overline{H}(\veps/\{D\sigma_k\sqrt{k}\log{k}\}))\\
& = \prod_{k>K} (1-2 \overline{H}(\veps k^{\alpha}/\{D\log{k}\})).
\end{align*}  
Note that if  
\begin{equation}  \label{defeps}
\veps \ge D'K^{-\be}\log{K},
\end{equation} for some universal constant $D'>0$ to be chosen. 
Then, for $k>K$,
\[  \veps k^\alpha/\{D\log{k}\}  \ge k^\alpha K^{-\be}\frac{\log{K}}{\log{k}}\frac{D'}{D}. \]
Hence, as long as $\alpha\ge \be$,  $D'$ can be chosen sufficiently large so that the last term is at least $1$. Then, by using \eqref{condts},
\[ \overline{H}(\veps k^\alpha/\{D\log{k}\})
\le C_3 K^{2\be}k^{-2\alpha}(\log{k}/\log{K})^2.
 \]
Then, if $\alpha> 1/2$ so that the series $\sum k^{-2\alpha}$ is converging, and possibly enlarging $D'$ in \eqref{defeps} further in order to have that the right hand side of the last display is less than 1/4, using the inequality $\log(1-2x)\geq -4x$ for all $x\in[0,1/4)$, we have
\begin{align*}
 \cP_2 & \ge \exp\{ \sum_{k>K}\log(1-2C_3 K^{2\be}k^{-2\alpha}(\log{k}/\log{K})^2) \} \\
& \ge \exp\{ -4C_3 K^{2\be} \sum_{k>K} k^{-2\alpha} (\log{k}/\log{K})^2) \ge \exp(-C_4 K \cdot K^{2(\be-\alpha)}),
\end{align*}
where we have used, whenever $\alpha>1/2$, that $\sum_{k>K} k^{-2\alpha} (\log{k})^2=O( K^{-2\alpha+1}\log^2{K})$ as $K\to\infty$. The bound of the last display is at least $\exp(-C_4K)$ assuming $\alpha\ge \be$. 

Putting everything together one gets
\begin{align*}
\Pi[ \|f-f_0\|_2< \veps ] &  \ge 
\veps^K \exp\left\{ -C_1K\log^{1+\kappa}(C/\sigma_K) -C_4K\right\} \\
& \ge \exp \left\{ -K\log(K^\be/(D'\log{K}))-C_1K\log^{1+\kappa}(C/\sigma_K) -C_4K\right\} \\
& \ge \exp \left\{ -C_5 K \log^{1+\kappa}{K}\right\}.
\end{align*} 
{Recall the definition of $\veps_n$ as in \eqref{rateveps-1} and the constants $d_1,d_2$ from the statement of the Theorem. 
Set $K=K_n=(d_1/D')^{-1/\be} n^{1/(1+2\be)} \log^q{n}$, with $q=(1-\kappa)/(1+{2}\be)$ and $d_1$ to be chosen below.   
 Then  \eqref{defeps} holds for $\veps=d_1\veps_n$ and large enough $n$. Also,
\[C_5K\log^{1+\ka}K\le C d_1^{-1/\be} n\veps_n^2, \] where $C$ is a constant independent of $d_1, d_2$. For given $d_2>0$, the right hand side of the last display is bounded from above by $d_2n\veps_n^2$, provided $d_1$ is chosen sufficiently large, which concludes the proof for $\sigma_k$ as in \eqref{defsig}.}

Let us now turn to the case of $\sigma_k$ given by \eqref{fastsig}. The term $\cP_1$ above is bounded below by
\[ 
\prod_{k=1}^{K} p_k \ge \veps^K \exp\left\{ -C_1K\log^{1+\kappa}(C/\sigma_K) \right\} 
\ge \veps^K\exp\{ -C_1K\log^{(1+\kappa)(1+\delta)}{K} \}.
\]
On the other hand, we also have %\ora{the power $2$ below should be $1+\delta$}
\begin{align*}
\cP_2 &=\prod_{k>K} (1-2 \overline{H}(\veps/\{D\sigma_k\sqrt{k}\log{k}\}))\\
& = \prod_{k>K} (1-2 \overline{H}(\veps e^{a\log^{1+\delta}{k}}/\{D\sqrt{k}\log{k}\})).
\end{align*} 
Let $\veps\ge DK^{-\be}$, then for large enough $K$, 
$\veps e^{a\log^{1+\delta}{k}}/\{D\sqrt{k}\log{k}\}\ge 1$ and by \eqref{condts}
\[\overline{H}(\veps e^{a\log^{1+\delta}{k}}/\{D\sqrt{k}\log{k}\})
\le c_2(\veps e^{a\log^{1+\delta}{k}}/\{D\sqrt{k}\log{k}\})^{-2}
\le e^{-a\log^{1+\delta}{k}}
 \] 
so that $\cP_2\ge \exp\{ -C\sum_{k>K} e^{-a \log^{1+\delta}{k}} \}\ge \exp\{-C' e^{-a\log^{1+\delta}{K}}\}$. The latter is bounded from below by a constant, so the final bound obtained for the probability at stake is $\exp\{-C'K\log^{(1+\kappa)(1+\delta)}{K}\}$ (for a new value of the constant $C'$). 
{Let us set $\veps=d_1\veps_n$ and $K:=(d_1\veps_n/D)^{-1/\be}$, for $\veps_n$ as in \eqref{rateveps-2}. Then 
\[ C'K\log^{(1+\kappa)(1+\delta)}{K}\le C d_1^{-1/\be} n\veps_n^2, \] where $C$ is  independent of $d_1, d_2$. The last display is less than $d_2n\veps_n^2$ for large $d_1$, which concludes the proof. 
} 
\end{proof}  

\begin{remark}[Cauchy tails] \label{rem:cauchy}
We note that the case of $H$ equal to the Cauchy distribution, corresponding to $\overline{H}(x)\leq c_2/x$ for $x\ge1$, can be accommodated up to a slight variation on the condition for the prior HT$(\al)$. Suppose in this case that $\alpha>1$ (recall that we have the choice of $\al$, and that in view of the Theorem, the larger $\al$ is, the larger the range for which adaptation occurs, so we can always choose $\al>1$ beforehand). Indeed, in this case
for $\alpha> 1$ one gets, for $\sigma_k$ as in \eqref{defsig},
\begin{align*}
\cP_2 & \ge \exp\{ \sum_{k>K}\log(1-2C_3(K^{\be}/k^{\alpha})(\log{k}/\log{K})) \} \\
& \ge  \exp\{ -C_4 K^{1+\be-\alpha} \} = \exp\{ -C_4 K K^{\be-\alpha} \}, 
\end{align*}
and from there on the proof is identical to that of Theorem \ref{thmpriorm}. A similar remark applies to the case of $\sigma_k$ as in \eqref{defsigr}, this time with no extra condition (the latter choice is free of $\al$).
\end{remark}

{\em Technical Lemmas.} 
\begin{lemma} \label{laptr}
Consider the model $\te\sim\pi$ and $X\given \te\sim \cN(\te,1/n)$. Suppose $\pi$ is the law of $\sigma\cdot\zeta$, for $\sigma>0$ and $\zeta$ a real random variable with density $h$ satisfying \eqref{conds}--\eqref{condti}.
Then for some $C, C_1>0$, it holds, for all $t\in\RR, \theta_0\in\RR, \sigma>0$,
\[ \log E_{\te_0}E^\pi\big[e^{t\sqrt{n}(\te-X)}\,\big|\,X\big] 
\le  t^2/2  + C\log^{1+\kappa}\left(1+\frac{|\te_0|+1/\sqrt{n}}{\sigma}\right)+C_1.\] 
\end{lemma}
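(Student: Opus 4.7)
The plan is to first compute the inner posterior expectation via Bayes' formula, factor out the Gaussian MGF term $e^{t^2/2}$, bound the remaining ratio uniformly in $t$ via the tail condition \eqref{condti}, and finally integrate against $P_{\te_0}$. Writing the posterior of $\te$ given $X$ with prior density $\sigma^{-1}h(\cdot/\sigma)$ and changing variables $u=\sqrt{n}(\te-X)$, one obtains
\[
E^\pi\bigl[e^{t\sqrt{n}(\te-X)}\bigm|X\bigr]
= \frac{\int e^{tu}\phi(u)\,h\bigl((X+u/\sqrt{n})/\sigma\bigr)\,du}
       {\int \phi(u)\,h\bigl((X+u/\sqrt{n})/\sigma\bigr)\,du}.
\]
The identity $e^{tu}\phi(u)=e^{t^2/2}\phi(u-t)$ immediately produces the factor $e^{t^2/2}$, and it then remains to bound the ratio $R(X,t):=N(X,t)/D(X)$ where $D(X)$ is the denominator above and $N(X,t)$ is the same with $\phi(u-t)$ in place of $\phi(u)$.

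For the numerator, boundedness of $h$ from \eqref{conds} gives at once $N(X,t)\le\|h\|_\infty$, an estimate independent of $t$. For the denominator, restricting the integral to $|u|\le 1$ and exploiting the symmetry and monotonicity of $h$ on $[0,\infty)$ from \eqref{conds} yields $D(X)\ge 2\phi(1)\,h((|X|+1/\sqrt{n})/\sigma)$. Plugging in the lower bound $h(y)\ge \exp(-c_1(1+\log^{1+\kappa}(1+|y|)))$ coming from \eqref{condti} then gives
\[
R(X,t)\le C\exp\!\Bigl(c_1\log^{1+\kappa}\bigl(1+(|X|+1/\sqrt{n})/\sigma\bigr)\Bigr),
\]
a bound that is free of $t$.

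The final step is to take $E_{\te_0}$ and bound a random $|X|$ in terms of the deterministic $|\te_0|$. Writing $X=\te_0+Z/\sqrt{n}$ with $Z\sim\cN(0,1)$, setting $A:=(|\te_0|+1/\sqrt{n})/\sigma$, and using $\log(1+a+b)\le\log(1+a)+\log(1+b)$ for $a,b\ge 0$ together with the elementary bound $(x+y)^{1+\kappa}\le 2^\kappa(x^{1+\kappa}+y^{1+\kappa})$, one obtains
\[
\log^{1+\kappa}\bigl(1+(|X|+1/\sqrt{n})/\sigma\bigr)
\le 2^\kappa\Bigl(\log^{1+\kappa}(1+A)+\log^{1+\kappa}\bigl(1+|Z|/(\sigma\sqrt{n})\bigr)\Bigr).
\]
The first term is deterministic and of the form required. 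For the $Z$-dependent term I would split into two cases. When $\sigma\sqrt{n}\ge 1$, bound $|Z|/(\sigma\sqrt{n})\le|Z|$ and use that $E[\exp(c\log^{1+\kappa}(1+|Z|))]$ is finite since any power of $\log(1+|Z|)$ is dominated by the Gaussian factor. When $\sigma\sqrt{n}<1$, observe that $A\ge 1/(\sigma\sqrt{n})>1$, so that $1+|Z|/(\sigma\sqrt{n})\le(1+|Z|)/(\sigma\sqrt{n})$; applying subadditivity of $\log$ and of $x\mapsto x^{1+\kappa}$ a second time splits the expression into an $|Z|$-part with bounded exponential moment and an extra $\log^{1+\kappa}(A)$-part that is absorbed into the first, deterministic term at the cost of enlarging the constant $C$.

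The main obstacle is precisely this last step: the case $\sigma\sqrt{n}<1$ is delicate because naively bounding $\log(1+|Z|/(\sigma\sqrt{n}))$ would introduce a dependence on $\sigma$ that is not allowed in the stated bound. The two-stage splitting above is the key device, using that the smallness of $\sigma\sqrt{n}$ is automatically reflected in the size of $A$. Once this is in hand, taking logarithms recovers the three summands $t^2/2$, $C\log^{1+\kappa}(1+(|\te_0|+1/\sqrt{n})/\sigma)$, and $C_1$ exactly as in the statement.
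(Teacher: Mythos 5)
Your proof is correct, but it takes a genuinely different route from the paper's. Both proofs begin by computing the inner posterior expectation via Bayes' formula and extracting the factor $e^{t^2/2}$, but they differ in the change of variables used for $\te$ and hence in where the randomness of $X$ ends up. You substitute $u=\sqrt{n}(\te-X)$, which makes the Gaussian kernel elementary at the cost of leaving the \emph{random} argument $(X+u/\sqrt{n})/\sigma$ inside $h$; the $X$-dependence therefore surfaces in the term $E_{\te_0}\exp\!\bigl(c_1\log^{1+\kappa}(1+(|X|+1/\sqrt{n})/\sigma)\bigr)$, and the whole last step of your argument — the two-case splitting on $\sigma\sqrt{n}\gtrless 1$ — is devoted to showing this expectation contributes only a deterministic $C\log^{1+\kappa}(1+A)$ plus a universal constant. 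The paper instead substitutes $v=\sqrt{n}(\te-\te_0)$ (and writes $X=\te_0+\xi/\sqrt{n}$), which makes the argument of $h$ in the denominator the \emph{deterministic} quantity $(\te_0+v/\sqrt{n})/\sigma$; on $|v|\le 1$ this is bounded directly by $A$ via \eqref{conds}--\eqref{condti}, and the only remaining randomness is the Gaussian kernel, so the final step reduces to the clean fact $E_{\xi\sim\cN(0,1)}\bigl[(\int_{-1}^1 e^{-(v-\xi)^2/2}dv)^{-1}\bigr]<\infty$, cited from \cite{cn13}. Your route is self-contained (no external citation) and your case analysis for $\sigma\sqrt{n}<1$ — using $A\ge 1/(\sigma\sqrt{n})$ to absorb the would-be $\sigma$-dependence into the deterministic term — is the crucial device that makes it go through; the paper's parametrization avoids this issue entirely by design, at the price of needing the (standard) Gaussian integral estimate.
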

\begin{proof}
For any $t\in\RR$, we have  
\begin{align*}
E_{\theta_0}E^\pi\big[e^{t\sqrt{n}(\theta-X)}\,\big|\,X\big]&=
E_{\theta_0}\frac{\int\exp\big(t\sqrt{n}(\theta-X)\big)\varphi\big(\sqrt{n}(X-\theta)\big)h(\theta/\sigma)d\theta}{\int\varphi\big(\sqrt{n}(X-\theta)\big)h(\theta/\sigma)d\theta}\\
%&=E_{\xi\sim \cN(0,1)}\frac{\int e^{t(v-\xi)}\varphi(v-\xi)h\big(\frac{\theta_0+v/\sqrt{n}}{\sigma}\big)dv}{\int\varphi(v-\xi)h\big(\frac{\theta_0+v/\sqrt{n}}{\sigma}\big)dv}\\
&=E_{\xi\sim \cN(0,1)}\frac{\int e^{t(v-\xi)-\frac{(v-\xi)^2}2}h\big(\frac{\theta_0+v/\sqrt{n}}{\sigma}\big)dv}{\int e^{-\frac{(v-\xi)^2}2}h\big(\frac{\theta_0+v/\sqrt{n}}{\sigma}\big)dv}.
\end{align*}
Using that $h$ is bounded, one gets
\begin{align*}\label{denomintegral}
E_{\theta_0}E^\pi\big[e^{t\sqrt{n}(\theta-X)}\,\big|\,X\big]&\lesssim E_{\xi\sim \cN(0,1)}\frac{\int e^{tu-u^2/2}du}{\int e^{-\frac{(v-\xi)^2}2}h\big(\frac{\theta_0+v/\sqrt{n}}{\sigma}\big)dv}\\
&\lesssim e^{t^2/2} E_{\xi\sim \cN(0,1)}\Bigg[\Big(\int e^{-\frac{(v-\xi)^2}2}h\big(\frac{\theta_0+v/\sqrt{n}}{\sigma}\big)dv\Big)^{-1}\Bigg].
\end{align*}
The latter integral can be lower bounded using \eqref{conds} and \eqref{condti},
\[\int e^{-\frac{(v-\xi)^2}2}h\big(\frac{\theta_0+v/\sqrt{n}}{\sigma}\big)dv\gtrsim \int_{-1}^1 e^{-\frac{(v-\xi)^2}{2}}e^{-c_1\log^{1+\kappa}\left(1+\frac{|\te_0|+1/\sqrt{n}}{\sigma}\right)}dv.\]
As a result, we have 
\[E_{\theta_0}E^\pi\big[e^{t\sqrt{n}(\theta-X)}\,\big|\,X\big]\lesssim e^{t^2/2+c_1\log^{1+\kappa}\big(1+\frac{|\te_0|+1/\sqrt{n}}{\sigma}\big)} E_{\xi\sim \cN(0,1)}\Bigg[\Big(\int_{-1}^1e^{-\frac{(v-\xi)^2}{2}}dv\Big)^{-1}\Bigg],\]
and the claim follows since the expectation appearing on the right hand side can be bounded by a universal constant as in \cite{cn13}, pages 2015-2016.
\end{proof}
 
\begin{lemma} \label{lad} 
 Let $Y$ be a real random variable. Then for $t>0$ and $\cL(t)=E[\exp(t|Y|)]$,
 \[ E[Y^2] \le t^{-2}\left\{8+2\log^2\cL(t)\right\}.\]
\end{lemma}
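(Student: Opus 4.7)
\textbf{Proof plan for Lemma \ref{lad}.} The plan is to use a standard Chernoff/layer-cake argument: bound the tail of $|Y|$ via Markov applied to the exponential moment, then integrate. First, I would write the second moment in its layer-cake form
\[
E[Y^2] = \int_0^\infty 2x\, P(|Y|>x)\,dx,
\]
and apply Markov's inequality to the random variable $e^{t|Y|}$ to get $P(|Y|>x) \le e^{-tx}\cL(t)$ for every $x\ge 0$ (note $\cL(t)\ge 1$, so $\log\cL(t)\ge 0$).

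Next I would pick the natural threshold $M = t^{-1}\log\cL(t)$, which is precisely the point where the exponential tail bound becomes trivial (equals $1$). Splitting the integral at $M$ gives
\[
E[Y^2] \le \int_0^M 2x\,dx + \cL(t)\int_M^\infty 2x\,e^{-tx}\,dx
= M^2 + \cL(t)\bigl(2Me^{-tM}/t + 2e^{-tM}/t^2\bigr),
\]
where the second integral is computed by an integration by parts. Substituting $e^{-tM} = 1/\cL(t)$ yields
\[
E[Y^2] \le \frac{\log^2 \cL(t)}{t^2} + \frac{2\log \cL(t)}{t^2} + \frac{2}{t^2}.
\]

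Finally, I would absorb the cross term using the elementary inequality $2u \le 1 + u^2$ (valid for all real $u$, applied to $u = \log\cL(t)$), which gives
\[
E[Y^2] \le \frac{2\log^2 \cL(t) + 3}{t^2} \le t^{-2}\bigl(8 + 2\log^2 \cL(t)\bigr),
\]
as claimed. There is no genuine obstacle here: every step is elementary and the constant $8$ in the statement is comfortably loose (the argument actually yields $3$), so I would not try to optimise it.
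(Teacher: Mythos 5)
Your proof is correct, and it takes a genuinely different route from the paper. You use a Chernoff-type tail bound with the layer-cake formula: Markov applied to $e^{t|Y|}$ gives $P(|Y|>x)\le \cL(t)e^{-tx}$, the integral is split at the crossover point $M=t^{-1}\log\cL(t)$, and the tail integral is evaluated explicitly, yielding $E[Y^2]\le t^{-2}(\log^2\cL(t)+2\log\cL(t)+2)\le t^{-2}(2\log^2\cL(t)+3)$ after absorbing the cross term. The paper instead writes $E[Y^2]=t^{-2}E[\log^2 e^{t|Y|}]$ directly, bounds $\log^2 x\le \log^2(e+x)$, exploits the concavity of $\log^2$ on $(e,\infty)$ to apply Jensen's inequality and pull the expectation inside, and finishes with $\log(e+b)\le 2+\log b$. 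Both arguments are elementary and short; yours is perhaps the more standard tail-integration strategy and happens to give the sharper constant $3$ in place of $8$, while the paper's is a compact one-line Jensen trick that avoids any integration. For the purposes of Theorem 1, where only the order in $t$ and $\log\cL(t)$ matters, the two are interchangeable.
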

\begin{proof}
Let us write $E[Y^2]=t^{-2}E[\log^2\exp(t|Y|)]$. Using concavity of the map $x\to \log^2(x)$ for $x>e$, one may write
\begin{align*}
 E[\log^2\exp(t|Y|)] & \le E[\log^2(e+\exp(t|Y|))]\le \log^2\{e+ \cL(t)\} \\
& \le (2+\log\cL(t))^2\le 8+2\log^2\cL(t),
\end{align*}
where the last inequality uses $\log(e+b)\le 2+\log(b)$ valid for $b\ge 1$.  
\end{proof}

\begin{lemma} \label{lemev}
Let $\cA_n(N)$ be the event as in \eqref{defev}, where either $N=\cN_n$ or $N=\cM_n$ as in \eqref{enn} and \eqref{emn}. Then for large enough $n$ it holds
\[ P_{f_0}[\cA_n^c] \leqa 1/\sqrt{n}.  \] 
\end{lemma}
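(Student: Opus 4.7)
The plan is to proceed by a direct union bound over the countably many events $\cA_{k,j}^c$ that make up $\cA_n(N)^c$, using nothing more than a standard Gaussian tail inequality together with the fact that indices $k\in N^c$ have small true coefficients. Under $P_{f_0}$, one has $\sqrt{n}(X_k-f_{0,k})\sim\cN(0,1)$, and crucially, whether $N=\cN_n$ or $N=\cM_n$, by definition of $N^c$ it holds $|f_{0,k}|\le 1/\sqrt{n}$ for every $k\in N^c$ (for $\cM_n^c$ even $|f_{0,k}|\le \delta_n/\sqrt{n}$, which only helps). Therefore, for any $k\in N^c$ and any $j\ge0$, if $Z\sim\cN(0,1)$,
\[
P_{f_0}(\cA_{k,j}^c)=P\bigl(\sqrt{n}|X_k|>\sqrt{4\log\{n(j^2\vee 1)\}}\bigr)\le P\bigl(|Z|>\sqrt{4\log\{n(j^2\vee 1)\}}-1\bigr).
\]

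Next I would apply the standard bound $P(|Z|>t)\le 2e^{-t^2/2}$ (valid for $t>0$) with $t=\sqrt{4\log\{n(j^2\vee 1)\}}-1$. Expanding the square, for $n$ large enough so that $\sqrt{4\log\{n(j^2\vee 1)\}}\ge 2$ uniformly in $j\ge0$, one gets
\[
P_{f_0}(\cA_{k,j}^c)\le 2\exp\bigl(-2\log\{n(j^2\vee 1)\}+\sqrt{4\log\{n(j^2\vee 1)\}}\bigr)\le 2\bigl(n(j^2\vee 1)\bigr)^{-2+\eta}
\]
for any fixed $\eta>0$ and all sufficiently large $n$, since the $\sqrt{\log}$ factor is absorbed into $(n(j^2\vee 1))^\eta$. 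Note the bound does not depend on $k$.

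Summing with the union bound, the $j=0$ block contains at most $n$ indices, contributing at most $n\cdot 2n^{-2+\eta}=2n^{-1+\eta}$; each block $j\ge 1$ contains at most $n$ indices, contributing $n\cdot 2(nj^2)^{-2+\eta}\le 2n^{-1+\eta}j^{-4+2\eta}$, and summing in $j$ gives a finite constant since the series $\sum_{j\ge 1}j^{-4+2\eta}$ converges for small $\eta$. Choosing $\eta<1/2$ once and for all, one concludes
\[
P_{f_0}[\cA_n(N)^c]\le C n^{-1+\eta}\lesssim 1/\sqrt{n}
\]
for $n$ large enough, as claimed. The only mild obstacle is purely bookkeeping: writing the inequality $(\sqrt{a}-1)^2\ge a-2\sqrt{a}$ cleanly and absorbing the $\exp(\sqrt{4\log(nj^2)})$ correction into an arbitrarily small polynomial factor, uniformly in $j$, which is routine.
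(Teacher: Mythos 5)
Your proof is correct and follows essentially the same route as the paper's: a union bound over all events $\cA_{k,j}^c$ for $k\in N^c$, a Gaussian tail bound using $|f_{0,k}|\le 1/\sqrt{n}$, and summation over the $j$-blocks of size $n$. The only cosmetic difference is how the $-1$ shift from the mean is absorbed: the paper drops the log coefficient from $4$ to $3$ (giving the bound $2(n(j^2\vee 1))^{-3/2}$) while you retain the exact square and absorb the $\exp(\sqrt{4\log\cdot})$ term into a polynomial correction $(\cdot)^\eta$; both are valid and yield the same conclusion.
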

\begin{proof} 
First, for any  $k\in {N^c}$  and $j\ge 0$, we have 
\[ P_{f_0}\left[\cA_{k,j}^c\right]\le P\left[ |\cN(0,1)|>\sqrt{3\log{n(j^2\vee 1)}}\right] \le 2(n(j^2\vee 1))^{-3/2},\] 
where one uses that $|f_{0,k}|{\le 1/\sqrt{n}}$ {for $k\in N^c$}. %\sbl{--I.:am not sure to understand: we could add indeed `for $n$ large enough' so that $\delta_n$ is smaller than $1$, it that what you meant?--} 
%\ora{ by definition of $\cA_n(N)$}. 
From this one deduces
\[P_{f_0}[\cA_n^c] \leqa  
\sum_{j\ge 0} n \frac{1}{\{n (j^2 \vee 1)\}^{3/2}}\leqa 1/\sqrt{n}. \qedhere \]
\end{proof}

\begin{acks}[Acknowledgments]
 The authors would like to thank the Associate Editor and two referees for a number of insightful comments.
 \end{acks}

\begin{funding}
IC's work was partly funded by ANR BACKUP grant ANR-23-CE40-0018-01 and a fellowship from the Institut Universitaire de France.
\end{funding}
\bigskip

\begin{center}\large{\bf SUPPLEMENTARY MATERIAL}\end{center}\smallskip

 We start this supplement with a discussion on $\rho$--posteriors and the possibility to extend some of the results in Section \ref{sec:prmass} to the standard posterior. We then provide the rest of the proofs of our results, some technical lemmas as well as additional simulations corroborating the theory.
\setcounter{section}{0}

\numberwithin{equation}{section}
\numberwithin{theorem}{section}
\numberwithin{lemma}{section}
\numberwithin{remark}{section}
\renewcommand\thesection{\Alph{section}}%Letter numbering for Supplement
\renewcommand\thefigure{\thesection.\arabic{figure}}   
\renewcommand\thetable{\thesection.\arabic{table}}   
\renewcommand\theremark{\thesection.\arabic{remark}}

\section{Discussion:  $\rho$--posteriors ($\rho<1$) and the posterior ($\rho=1)$}\label{sup:seca}

 We now briefly discuss contributions on posteriors and fractional posteriors (that is, $\rho$--posteriors with $\rho<1$), mostly focusing on the nonparametric setting. General results on convergence rates for posterior distributions were obtained in a series of fundamental works in the early 2000's \cite{ggv00, sw01, gvni} among others. These results have shown a very broad applicability, and both convergence rates and, more recently, limiting results, uncertainty quantification, and testing using posterior distributions have been investigated for an increasing number of settings; we refer to the book \cite{gvbook} for an overview of the field.  The approach that has been developed for convergence rates generally assumes three conditions:  i) a prior mass condition around the truth, together with ii) the existence of sieve sets on which iii) certain tests exist (the latter condition is often replaced by entropy conditions that are typically stronger but may be easier to verify on examples). 
On the other hand, $\rho$--posteriors when $\rho<1$ only require a prior mass condition for the $\rho$--posterior to converge in terms of the $\rho$--R\'enyi divergence \cite{tongz06}. In terms of convergence rates, when both approaches can be used, the obtained converge rate is generally the same up to constants (that depend on $\rho$) for the posterior and fractional posterior. We refer to the work \cite{ltcr23} for a discussion and results on the dependence in terms of $\rho$, in particular for uncertainty quantification. 

Since fractional posteriors require a prior mass condition only to verify the convergence rate, they can sometimes been deployed in settings where the verification of the test and sieve conditions seems difficult, see e.g. \cite{BPY, lineroyang}. Sometimes, it is possible by strengthening the conditions on the prior to obtain results for both $\rho<1$ and $\rho=1$, as done for instance in \cite{lineroyang}, where the authors still apply the general approach \cite{ggv00} for $\rho=1$. We are not aware of existing results for settings where using the approach of \cite{ggv00} seems out of reach. This is precisely the case for the heavy tailed priors considered herein, for which it is unclear how to construct appropriate sieve sets, due to the slow decrease of the prior mass of complements of growing balls, implied by the prior's heavy tails.  Nevertheless, we are able (for example) in Theorem \ref{thm-seq} to give a proof of convergence for the standard posterior $\rho=1$, in white noise regression. The corresponding result for $\rho<1$ is obtained from Theorem \ref{thm-besov-rho}; it could also be obtained by following the arguments of the proof of Theorem \ref{thm-seq} with the likelihood raised to the power $\rho$. 

An interesting topic for future research would be to prove, if true, a convergence rate for the standard posterior ($\rho=1$) corresponding to the heavy-tailed priors of the paper, in settings beyond white noise regression, such as density estimation and classification. In simulations in those settings, we have seen barely any difference between the behaviour of the standard posterior ($\rho=1$) and that of a $\rho$--posterior with $\rho$ close to $1$ (e.g. equal to $.9$ say). This strongly suggests that there is a `smooth' transition between $\rho<1$ and $\rho=1$ and that theoretical results as we have obtained for $\rho$--posteriors (e.g. in density estimation or classification) should continue to hold for the standard posterior. We thus conjecture that the results obtained in Theorems \ref{thmdensity} and \ref{thmclassif} continue to hold for the standard posterior. We now discuss possible ways to prove this.

A main technical difficulty with using heavy-tailed priors as considered here is the lack of natural sieve sets to deal with the `high-frequency'--part of the posterior distribution (by this we mean the control of the basis coefficients corresponding to $k\ge K_n\asymp n^{1/(2\beta+1)}$ in single--index notation). Note that priors that explicitly model a  `truncation' point, for example sieve priors with random truncation at a frequency $K$, do not suffer from this difficulty, even if `low-frequencies' $k\le K_n$  are modelled via heavy-tailed priors: this is because the prior distribution on $K$ is able to downweight too complex models, while for low-frequencies one can use local-entropy arguments (see e.g. the contributions \cite{arbel13}--\cite{shenghosal15}). But note that one main advantage of our approach is precisely that we do not model explicitly a latent cut-off variable $K$, which while generally leading to $L^2$--adaptation, typically requires a costly Gibbs sampling to simulate from the posterior distribution on $K$; our method avoids this which is very appealing simulations-wise as demonstrated in the simulations in Section \ref{sec:sim} of the main article and  Section \ref{sec:adsim} below. 
 
Based on the above comments one can outline two promising directions for the case $\rho=1$. % \ma{For both, one main technical bottleneck for dealing with the original priors (and get full adaptation) corresponds to the `high frequency' term} %, namely the contribution of coefficients $f_{lk}=\psg f,\psi_{lk}\psd$ for large $l$ (and $l>L_n$ the standard nonparametric cut-off) in the posterior distribution, 
%  for which the presently existing techniques in the literature give little insight. 
To fix ideas, let us focus on density estimation. For this model one puts a heavy-tailed prior on the log-density as in Section 3.2 of the paper.
 
A first direction consists in seeking for an extension of the approach of \cite{ggv00}. As discussed above, one main difficulty in applying this approach as such lies in the fact that, often, it requires building a collection of sieve sets on which suitable tests exist; but the sieve sets need to have exponentially vanishing prior probability, which for heavy-tailed priors means that they should be very large, hence making the construction of the aforementioned tests more complicated. Most likely, following this approach in the present setting will require finding an alternative to the usual entropy-sieve argument. A possibility
could be to build test functions directly; an example of application of this strategy can be found e.g. in the paper \cite{gn11} (see also e.g. Theorem 7.3.5 of \cite{ginenicklbook}), where the authors construct tests based on a well-chosen estimator for which sufficiently sharp concentration inequalities provide the appropriate decrease of testing errors.  However the later approach still often requires to work with some form of truncated prior (as it still requires sieves), so `high-frequencies' would have to be dealt with separately. 

A second direction could be based on extending the arguments used for the white noise model, which suggest a possible route to prove such results for the standard posterior $(\rho=1)$ in more general models. It is in particular conceivable that the arguments for `low frequencies' ($k\le K_n$; or $l\le L_n$ in double-index notation, with $K_n, L_n$ the usual nonparametric cut-offs) can be extended to more general models by studying the induced posterior on the coefficients $f_k=\psg f,\varphi_k\psd$ -- or $f_{lk}=\psg f,\psi_{lk}\psd$ in double-index notation -- viewed as linear functionals of $f$, as in the line of work initiated in \cite{cn13, cn14, ic14, n17} --; in particular, the case of the density estimation model has been studied in detail in \cite{cn14, ic14}. The main obstacle we foresee lies in the high-frequency terms, for which up to now there is no known argument to prove that these are suitably small. More precisely, along the proof of Theorem 1 for large $k$'s (case $k>K_n$ and $|f_{0,k}|\le 1/\sqrt{n}$), we are able to relate the posterior second moment $\int f_k^2 d\Pi(f\given X)$ to the prior's variance $\sigma_k^2$. This is done through quite precise calculations made possible by the structure of the white noise model, but is likely to be even more delicate for more complex models. For the OT prior, the fact that the prior's variance decays rapidly with $k$ could help there, although it is likely that the argument would become more complicated for low frequencies. For the HT$(\al)$ prior, one could truncate the posterior at some desired maximal cut-off $K_{max}$. It should be possible (as discussed above) to handle the low frequency terms $k\le K_n$ using a BvM-type argument (\cite{cn14, ic14}); however, a new argument would be required for frequencies $K_n<k\le K_{max}$, for which the prior should take over the likelihood. 

Going beyond this technical bottleneck is an interesting topic for future work. More generally, deriving tools that could avoid the need for verifying tests and/or entropy conditions when proving convergence rates would be particularly interesting for the theoretical validation of the use of Bayesian posterior distributions. For instance, in the setting of deep Gaussian processes, theory for usual posteriors can for now be provided for priors that downweight sufficiently large models \cite{fs23} so that the generic approach of \cite{ggv00}  can be used, which goes through a careful conditioning of Gaussian process paths and building a `strong' model-selection type prior. On the other hand, theoretical guarantees can be obtained for $\rho$-posteriors, $\rho<1$,  for unconstrained deep Gaussian processes close to the ones  implemented in practice \cite{cr24}. Yet, although one can conjecture that, again, results should still carry over to the case $\rho=1$, there is no theory available so far to confirm this.
\\

\section{Proofs}

\subsection{Proofs of remaining results of Section \ref*{sec:reg}}\label{proof:sec2}

\begin{proof}[Proof of Theorem \ref{thminv}]
Let us introduce the cut-off $\cK_n$ given by
\begin{equation} \label{kninv}
\cK_n = n^{1/(2\be+2\nu+1)}.
\end{equation}
We wish to achieve the quadratic rate, up to a logarithmic factor,
\begin{equation} \label{rateinv}
\veps_n^2 = n^{-2\be/(2\be+2\nu+1)}.
\end{equation}
The prior distribution $\Pi$ on the collection of coefficients $(f_k)$ induces a prior distribution, that we denote $\Pi^\#$, on the sequence $(\mu_k)$ where $\mu_k:=\kappa_k f_k$. By definition of $\Pi$, the distribution $\Pi^\#$ draws coefficients $\mu_k$ independently with respective laws $\tau_k \zeta_k$, with $\tau_k:=\kappa_k\sigma_k$. 

Since the sequence defines a signal-in-white noise model $X_k\sim\cN(\mu_k,1/n)$, the posterior distribution $\Pi^\#[\cdot\given X]$ induced on $(\mu_k)$ has then the same expression as the posterior on coefficients in the proof of Theorem \ref{thm-seq}, with a `true' $\mu_0$ that has coefficients $\mu_{0,k}:= \kappa_k f_{0,k}$. 

By the corresponding argument in the proof of Theorem \ref{thm-seq}, Eq.  \eqref{eq:logbnd}  becomes, for $k\ge 1$, 
\[ nE_{\mu_0}\int (\mu_k-\mu_{0,k})^2 d\Pi^\#(\mu\given X^{(n)})\leqa 
\log^{1+\kappa}
\left(1+\frac{L+1/\sqrt{n}}{\tau_k} \right).\]
From this one deduces that, in model \eqref{modip}, it holds for any $k\ge 1$,
\[ nE_{f_0}\int (f_k-f_{0,k})^2 d\Pi(f\given X^{(n)})\leqa \kappa_k^{-2} 
\log^{1+\kappa}
\left(1+\frac{L+1/\sqrt{n}}{\tau_k} \right). \]
Using this bound for indices $k\le \cK_n$ or $k\in\cN_n:=\{k>\cK_n:\ \kappa_k |f_{0,k}|>1/\sqrt{n}\}$
 and noting  
 \begin{align*}
 \sum_{k\le \cK_n} \ka_k^{-2} & \leqa \cK_n^{2\nu+1}\leqa n\veps_n^2,\\
 \sum_{k\in \cN_n} \ka_k^{-2} & \le n \sum_{k\in\cN_n} f_{0,k}^2 \le n \cK_{n}^{-2\be} \sum_{k\ge 1} k^{2\be} f_{0,k}^2 \leqa  n\veps_n^2,
 \end{align*}
where we have used the definition of $\cK_n, \cN_n$ and the Sobolev regularity of $f_0$, one deduces 
\[ \sum_{k\le \cK_n \text{ or }k\in\cN_n} E_{f_0}\int (f_k-f_{0,k})^2 d\Pi(f\given X^{(n)})\leqa (\log n)^d \veps_n^2,\]
where we have used that the logarithmic term above is bounded by $(\log{n})^d$ for some $d>0$ for $k\in \cN_n$ (since for $k\in\cN_n$ one must have $k^{2\be+2\nu}\leqa n$). 

Now focusing on the case $k\notin\cN_n$ and $k>\cK_n$, working again first with $\mu_k$, we follow  the same reasoning as in the proof of Theorem \ref{thm-seq} (with the same definition of $x_k$ and $\cA_n$, with $\cK_n$ in place of $K_n$)  to obtain, for $\sigma_k$ as in \eqref{defsigr} and $\ta_k=\kappa_k\sigma_k$,
\[ \int  \mu_k^2 d\Pi^\#(\mu \given X^{(n)}) \1_{|X_k|\le x_k}
\le \ta_k^2 \frac{\1_{|X_k|\le x_k}}{\phi(\sqrt{n}x_k)}, \]
noting that the constraint is now $\ta_k\leqa x_k$ and is satisfied: indeed, it follows from the definition of $\sigma_k$ that $\tau_k\leqa 1/\sqrt{n}$ for any $k>\cK_n$. Dividing both sides of the last display by $\ka_k^2$, one gets
\[ \int  f_k^2 d\Pi(\mu \given X^{(n)}) \1_{|X_k|\le x_k}
\le \si_k^2 \frac{\1_{|X_k|\le x_k}}{\phi(\sqrt{n}x_k)}. \]
This is the same bound as in the proof of Theorem \ref{thm-seq}, so one concludes similarly that
\[
 \sum_{k\notin\cN_n,\ k>\cK_n} \int f_k^2d\Pi(f\given X^{(n)})\1_{\cA_n}=o(n^{-r}),
\]
where $r$ can be chosen to be an arbitrarily large integer. This concludes the proof for $\sigma_k$ as in \eqref{defsigr}. The proof for the case of $\sigma_k$ as in \eqref{defsig}, which follows  in a similar way, is omitted.
\end{proof}

\begin{proof}[Proof of Theorem \ref{thm-seq-sup}]
%\ora{[Note : I think it is better to keep $l$ until almost the end in the bounds i.e. replace $\log{n}$ there by $l$]}. \ma{S: changed terms $(I), (III)$ accordingly.}
Below we repeatedly use the following (e.g. \cite{ginenicklbook}, Chapter 4): for $\{\psi_{lk}\}$ a (boundary-corrected) wavelet basis of $[0,1]$ as chosen in the main paper, and for $g$ a continuous function on $[0,1]$ with wavelet coefficients $g_{lk}$, we have
\[ g(x) = \sum_{l\ge 0} \sum_{k} g_{lk}\psi_{lk}(x) \]
in $L^\infty$ and the following upper-bounds on the supremum norm in terms of coefficients
\begin{equation} \label{bsn}
 \|g\|_\infty \le  \sum_{l\ge 0} \max_{k} |g_{lk}| \sup_{x}\sum_{k} |\psi_{lk}|(x) 
\leqa  \sum_{l\ge 0} 2^{l/2} \max_{k} |g_{lk}|,
\end{equation}
using $\|\sum_k |\psi_{lk}|\|_\infty\leqa 2^{l/2}$ (implied by $\sum_{k\in\mathbb Z}\psi_{00}(\cdot-k)\in L^\infty(\mathbb R)$ which holds by construction of boundary-corrected wavelet bases, see e.g. \cite[Section 4.3.5 and Theorem 4.2.10]{ginenicklbook}). 
Recalling that we assume $f_0\in\cH^\beta(L)$, denote by $\ell_n, \lambda_n$ the (closest integer) solutions to
\[
2^{\ell_n}=(n/\log{n})^{1/(1+2\be)} \quad\text{and}\quad 2^{\lambda_n}=(n \log{n})^{1/(1+2\be)},
\]
respectively. Furthermore, denote by $\veps_n=(\log{n}/n)^{\be/(2\be+1)}\asymp\sqrt{\ell_n2^{\ell_n}/n}$. Note that $\ell_n$ is the standard truncation point when approximating functions in $\cH^\beta$, while $\lambda_n$ is slightly larger. We will use the former for $s_l$ as in (\ref{defsigr}) and the latter for $s_l$ as in (\ref{defsig}).
For %$B_n$ a measurable set of functions  and 
 $\cA_n$ a suitable event to be defined below, using Markov's inequality,
\[ E_{f_0}\Pi[\|f-f_0\|_\infty>v_n \given X^{(n)}] \le P_{f_0}(\cA_n^c) +
 v_n^{-1}E_{f_0}\left[ \int \|f-f_0\|_\infty d\Pi(f\given X^{(n)})\1_{\cA_n}\right],   \]
where $v_n=\cL_n \veps_n$ for $\cL_n=(\log{n})^d$ with $d>0$ sufficiently large.
Given an integer $l_n$, for a function $f\in L^2$ denote by $f^{[l_n]}$ and $f^{[l_n^c]}$ its projection onto the linear span of the wavelets $\{\psi_{lk}\}_{l\leq l_n, k\in\cK_l}$ and $\{\psi_{lk}\}_{l> l_n, k\in\cK_l}$, respectively. We can then decompose the norm as follows
\[
\|f-f_0\|_\infty\leq \|f^{[l_n]}-f_0^{[l_n]}\|_\infty+\|f^{[l_n^c]}\|_\infty+\|f_0^{[l_n^c]}\|_\infty.
\]
We denote the three terms on the right hand side by (I), (II), (III), respectively and below we study them one by one.\\

For (III), using \eqref{bsn} and that $f_0$ belongs to $\Hbl$, for either choice $l_n=\ell_n$ or $l_n=\lambda_n$, we have
\[ \|f_0^{[l_n^c]}\|_\infty\leqa 
\sum_{l>l_n} 2^{l/2} \max_{k} |f_{0,lk}|
\leqa  \sum_{l>l_n} 2^{l/2} L 2^{-l(1/2+\be)}\leqa  2^{-l_n\be}=o(v_n).
\]
% $\|f_0^{[l_n^c]}\|_\infty\leqa$.%\leqa n^{-\frac{\be}{1+2\be}}\log^{\frac{\be}{1+2\be}}(n).$

For (I), one can write
\begin{align*}
E_{f_0}E[{\rm (I)}|X^{(n)}]&\leq \sum_{l\leq l_n}2^{l/2}E^n_{f_0}E^\Pi\big[\max_{k\in\cK_l}|f_{lk}-f_{0,lk}|\,\big|\,X^{(n)}\big]\\
&\leq \sum_{l\leq l_n}2^{l/2}E^n_{f_0}E^\Pi\big[\max_{k\in\cK_l}|f_{lk}-X_{lk}|\,\big|\,X^{(n)}\big]+\sum_{l\leq l_n}2^{l/2}E^n_{f_0}\max_{k\in\cK_l}|X_{lk}-f_{0,lk}|.
\end{align*}
The second term on the right hand side satisfies 
\[
\sum_{l\leq l_n}2^{l/2}E^n_{f_0}\max_{k\in\cK_l}|X_{lk}-f_{0,lk}|=\sum_{l\leq l_n}\frac{2^{l/2}}{\sqrt{n}}E\max_{k\in\cK_l}|\xi_{lk}|\leq c \sum_{l\leq l_n} \frac{2^{l/2}\sqrt{l}}{\sqrt{n}},
\]
where $\xi_{lk}$ are independent standard normal random variables. Here, we have first used the observational model and then that the maximum of $C2^l$ standard normal variables has expectation of order $\sqrt{l}$. 

To bound the first term on the right hand side we use similar techniques to \cite[Proposition~1 and Lemma 1]{ic14}. First, we observe that for any $t>0$
\begin{align*}
tE^n_{f_0}E^\Pi\big[\max_{k\in\cK_l}\sqrt{n}|f_{lk}-X_{lk}|\,\big|\,X^{(n)}\big]
&\leq\log\sum_{k\in\cK_l}E_{f_0}E^\Pi\big[e^{t\sqrt{n}(f_{lk}-X_{lk})}+ e^{-t\sqrt{n}(f_{lk}-X_{lk})}\,\big|\,X^{(n)}\big], 
\end{align*}
where we have used Jensen's inequality and bounded the maximum by the sum. To bound the Laplace transforms on the right hand side, we employ Lemma \ref{laptr} and distinguish the two choices of $s_l$ in (\ref{defsig}) and (\ref{defsigr}), with the corresponding choices $l_n=\lambda_n$ and $l_n=\ell_n$, respectively. For $s_l$ as in (\ref{defsig}), for $l\le \lambda_n$ and $\alpha\ge\be$ it holds
\begin{align*}
\frac{|f_{0,lk}|+1/\sqrt{n}}{s_l}\lesssim 2^{l(\alpha-\be)}+\frac{1}{\sqrt{n}}2^{l(1/2+\alpha)}\leqa 2^{l(\alpha-\be)}\sqrt{\log n},
%\leq \Big(\frac{n}{\log n}\Big)^{\frac{\alpha-\be}{1+2\be}}+\frac{1}{\sqrt{n}}\Big(\frac{n}{\log{n}}\Big)^{\frac{1+2\alpha}{2(1+2\be)}}\lesssim n^{\frac{\alpha-\be}{1+2\be}},
\end{align*}
hence Lemma \ref{laptr} shows that there exist $C_1, C_2, C_3>0$, such that for any $t>0$ and $l\leq \lambda_n$
\begin{align*}tE_{f_0}E^\Pi\big[\max_{k\in\cK_l}\sqrt{n}|f_{lk}-X_{lk}|\,\big|\,X^{(n)}\big]&\lesssim \log\sum_{k\in\cK_l}C_1e^{t^2/2}e^{C_2l^{1+\kappa}}e^{C_3(\log\log n)^{1+\kappa}}\\
%\lesssim \log\sum_{k\in\cK_l}C_1e^{t^2/2}e^{C_2(\log n)^{1+\kappa}}\\
&=\log\Big(C_12^le^{t^2/2}e^{C_2l^{1+\kappa}}e^{C_3(\log\log n)^{1+\kappa}}\Big).
%&=\log\Big(C_12^le^{t^2/2}e^{C_2(\log n)^{1+\kappa}}\Big).
\end{align*} 
On the other hand, for $s_l$ as in (\ref{defsigr}), it holds %for $l\leq l_n$ and noting that $l_n\lesssim \log(n)$, it holds
\begin{align*}
\frac{|f_{0,lk}|+1/\sqrt{n}}{s_l}\lesssim 2^{l(l-1/2-\be)}+\frac{1}{\sqrt{n}}2^{l^2}\lesssim 2^{l^2},
%\lesssim 2^{(\log n)^2}+\frac1{\sqrt{n}}2^{(\log n)^2}\lesssim 2^{(\log n)^2},
\end{align*}
hence using Lemma \ref{laptr} we obtain that there exist $C_1, C_2>0$, such that 
for any $t>0$ and $l\leq \ell_n$
\begin{align*}tE_{f_0}E^\Pi\big[\max_{k\in\cK_l}\sqrt{n}|f_{lk}-X_{lk}|\,\big|\,X^{(n)}\big]&\lesssim \log\sum_{k\in\cK_l}C_1e^{t^2/2}e^{C_2l^{2+2\kappa}}\\%\lesssim \log\sum_{k\in\cK_l}C_1e^{t^2/2}e^{C_2(\log n)^{2+2\kappa}}\\
&=\log\Big(C_12^le^{t^2/2}e^{C_2l^{2+2\kappa}}\Big).
%&=\log\Big(C_12^le^{t^2/2}e^{C_2(\log n)^{2+2\kappa}}\Big).
\end{align*} 
Combining, and setting $r=1$ for $s_l$ as in (\ref{defsig}) and $r=2$ for $s_l$ as in (\ref{defsigr}), we get that for $l\le \lambda_n$ and $l\le \ell_n$, respectively,
\begin{align*}E_{f_0}E^\Pi\big[\max_{k\in\cK_l}\sqrt{n}|f_{lk}-X_{lk}|\,\big|\,X^{(n)}\big]&\lesssim\frac{\log(C_12^l)}{t}+\frac{t}2+\frac{l^{r(1+\kappa)}}t+\frac{\log\log{n}}t\\
&\lesssim t+\frac{l^{r(1+\kappa)}}t+\frac{\log\log{n}}t.
\end{align*}%\lesssim\frac{\log(C_12^l)}{t}+\frac{t}2+\frac{\log^{r(1+\kappa)}(n)}t\lesssim t+\frac{\log^{r(1+\kappa)}(n)}t,\]
%\lesssim t+\frac{\log^{r(1+\kappa)}(n)}t,\]
%where for the last bound we used that $l\leq l_n$. 
We can optimize by choosing $t=\sqrt{l^{r(1+\kappa)}+\log\log{n}}$, whence we get 
\[E_{f_0}E^\Pi\big[\max_{k\in\cK_l}\sqrt{n}|f_{lk}-X_{lk}|\,\big|\,X^{(n)}\big]\lesssim l^{\frac{r(1+\kappa)}2}+\sqrt{\log\log{n}}.\]
Putting things together, and since both choices of $l_n$ satisfy $l_n\lesssim \log n$, we get that for either $s_l$ as in (\ref{defsigr}) with $l_n=\ell_n$ or (\ref{defsig}) with $l_n=\lambda_n$, it holds
\begin{align}\label{boundI}
E_{f_0}E[{\rm (I)}|X^{(n)}]&\lesssim \sum_{l\leq l_n}\frac{2^{l/2}}{\sqrt{n}}\Big(\sqrt{l}+l^{\frac{r(1+\kappa)}2}+\sqrt{\log\log{n}}\Big)\nonumber\\
&\lesssim\sum_{l\leq l_n}\frac{l^{\frac{r(1+\kappa)}2}+\sqrt{\log\log{n}}}{\sqrt{n}}2^{l_n/2}=o(v_n).
%&\lesssim n^{-\frac{\be}{1+2\be}}\log^{\frac{r(1+\kappa)(1+2\be)-1}{2(1+2\be)}}(n).
\end{align}

We next study term (II). We start by noting that for any $q\ge 1$, by Jensen's inequality, 
\begin{align*}
E_{f_0}\left[E^{\Pi}[{\rm (II)}|X^{(n)}]\1_{\cA_n}\right]&\leqa  \sum_{l> l_n}2^{l/2}E^n_{f_0}\left[E^\Pi\big[\max_{k\in\cK_l}|f_{lk}|\,\big|\,X^{(n)}\big]\1_{\cA_n}\right]\\
&\leq \sum_{l>l_n}2^{l/2}E^n_{f_0}\left[E^\Pi\big[\big(\sum_{k\in\cK_l}|f_{lk}|^q\big)^{1/q}\,\big|\,X^{(n)}\big]\1_{\cA_n}\right]\\
&\leq \sum_{l>l_n}2^{l/2}E^n_{f_0}\left[\left(E^\Pi\big[\sum_{k\in\cK_l}|f_{lk}|^q\,\big|\,X^{(n)}\big]\right)^{1/q}\1_{\cA_n}\right]\\
&\leq \sum_{l>l_n}2^{l/2}\left(\sum_{k\in\cK_l}E^n_{f_0}\left[E^\Pi\big[|f_{lk}|^q\,\big|\,X^{(n)}\big]\1_{\cA_n}\right]\right)^{1/q}.
\end{align*}
Using Bayes' formula, it is enough to bound the following terms individually
\[ \int |f_{lk}|^q d\Pi(f\given X) = \frac{ \int \te^q \phi(\sqrt{n}(X_{lk}-\te))h(\te/s_l)d\te }{ \int  \phi(\sqrt{n}(X_{lk}-\te))h(\te/s_l)d\te } =: \frac{N_q}{D}.  \] To bound the numerator $N_q$, we use $|\phi|\le \|\phi\|_\infty$ and $\int \te^q h(\te/s_l)d\te = s_l^{q+1}\int u^q h(u)du\leqa s_l^{q+1}$, using (\ref{qmom}), that is, that $h$ has $q$ moments, so that $N_q\leqa s_l^{q+1}$ regardless of $X_{lk}$. 

The denominator is bounded in the same way as in the proof of Theorem \ref{thm-seq}. In particular, for $(x_{lk})$ a deterministic nonnegative sequence such that $s_l\lesssim x_{lk}$, it holds that 
 \[  \frac{N_q}{D}\1_{|X_{lk}| \le x_{lk}} \le s_l^q \frac{\1_{|X_{lk}|\le x_{lk}}}{\phi(\sqrt{n}x_{lk})}. \]
 
 Let us first deal with the case of super-light variances (\ref{defsigr}) and $l_n=\ell_n$. 
Define the events, for $j\ge 0$, $l\ge 1$, $k\in\cK_l$ and $t=(L+\sqrt{3})^2$ (recalling that $f_0\in \cH(L)$),
 \begin{align}
 \cA_{l, k,j} & := \left\{ \, |X_{lk}| \le \sqrt{\frac{t(j+1)\log{(n)}}{n}}\, \right\},  \label{defevkj-inf}\\
 \cA_n & := \bigcap_{l_n<l\le \log_2(n)}\ \bigcap_{k\in\cK_l} \,\cA_{l,k,0}\
  \cap\ \bigcap_{j\ge 1}\ \bigcap_{j\log_2(n) < l \le (j+1)\log_2(n)} \ \bigcap_{k\in \cK_l} \,\cA_{l,k,j}.\label{defev-2}
 \end{align} 
Let us set $x_{lk}=\sqrt{\frac{t(j+1)\log{n}}{n}}$ whenever $j\log_2(n)<l\le (j+1)\log_2(n)$. The  constraint $s_l\leqa~x_{lk}$ is trivially satisfied when $l>\ell_n$ for this choice of $x_{lk}$ and large enough $n$ since then $s_l\leqa 1/\sqrt{n}$ for super-light variances. %\sbl{[I would either say $\al>\be$ or that we get an extra log on top, which is fine in any case!]}\ma{[Not sure I understand; this is the supersmooth case?]} 
 Then, using that $(s_l)$ is decreasing and that $\phi(\sqrt{n}x_{lk})^{-1}\leqa n^{t(j+1)/2}$ for $j\ge 0$, we have the bounds
 \begin{align*}
&E_{f_0}\left[E^{\Pi}[{\rm (II)}|X^{(n)}]\1_{\cA_n}\right]\leq
\sum_{l>\ell_n}2^{l/2}\left(\sum_{k\in\cK_l}E^n_{f_0}\left[E^\Pi\big[|f_{lk}|^q\,\big|\,X^{(n)}\big]\1_{\cA_n}\right]\right)^{1/q}\\
&\leqa \sum_{\ell_n<l\le\log_2{(n)}}2^{l/2}\left(\sum_{k\in\cK_l} s_l^q n^{t/2}\right)^{1/q}
+\,\sum_{j\ge 1}\,\sum_{j\log_2{(n)}<l\le(j+1)\log_2{(n)}}2^{l/2}\left(\sum_{k\in\cK_l} s_l^q n^{t(j+1)/2}\right)^{1/q}\\
&\leqa n^{\frac12+\frac{2+t}{2q}}s_{\ell_n}+\sum_{j\ge1}n^{(j+1)(\frac12+\frac{t+2}{2q})}s_{j\log_2{n}}\leqa n^{\frac12+\frac{2+t}{2q}}2^{-\ell_n^2}+\sum_{j\ge1}n^{(j+1)(\frac12+\frac{t+2}{2q})}2^{-j^2(\log_2{n})^2}.
\end{align*}
The latter bound is $o(n^{-M})$ for arbitrary $M>0$. Combining the bounds for the terms ${\rm(I)}-{\rm (III)}$ which are all $o(v_n)$ for sufficiently large $d$, together with Lemma \ref{lemev-2}, completes the proof for $(s_l)$ as in (\ref{defsigr}).

Finally we turn to the case of variances as in (\ref{defsig}) and $l_n=\lambda_n$. We again use the event $\cA_n$ from $\eqref{defev-2}$, albeit with $l_n=\lambda_n$. First one notes that if $\sqrt{n}|X_{lk}|\le 1$ and since for $\alpha\ge~\be, l>~\lambda_n$ it holds $s_l\leq s_{\lambda_n}=2^{-(1/2+\alpha)\lambda_n}\leqa 1/\sqrt{n}$, the above bounds for $N_q, D$ can be used with $x_{lk}=1/\sqrt{n}$. In particular, we obtain that, for $l>\lambda_n$, 
\[  \frac{N_q}{D}\1_{\sqrt{n}|X_{lk}| \le 1} \leqa s_l^q \phi(1)^{-1}. \]
For $\sqrt{n}|X_{lk}|>1$ we split further, recalling the definition of $x_{lk}=\sqrt{\frac{t(j+1)\log{n}}{n}}$, $j\ge 0$,
\begin{align*}
\frac{N_q}{D}\1_{\sqrt{n}|X_{lk}| > 1}\1_{|X_{lk}| \le x_{lk}} & = \sum_{p\ge 1} \frac{N_q}{D}\1_{\sqrt{p}< \sqrt{n}|X_{lk}| \le \sqrt{p+1}}  \1_{|X_{lk}| \le x_{lk}} \\
& \leq %\left\{ 
\sum_{p=1}^{t(j+1)\log{n}} \frac{N_q}{D}\1_{\sqrt{p}< \sqrt{n}|X_{lk}| \le \sqrt{p+1}}.  %\right\} \1_{|X_k| \le x_k}.
\end{align*}
As in the proof of Theorem \ref{thm-seq}, given $l>\lambda_n$ and $k\in \cK_l$, we have the bound
\[  \sum_{p=1}^{t(j+1)\log{n}} \frac{N_q}{D}\1_{\sqrt{p}< \sqrt{n}|X_{lk}| \le \sqrt{p+1}}  
\leqa s_l^q \sum_{p=1}^{t(j+1)\log{n}} \frac{\1_{\sqrt{p}< \sqrt{n}|X_{lk}| \le \sqrt{p+1}}}{\phi(\sqrt{p+1})}, \]
where we have used (\ref{qmom}), that is the assumption that $h$ has $q$ moments, together with the facts that, since $l>\lambda_n$, for $\alpha\ge\be$ a) if $\sqrt{n}|X_{lk}|>\sqrt{p}$ then $|X_{lk}|/s_l\geqa1$  and b) it holds $s_l\leqa x_{lk}$.
Then,
\begin{align*}
&E_{f_0}\left[E^{\Pi}[{\rm (II)}|X^{(n)}]\1_{\cA_n}\right]\leq
\sum_{l>\lambda_n}2^{l/2}\left(\sum_{k\in\cK_l}E^n_{f_0}\left[E^\Pi\big[|f_{lk}|^q\,\big|\,X^{(n)}\big]\1_{\cA_n}\right]\right)^{1/q}\\
&\leqa \sum_{\lambda_n<l\le \log_2{(n)}} 2^{l/2}\left(\sum_{k\in\cK_l}s_l^q\{1+
\sum_{p=1}^{t\log{n}} \frac{E_{f_0}\1_{\sqrt{p}< \sqrt{n}|X_{lk}| \le \sqrt{p+1}}}{\phi(\sqrt{p+1})} \}\right)^{1/q} \\
& +  \sum_{j\ge 1} \,\sum_{j\log_2{(n)} < l\le(j+1)\log_2{(n)}} 
2^{l/2}\left(\sum_{k\in \cK_l} s_l^q\{1+ \sum_{p=1}^{t(j+1)\log{n}} \frac{E_{f_0}\1_{\sqrt{p}< \sqrt{n}|X_{lk}| \le \sqrt{p+1}}}{\phi(\sqrt{p+1})}\}\right)^{1/q}.
\end{align*}
Noting that, for $l>\lambda_n$, $k\in \cK_l$, for $\alpha\ge\be$ it holds $\sqrt{n}|f_{0,lk}|\leq 1/\sqrt{\log n}$, and letting $\delta_n=~1/\sqrt{\log n}$, similarly to the proof of Theorem \ref{thm-seq}, we have
\begin{align*}
 E_{f_0} \1_{\sqrt{p}< \sqrt{n}|X_{lk}| \le \sqrt{p+1}}
 \le 2\phi(\sqrt{p}-\delta_n)/(\sqrt{p}-\delta_n),
%= \frac{\bar{\Phi}(\sqrt{p})-\bar{\Phi}(\sqrt{p+1})}{\phi(\sqrt{p+1})}.
\end{align*}
and 
\[ \frac{1}{\sqrt{p}-\delta_n}\frac{\phi(\sqrt{p}-\delta_n)}{\phi(\sqrt{p+1})}
\lesssim \frac{e^{\sqrt{p}\delta_n}}{\sqrt{p}}.
 \]

First dealing with the term $l\le \log_2{n}$, one deduces
\begin{align*}
&\sum_{\lambda_n<l\le \log_2{(n)}} 2^{l/2}\left(\sum_{k\in\cK_l}s_l^q\{1+
\sum_{p=1}^{t\log{n}} \frac{P_{f_0}(\sqrt{p}< \sqrt{n}|X_{lk}| \le \sqrt{p+1})}{\phi(\sqrt{p+1})} \}\right)^{1/q}\\
&\leqa \sum_{\lambda_n<l\le \log_2{(n)}} 2^{l(1/2+1/q)}s_l\left(1+
\sum_{p=1}^{t\log{n}} \frac{e^{\sqrt{p}\delta_n}}{\sqrt{p}}\right)^{1/q}\\
&\leqa \sum_{\lambda_n<l\le \log_2{(n)}} 2^{l(1/2+1/q)}s_l\left(1+e^{\delta_n\sqrt{t\log{n}}}
\sum_{p=1}^{t\log{n}} \frac{1}{\sqrt{p}}\right)^{1/q}\\
&\leqa (\log n)^{\frac1{2q}}\sum_{\lambda_n<l\le \log_2{n}}2^{l(1/2+1/q)}s_l \\
&\leqa (\log n)^{\frac1{2q}}\sum_{\lambda_n<l\le \log_2{n}}2^{l(1/q-\alpha)}\leqa (\log n)^{\frac1{2q}}n^{\frac{1/q-\alpha}{1+2\beta}},
 \end{align*}
where one uses the previous bounds and the assumption on $\alpha$ which in particular implies that $1/q-\alpha<0$. The last bound is $o(v_n)$ under the assumption $\alpha\ge\beta+1/q$. Similarly,
\begin{align*}
&\sum_{j\ge 1} \,\sum_{j\log_2{(n)} < l\le(j+1)\log_2{(n)}} 
2^{l/2}\left(\sum_{k\in \cK_l} s_l^q\{1+ \sum_{p=1}^{t(j+1)\log{n}} \frac{P_{f_0}(\sqrt{p}< \sqrt{n}|X_{lk}| \le \sqrt{p+1})}{\phi(\sqrt{p+1})}\}\right)^{1/q}\\
  & \leqa \sum_{j\ge 1}\, \sum_{j\log_2{(n)} < l\le (j+1)\log_2{(n)}} 2^{l(1/2+1/q)}s_l\left(1+ \sum_{p=1}^{t(j+1)\log{n}}\frac{e^{\sqrt{p}\delta_n}}{\sqrt{p}}\right)^{1/q}\\
  & \leqa \sum_{j\ge 1}\, \sum_{j\log_2{(n)} < l\le (j+1)\log_2{(n)}} 2^{l(1/2+1/q)}s_l\left(1+ e^{\delta_n\sqrt{t(j+1)\log{n}}}\sum_{p=1}^{t(j+1)\log{n}}\frac{1}{\sqrt{p}}\right)^{1/q}\\
  & \leqa \sum_{j\ge 1}\, \sum_{j\log_2{(n)} < l\le (j+1)\log_2{(n)}} 2^{l(1/2+1/q)}s_l\{t(j+1)\log{n}\}^{\frac1{2q}}e^{\frac{\sqrt{t(j+1)}}{q}}\\
  &\leqa \sum_{j\ge 1}\, \sum_{j\log_2{(n)} < l\le (j+1)\log_2{(n)}} 2^{l(1/2+1/q)}s_ll^{\frac{1}{2q}} e^{\frac{\sqrt{2tl}}{q\sqrt{\log_2 n}}}\\
  &\leqa \sum_{j\ge 1}\, \sum_{j\log_2{(n)} < l\le (j+1)\log_2{(n)}} 2^{l(1/q+\eta-\alpha)}
  \leqa\sum_{l>\log_2{n}}2^{l(1/q+\eta-\alpha)}\leqa n^{1/q+\eta-\alpha},
\end{align*}
for $\eta>0$ arbitrarily small and where we used that $t(j+1)\log_2{n}\leq 2tj\log_2{n}\leq 2tl$ since $j\ge1$ (and again the assumption on $\alpha$). The latter bound is $o(v_n)$ provided $\eta\le \alpha-\frac{\be}{1+2\be}-1/q$, where such a choice of $\eta>0$ is possible due to the assumption $\alpha\ge \beta+1/q$. %In particular, for $\be\ma{>}\frac{1+\sqrt{1+2q}}{2q}$, for any \ma{$\alpha\ge\be$} there exists a sufficiently small $\eta>0$ such that the last bound is $o(v_n)$.
% \ma{is there room for improvement in the second display from the bottom? I use $l^{\frac1{2q}}e^{\frac{\sqrt{2tl}}{q}n^{-r}}\leqa 2^{\eta l}$.} \sbl{[indeed the $\eta$ could be improved slightly but we still have $q$ which is the main limitation?]}

Putting the previous bounds together one gets for $s_l$ as in (\ref{defsig}) and $l_n=\lambda_n$, that
\[E_{f_0}\left[E^{\Pi}[{\rm (II)}|X^{(n)}]\1_{\cA_n}\right]
\leqa o(v_n),\]
provided $\alpha\ge\beta+1/q$. 
Combining the bounds for the terms ${\rm(I)}-{\rm (III)}$ which are all $o(v_n)$ for sufficiently large $d$, together with Lemma \ref{lemev-2}, completes the proof.
\end{proof}

\begin{proof}[Proof of Theorem \ref{thm-besov}]
Let $J_0, J_1$ be the (closest integer) solutions to 
\begin{align}
 2^{J_0} & =n^{1/(2\be+1)} \label{defj0} \\
 2^{J_1} & = n^{\frac{\be}{2\be+1}\frac{1}{\be'}},\qquad 
\be'=\be-\left(\frac{1}{r}-\frac12\right)_{+}.  \label{defj1}
\end{align}
%\ora{[If $\be'$ is very close to $0$, one gets $2^{J_1}$ larger than $n$: in the lecture notes by Picard et al. \cite{wasa} Proposition 10.3  they assume $\be>1/r$, it seems we do not need it here??]} 

We first deal with super-light variances  (\ref{defsigr}).  
Let us recall the definition of the set of indices $\cN_n$ from the proof of Theorem \ref{thm-seq} 
\[ \cN_n := \left\{(l,k):\ \ |f_{0,lk}|>1/\rn  \right\}. \]
It follows from the definition of the class $\mathcal{B}_{rr}^\be(L)$, that $|f_{0,lk}|\le L 2^{-l(\be+1/2-1/r)}$. Since $B:=\be+1/2-1/r>0$, one obtains 
\begin{equation}\label{trcn}
\cN_n\subset \left\{(l,k):\ 2^l\le (\sqrt{n}L)^{1/B}\right\}.
\end{equation}
%By Lemma \ref{lembesov} below applied with $\Delta=1$, we have $|\cN_n|\le A 2^{J_0}$ and, for large enough $A$,
%\[ \sum_{l\le J_1,k} f_{0,lk}^2 \1_{|f_{0,lk}|\le 1/\sqrt{n}}
% +  \sum_{l> J_1,k} f_{0,lk}^2
% \le  A n^{-2\be/(2\be+1)}.  \]
To prove the theorem, one needs to bound\smallskip
\begin{align} E_{f_0} \int \|f-f_0\|_2^2 d\Pi(f\given X^{(n)})
& = E_{f_0} \sum_{(l,k):\, l\le J_0 \text{ or } (l,k)\in\cN_n}
\int (f_{lk}-f_{0,lk})^2 d\Pi(f\given X^{(n)}) \nonumber\\
& + E_{f_0} \sum_{(l,k)\notin\cN_n\,,\, l>J_0} \int (f_{lk}-f_{0,lk})^2 d\Pi(f\given X^{(n)})=:(A)+(B). \label{trbe}
\end{align}
We now transpose the estimates obtained in the proof of Theorem \ref{thm-seq} using the double--index notation. %(exactly as in the proof of Theorem \ref{thm-seq-sup}, with $\cA_n$ the event defined in \eqref{defev-2}). 
%For a given $J\ge 1$, recall that $f^{[J]}$ denotes the projection onto the span of the wavelet basis $\{\psi_{lk}\}$ up to level $l=J$. % with $k$ arbitrary. 
For any $l\ge1$ and $k\in\cK_l$, the inequality (\ref{eq:logbnd}) writes
\begin{equation} \label{eq:logbnd2}
 nE_{f_0}\int (f_{lk}-f_{0,lk})^2 d\Pi(f\given X^{(n)})\leqa 
\log^{1+\kappa}
\left(1+\frac{L+1/\sqrt{n}}{s_l} \right). 
\end{equation}
We use this to bound the first expected sum (A) in \eqref{trbe}. We have, using \eqref{trcn}, that if $l\le J_0$ or $(l,k)\in\cN_n$, then $l\leqa \log{n}$. For super-light variances  (\ref{defsigr}), this leads to $\log(s_l^{-1})~\leqa~l^2\leqa~(\log{n})^2$,  
%\ma{[ $s_l=2^{-l^2}?$ so $\log(s_l^{-1})=l^2\log(2)$]}, 
so that for such $l$'s
\[ \log^{1+\ka}\left(1+\frac{L+1/\sqrt{n}}{s_l} \right)\le
\left(C+l^2\right)^{1+\ka}\leqa \log^{2+2\ka}(n).
\]
%which yields, considering indices up to $J_0$ first,
%\[ E_{f_0} \int \|f^{[J_0]}-f_0^{[J_0]}\|_2^2 d\Pi(f\given X^{(n)}) \leqa (\log{n})^d \sum_{l=1}^{J_0} \sum_{k} \frac{1}{n} \leqa (\log{n})^d \frac{2^{J_0}}{n} \leqa (\log{n})^d n^{-\frac{2\be}{2\be+1}}. \]
%We also use \eqref{eq:logbnd2} for indices $(l,k)$ that belong to $\Sigma_0$, which yields
This implies that the contribution of the term (A) in \eqref{trbe} is 
\[(A) \leqa \log^{2+2\ka}(n) ( 2^{J_0} + 
|\cN_n| )/n\leqa \log^{2+2\ka}(n) 2^{J_0}/n,\]
using $|\{(l,k):\ l\le J_0\}|\leqa 2^{J_0}$ and Lemma \ref{lembesov} with the choice $\delta_n=1$ 
%(\log{n})^d |\Sigma_0|/n \leqa (\log{n})^d \frac{2^{J_0}}{n} \leqa (\log{n})^d n^{-\frac{2\be}{2\be+1}}, \]
%where we use Lemma \ref{lembesov} 
to bound the cardinality $|\cN_n|$ by a constant times $2^{J_0}$. This implies $(A)\leqa (\log{n})^d n^{-2\be/(2\be+1)}$ as desired.

It now remains to deal with the set of indices  $\cJ:=\{(l,k):\ l>J_0,\ |f_{0,lk}|\le 1/\sqrt{n}\}$. These indices are a subset of 
$\{(l,k):\ l\le J_1,\ |f_{0,lk}|\le 1/\sqrt{n}\}\cup\{(l,k):\ l>J_1\}$. By Lemma~\ref{lembesov}, observing that the latter set of indices coincides with the indices in the sum bounded in the second part of the lemma, combining with $(a+b)^2\le 2a^2+2b^2$, we obtain
\begin{align*} 
\sum_{(l,k)\in \cJ} \int (f_{lk}-f_{0,lk})^2 d\Pi(f\given X^{(n)}) & \le
2 \sum_{(l,k)\in \cJ} f_{0,lk}^2  
+ 2 \sum_{(l,k)\in \cJ} \int f_{lk}^2 d\Pi(f\given X^{(n)})\\
& \leqa n^{-\frac{2\be}{2\be+1}} 
+ \sum_{(l,k)\in \cJ} \int f_{lk}^2 d\Pi(f\given X^{(n)}). 
\end{align*}
To deal with the last term, note that this term is exactly, up to the use of the double index notation, the same as the last term bounded in the proof of Theorem \ref{thm-seq} (which in single-index notation concerns the indices $\{k>K_n:\ |f_{0,k}|\le 1/\sqrt{n}\}$). Note that therein one only uses the bound $|f_{0,k}|\le 1/\sqrt{n}$ (and not any other regularity assumption on $f_0$). In particular, this term can be dealt with in exactly the same way as in that proof, up to transposing the event $\cA_n$ therein in double-index notation: the event $\cA_{k,j}$ now becomes, in a similar way as in the proof of Theorem \ref{thm-seq-sup} above, 
\[\cA_{lk,j}:= \left\{ \, |X_{lk}| \le \sqrt{\frac{4(j+1)\log{n}}{n}}\, \right\}, \]
and one transposes the definition of $\cA_n=\cA_n(\cN_n)$ to double indices as 
%with intersections now over the double indices $(l,k)$ and $l>J_0$
\[ \cA_n  := \bigcap_{J_0<l \le \log{n}\,,\, (l,k)\in \cJ} \,\cA_{lk,0}\
  \cap\ \bigcap_{j\ge 1}\ \bigcap_{j\log{n} < l \le (j+1)\log{n}\,,\, (l,k)\in \cJ} \,\cA_{lk,j}. 
  \]
 In a similar way as in the proof of Theorem \ref{thm-seq}, this implies 
\begin{equation}\label{highfreqbe}
 E_{f_0}\sum_{(l,k)\in \cJ} \int f_{lk}^2 d\Pi(f\given X^{(n)})\1_{\cA_n}=o(n^{-M})  
\end{equation} 
for arbitrary $M>0$. Let us write the details here for completeness. First, one notes that $P_{f_0}[\cA_n^c]=o(1)$ by the same proof as that as Lemma \ref{lemev-2}, since we work with indices in $\cJ$ for which $|f_{0,lk}|\le 1/\sqrt{n}$ by definition so the proof goes through. For $j\ge 0$ in the sums below, let us set $x_{lk}=\sqrt{(4(j+1)\log{n})/n}$ whenever $j\log_2(n)<l\le (j+1)\log_2(n)$. The  constraint $s_l\leqa x_{lk}$ is trivially satisfied when $l>J_0$ for this choice of $x_{lk}$ and large enough $n$ since then $s_l\leqa 1/\sqrt{n}$. This enables one to use the bound, for $j\log_2(n)<l\le (j+1)\log_2(n)$,
\[ \int f_{lk}^2 d\Pi(f\given X) 
\1_{|X_{lk}| \le x_{lk}} \le s_l^2 \frac{\1_{|X_{lk}|\le x_{lk}}}{\phi(\sqrt{n}x_{lk})}
 \le s_l^2 n^{2(j+1)}.
 \]
Then, using that $(s_l)$ is decreasing, we have that
 \begin{align*}
%\lefteqn{
E_{f_0} & \sum_{(l,k)\in \cJ} \int f_{lk}^2 d\Pi(f\given X^{(n)})\1_{\cA_n}\\
&\leqa \sum_{J_0<l\le\log_2{(n)}} \sum_{k\in\cK_l} s_l^2 n^2
+\,\sum_{j\ge 1}\,\sum_{j\log_2{(n)}<l\le(j+1)\log_2{(n)}}\sum_{k\in\cK_l} s_l^2 n^{2(j+1)}\\
&\leqa n^3 s_{J_0}+\sum_{j\ge1}n^{3(j+1)}s_{j\log_2{n}}^2
\leqa n^3 2^{-\log_2^2{n}/(2\be+1)^2}+\sum_{j\ge1}n^{3(j+1)}2^{-j^2(\log_2{n})^2},
%&\leqa (\log{n})n^{\frac12+\frac{2+t}{2q}}2^{-\ell_n^2}+\sum_{j\ge1}(\log{n})n^{(j+1)(\frac12+\frac{t+2}{2q})}2^{-j^2(\log_2{n})^2}.
\end{align*} 
%\ma{[3 instead of 2 where magenta is above? There is $2^l$ due to $k\in\cK_l$]} 
which decreases  to $0$ faster than any polynomial in $n$, leading to \eqref{highfreqbe}. 
Combining \eqref{highfreqbe} with the previously obtained bounds leads to an overall upper bound for $(A)+(B)$ in \eqref{trbe} of $C(\log{n})^d n^{-2\be/(2\be+1)}$ as desired, which concludes the proof for super-light variances.

For the case of variances as in (\ref{defsig}), one proceeds again as in the proof of Theorem \ref{thm-seq} by replacing $\cN_n$ by
\[ \cM_n=\{(l,k):\ |f_{0,lk}|>\delta_n/\sqrt{n}\},\]
with $\delta_n=1/\sqrt{\log{n}}$. Now \eqref{trcn} becomes
\begin{equation}\label{trmn}
\cM_n\subset \left\{(l,k):\ 2^l\le (\sqrt{n}L/\delta_n)^{1/B}\right\},
\end{equation}
and if $l\leq J_0$ or $(l,k)\in \cM_n$ we still have $l\lesssim \log{n}$.
In order to bound the term (A), we have, this time,
\[ \log^{1+\ka}\left(1+\frac{L+1/\sqrt{n}}{s_l} \right)\le
\left(Cl\right)^{1+\ka}\leqa \log^{1+\ka}(n).
\]
%which yields, considering indices up to $J_0$ first,
%\[ E_{f_0} \int \|f^{[J_0]}-f_0^{[J_0]}\|_2^2 d\Pi(f\given X^{(n)}) \leqa (\log{n})^d \sum_{l=1}^{J_0} \sum_{k} \frac{1}{n} \leqa (\log{n})^d \frac{2^{J_0}}{n} \leqa (\log{n})^d n^{-\frac{2\be}{2\be+1}}. \]
%We also use \eqref{eq:logbnd2} for indices $(l,k)$ that belong to $\Sigma_0$, which yields
This implies that the contribution of the term (A) in \eqref{trbe} is 
\[ (A) \leqa \log^{1+\ka}(n) ( 2^{J_0} + 
|\cM_n| )/n\leqa \log^{2+\ka}(n) 2^{J_0}/n,\]
using $|\{(l,k):\ l\le J_0\}|\leqa 2^{J_0}$ and Lemma \ref{lembesov} applied with $\delta_n=1/\sqrt{\log{n}}$ 
%(\log{n})^d |\Sigma_0|/n \leqa (\log{n})^d \frac{2^{J_0}}{n} \leqa (\log{n})^d n^{-\frac{2\be}{2\be+1}}, \]
%where we use Lemma \ref{lembesov} 
to bound the cardinality $|\cM_n|$ by  $C\delta_{n}^{-r} 2^{J_0}$. This implies $(A)\lesssim (\log{n})^d n^{-2\be/(2\be+1)}$ as desired.

It now remains to deal with the set of indices  $\cJ':=\{(l,k):\ l>J_0,\ |f_{0,lk}|\le \delta_n/\sqrt{n}\}$. These indices are a subset of 
$\{(l,k):\ l\le J_1,\ |f_{0,lk}|\le \delta_n/\sqrt{n}\}\cup\{(l,k):\ l>J_1\}$, so as before this leads to
\begin{align*} 
\sum_{(l,k)\in \cJ'} \int (f_{lk}-f_{0,lk})^2 d\Pi(f\given X^{(n)}) & \le
2 \sum_{(l,k)\in \cJ'} f_{0,lk}^2  
+2 \sum_{(l,k)\in \cJ'} \int f_{lk}^2 d\Pi(f\given X^{(n)})\\
& \leqa n^{-\frac{2\be}{2\be+1}} + \sum_{(l,k)\in \cJ'} \int f_{lk}^2 d\Pi(f\given X^{(n)}). 
\end{align*}
To deal with the last term, note that this term is again exactly, up to the use of the double index notation, the same as the last term bounded in the proof of Theorem \ref{thm-seq} for the variances as in (\ref{defsig}).  In a similar way as in the proof of Theorem \ref{thm-seq}, adapting the argument with double-index notation as for the case of super-light variances above, this implies, for some $d>0$, 
\[ E_{f_0}\sum_{(l,k)\in \cJ'} \int f_{lk}^2 d\Pi(f\given X^{(n)})\1_{\cA_n}=O\left((\log{n})^d n^{-2\be/(2\be+1)}\right).\]
This concludes the proof for the variances as in (\ref{defsig}). 
\end{proof}

\begin{proof}[Proof of Theorem \ref{adnpbvm}]
We recall from \cite{cn14}, Proposition 6, two sufficient conditions for the nonparametric BvM to hold for a prior $\Pi$: first, a tightness requirement in $\cM_0(\bar{w})$, for some sequence $(\bar{w}_l)$ with $\bar{w}_l=o(w_l)$ as $l\to\infty$:
\begin{equation} \label{tightness} 
E[ \|f-T_n\|_{\cM_0(\bar{w})} \given X^{(n)}] = O_{P_0}(1/\sqrt{n}),
\end{equation}
for some centering $T_n$ which here can be taken to be the observation sequence $T_n=X^{(n)}=(X_{lk})_{l,k}$. Second, one should verify that finite-dimensional distributions converge, which here means that we should have that for any $J$, if $\pi_J$ denotes the projection onto the space spanned by the first $J$ coordinates of the basis, the vector $\rn(\pi_J(f-X^{(n)}))$ converges in distribution to $\cN(0,1)^{\otimes J}$.  

For tightness, we borrow the bounds on maxima of coordinates over a given level $l$ obtained in the proof of Theorem \ref{thm-seq-sup}: %\ora{[Note : to be updated in sup-norm proof as well, I think it is better to keep $l$ until almost the end in the bounds i.e. replace $\log{n}$ there by $l$]}
\[ E_{f_0}E^\Pi\big[\max_{k\in\cK_l}\sqrt{n}|f_{lk}-f_{0,lk}|\,\big|\,X^{(n)}\big]\leqa l^{(1+\kappa)r/2}, \]
where $r=1$ for $s_l$ as in (\ref{defsig}) and $r=2$ for $s_l$ as in (\ref{defsigr}). 
In the last display, note that the centering $f_{0,lk}$ can be replaced by $X_{lk}$, since $\sqrt{n}(X_{lk}-f_{0,lk})$ verifies the same inequality with $\sqrt{l}$ on the right-hand side instead of the (larger) $l^{(1+\kappa)r/2}$.  
Deduce that the tightness condition \eqref{tightness} holds for $\bar{w}_l=n^{(1+\kappa)r/2}$. 

For finite dimensional convergence, first observe that the posterior makes coordinates independent, so that it is enough to check convergence for individual coordinates, i.e. we want to show that $\rn(f_{lk}-X_{lk})$ converges in distribution (in $P_0$--probability) towards a $\cN(0,1)$ law, for any {\em fixed} indices $l,k$. For any fixed $l$, the prior density on coordinate $f_{lk}$ is  by definition the law of $s_l  \zeta_{lk}$, which by the prior's construction has a continuous and positive density over the whole real line.  For product priors  one can then  use Theorem 1 in \cite{ic12} to derive the desired asymptotic normality (see the paragraph below the statement of Theorem 7 in \cite{cn13} for a detailed argument).
\end{proof}

\subsection{Proofs of remaining results of Section \ref*{sec:prmass}}\label{proof:sec3}

\begin{proof}[Proof of Theorem \ref{thmpriormlinf}]
Let $l_n\ge 2$ be an integer, and for $f$ in $L^2$, recall that $f^{[l_n]}$ denotes its projection onto the linear span of the wavelets $\{\psi_{lk}\}_{l\leq l_n, k\in\cK_l}$ and $f^{[l_n^c]}=f-f^{[l_n]}$. Then, using the triangle inequality and \eqref{bsn}, 
\begin{align*}
&\Pi[\|f-f_0\|_\infty < \veps] \ge 
\Pi\left[ \|f^{[l_n]}-f_0^{[l_n]}\|_\infty < \veps/2 \,,\, \|f^{[l_n^c]}-f_0^{[l_n^c]}\|_\infty < \veps/2\right] \\
& \ge \Pi\left[\forall\, l\le l_n,\ \  2^{l/2}\max_{k\in\cK_l}|f_{lk} - f_{0,lk} | \le \frac{\veps}{cl_n}\; ; \ \forall\, l>l_n,\ \  2^{l/2}\max_{k\in\cK_l}|f_{lk}| \le \frac{\veps}{Dl\log^2{l}} \right] \1_{\|f_0^{[l_n^c]}\|_\infty<\veps/4}\\
& \ge \Pi\left[\forall\, l\le l_n,\,\forall k\in\cK_l, \ |f_{lk} - f_{0,lk} | \le \frac{\veps}{c2^{l/2}l_n}\right] \\&\hspace{5cm}\Pi\left[\ \forall\, l> l_n,\,\forall k\in\cK_l, \  |f_{lk}| \le \frac{\veps}{D2^{l/2}l\log^2{l}} \right] \1_{\|f_0^{[l_n^c]}\|_\infty<\veps/4}\\
& = \prod_{l\le l_n}\prod_{k\in\cK_l}\Pi\left[\ |f_{lk} - f_{0,lk} | \le \frac{\veps} {c2^{l/2}l_n}\right] \\&\hspace{5cm}\prod_{l>l_n}\prod_{k\in\cK_l}\Pi\left[|f_{lk}| \le \frac{\veps}{D2^{l/2}l\log^2{l}} \right] \1_{\|f_0^{[l_n^c]}\|_\infty<\veps/4}
\end{align*}
where we have used independence and the fact that $l^{-1}/\log^2(l)$ is a summable sequence and where $c,D$ are large enough constants.
%is a large constant.

Suppose the indicator in the last display equals one, which imposes $\|f_0^{[l_n^c]}\|_\infty<\veps/4$, for which a sufficient condition is
\begin{equation}\label{vepscond-inf}
\veps>4L2^{-\be l_n}
\end{equation}
  if $f_0\in \Hbl$ for some $L>0$. 

Let us now bound each individual term $p_{lk}:=\Pi[ |f_{lk}-f_{0,lk}|\le \veps/(c2^{l/2}l_n)]$ for $l\le l_n, k\in\cK_l$. By symmetry, one can assume $f_{0,lk}\ge 0$ and
\begin{align}\label{eq:pk}
 p_{lk} & \ge \int_{f_{0,lk}}^{f_{0,lk}+\veps/(c2^{l/2}l_n)} s_l^{-1}h(x/s_l) dx \ge \frac{\veps}{c2^{l/2}l_n} s_l^{-1}h(C/s_{l_n})\nonumber\\
        &\geq \frac{\veps}{c2^{l/2}l_n} h(C/s_{l_n})
        \geq \frac1c\veps \ e^{-\log(l_n)-(l_n/2)\log{2}-c_1(1+\log^{1+\kappa}(1+C/s_{l_n}))}\\
        &\geq \veps \ e^{-C_1l_n^{1+\kappa}},\nonumber
\end{align} 
where we have used that $(s_l)$ is decreasing and of type (\ref{defsig}) as well as $x\to h(x)$ on $[0,\infty)$ by assumption, that $f_{0,lk}+\veps/(c2^{l/2}l_n)\le C$ since $|f_{0,lk}|$ is bounded for $f_0\in \Hbl$, and assumption (\ref{condti}).   So 
\[ \prod_{l\leq l_n}\prod_{k\in \cK_l} p_{lk} \ge \veps^{2^{l_n+1}} \exp\left\{ -C_1l_n^{1+\kappa}2^{l_n} \right\}, \]
for a new value of the constant $C_1$.

On the other hand, we also need to bound
\begin{align*}
\cP_2:=\Pi\left[\forall\, l>l_n, \,\forall k\in\cK_l, \ \  |f_{lk}| \le \frac{\veps}{D2^{l/2}l\log^2{l}} \right]
&=\prod_{l>l_n}\prod_{k\in\cK_l} (1-2 \overline{H}(\veps/\{Ds_l2^{l/2}l\log^2{l}\}))\\
& = \prod_{l>l_n}\prod_{k\in\cK_l} (1-2 \overline{H}(\veps 2^{l \alpha}/\{Dl\log^2{l}\})).
\end{align*}
Note that if 
\begin{equation}  \label{defeps-inf}
\veps\ge D'2^{-l_n\be}l_n\log^2{l_n},
\end{equation}
then for $l>l_n$
\[  \veps 2^{l\alpha}/\{Dl\log^2{l}\} \ge 2^{l\alpha-l_n\be}\,\frac{l_n}{l}\frac{\log^2{l_n}}{\log^2{l}}\frac{D'}{D} \]
so that, as long as $\alpha\ge \be$ there is a large enough $D'>0$ such that the last term is at least 1 for all $l>l_n$ and \eqref{vepscond-inf} is satisfied.   Then, by using (\ref{condts}), there exists a constant $C_2>0$ (which can be made arbitrarily small by taking $D'$ large) such that
\[ \overline{H}(\veps 2^{l \alpha}/\{Dl\log^2{l}\})
\le C_2 2^{2l_n\be-2l\alpha}\left(\frac{l\log^2{l}}{l_n\log^2{l_n}}\right)^2.
 \]
Then, possibly enlarging $D'$ in \eqref{defeps-inf} further in order to have that the right hand side in the last display is less than $1/4$, using the inequality $\log(1-2x)\geq -4x$ for all $x\in[0,1/4)$, and as long as $\alpha> 1/2$, so that the series $\sum 2^{l(1-2\alpha)}$ is converging,
\begin{align*}
 \cP_2 & \ge \exp\left\{ \sum_{l>l_n}\sum_{k\in\cK_l}\log\left(1-2C_2 2^{2l_n\be-2l\alpha}\left(\frac{l\log^2{l}}{l_n\log^2{l_n}}\right)^2\right) \right\} \\
& \ge \exp\{ -C_3 \frac{2^{2l_n\be}}{l_n^2\log^4{l_n}} \sum_{l>l_n}\sum_{k\in\cK_l}2^{-2l\alpha}l^2\log^4(l) \}\\
& = \exp\{ -C_3 \frac{2^{2l_n\be}}{l_n^2\log^4{l_n}} \sum_{l>l_n}2^{l(1-2\alpha)}l^2\log^4(l) \} \ge \exp(-C_4 2^{l_n} \cdot 2^{2l_n(\be-\alpha)}),
\end{align*}
where we have used, whenever $\alpha>1/2$, that $\sum_{l>l_n} 2^{l(1-2\alpha)}l^2 \log^4{l}=O( 2^{l_n(1-2\alpha)}l_n^2\log^4{l_n})$ as $l_n\to\infty$. The bound of the last display is at least $\exp(-C_42^{l_n})$ assuming $\alpha\ge \be$. 

Putting everything together, for $D'>0$ in \eqref{defeps-inf} sufficiently large so that $D'l_n\log^2{l_n}\ge1$, one gets
\begin{align*}
\Pi[ \|f-f_0\|_\infty< \veps ] &  \ge 
\veps^{2^{l_n+1}} \exp\left\{ -C_1l_n^{1+\kappa}2^{l_n} -C_42^{l_n}\right\} \\
& \ge \exp \left\{ -C_5l_n 2^{l_n+1}-C_1l_n^{1+\kappa}2^{l_n} -C_42^{l_n}\right\} \\
& \ge \exp \left\{ -C_6 l_n^{1+\kappa}2^{l_n}\right\}.
\end{align*} 

%Set $2^{l_n}=n^{1/(1+2\be)} (\log{n})^{(2-(1+\kappa))/(1+2\be)}(\log\log{n})^{4/(1+2\be)}$. Then with \eqref{defeps-inf} one checks that
%\ora{For now we only have an inequality in \eqref{defeps-inf}, should the next display also be one?}
% \[ \veps\asymp n^{-\be/(1+2\be)} (\log{n})^{(1+(1+\kappa)\be)/(1+2\be)}(\log\log{n})^{2/(1+2\be)}\]
% and $n\veps^2 \asymp l_n^{1+\kappa}2^{l_n}$.

Recall the definition of $\veps_n$ as in (\ref{rateveps-inf-1}) and the constants $d_1,d_2$ from the statement of the Theorem. 
Set $l_n$ such that $2^{l_n}=M_1 n^{1/(1+2\be)}(\log n)^{(2-(1+\kappa))/(1+2\be)}(\log\log{n})^{4/(1+2\be)}$, with $M_1>0$ to be chosen below. It is then straightforward to check that there exists a constant $M_2>0$ such that $2^{-l_n\be}l_n\log^2l_n\leq M_1^{-\beta}M_2\veps_n$, hence \eqref{defeps-inf} holds for $\veps=d_1\veps_n$ and large enough $n$, provided $d_1\ge D'M_1^{-\beta}M_2$. Furthermore, \[C_6 l_n^{1+\kappa}2^{l_n}\le d_2n\veps_n^2,\] provided $M_1\leq d_2/(C_6M_2^{1+\kappa})$. Hence for any $d_2>0$, the constants $M_1$ and $d_1$ can be chosen so that the latter two conditions hold simultaneously, 
which concludes the proof for $s_l$ as in (\ref{defsig}).

Let us now turn to the case of $s_l$ given by (\ref{defsigr}). The term with $p_{lk}$'s is now bounded using \eqref{eq:pk}, similarly as in the case of the prior (\ref{defsig}), by 
\[ \prod_{l\leq l_n}\prod_{k\in \cK_l} p_{lk} \ge \veps^{2^{l_n+1}} \exp\left\{ -C_1l_n^{2+2\kappa}2^{l_n} \right\}. \]
On the other hand, we also have 
\begin{align*}
\cP_2:=\prod_{l>l_n}\prod_{k\in\cK_l} (1-2 \overline{H}(\veps/\{Ds_l2^{l/2}l\log^2{l}\})) = \prod_{l>l_n}\prod_{k\in\cK_l} (1-2 \overline{H}(\veps 2^{l^2-l/2}/\{Dl\log^2{l}\})).
\end{align*}
Let $\veps\ge D2^{-\be l_n}$, then for large enough $l_n$,  
$\veps 2^{l^2-l/2}/\{Dl\log^2{l}\}\ge 1$ and by (\ref{condts})
\[\overline{H}(\veps 2^{l^2-l/2}/\{Dl\log^2{l}\}\})
\le c_2(\veps 2^{l^2-l/2}/\{Dl\log^2{l}\})^{-2}
\le 2^{-l^2}
 \]
so that again for sufficiently large $l_n$ it holds \[\cP_2\ge \exp\{ -C\sum_{l>l_n}\sum_{k\in\cK_l} 2^{-l^2}\}\ge \exp\{-C' 2^{l_n-l_n^2}\}.\] The latter is bounded from below by a constant, so the final bound obtained for the probability at stake is $\exp\{-C'l_n^{2+2\kappa}2^{l_n}\}$ (for a new value of the constant $C'$).  Let us set $\veps=d_1\veps_n$ and $l_n$ such that $2^{l_n}=(d_1\veps_n/D)^{-1/\be}$. Noting that there exists a constant $M>0$ such that for large $n$ it holds $l_n\le M\log n$, we have that there exists $C''>0$ independent of $d_1, d_2$ such that 
\[C'l_n^{2+2\kappa}2^{l_n}\leq C''d_1^{-1/\be}n\veps_n^2.\]
The last display is less than $d_2n\veps_n^2$ for large $d_1$, which concludes the proof.
\end{proof}
 
 \begin{remark}[Cauchy tails] \label{rem:cauchyinfty}
Similarly to Remark \ref{rem:cauchy}, we note that the case of $H$ equal to the Cauchy distribution can be accommodated up to a slight variation on the condition for the prior HT$(\al)$. Suppose in this case that $\alpha>1$ (recall again that we have the choice of $\al$, and that in view of the Theorem, the larger $\al$ is, the larger the range for which adaptation occurs, so we can always choose $\al>1$ beforehand). Indeed, in this case
for $\alpha> 1$ one gets, for $s_l$ as in (\ref{defsig}),
\begin{align*}
 \cP_2 & \ge \exp\left\{ \sum_{l>l_n}\sum_{k\in\cK_l}\log\left(1-2C_2 2^{l_n\be-l\alpha}\frac{l\log^2{l}}{l_n\log^2{l_n}}\right) \right\}\\
 & = \exp\{ -C_3 \frac{2^{l_n\be}}{l_n\log^2{l_n}} \sum_{l>l_n}2^{l(1-\alpha)}l\log^2(l) \} \ge \exp(-C_4 2^{l_n} \cdot 2^{l_n(\be-\alpha)}),
\end{align*}
and from there on the proof is identical to that of Theorem \ref{thmpriormlinf}. A similar remark applies to the case of $s_l$ as in (\ref{defsigr}), this time with no extra condition (the latter choice is free of $\al$).
\end{remark}
 
\begin{proof}[Proof of Theorem \ref{thmdensity}]
Let $r_n=M\veps_n$ be a sufficiently large multiple of $\veps_n$, where the required value of $M$ is made explicit below.  
Using Lemma \ref{lemlinkd}, we know that  the KL-neighborhood $B_n(g_0,r_n)$ contains an $L^\infty$--ball $\{f:\ \|f-f_0\|_\infty \le cr_n\}$, for $c>0$ a small enough universal constant (provided $M\veps_n\le 1$ say, which is the case for $n$ large enough).

On the other hand,  using Theorem \ref{thmpriormlinf} with $d_2=\rho$, we know  that there exists $d_1>0$ such that 
\[ \Pi[\|f-f_0\|_\infty \le d_1\veps_n] \ge \exp(-\rho n\veps_n^2),\]
which is larger than $ \exp(-\rho n r_n^2)$ provided $M\ge 1$. Thus provided $cM\ge d_1$ (i.e. $M\ge d_1/c$), 
\[ \Pi[B_n(g_0, r_n)] \ge \exp(-\rho n r_n^2). \] 
By Theorem  \ref{alphapost}, the $\rho$--posterior thus converges at rate $Cr_n^2 \rho/(1-\rho)\asymp \veps_n^2$ in terms of $D_\rho/n$. By the paragraph below the statement of Theorem  \ref{alphapost}, one deduces that  the $\rho$--posterior converges at rate $C'\veps_n$ in terms of the $\|\cdot\|_1$--norm, which concludes the proof. 
%one deduces that for $M$ large enough and fixed $\rho\in(0,1)$ the prior mass condition \eqref{equation_thm_1} of Theorem  \ref{alphapost} is satisfied with $r_n$ in place of $\veps_n$ \ora{[Tune constants carefully! (i.e. allow some flexibility in Theorem \ref{thmpriormlinf}: show that one can take $c_2=\rho$ for appropriate constant $M$ and $c_1$) ]}. This concludes the proof, as one then directly deduces a result in $L^1$--norm in the density estimation model according to the comments below the statement of Theorem  \ref{alphapost}. 
\end{proof}

\begin{proof}[Proof of Theorem \ref{thmclassif}]
One proceeds similarly as in the proof of Theorem \ref{thmdensity}. Using Lemma \ref{lemlinkc}, one obtains that the mass of the KL-neighborhood in Theorem \ref{alphapost} is bounded from below by the mass of an $L^\infty$--neighborhood of $f_0=\La^{-1}h_0$ of size constant times $\veps_n$. The result follows by combining Theorems \ref{thmpriormlinf} and \ref{alphapost} (and the paragraph below it), %\ora{adjust constants carefully: this will be fine once this has been done for Theorem \ref{thmdensity}}, 
using again that the R\'enyi--entropy can be bounded from below in terms of the $L^1$--distance between individual densities in the model, which coincides with $\|p_f-p_{f_0}\|_{G,1}$.
\end{proof}

\begin{proof}[Proof of Theorem \ref{thm-besov-rho}]
%\sbl{[I agree with your updates (thank you for filling in the missing details!), which I have uncoloured, just a few small remarks remaining (please see below)]} 
We start by recalling some notation from the proof of Theorem \ref{thm-besov}. Let $J_0$ be the (closest integer) solution to $2^{J_0}=n^{1/(2\be+1)}$. The index $J_1$ is defined as the (closest integer) solution to
\[ 2^{J_1} = n^{\frac{\be}{2\be+1}\frac{1}{\be'}},\qquad 
\be'=\be-\left(\frac{1}{r}-\frac12\right)_{+}. \]
We also set $\veps_n^*:=2^{-\be J_0}$. The following set of indices will also be used
\[ \Sigma_0' := \left\{(l,k):\ l\le J_1,\ |f_{0,lk}|>1/\rn  \right\}\cup\{(l,k): l\le J_0\}. \]
By Theorem \ref{alphapost} for tempered posteriors and the paragraph below its statement, in the white noise model the $\rho$--posterior convergence in $\|\cdot\|_2$--loss follows if the  Kullback--Leibler type neighborhood $B_n(f_0,\veps_n)$, which here coincides with the $\veps_n$--ball in $L^2$ around $f_0$, has prior mass bounded below by $e^{-n\rho\veps_n^2}$.  

{\em Step 1.} By Lemma \ref{lembesov}, for some constant $A$,  
\begin{equation}\label{besovimp}
\cB_n:= \sum_{l\le J_1,k} f_{0,lk}^2 \1_{|f_{0,lk}|\le 1/\sqrt{n}}
 +  \sum_{l> J_1,k} f_{0,lk}^2
 \le  A {\veps_n^*}^2
\end{equation} 
and  the cardinality $|\Sigma_0'|$ of $\Sigma_0'$ verifies
\begin{equation}\label{besovcard}
|\Sigma_0'|\leqa 2^{J_0}.
\end{equation}

{\em Step 2.} 
Using the same notation as in the proof of Theorem \ref{thmpriormlinf}, letting in the next lines $\veps> C_1\veps_n^*$ where $C_1>0$ is a sufficiently large constant and $\cB_n$ is as in \eqref{besovimp}, and for $D$ a large enough constant,
\begin{align*}
\Pi&[ \|f-f_0\|_2 < \veps] \ge 
\Pi\left[ \|f^{[J_1]}-f_0^{[J_1]}\|_2 < \veps/2 \,,\, \|f^{[J_1^c]}-f_0^{[J_1^c]}\|_2 < \veps/2 \right] \\
& \ge\Pi\left[ \forall\, (l,k)\in \Sigma_0',\ \  |f_{lk}-f_{0,lk}| \le \frac{\veps}{D\sqrt{|\Sigma_0'|}} \right]  \cdot \\
&\qquad \Pi\left[ \forall (l,k)\notin\Sigma_0':\  l\le J_1,\ \ | f_{lk}| \le \frac{\veps}{D2^{J_1/2}} \right]  \cdot\\
&\qquad \Pi\left[ \forall\, (l,k):\ l>J_1,\ \ |f_{lk}| \le \frac{\veps}{Dl2^{l/2}} \right] \cdot
\1_{\sqrt{\cB_n}<\veps/8}.
\end{align*}
We bound each term separately from below. First note that since $\beta>1/r-1/2$ by assumption, the indicator function takes the value 1 for the considered values of $\veps$. For the first term, which involves indices with $l\le J_1$ and $(l,k)\in\Sigma_0'$, by the same argument as for the terms $l\le l_n, k\in\cK_l$ in the proof of Theorem \ref{thmpriormlinf}, one obtains
\[\Pi\left[ \forall\, (l,k)\in \Sigma_0',\ \  |f_{lk}-f_{0,lk}| \le \frac{\veps}{D\sqrt{|\Sigma_0'|}} \right] \ge \left(\veps e^{-C_2 J_1^{2+2\kappa}} \right)^{|\Sigma_0'|}, \]
for $C_2>0$ a large enough constant. For the two other terms, by a similar argument as for the terms $l>l_n$ in the proof of Theorem \ref{thmpriormlinf}, one obtains that the probabilities at stake are bounded from below by a constant. Indeed, this follows from the fact that for large enough $n$ and for $\veps>D2^{-\beta J_0}$, for $J_0<l\le J_1$ and for $l>J_1$, respectively, it holds
\[\frac{\veps}{Ds_l2^{J_1/2}}\ge2^{l^2-J_1/2-\beta J_0}\ge 1, \quad\quad \frac{\veps}{Ds_ll2^{l/2}}=\frac{2^{l^2-l/2-\beta J_0}}{l}\ge 1,\] which by condition (\ref{condts}) result in the respective bounds
\[\overline{H}(\frac{\veps}{Ds_l2^{J_1/2}})\leq 2^{-l^2},\quad\quad\overline{H}(\frac{\veps}{Ds_ll2^{l/2}})\leq 2^{-l^2}.\] 
Combining, we deduce that there exists a sufficiently large constant $C_3>0$ such that
\[ 
\Pi[ \|f-f_0\|_2 < \veps] \ge e^{-C_3 (\log{n})^{2+2\kappa} 2^{J_0}}.
\] 
The result follows by setting $\veps=\veps_n\asymp (\log{n})^{1+\kappa} 2^{-\beta J_0}~\asymp~(\log{n})^{1+\kappa} n^{-\be/(2\be+1)}$. Note that by choosing the constant in the definition of $\veps_n$ sufficiently large, the right hand side in the obtained lower bound in the last display becomes larger than $e^{-n\rho \veps_n^2}$, as required. %\sbl{double-check slight difference with prior mass $L^2$ loss (linked to the choice of $K$ there)}
\end{proof}

\section{Technical lemmas}
\subsection{Lemma for the regression setting}

\begin{lemma} \label{lemev-2}
Let $\cA_n$ be the event as in \eqref{defev-2}, with either $l_n=\ell_n$ or $l_n=\lambda_n$, and assume $f_0\in \Hbl$ for some $\beta, L>0$. Then for large enough $n$ it holds
\[ P_{f_0}[\cA_n^c] \leqa 1/\sqrt{n}.  \]
\end{lemma}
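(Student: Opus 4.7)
The plan is to follow the structure of Lemma \ref{lemev} from the main text, but adapted to double-indices and with the improved control of $|f_{0,lk}|$ that comes from the H\"older assumption $f_0\in\Hbl$. First I would write $X_{lk}=f_{0,lk}+\xi_{lk}/\sqrt n$, where $\xi_{lk}$ are iid $\cN(0,1)$, so that the event $\cA_{l,k,j}^c$ reads $|\sqrt n f_{0,lk}+\xi_{lk}|>\sqrt{t(j+1)\log n}$. A union bound gives
\[
P_{f_0}[\cA_n^c]\le \sum_{l_n<l\le \log_2 n}\sum_{k\in\cK_l}P_{f_0}[\cA_{l,k,0}^c]+\sum_{j\ge 1}\sum_{j\log_2 n<l\le(j+1)\log_2 n}\sum_{k\in\cK_l}P_{f_0}[\cA_{l,k,j}^c],
\]
and the point is to show that each piece is of order $n^{-1/2}$ (or less), the choice $t=(L+\sqrt 3)^2$ being designed for precisely this purpose.

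Next I would use the H\"older hypothesis to control the bias term $\sqrt n|f_{0,lk}|$ on the range $l>l_n$. Since $|f_{0,lk}|\le L\,2^{-l(1/2+\be)}$ and both choices of cutoff satisfy $2^{l_n}\asymp n^{1/(1+2\be)}(\log n)^c$ for some $c\in\RR$, a direct computation gives $\sqrt n|f_{0,lk}|\le L\sqrt{\log n}$ uniformly in $l>l_n$, $k\in\cK_l$ (in fact strictly smaller for $l_n=\lambda_n$). Hence by the standard Gaussian tail bound and the triangle inequality,
\[
P_{f_0}[\cA_{l,k,j}^c]\le 2\exp\Bigl(-\tfrac12\bigl(\sqrt{t(j+1)\log n}-L\sqrt{\log n}\bigr)^2\Bigr)\le 2\exp\Bigl(-\tfrac{3(j+1)}{2}\log n\Bigr),
\]
where the last inequality uses $(\sqrt{t(j+1)}-L)^2\ge 3(j+1)$ for $j\ge 0$ when $t=(L+\sqrt 3)^2$, which follows from expanding and using $\sqrt{j+1}\ge 1$.

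Finally I would plug this into the union bound. For $j=0$ the inner double-sum has at most $\sum_{l\le \log_2 n}2^l\leqa n$ terms, each of size $\leqa n^{-3/2}$, contributing $\leqa n^{-1/2}$. For $j\ge 1$ the range $l\le (j+1)\log_2 n$ contains $\leqa 2^{(j+1)\log_2 n}=n^{j+1}$ indices, each contributing $\leqa n^{-3(j+1)/2}$, so the $j$-th block is $\leqa n^{-(j+1)/2}$, and summation of the geometric series over $j\ge 1$ yields $O(n^{-1})$. Combining completes the bound. The only mild care-point I anticipate is book-keeping the inequality $(\sqrt{t(j+1)}-L)^2\ge 3(j+1)$ uniformly in $j$, which is essentially what motivated the specific choice $t=(L+\sqrt 3)^2$; the rest is routine.
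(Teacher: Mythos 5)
Your proof is correct and follows essentially the same route as the paper's: write $X_{lk}=f_{0,lk}+\xi_{lk}/\sqrt n$, use the H\"older assumption to bound $\sqrt n|f_{0,lk}|\le L\sqrt{\log n}$ for $l>l_n$, reduce to a Gaussian tail bound giving $P_{f_0}[\cA_{l,k,j}^c]\leqa n^{-\frac32(j+1)}$, and sum the resulting geometric series. You have spelled out the algebraic step $(\sqrt{t(j+1)}-L)^2\ge 3(j+1)$ that the paper leaves implicit, which is a welcome clarification, but otherwise the argument is the same.
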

\begin{proof}
First, for either considered choice of $l_n$, for any $l>l_n, k\in\cK_l$ and $j\ge 0$, we have $P_{f_0}[\cA_{l,k,j}^c]\le P[ |\cN(0,1)|>\sqrt{3(j+1)\log{n}}] \leqa n^{-\frac32(j+1)}$, where one uses that for $l>l_n$ we have $|f_{0,lk}|\leq L\sqrt{\log{n}}/\sqrt{n}$ for large enough $n$.  From this one deduces
\[P_{f_0}[\cA_n^c] \lesssim  \sum_{j\ge 0} 2^{(j+1)\log_2{n}} n^{-\frac32(j+1)}\leqa (\sum_{j\ge0} n^{-j/2})/\sqrt{n}\leqa1/\sqrt{n}.
\qedhere\]
\end{proof}

\subsection{Relating KL-neighborhoods to $L^\infty$--balls}% in density estimation and classification}
\begin{lemma}[Lemma 3.1 in \cite{vvvz08}] \label{lemlinkd}
For measurable functions $v,w$ on $[0,1]^d$ and $p_v=e^v/\int_0^1 e^v$,
 we have (with constants in the inequalities $\leqa$ only depending on an upper-bound on $\|v - w\|_\infty$) 
% \eqref{priordens},
\begin{align*}
 K(p_v,p_w) & \leqa \|v-w\|_\infty^2(1+\|v-w\|_\infty)e^{\|v-w\|_\infty}, \\
 V(p_v,p_w) & \leqa \|v-w\|_\infty^2(1+\|v-w\|_\infty)^2e^{\|v-w\|_\infty}.
\end{align*} 
\end{lemma}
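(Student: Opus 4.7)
Set $h=v-w$ and $M=\|h\|_\infty$. The plan is to reduce both the Kullback--Leibler divergence and the variance to integrals of a log-likelihood ratio that is uniformly bounded by a multiple of $M$. From the definition $p_v=e^v/Z_v$, $Z_v=\int_0^1 e^v$, and similarly for $w$, I first write
\[
\log(p_v/p_w)=h-a,\qquad a:=\log(Z_v/Z_w)=\log\!\int e^{h}\,p_w.
\]
Since $e^{-M}\le e^h\le e^M$, the constant $a$ satisfies $|a|\le M$, and hence $|\log(p_v/p_w)|\le 2M$ pointwise.

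The variance bound is then immediate: $V(p_v,p_w)=\int p_v(\log(p_v/p_w))^2\le 4M^2$, which matches the stated bound $M^2(1+M)^2 e^M$ up to a constant depending on any fixed upper bound for $M$.

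For the KL divergence, a direct expansion $K(p_v,p_w)=E_{p_v}[h]-a$ would require a delicate second-order Taylor analysis of the log-partition function $a=a(h)$ near $h\equiv 0$, so instead I would use the symmetrization identity
\[
K(p_v,p_w)\le K(p_v,p_w)+K(p_w,p_v)=\int(p_v-p_w)(h-a),
\]
which follows from nonnegativity of each divergence together with the fact that the two log-ratios are $\pm(h-a)$. Writing $p_v-p_w=p_w(e^{h-a}-1)$ and applying the elementary inequality $|e^x-1|\le |x|e^{|x|}$ with $|h-a|\le 2M$, the right-hand side is bounded by
\[
 2Me^{2M}\!\int p_w|h-a|\le 4M^2 e^{2M},
\]
which again agrees with the stated bound $M^2(1+M)e^M$ up to a constant depending on an upper bound for $M$.

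The main obstacle is the KL estimate: without the symmetrization trick one is forced into a careful Taylor expansion of $a(h)=\log\int e^h p_w$ around $h=0$ to cancel the first-order term, whereas symmetrization reduces everything to the uniform bound on $|h-a|$ and the pointwise inequality $|e^x-1|\le|x|e^{|x|}$. The remaining steps (bounding $|a|$ via monotonicity of the log-MGF, and integrating the elementary inequalities against $p_w$) are routine.
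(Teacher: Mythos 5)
Your proof is correct. Note that the paper itself does not prove this lemma; it cites it directly from van der Vaart and van Zanten (2008), so there is no internal proof to compare against. Your symmetrization approach, bounding $K(p_v,p_w)$ by $K(p_v,p_w)+K(p_w,p_v)=\int(p_v-p_w)(h-a)$ and then using $p_v-p_w=p_w(e^{h-a}-1)$ together with $|e^x-1|\le|x|e^{|x|}$, is a clean route that avoids the second-order Taylor analysis of the log-partition function, and all the required uniform bounds ($|a|\le M$, $|h-a|\le 2M$) are established correctly. One small imprecision: you write $V(p_v,p_w)=\int p_v(\log(p_v/p_w))^2$ as an equality, but the paper's $V$ (Section~D of the supplement) is the \emph{centered} quantity $\int p_v(\log(p_v/p_w)-K(p_v,p_w))^2$, so this line should be an inequality (variance bounded by the second moment). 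Since that inequality holds, the final bound $V\le 4M^2$ still follows, and both of your bounds are at most a constant (depending only on an upper bound for $M$) times the stated ones.
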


\begin{lemma}[Lemma 3.2 in \cite{vvvz08} for logistic link function] \label{lemlinkc}
For measurable functions $v,w$ on $[0,1]^d$, $\La$ the logistic link function, $h_f(x)=\La( f(x) )$ and $\|\cdot\|_{2,G}$ the $L^2(G)$ norm,
\begin{align*}
 K(h_v,h_w) & \leqa \|v-w\|_{2,G}^2 \\
 V(h_v,h_w) & \leqa \|v-w\|_{2,G}^2.
\end{align*} 
In particular, both quantities are bounded from above by a constant times $\|v-w\|_\infty^2$.
\end{lemma}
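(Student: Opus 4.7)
The plan is to exploit two special features of the logistic link: that $t \mapsto \log \Lambda(t)$ and $t \mapsto \log(1-\Lambda(t)) = \log \Lambda(-t)$ are both $1$-Lipschitz, and that the Bernoulli KL under a logistic reparametrisation has a very simple derivative.

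First, I would write the log-likelihood ratio pointwise. Since $p_v(x,y) = h_v(x)^y (1-h_v(x))^{1-y} g(x)$ and $y\in\{0,1\}$,
\[
\log \frac{p_v(x,y)}{p_w(x,y)} = y\bigl[\log h_v(x)-\log h_w(x)\bigr] + (1-y)\bigl[\log(1-h_v(x))-\log(1-h_w(x))\bigr],
\]
so in particular the log-ratio equals $\log \Lambda(v(x))-\log \Lambda(w(x))$ when $y=1$ and $\log \Lambda(-v(x))-\log \Lambda(-w(x))$ when $y=0$. Since $(\log \Lambda)'(t) = 1-\Lambda(t) \in (0,1)$, the map $t\mapsto \log \Lambda(t)$ is $1$-Lipschitz, and the same holds for $t\mapsto\log\Lambda(-t)$. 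This yields the pointwise bound
\[
\Bigl|\log \tfrac{p_v(x,y)}{p_w(x,y)}\Bigr| \le |v(x)-w(x)| \qquad \text{for every }(x,y).
\]

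For the variance term, squaring the previous inequality and integrating against $p_v$ (which has marginal $g$ on $x$) gives $V(h_v,h_w) \le E_v\bigl[\log^2(p_v/p_w)\bigr] \le \int (v-w)^2 g \, dx = \|v-w\|_{2,G}^2$. Both the second-moment and the centered variants of $V$ obey this bound.

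For the KL term, I would first integrate out $y$ to reduce to a pointwise Bernoulli KL:
\[
K(h_v,h_w) = \int K_{\mathrm{Bern}}(h_v(x),h_w(x))\, g(x)\, dx,
\]
where $K_{\mathrm{Bern}}(a,b)=a\log(a/b)+(1-a)\log((1-a)/(1-b))$. Fix $w$ and set $F(t):= K_{\mathrm{Bern}}(\Lambda(t),\Lambda(w))$. Using $\Lambda'=\Lambda(1-\Lambda)$ and $\log \Lambda(t)-\log(1-\Lambda(t))=t$, a short calculation collapses to the clean identity
\[
F'(t) = \Lambda'(t)\bigl(t-w\bigr), \qquad F(w)=0.
\]
Since $\Lambda'\le 1/4$, integrating gives $F(v)\le (v-w)^2/8$, so
\[
K(h_v,h_w) \le \tfrac{1}{8}\int (v(x)-w(x))^2 g(x)\, dx = \tfrac{1}{8}\|v-w\|_{2,G}^2.
\]
The final claims follow from $\|v-w\|_{2,G}\le \|v-w\|_\infty$ as $G$ is a probability measure.

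I do not anticipate a serious obstacle: the Lipschitzness of $\log\Lambda$ immediately handles $V$, and the identity $F'(t)=\Lambda'(t)(t-w)$ is the only computation of note for $K$. The main subtlety is purely notational, namely that the quantities $K(h_v,h_w)$ and $V(h_v,h_w)$ denote the KL divergence and the centered (or uncentered) log-likelihood-ratio variance between the full data densities $p_v$ and $p_w$, not a divergence between the functions $h_v,h_w$ themselves; the computation is the same either way.
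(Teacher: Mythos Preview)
Your proof is correct. The paper itself does not prove this lemma; it merely records it as a specialisation of Lemma~3.2 in \cite{vvvz08} to the logistic link. Your argument is essentially the standard one: the $1$-Lipschitz property of $t\mapsto\log\Lambda(t)$ immediately controls the pointwise log-likelihood ratio and hence $V$, and your derivative identity $F'(t)=\Lambda'(t)(t-w)$ together with $\Lambda'\le 1/4$ gives the sharp constant $1/8$ for $K$. The only remark is that in \cite{vvvz08} the result is stated for a general link $\Psi$ under Lipschitz assumptions on $\log\Psi$ and $\log(1-\Psi)$, which your argument verifies explicitly for $\Lambda$; your KL computation via the clean identity $\log(\Lambda(t)/(1-\Lambda(t)))=t$ is in fact slightly slicker than the generic bound there.
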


\subsection{An auxiliary lemma relating to Besov smoothness}
Let us recall the notation %, for $0<\be\le N+1$, 
$2^{J_0}=n^{1/(2\be+1)}$ and $2^{J_1} = n^{\frac{\be}{2\be+1}\frac{1}{\be'}}$ with $\be'=\be-\left(\frac{1}{r}-\frac12\right)_{+}$. 

\begin{lemma} \label{lembesov}
Let $f_0\in \Bblr$ for some $1\le r\le 2$,  let $\be, L>0$, and $\be>1/r-1/2$. Let $J_0, J_1$ be defined as above.  Then for a large enough constant $A$,  for any $0<\delta_n\le 1$, the set
$\Sigma_0:= \left\{(l,k):\  |f_{0,lk}|>\delta_n/\rn  \right\}$  has cardinality  $|\Sigma_0|$ bounded by
\[ |\Sigma_0|\le A \delta_n^{-r} 2^{J_0},\]
 and we also have
\[ \sum_{l\le J_1,k} f_{0,lk}^2 \1_{|f_{0,lk}|\le \delta_n/\sqrt{n}}
 +  \sum_{l> J_1,k} f_{0,lk}^2
 \le  A n^{-2\be/(2\be+1)}.  \]
\end{lemma}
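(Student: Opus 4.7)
\medskip

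\noindent\textbf{Proof proposal.} My plan is to exploit the Besov norm estimate $\sum_k |f_{0,lk}|^r \le L^r 2^{-rlB}$ at each level $l$, where $B:=\be+1/2-1/r>0$ (this positivity is precisely our hypothesis $\be>1/r-1/2$). Note also that $1+rB=r(\be+1/2)$, so $r/(1+rB)=1/(\be+1/2)$, giving the crucial exponent identity $r/(1+rB) = 2/(2\be+1)$ since $1/(\be+1/2)=2/(2\be+1)$.

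For the cardinality bound, at each level I would combine the two trivial estimates
\[
|\Sigma_0\cap\{l\}|\le 2^l, \qquad |\Sigma_0\cap\{l\}|\cdot (\delta_n/\sqrt n)^r \le \sum_k |f_{0,lk}|^r \le L^r 2^{-rlB},
\]
and sum $\min$ of the two against the crossover level $L^*$ determined by $2^{l(1+rB)}=L^r(\sqrt n/\delta_n)^r$, which gives $2^{L^*}\asymp n^{1/(2\be+1)}\delta_n^{-2/(2\be+1)}$. Both resulting geometric series are controlled by a constant times $2^{L^*}$. The main subtlety is passing from the natural exponent $\delta_n^{-2/(2\be+1)}$ to the stated $\delta_n^{-r}$: this is where $\be>1/r-1/2$ re-enters, since that condition is equivalent to $r(2\be+1)>2$, i.e. $r>2/(2\be+1)$, so $\delta_n^{-2/(2\be+1)}\le \delta_n^{-r}$ because $\delta_n\le 1$.

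For the $L^2$ estimate in the second part, I would split the sum into three ranges and bound each by a multiple of $n^{-2\be/(2\be+1)}$.
\begin{itemize}
\item For $l\le J_0$: trivially $f_{0,lk}^2\mathbf 1_{|f_{0,lk}|\le \delta_n/\sqrt n}\le 1/n$, giving a bound of $2^{J_0+1}/n \asymp n^{-2\be/(2\be+1)}$.
\item For $J_0<l\le J_1$ (the main regime): use $f_{0,lk}^2\mathbf 1_{|f_{0,lk}|\le \delta_n/\sqrt n}\le (\delta_n/\sqrt n)^{2-r}|f_{0,lk}|^r$ (valid since $r\le 2$), then the Besov bound at level $l$, and sum the geometric series in $l>J_0$ to get $\lesssim n^{-(2-r)/2} 2^{-rJ_0 B}=n^{-(2-r)/2-rB/(2\be+1)}$. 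A direct algebraic check shows the exponent equals $-2\be/(2\be+1)$.
\item For $l>J_1$: use the $\ell^r\hookrightarrow\ell^2$ embedding at each fixed level, $\sum_k f_{0,lk}^2 \le \bigl(\sum_k|f_{0,lk}|^r\bigr)^{2/r}\le L^2 2^{-2lB}$, and sum the geometric series. Since $B=\be'$ and $2^{J_1}=n^{\be/((2\be+1)\be')}$, we get $2^{-2J_1 B}=n^{-2\be/(2\be+1)}$ as required.
\end{itemize}

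The main obstacle is not conceptual but arithmetic: verifying that the three pieces really do collapse to the same rate $n^{-2\be/(2\be+1)}$ relies on two separate exponent identities -- one used in the medium-frequency sum, and one in the definition of $J_1$ -- and on the equivalence $\be>1/r-1/2 \Leftrightarrow r(2\be+1)>2$, which also controls the cardinality bound. Once these identities are noted, each piece is a routine application of the Besov estimate and a geometric sum.
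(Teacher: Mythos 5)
Your proof is correct, and for the $L^2$ estimate (the second claim) it is essentially the paper's argument: split at $J_0$ and $J_1$, bound $f_{0,lk}^2\1_{|f_{0,lk}|\le\delta_n/\sqrt n}\le(\delta_n/\sqrt n)^{2-r}|f_{0,lk}|^r$ in the middle range, and use the level-wise $\ell^r\hookrightarrow\ell^2$ inclusion for $l>J_1$ (the paper phrases this via the embedding chain $\cB^\be_{rr}\subset\cB^{\be'}_{2r}\subset\cB^{\be'}_{22}$, which is the same thing); the two exponent identities you verify are exactly the ones the paper relies on. For the cardinality bound your route is organized differently: you cap the count at each level by $\min\bigl(2^l,\,L^r2^{-rlB}(\sqrt n/\delta_n)^r\bigr)$, sum the two geometric series against a crossover level $L^*$, and obtain the sharper intermediate estimate $|\Sigma_0|\lesssim 2^{J_0}\delta_n^{-2/(2\be+1)}$, which you then relax to the stated $\delta_n^{-r}2^{J_0}$ using $r\ge 2/(2\be+1)$. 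The paper instead observes that each $(l,k)\in\Sigma_0$ with $l>J_0$ contributes at least $(\delta_n/\sqrt n)^r$ to $\sum_{l>J_0,\,k}|f_{0,lk}|^r$, bounds that tail sum by $L^r2^{-rJ_0B}=L^rn^{-r/2}2^{J_0}$, and divides, landing on $\delta_n^{-r}2^{J_0}$ in one step. Both are valid; your crossover version is a bit longer but makes visible that the true $\delta_n$-dependence is better than what the lemma states, while the paper's is more economical and hits the stated form directly.
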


\begin{proof}
The proof is closely related to the proof of Proposition 10.3 in \cite{wasa},  with a slightly different choice of threshold (we take it of order $\delta_n/\sqrt{n}$ here). One first proves the cardinality claim. It is enough to prove it for the smaller set $\Sigma_0'=\Sigma_0 \cap \{(l,k):\ l>J_0\}$ since $|\{(l,k):\ l\le J_0\}|\leqa 2^{J_0}$. By definition of $\Sigma_0'$,
\[ \delta_n^r n^{-r/2}|\Sigma_0'| \le \sum_{l\ge J_0,\, k} |f_{0,lk}|^r, \]
which in turn is bounded from above by, using that $\be+1/2-1/r>0$ by assumption,
\[ 2^{-J_0r(\be+1/2-1/r)}\sum_{l\ge J_0,\, k} 2^{lr(\be+1/2-1/r)} |f_{0,lk}|^r
\le L^r 2^{-J_0r(\be+1/2-1/r)}=L^r 2^{-J_0(\be+1/2)r/2}2^{J_0}, 
\] 
which is bounded from above by a constant times $n^{-r/2} 2^{J_0}$, so that   $|\Sigma_0'|\leqa \delta_n^{-r} 2^{J_0}$ as required. For the second inequality one bounds each term separately. For $r\le 2$,
\[  \sum_{l\le J_1,k} f_{0,lk}^2 \1_{|f_{0,lk}|\le \delta_n/\sqrt{n}} \le 
\sum_{l\le J_0,k} f_{0,lk}^2 \1_{|f_{0,lk}| \le \delta_n/\sqrt{n}}
+ (\delta_n/\sqrt{n})^{2-r}
 \sum_{l>J_0\,,\,k} |f_{0,lk}|^r. \] 
The last sum has been bounded just above while the first is at most $C\delta_n^2 2^{J_0}/n$. We deduce that the left hand side of the last display is bounded from above by
\[ C\delta_n^2 2^{J_0}/n + C(\delta_n/\sqrt{n})^{2-r} n^{-r/2}2^{J_0}\leqa 2^{J_0}/n,\]
which is bounded by a constant times $n^{-2\be/(2\be+1)}$ as required. For the indices with $l>J_1$, we use the embeddings $\cB_{rr}^{\be}\subset \cB_{2r}^{\be'}\subset \cB_{22}^{\be'}$  (e.g. \cite{ginenicklbook}, Proposition 4.3.6) which imply
\[  \sum_{l> J_1,k} f_{0,lk}^2 \leqa 2^{-2J_1\be'} \sum_{l>J_1,k} 
2^{2j\be'}f_{0,lk}^2\leqa 2^{-2J_1\be'}\leqa n^{-2\be/(2\be+1)},\]
which concludes the proof. %for $r<2$. The case $r=2$ follows similarly with obvious simplifications.
\end{proof}

\section{Convergence for $\rho$--posteriors}\label{sec:rhopost}
%{\ma{S: replaced $\alpha$ by $\rho$ since $\alpha$ now is prior smoothness}}

Let us give sufficient conditions for nonparametric contraction of $\rho$--posterior distributions. Unlike for the full Bayesian posterior, testing or entropy conditions are not needed to obtain contraction rates in the R\'enyi-divergence for the fractional posterior when $\rho_n<1$ \cite{tongz06}, see also \cite{kruijervdv13,BPY}.  We state it in the form of a recent result from \cite{ltcr23} that gives a  precise dependence on both $n$ and $\rho$ (since here we consider only fixed $\rho\in(0,1)$, one could also use e.g. the result in \cite{BPY};  the result below gives a sharper dependence in $\rho$, which is relevant if one wishes to allow $\rho$ to go to $0$).

For $0<\rho<1$,  the $\rho$--R\'enyi divergence  between two densities $f$ and $g$ with respect to $\mu$ is 
	\begin{align*}
		D_\rho(f, g) = -\frac{1}{1-\rho}\log \left( \int f^\rho g^{1-\rho}d\mu \right).
	\end{align*}
Consider a statistical model $\{P_\eta^{(n)},\ \eta\in \mathcal{H}\}$ with observations $Y^n$ and suppose it is dominated, i.e. all $P_\eta^{(n)}$ are absolutely continuous with respect to a common measure $\mu$, with density $p_\eta^{(n)}$. 
For two densities $f, g$ dominated by $\mu$, one denotes $K(f, g)=\int f \log(f/g) d\mu$, $V(f,g)= \int f \left(\log(f/g) - K(f,g)\right)^2 d\mu$ and one defines a neighborhood of the true $\eta_0$ by, for any $\veps>0$,
		\begin{align*}
		B_n(\eta_0,\eps) &=\left\{\eta : \ K(p_{\eta_0}^{(n)}, p_{\eta}^{(n)}) \leq n \eps^2,\: V(p_{\eta_0}^{(n)},  p_{\eta}^{(n)}) \leq n \eps^2  \right\}.
	\end{align*}	 
In the next lines $E_{\eta_0}$ refers to the expectation with respect to $P_{\eta_0}^{(n)}$.
	
	\begin{theorem}[Theorem 17 in \cite{ltcr23}] \label{alphapost} Suppose, for $\eps_n>0$, $\rho_n\in(0,1)$ and $n\rho_n\eps_n^2 \rightarrow \infty$, %that
		\begin{align}\label{equation_thm_1}
			\Pi(B_n(\eta_0, \eps_n)) \geq e^{-n\rho_n\eps_n^2}.
		\end{align}
Then	 there exists $C>0$ such that as $n\to\infty$,
		\begin{align*}
			E_{\eta_0}\Pi_{\rho_n}\left( \eta: \: \frac{1}{n}D_{\rho_n}(p_{\eta}^{(n)}, p_{\eta_0}^{(n)}) \geq C \frac{\rho_n\eps_n^2}{1-\rho_n}  \given  Y^n\right) \to 0.
		\end{align*}	
	\end{theorem}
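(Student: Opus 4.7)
The strategy I would follow is a direct Bayes'-formula argument controlling numerator and denominator in the ratio defining $\Pi_{\rho_n}[\cdot\mid Y^n]$, in the spirit of Walker--Hjort and Zhang (a related quantitative version appears in Bhattacharya--Pati--Yang). Fix a constant $M>4$ and set $A_n = \{\eta : D_{\rho_n}(p_\eta^{(n)}, p_{\eta_0}^{(n)}) \ge M n \rho_n \eps_n^2/(1-\rho_n)\}$. Writing $R_\eta = p_\eta^{(n)}/p_{\eta_0}^{(n)}$ and
\[
\Pi_{\rho_n}(A_n \mid Y^n) = \frac{N}{D},\qquad N = \int_{A_n} R_\eta^{\rho_n}\, d\Pi(\eta),\quad D = \int R_\eta^{\rho_n}\, d\Pi(\eta),
\]
the goal is to show that $N \ll D$ with $P_{\eta_0}^{(n)}$--probability tending to one.

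For the numerator, the key input is the R\'enyi identity
\[
E_{\eta_0}\, R_\eta^{\rho_n} \;=\; \int (p_\eta^{(n)})^{\rho_n}(p_{\eta_0}^{(n)})^{1-\rho_n}\, d\mu \;=\; \exp\!\bigl(-(1-\rho_n)\, D_{\rho_n}(p_\eta^{(n)}, p_{\eta_0}^{(n)})\bigr),
\]
which, combined with Fubini and the definition of $A_n$, yields $E_{\eta_0}[N] \le e^{-M n \rho_n \eps_n^2}$. Markov's inequality then gives $N \le e^{-(M-1) n \rho_n \eps_n^2}$ outside an event of $P_{\eta_0}^{(n)}$--probability $e^{-n\rho_n\eps_n^2}$, which vanishes since $n\rho_n \eps_n^2 \to \infty$.

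For the denominator I restrict to the Kullback neighbourhood, writing $D \ge \Pi(B_n(\eta_0,\eps_n))\,\int R_\eta^{\rho_n}\, d\bar\Pi(\eta)$ with $\bar\Pi$ the normalised restriction of $\Pi$ to $B_n(\eta_0,\eps_n)$. Jensen's inequality gives
\[
\log \int R_\eta^{\rho_n}\, d\bar\Pi(\eta) \;\ge\; \rho_n \int \log R_\eta\, d\bar\Pi(\eta),
\]
where the integrand on the right has $P_{\eta_0}^{(n)}$--mean $-\int K(p_{\eta_0}^{(n)},p_\eta^{(n)})\,d\bar\Pi \ge -n\eps_n^2$ and variance, by Fubini, at most $\int V(p_{\eta_0}^{(n)},p_\eta^{(n)})\,d\bar\Pi \le n\eps_n^2$, both by the very definition of $B_n(\eta_0,\eps_n)$. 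A Chebyshev bound then gives $\int \log R_\eta\, d\bar\Pi \ge -2n\eps_n^2$ with probability at least $1-1/(n\eps_n^2)$. Combining with the prior-mass hypothesis \eqref{equation_thm_1} one obtains $D \ge e^{-3 n \rho_n \eps_n^2}$ with high probability, so finally $\Pi_{\rho_n}(A_n\mid Y^n) \le e^{-(M-4) n \rho_n \eps_n^2} \to 0$, which delivers the conclusion with an appropriate constant $C$.

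The main technical subtlety is the denominator step: one must verify that the two-part definition of $B_n$ (with both a KL and a $V$ bound of order $n\eps_n^2$) is precisely what makes the Jensen--Chebyshev argument go through with $\rho_n$--dependent rates. A secondary point, which explains the factor $1/(1-\rho_n)$ in the rate, is that the ``loss'' incurred in the numerator is controlled by $(1-\rho_n)\,D_{\rho_n}$ while the one in the denominator scales with $\rho_n$; balancing these two gives exactly the threshold $M n \rho_n \eps_n^2/(1-\rho_n)$ chosen for $A_n$.
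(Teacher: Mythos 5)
Your proposal is correct. Note, however, that the paper does not prove this statement at all: it is quoted verbatim as Theorem~17 of the cited reference \cite{ltcr23} and is used as a black box. What you have supplied is a self-contained proof following the standard template (Walker--Hjort / Zhang, and in the fractional-posterior form Bhattacharya--Pati--Yang and \cite{ltcr23}): bound the numerator $N=\int_{A_n} R_\eta^{\rho_n}\,d\Pi$ in expectation using the R\'enyi identity $E_{\eta_0}R_\eta^{\rho_n}=e^{-(1-\rho_n)D_{\rho_n}}$ and Fubini--Tonelli, apply Markov, and bound the denominator from below by restricting to $B_n$, applying Jensen in $\rho_n$, and using the Kullback--Leibler mean and variance bounds that define $B_n$ together with Chebyshev. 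All steps check out: the $(1-\rho_n)$ factor in the R\'enyi identity cancels against the $1/(1-\rho_n)$ in the threshold defining $A_n$, giving $E_{\eta_0}[N]\le e^{-Mn\rho_n\eps_n^2}$; the denominator bound $D\ge e^{-3n\rho_n\eps_n^2}$ holds on an event of probability $1-1/(n\eps_n^2)$, and since $\rho_n<1$ the hypothesis $n\rho_n\eps_n^2\to\infty$ implies $n\eps_n^2\to\infty$, so both exceptional events vanish. Combining, $\Pi_{\rho_n}(A_n\given Y^n)\le e^{-(M-4)n\rho_n\eps_n^2}$ on a high-probability event, and since the random variable is bounded by one, the expectation goes to zero. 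Your threshold constant $C$ corresponds to any $M>4$, which suffices for the existential claim in the theorem.
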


 In the Gaussian white noise model, one can directly compute $D_{\rho_n}(f,f_0) = n\rho_n\|f-~f_0\|_2^2/2$, so that the conclusion above becomes, with now $\eta_0=f_0$,
		\begin{align*}
			E_{f_0}\Pi_{\rho_n}\left( f: \: \|f-f_0\|_2 \geq C \frac{\eps_n}{\sqrt{1-\rho_n}}  
			 \given Y^n\right) \to 0.
		\end{align*}	 
 A similar conclusion holds in density estimation with $L^1$--loss, where $P_f^{(n)}=P_f^{\otimes n}$ for $P_f$ the law of density $f$, and one has $D_{\rho_n}(P_f^{\otimes n},P_{f_0}^{\otimes n}) \ge n\rho_n \|f-f_0\|_1^2/2$ (\cite{van_Erven_2014}, Theorem 31)
 % for $f^n(x) = \prod_{i=1}^n f(x_i)$ the $n$-fold product density of $f$, 
  thereby giving the same rates as for $L^2$--loss in Gaussian white noise as just above.

\section{Additional Simulations}\label{sec:adsim}
\subsection{Inverse regression with Sobolev/spatially homogeneous truth (continued from Section \ref{sec:sim})}\label{ip:sim}
In Figures \ref{fig-rhopostSob-d1} and \ref{fig-rhopostSob-d2} we present $\rho$-posteriors for $\rho=1, 0.6, 0.2$ and for various noise levels, for the two considered Student priors. As before HT$(\al)$ is overconfident for moderate noise levels. The performance of the considered $\rho$-posteriors is comparable to the one of the classical posterior, albeit with more variability for smaller $\rho$'s.

\begin{figure}[hbt!]
    \centering
    \includegraphics[width=0.97\textwidth]{./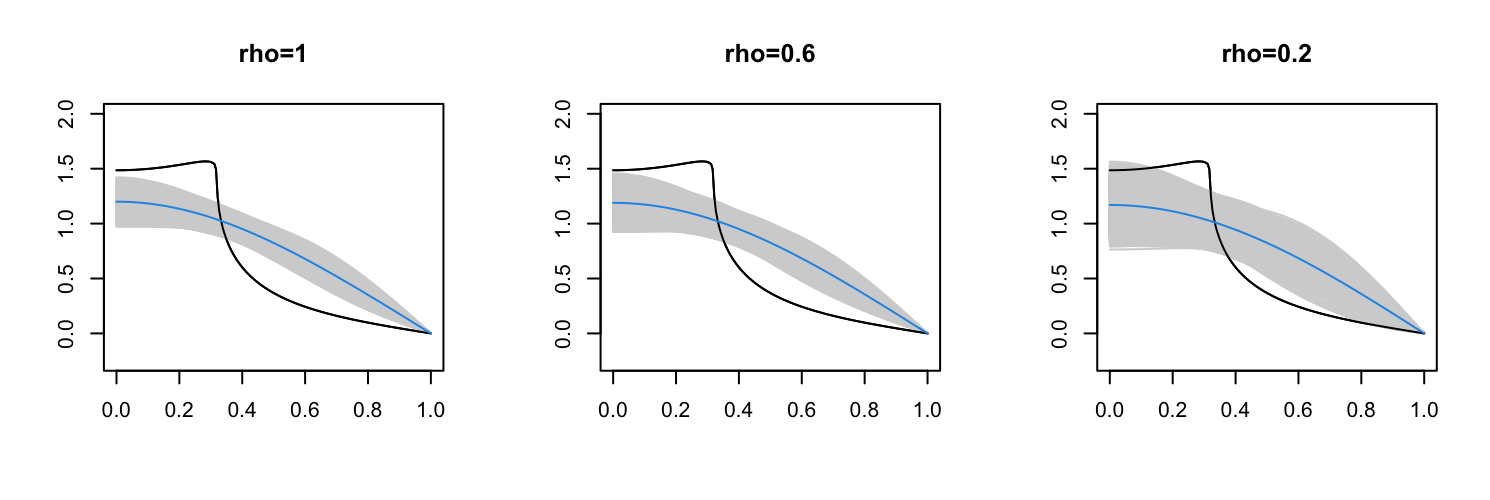}
    \includegraphics[width=0.97\textwidth]{./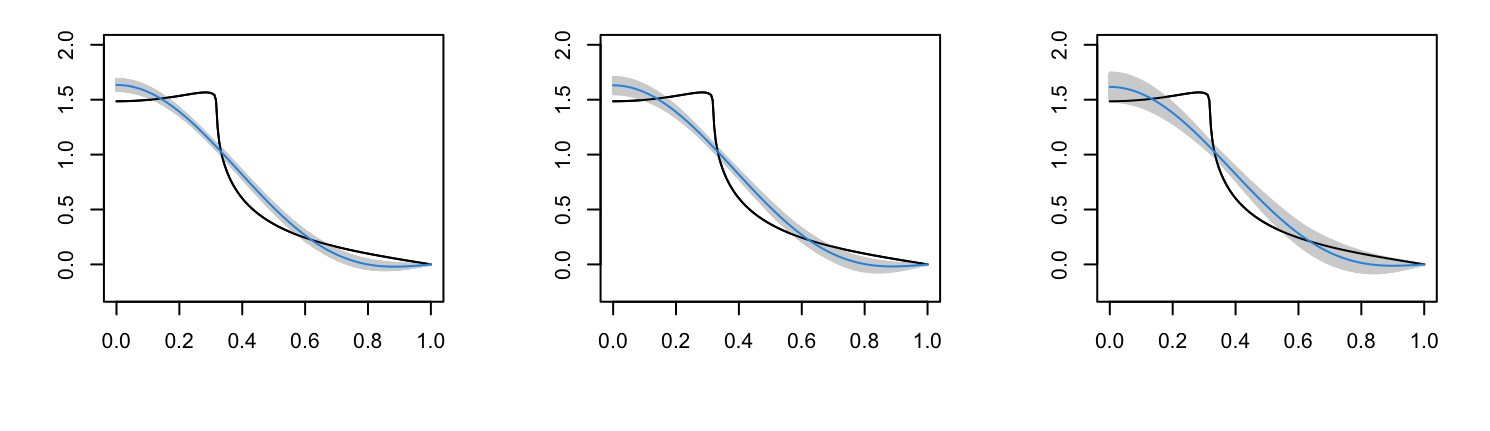}
    \includegraphics[width=0.97\textwidth]{./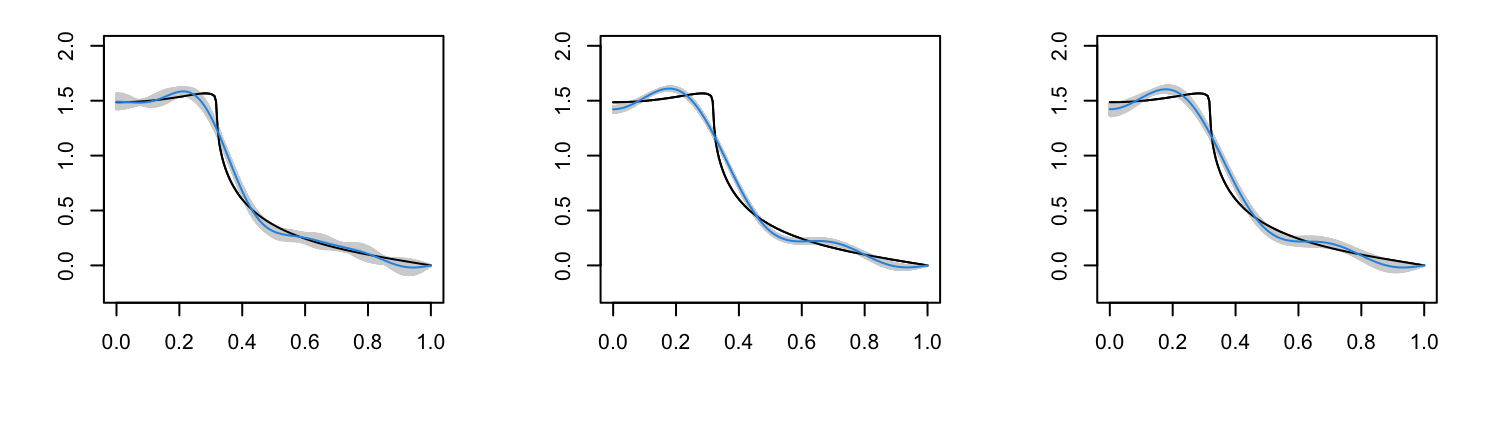}
    \includegraphics[width=0.97\textwidth]{./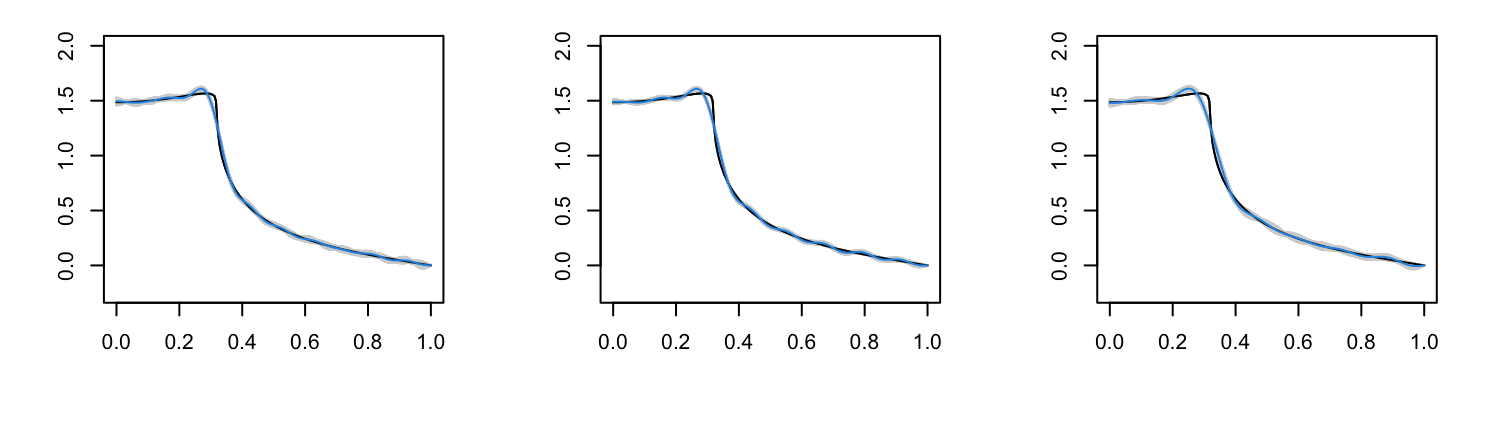}
    \includegraphics[width=0.97\textwidth]{./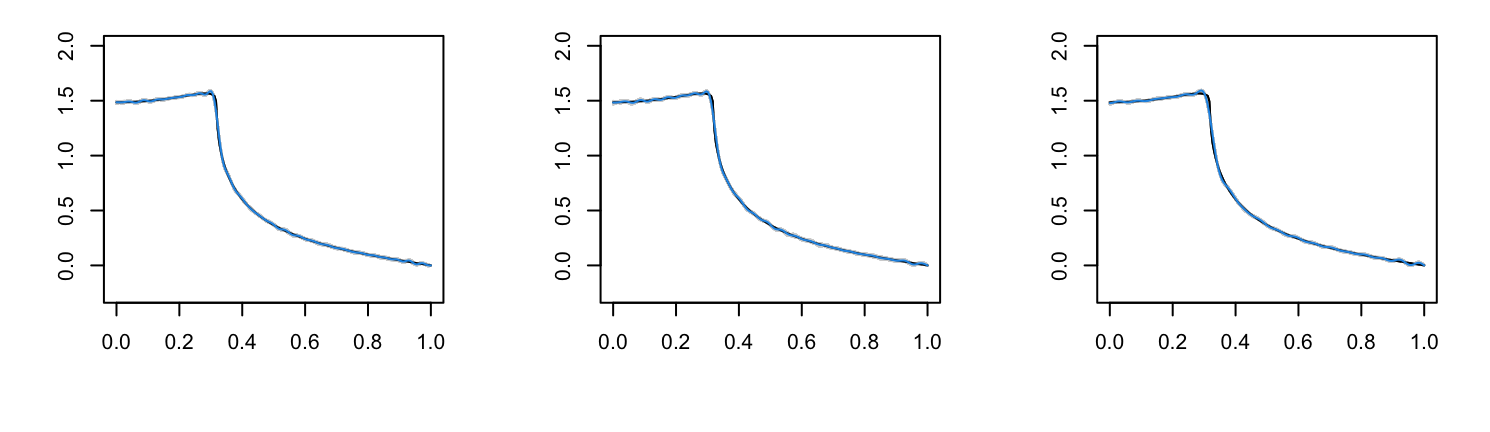}
    \caption{White noise model: true function (black), mean (blue), 95\% credible regions (grey) for the $\rho$-posterior arising for the HT$(\alpha)$ prior, for $\rho=1, 0.6, 0.2$ left to right and for $n=10^3, 10^5, 10^7, 10^9, 10^{11}$ top to bottom.}
    \label{fig-rhopostSob-d1}
\end{figure}

\begin{figure}[hbt!]
    \centering
    \includegraphics[width=0.97\textwidth]{./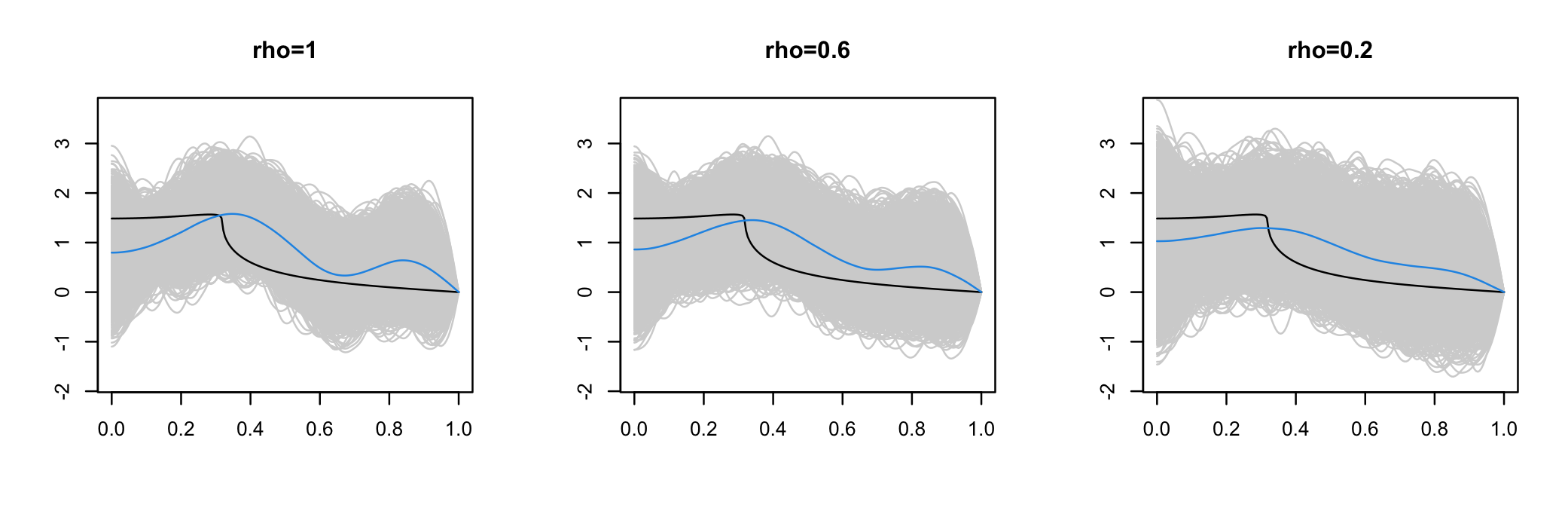}
    \includegraphics[width=0.97\textwidth]{./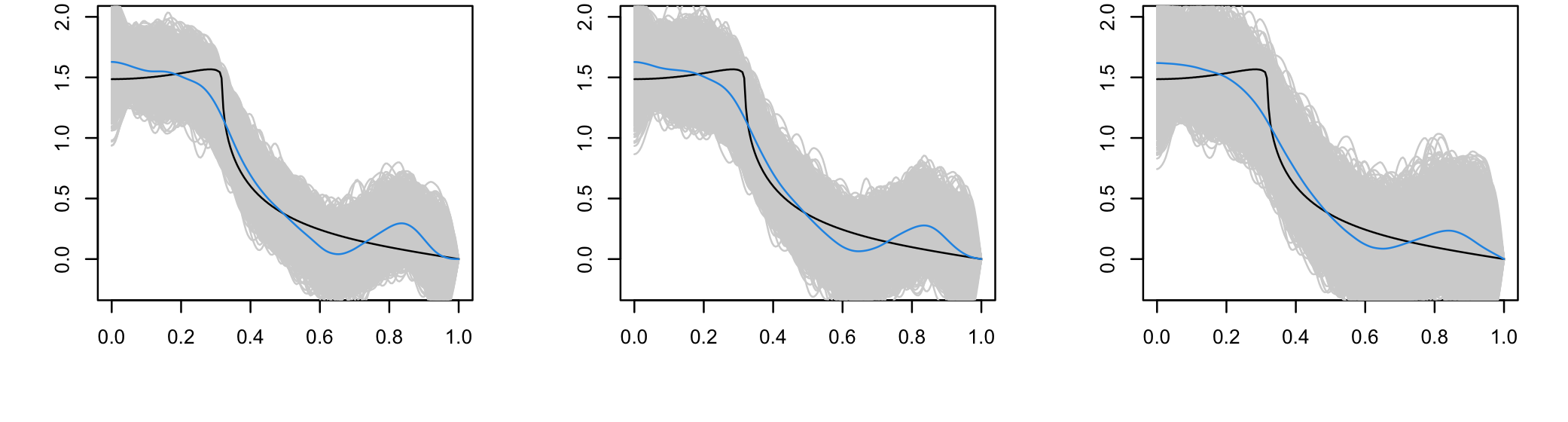}
    \includegraphics[width=0.97\textwidth]{./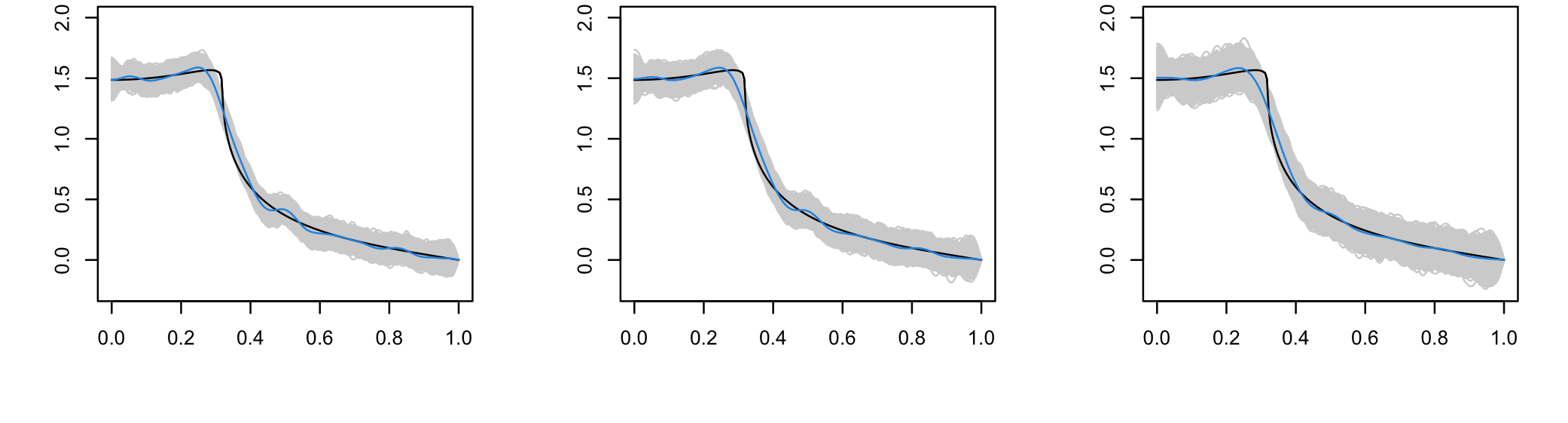}
    \includegraphics[width=0.97\textwidth]{./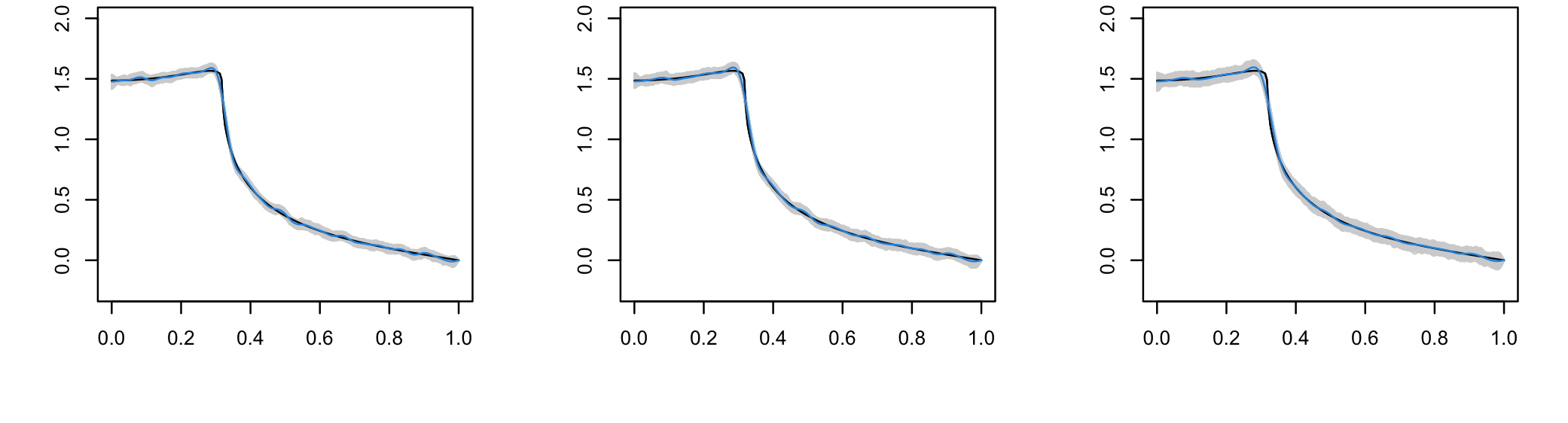}
    \includegraphics[width=0.97\textwidth]{./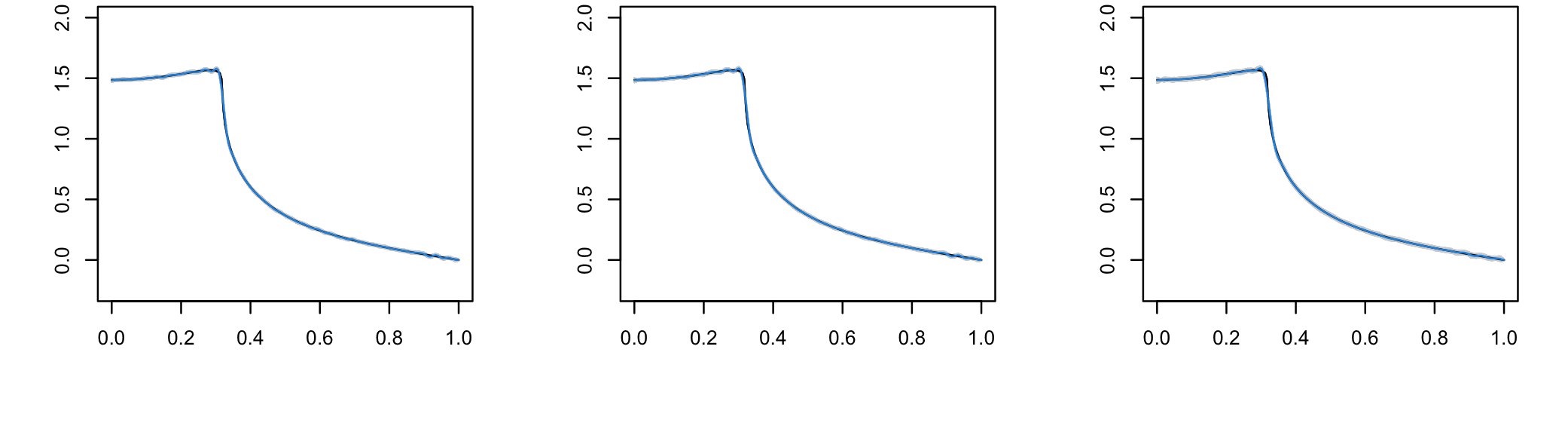}
    \caption{White noise model: true function (black), mean (blue), 95\% credible regions (grey) for the $\rho$-posterior arising for the OT prior, for $\rho=1, 0.6, 0.2$ left to right and for $n=10^3, 10^5, 10^7, 10^9, 10^{11}$ top to bottom.}
    \label{fig-rhopostSob-d2}
\end{figure}

As an illustration of `tails--adaptation', we compare in Figure \ref{fig:gpvsht} the behaviour of the HT$(\al)$ prior and a corresponding Gaussian process prior both with the same $\al=5$. We see that, as expected, adaptation does not occur in the Gaussian tail case; it does occur for the HT prior, as predicted by our theory. 

\begin{figure}[hbt!]
    \centering
    \includegraphics[width=0.77\textwidth]{./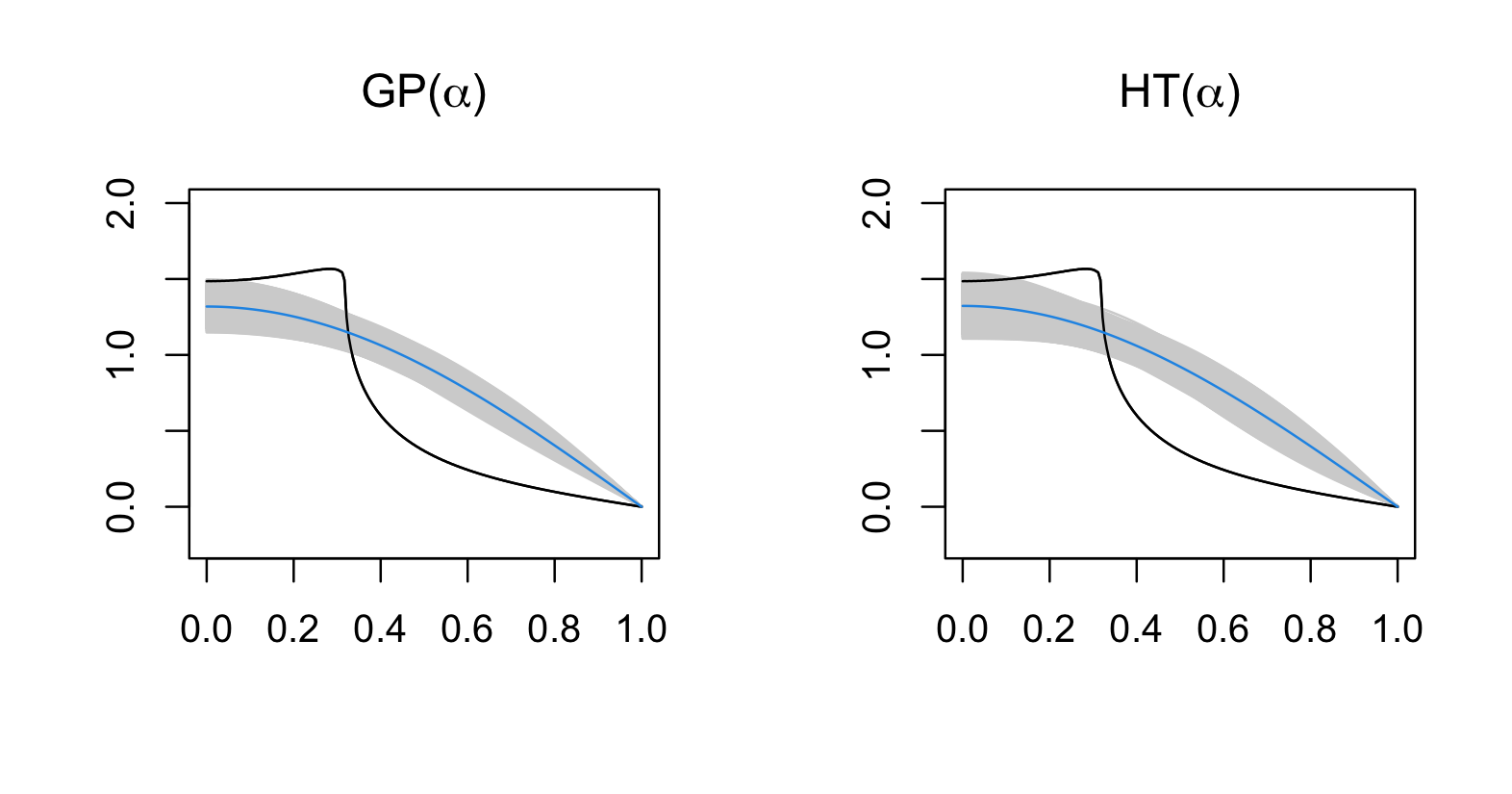}\vspace{-.55cm}
    \includegraphics[width=0.77\textwidth]{./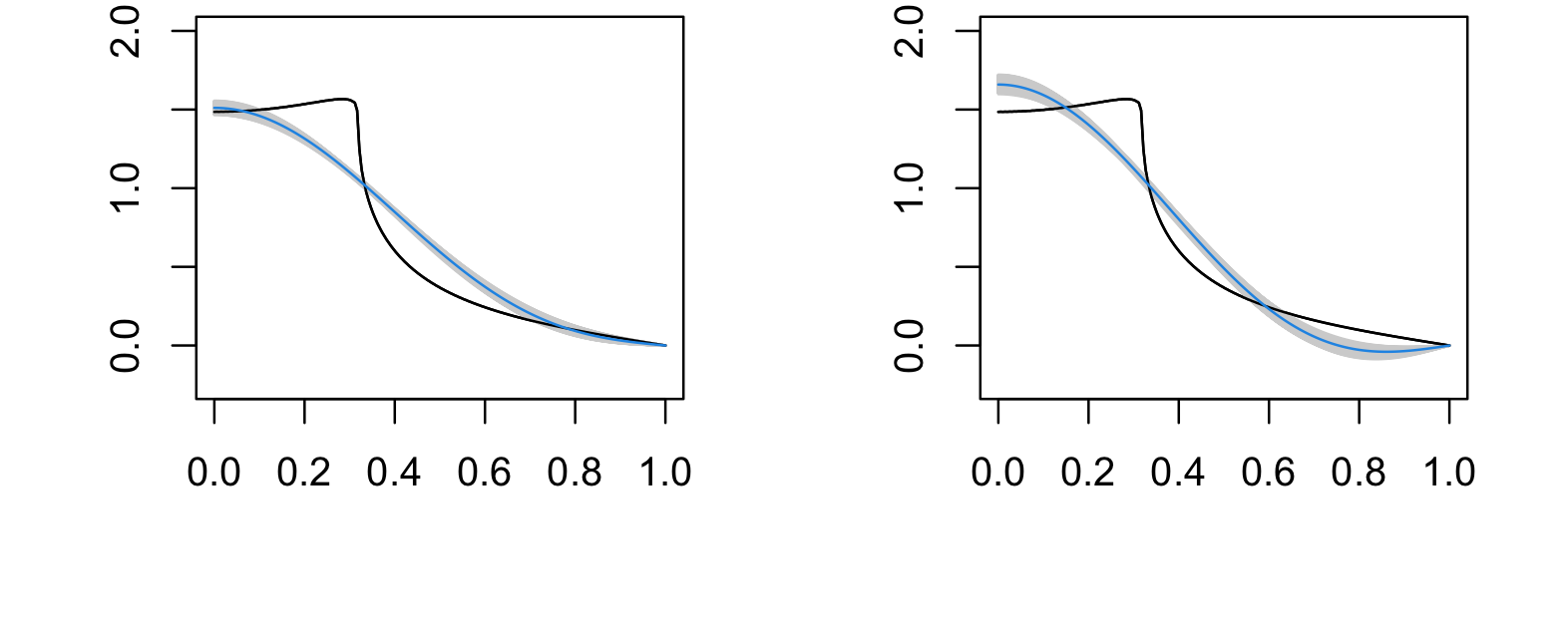}\vspace{-.55cm}
    \includegraphics[width=0.77\textwidth]{./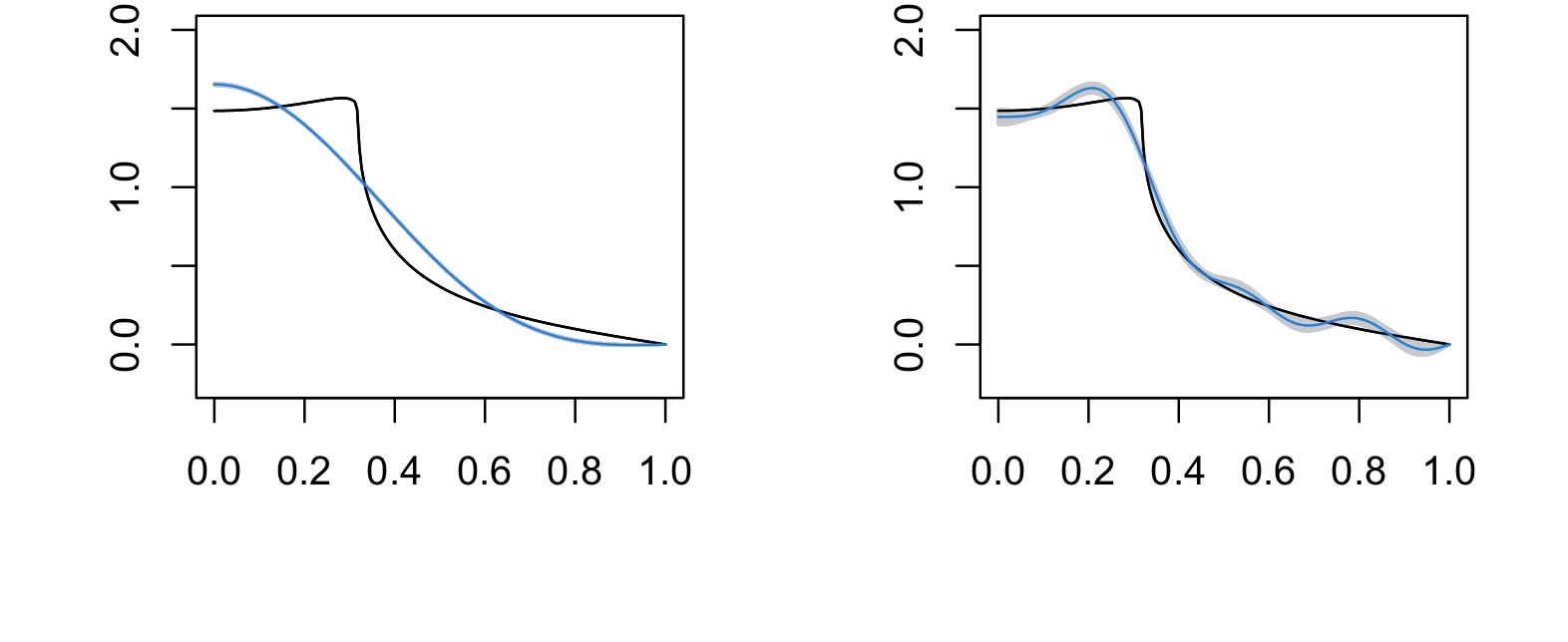}\vspace{-.55cm}
    \includegraphics[width=0.77\textwidth]{./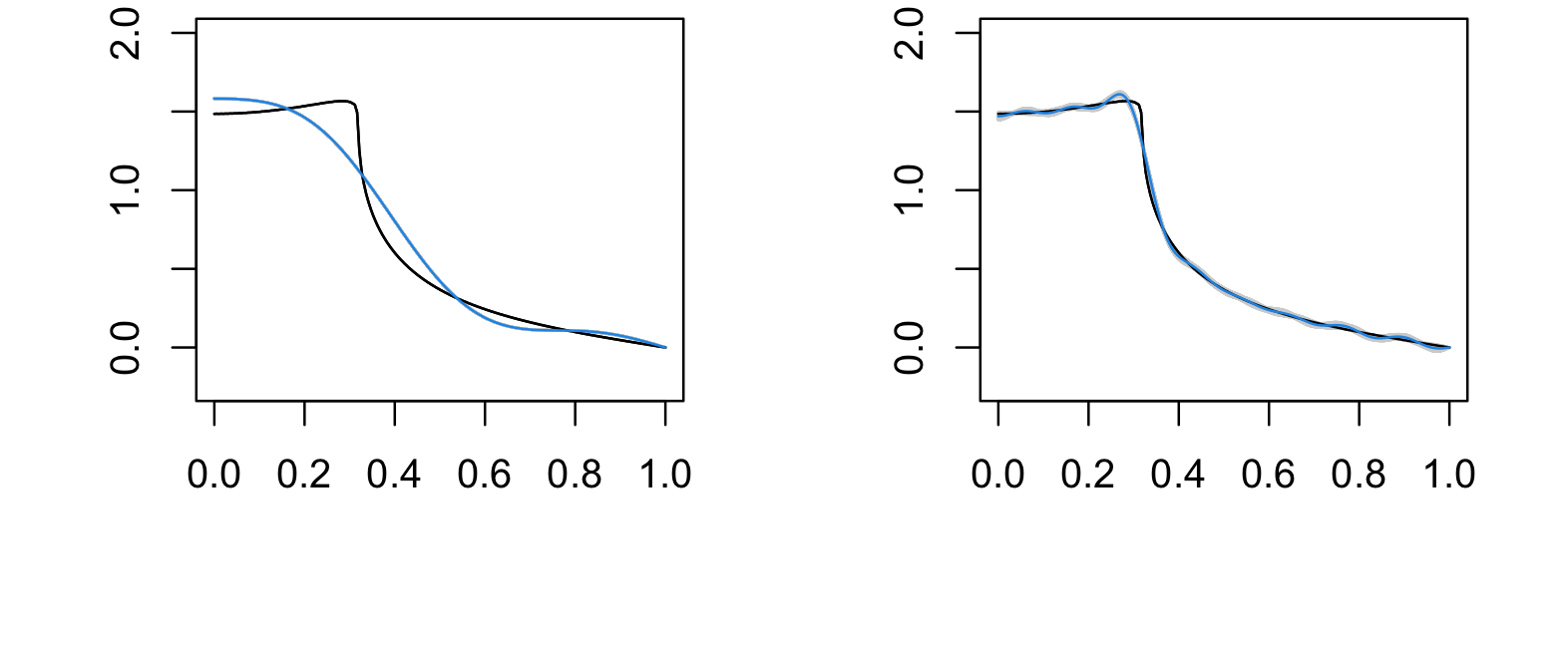}\vspace{-.55cm}
    \includegraphics[width=0.77\textwidth]{./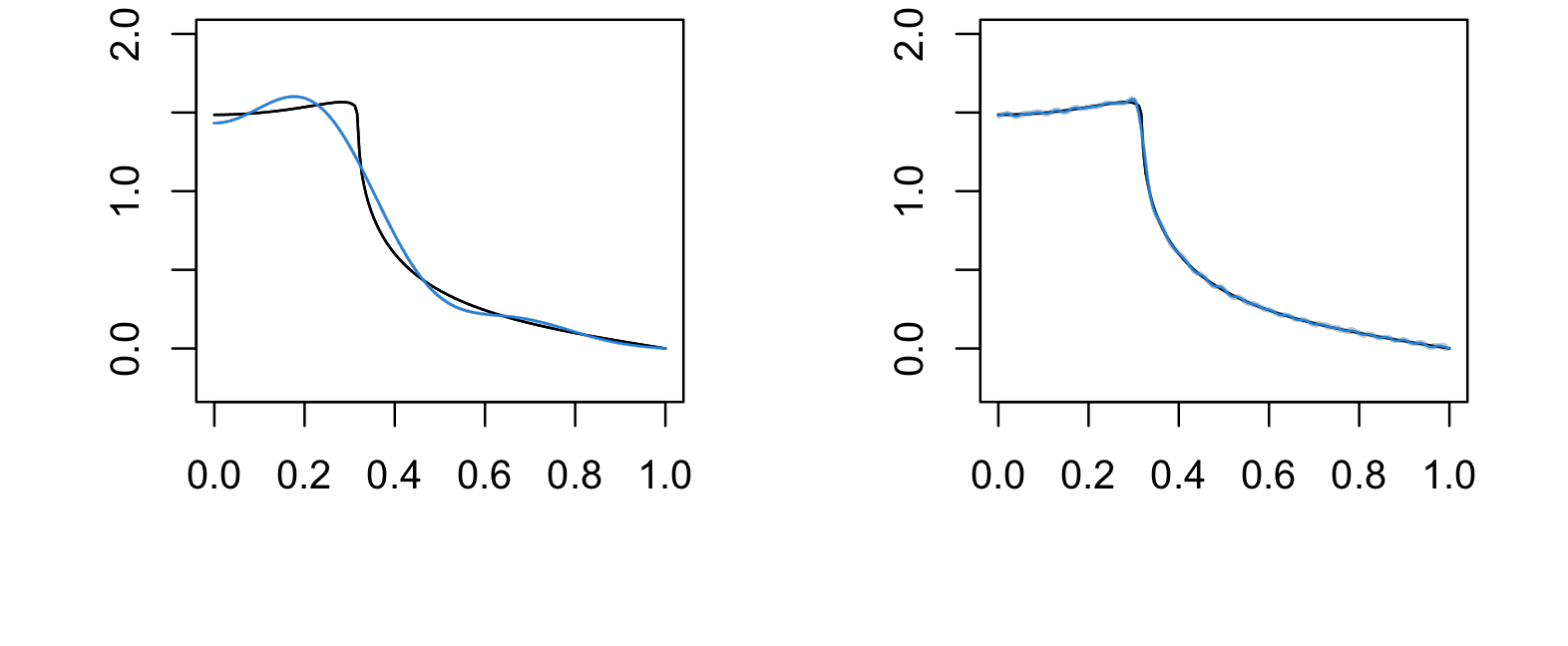}\vspace{-.55cm}
    \caption{White noise model: true function (black), posterior mean (blue), 95\% credible regions (grey), for $n=10^3, 10^5, 10^7, 10^9, 10^{11}$ top to bottom and for the $\alpha$-smooth Gaussian prior (left) and the HT$(\alpha)$ prior (right), where $\alpha=5$.}
    \label{fig:gpvsht}
\end{figure}

\subsection{White noise model with spatially inhomogeneous truth}\label{sim:spin}
We next consider function estimation in the (direct) white noise model, with four spatially inhomogeneous true functions introduced in \cite{DJ94} for testing wavelet thresholding algorithms. The four functions (Blocks, Bumps, HeaviSine and Doppler) can be seen in Figure \ref{fig-dj94}, while their formulae can be found in \cite[Table 1]{DJ94}. In all four cases, we expand the functions in the Daubechies-8 maximally symmetric wavelet basis \cite{daubechiesbook} (Symmlet-8) and add standard normal noise on each wavelet coefficient. We use 2048 coefficients and coarse level 5. Each function has been appropriately rescaled to get a signal-to-noise ratio (as captured by the ratio of the $L^2$--norms of the function to the noise) approximately equal to 7, as in \cite{DJ94}. We thus have a normal sequence model as in (\ref{normseq2}) with $n=1$. The noisy observations can be seen in Figure \ref{fig-dj94-noisy}. Analysis and synthesis of the wavelet expansions is performed in Wavelab850 \cite{wavelab}.

We consider priors on the wavelet coefficients of the form $f_{lk}=s_l\zeta_{lk}$ for i.i.d. $\zeta_{lk}$, with the following choices of the standard deviations $s_{l}$ and/or the distribution of $\zeta_{00}$:
\begin{itemize}
\item Gaussian hierarchical prior: $s_l=\tau 2^{-l(1/2+\alpha)}$ with $\tau\sim$Inv-Gamma$(1,1)$, $\alpha\sim {\rm Exp}(1)$, $\zeta_{00}$ standard normal;
\item OT prior: $s_l=2^{-l^{1+\delta}}$, with $\delta=1/2$, $\zeta_{00}$ distributed according to the standard Cauchy distribution.
\end{itemize}
For the OT prior we use $\delta=1/2$ instead of $\delta=1$ used in our analysis. As noted in Section \ref{sec:intro}, the contraction rates are identical for any $\delta>0$, however we found `the finite' $n$ behaviour to be slightly better for $\delta=0.5$ compared to $\delta=1$, so we kept this choice through the simulations.

To sample the posterior arising from the Gaussian hierarchical priors, we employ non-centered parametrizations of Metropolis-within-Gibbs samplers which update in turn $f|\alpha, \tau, X$, $\tau|f,\alpha,X$ and $\alpha|f,\tau,X$, \cite{abps14}; these samplers are initialized using prior draws. For the Cauchy prior, we use the whitened $\infty$-MALA algorithm \cite[Algorithm 3]{cdps18} initialized at the observed data. The Metropolis Adjusted Langevin Algorithm (MALA) is a Metropolis-Hastings algorithm with a proposal distribution which uses gradient information from the likelihood. The vanilla version of $\infty$-MALA is a dimension robust algorithm suitable for Gaussian priors \cite{crsw13}, while the whitened version introduced in \cite{cdps18} is suitable for non-Gaussian priors which admit a so-called prior orthogonalizing transformation $f=T(\xi)$, {where $\xi$ is a sequence of i.i.d. standard normal variables.}  and $f$ is a random draw from the non-Gaussian prior of interest. It can be shown that Cauchy series priors admit such an orthogonalizing transformation using that if $\xi$ is a standard normal random variable then $T(\xi):=\tan\big(\pi(1-2\Phi(\xi))/2\big)$, where $\Phi$ is the cumulative distribution function of the standard normal, follows the standard Cauchy distribution; for details see \cite[Section 5.2]{cdps18}. The prior orthogonalizing transformation allows to employ a function space algorithm suitable for Gaussian priors on the level of the latent Gaussian process $\xi$ (here we use $\infty$-MALA while in density estimation and binary classification below we use pCN), while we can transform $\xi$ to obtain samples for $f$ as well. 

In Figure \ref{fig-dj94-posterior}, we present posterior sample means as well as 95\% credible regions for the OT prior, computed by taking the 95\% out of the 100000 draws (after burn-in) which are closest to the mean in $L^2$-sense. In Figure \ref{fig-dj94-posterior-gaussian} we present posterior sample means for the Gaussian hierarchical prior (based on $400000$ draws, half of which were discarded as burn-in). It clealy appears that the OT prior does a better job in denoising the signals while reconstructing the spatially varying features than the Gaussian hierarchical prior. The Gaussian hierarchical prior was expected to perform poorly in this setting, since the contraction rates achieved by Gaussian priors in the small noise limit over Besov spaces of spatially inhomogeneous functions are known to be suboptimal \cite{aw21}. In Table \ref{tab:spinh} we show $L^2$-errors of the posterior mean, which confirm the better performance of the OT prior. In the table, we also include errors for a Laplace hierarchical prior, with the same hyper-priors as the Gaussian mixture considered here, taken from the upcoming PhD thesis of Aimilia Savva. The Laplace prior is known to be adaptive over spatially inhomogeneous Besov spaces \cite{as22}, and in the present setting performs in-between the OT prior and the Gaussian hierarchical prior.
Returning to the OT prior, the posterior means are comparable to the estimates obtained via the RiskShrink wavelet shrinkage method in \cite{DJ94}. %A close inspection of the shown posterior samples, e.g. for the HeaviSine function, reveals some evidence of multimodality, which is to be expected since priors with heavier than exponential tails are known to give rise to multimodal posteriors (even if the likelihood is logarithmically concave).

\begin{table}[ht]
    \centering
  \caption{White noise model with spatially inhomogeneous truths: $L^2$-errors of posterior means.}
  \label{tab:spinh}
    \begin{tabular}{llll}
        \toprule
        & \textbf{\quad\hspace{-0.05cm} {OT} \quad} & \textbf{Gaussian hierarchical} & \textbf{Laplace hierarchical} \\
    \toprule
   Blocks & \quad 0.50 & \quad\quad\quad\quad 0.61 & \quad\quad\quad\quad 0.53 \\
   Bumps  & \quad 0.54 & \quad\quad\quad\quad 0.70 &  \quad\quad\quad\quad 0.58 \\
   HeaviSine  & \quad 0.21 & \quad\quad\quad\quad 0.26 & \quad\quad\quad\quad 0.26 \\
   Doppler  & \quad 0.33 & \quad\quad\quad\quad 0.49 & \quad\quad\quad\quad 0.40 \\

        \bottomrule
    \end{tabular}
\end{table}

\begin{figure}[htbp]
    \centering
    \includegraphics[width=0.45\textwidth]{./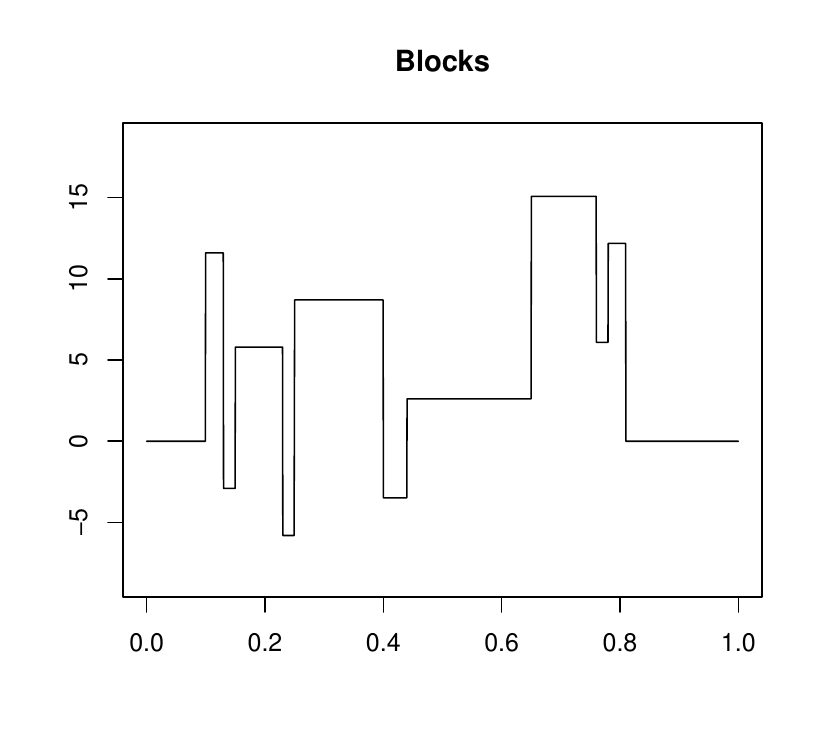}
    \includegraphics[width=0.45\textwidth]{./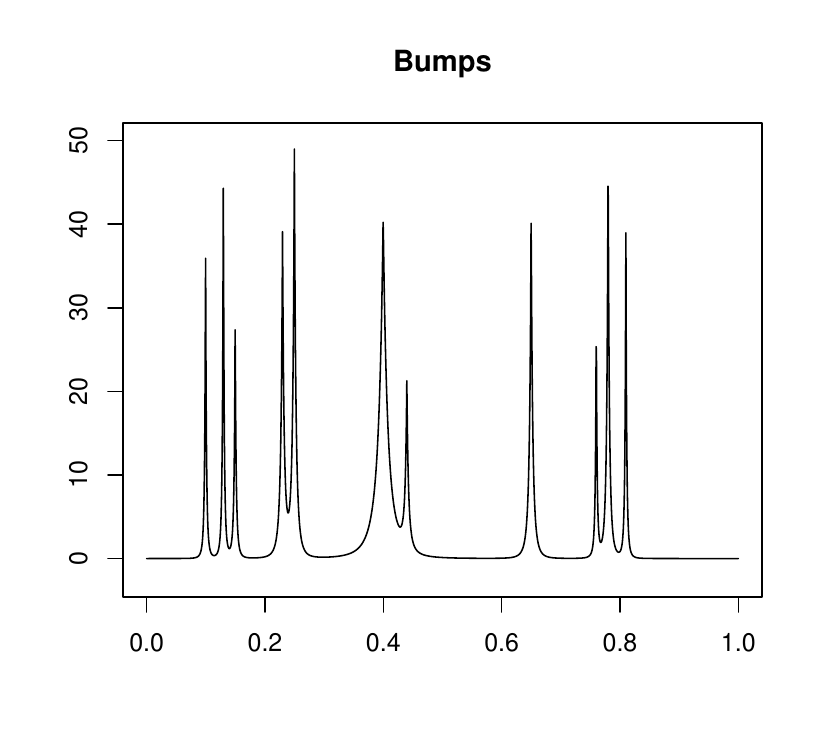}\vspace{-.8cm}
     \includegraphics[width=0.45\textwidth]{./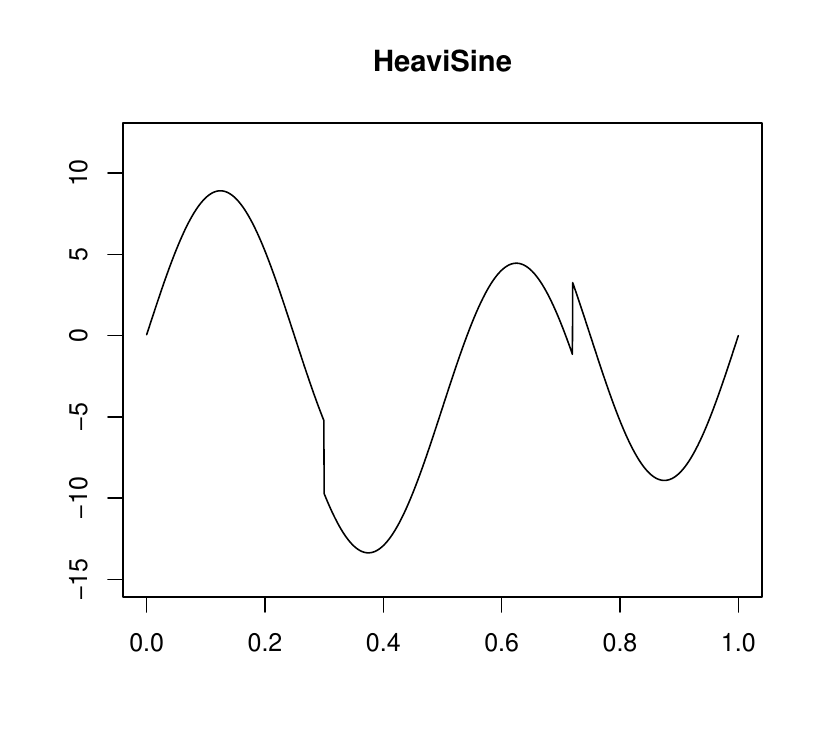}
 \includegraphics[width=0.45\textwidth]{./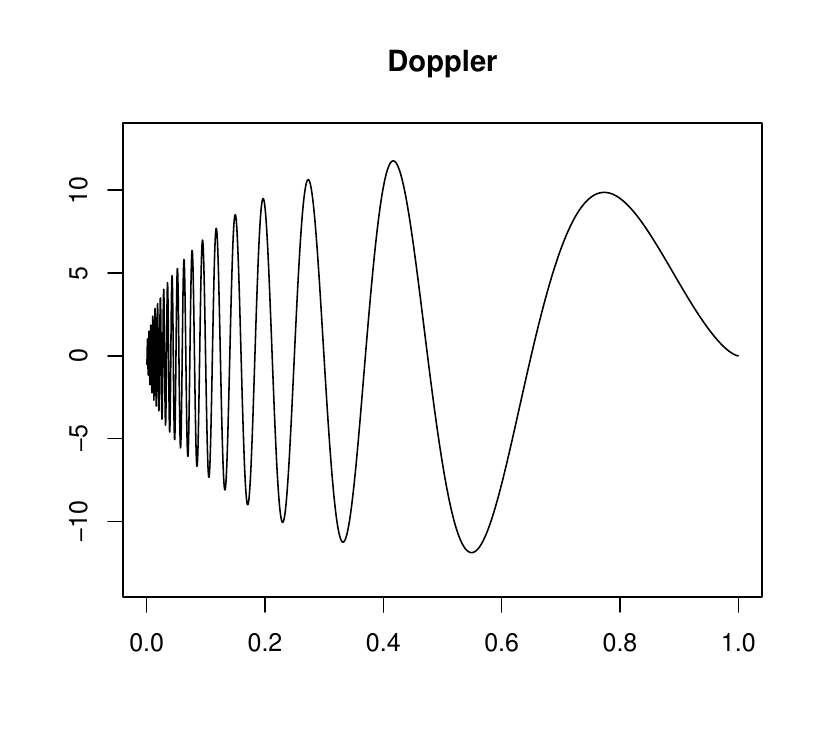}\vspace{-.5cm}
    \caption{True functions.}
    \label{fig-dj94}

\bigskip

    \includegraphics[width=0.45\textwidth]{./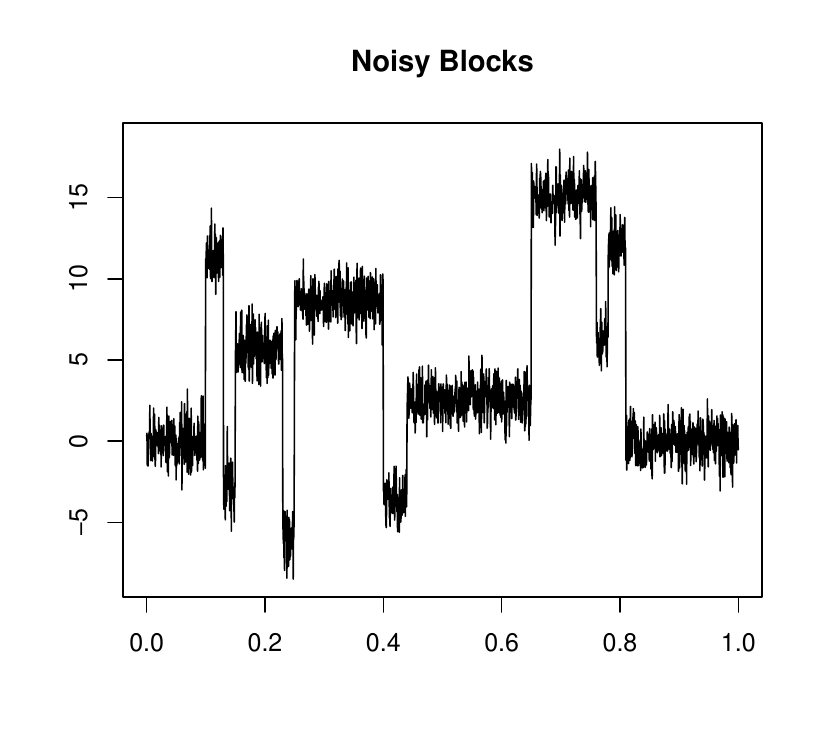}
    \includegraphics[width=0.45\textwidth]{./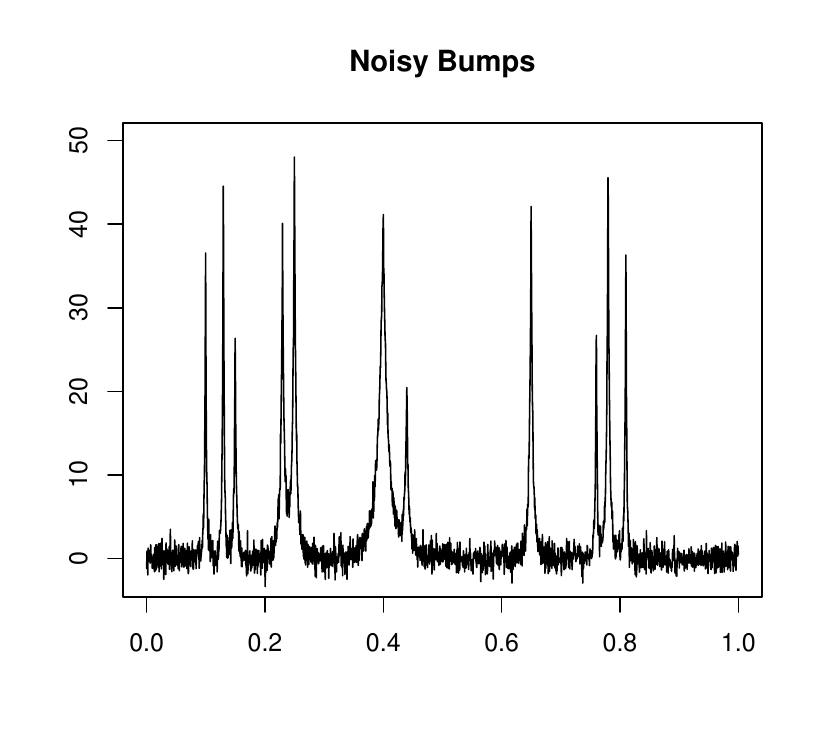}\vspace{-.8cm}
     \includegraphics[width=0.45\textwidth]{./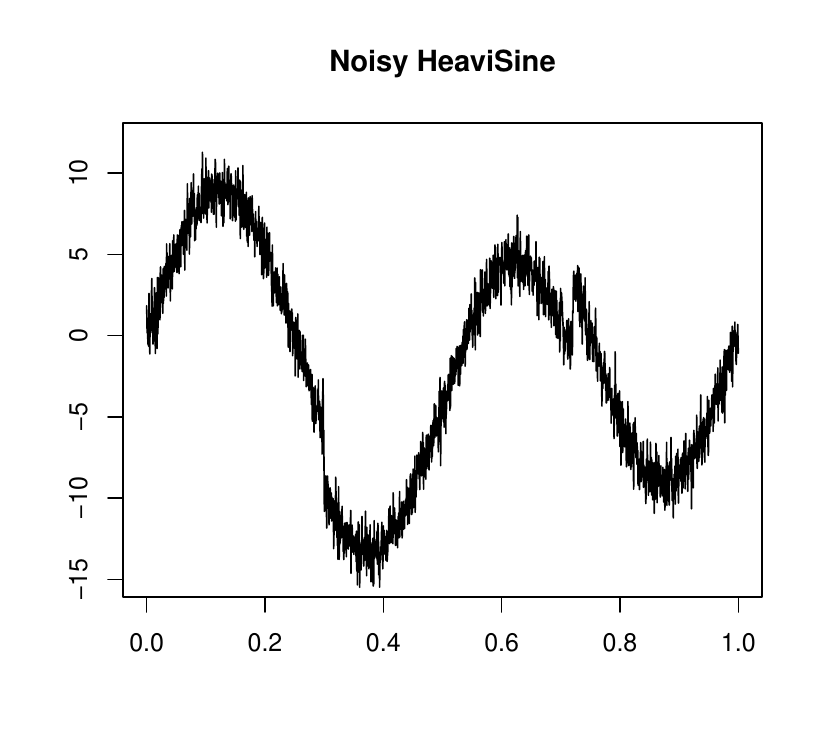}
 \includegraphics[width=0.45\textwidth]{./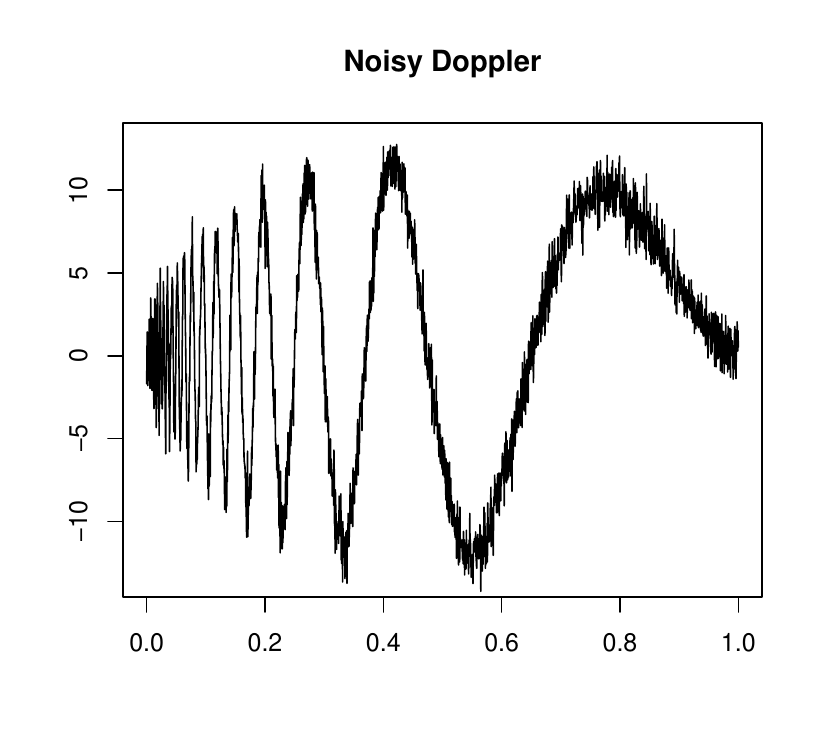}\vspace{-.5cm}
    \caption{Noisy observation, signal to white noise model approximately 7 in all four cases.}
    \label{fig-dj94-noisy}
\end{figure}

\begin{figure}[htbp]
\centering
    \includegraphics[width=0.45\textwidth]{./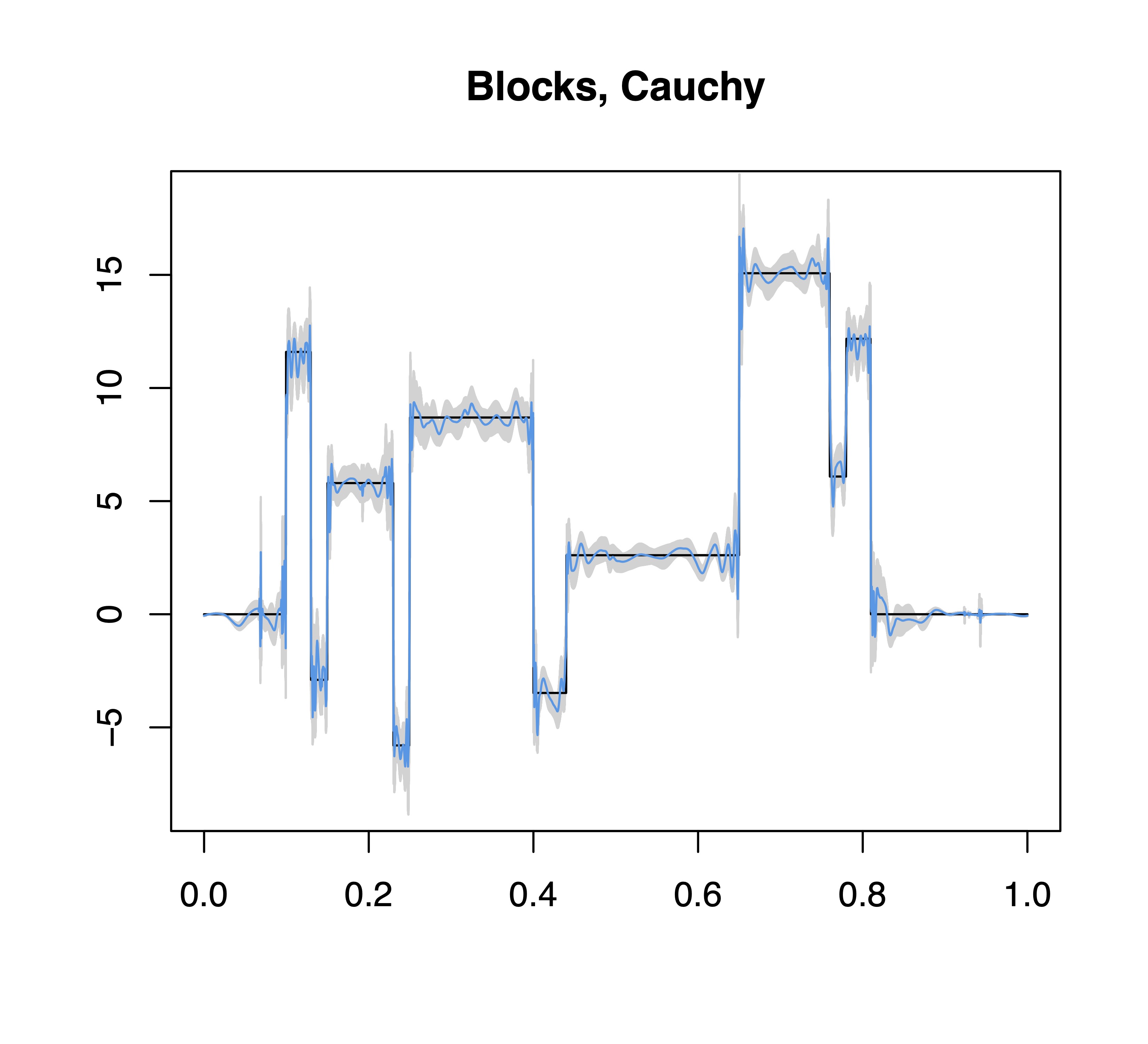}
    \includegraphics[width=0.45\textwidth]{./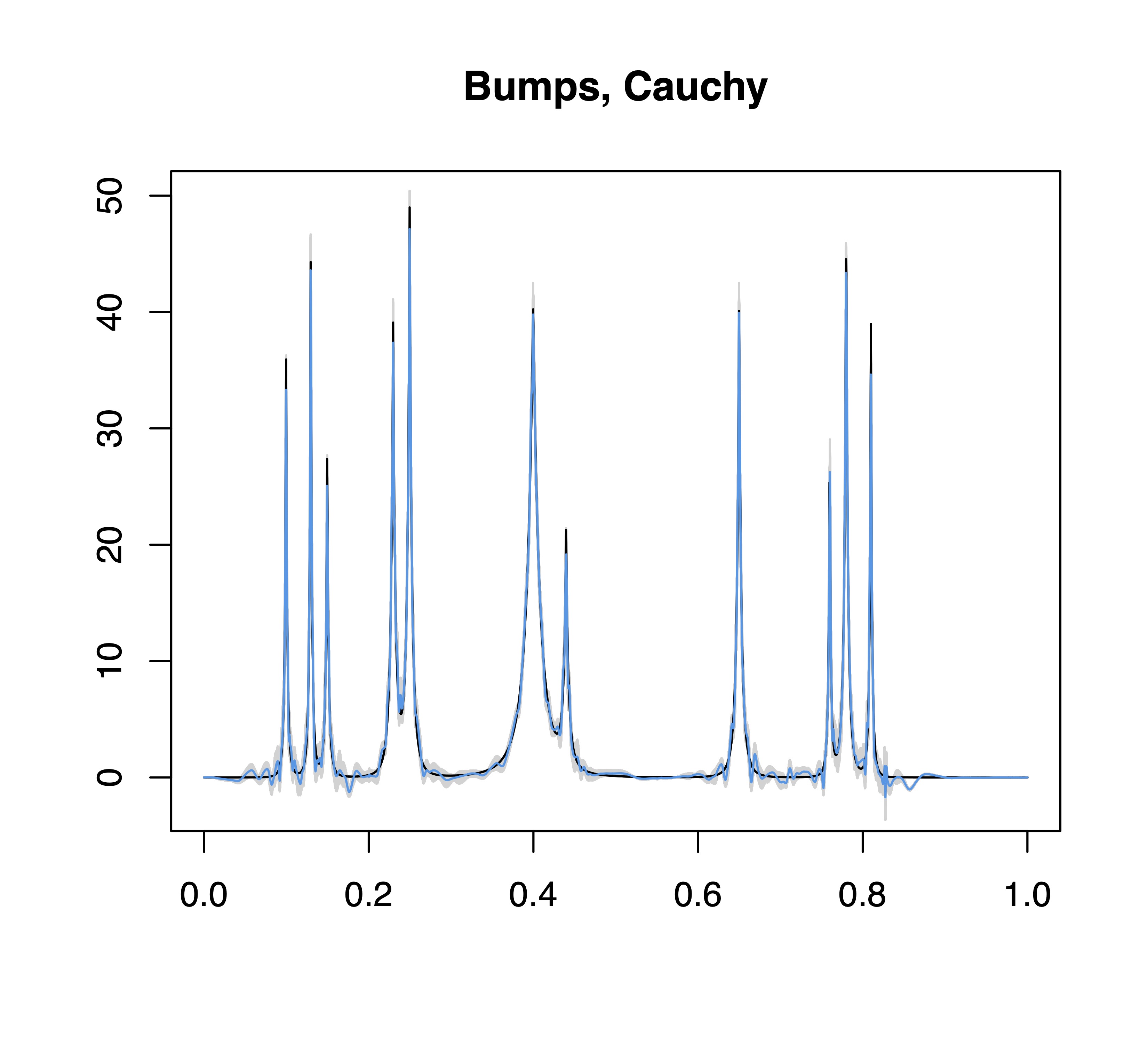}\vspace{-.8cm}
     \includegraphics[width=0.45\textwidth]{./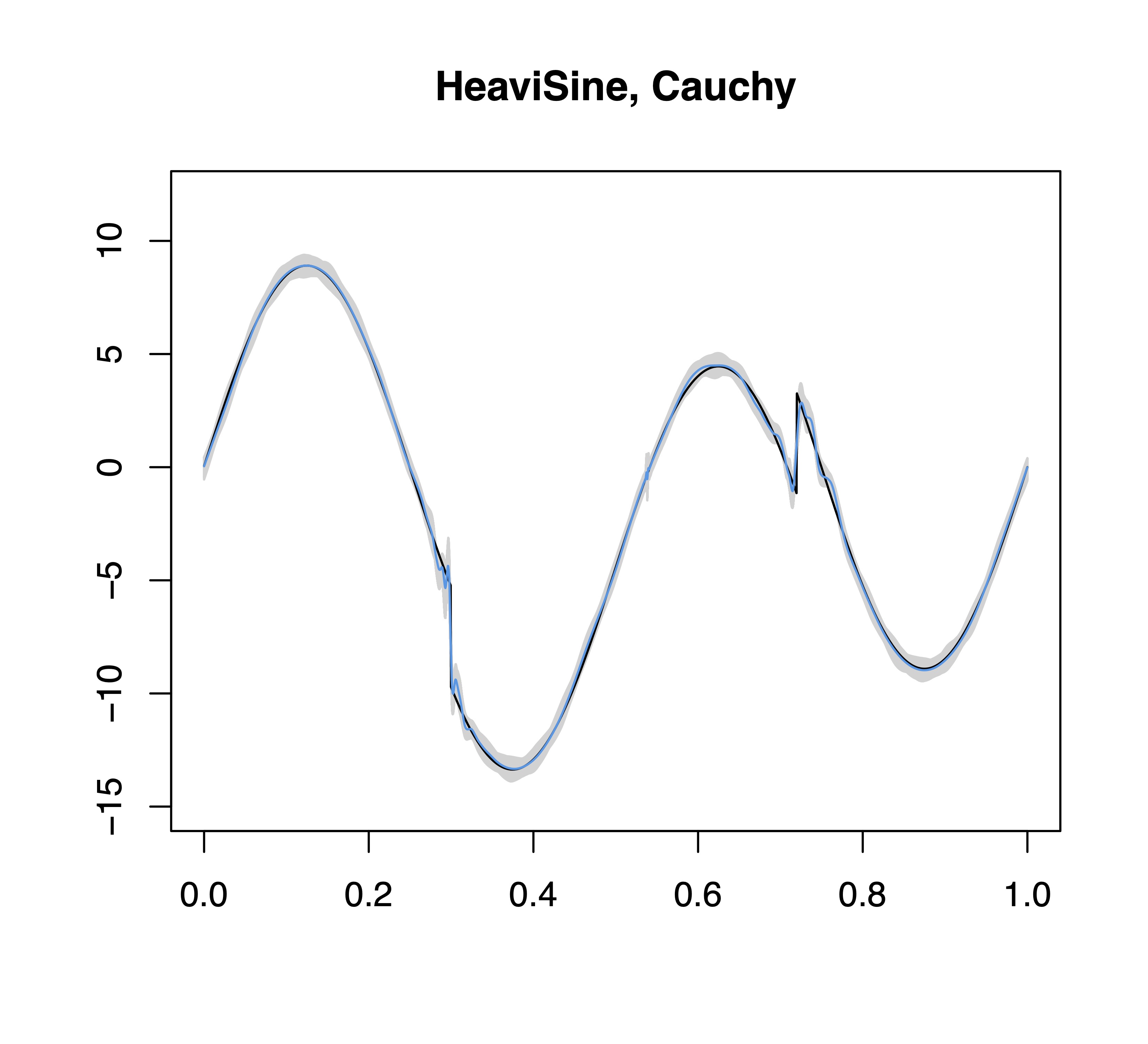}
 \includegraphics[width=0.45\textwidth]{./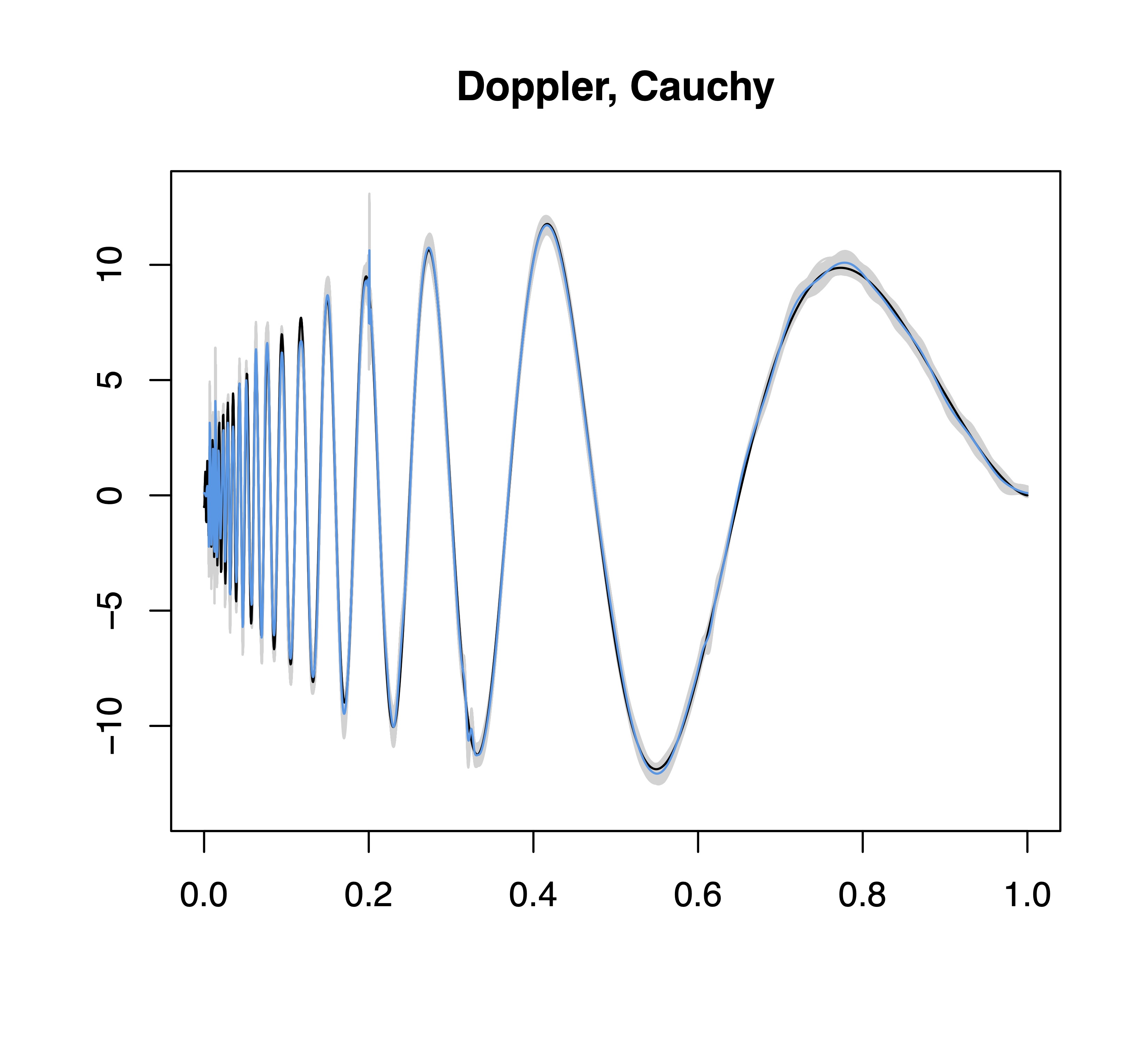}\vspace{-.5cm}
    \caption{Posteriors under Cauchy oversmoothing prior.}
    \label{fig-dj94-posterior}
    \bigskip
      \includegraphics[width=0.45\textwidth]{./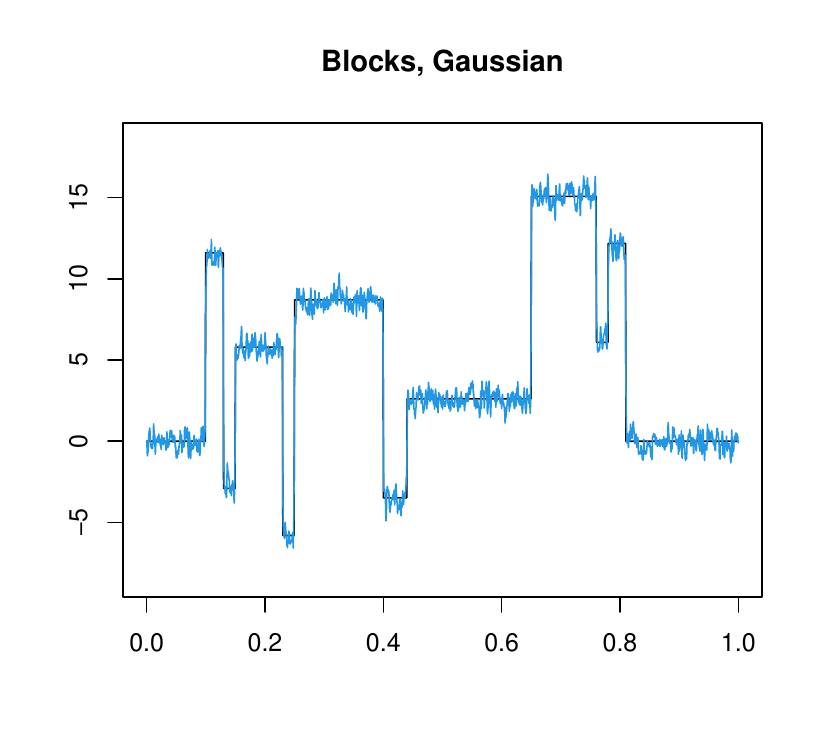}
    \includegraphics[width=0.45\textwidth]{./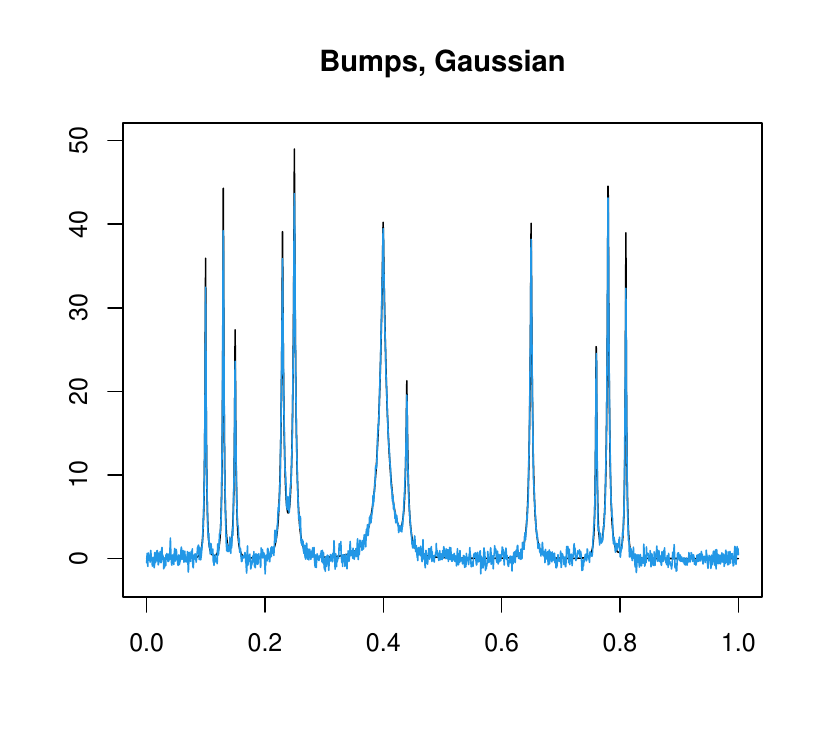}\vspace{-.8cm}
     \includegraphics[width=0.45\textwidth]{./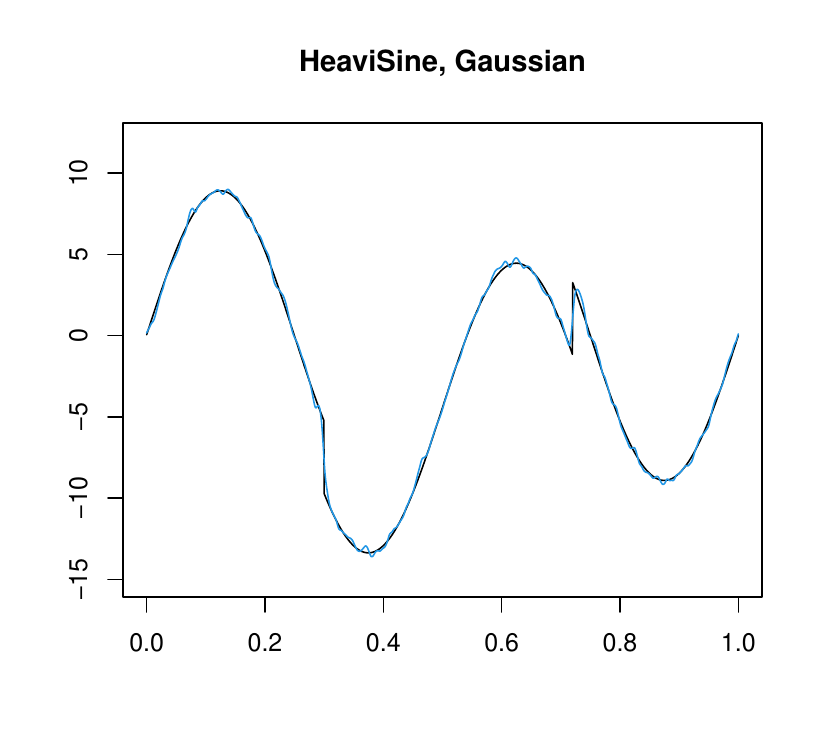}
 \includegraphics[width=0.45\textwidth]{./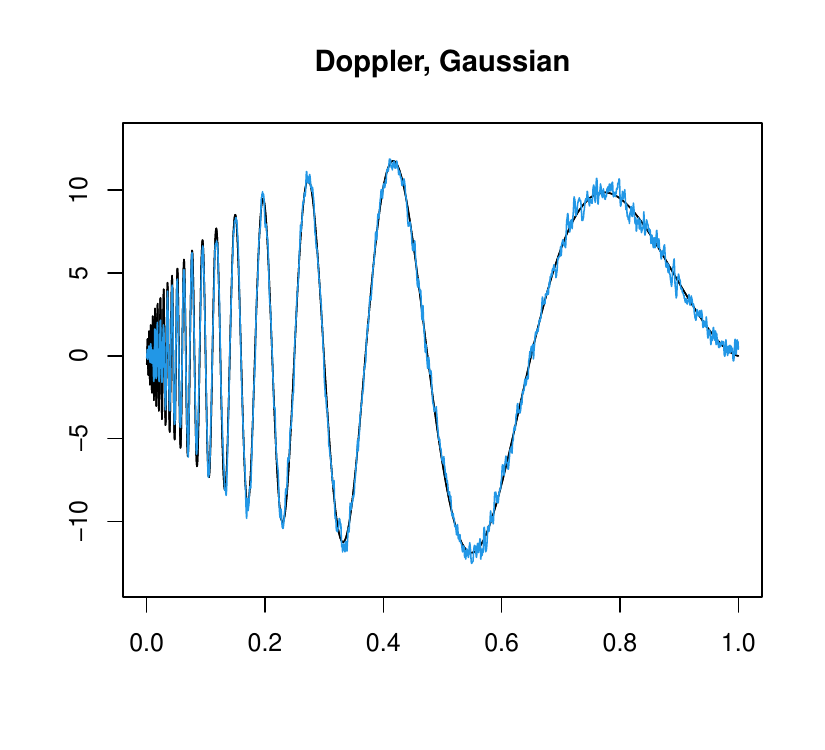}\vspace{-.5cm}
    \caption{Posterior means under Gaussian hierarchical prior.}
    \label{fig-dj94-posterior-gaussian}
\end{figure}

\subsection{Density estimation with H\"older-regular truth}\label{de:sim} 
We consider non-parametric estimation of a probability density function $p$ relative to the uniform measure on the unit interval, based on a sample of observations $X_1, \dots, X_n|p\stackrel{iid}{\sim}p$. We construct a prior $\Pi$ for such densities, by exponentiating and normalizing a random function $f(\cdot):[0,1]\to\RR$ drawn from a wavelet prior
as in  (\ref{priord}), see Subsection \ref{ssec:de}. We use the Daubechies-8 wavelet basis \cite{daubechiesbook}, while as underlying true density we use $p_0=e^{f_0}/\int_0^1e^{f_0}$ for $f_0$ with Daubechies-8 wavelet coefficients given as 
\[f_{0,lk}=4\cos^3(k)2^{-(5/2)l}.\] Note that since the Daubechies-8 wavelets have H\"older regularity higher than 2 \cite{daubechiesbook}, we have that $f_0\in \mathcal{C}^2[0,1]$.

We use three different series priors on $f$, corresponding to four different choices of $s_l$ and the distribution of $\zeta_{lk}$ in (\ref{priord}):
\begin{itemize}
\item Gaussian oversmoothing prior: $s_l=2^{-l(1/2+\alpha)}$ with $\alpha=5$, where $\zeta_{lk}$ standard normal;
\item HT$(\alpha)$ prior: $s_l=2^{-l(1/2+\alpha)}$ with $\alpha=5$, where $\zeta_{lk}$ standard Cauchy-distributed;
%\item Cauchy oversmoothing prior 2: $s_l=2^{-l^{3/2}},$ with  $\zeta_{lk}$ standard Cauchy-distributed;
\item OT prior: $s_l=2^{-l^2},$ with $\zeta_{lk}$ standard Cauchy-distributed.
\end{itemize}

We truncate up to $L=10$ which (given the smoothness of the considered priors and the truth) suffices for the truncation error being of lower order compared to the estimation error. For sampling the posterior we employ the preconditioned Crank-Nicholson (pCN) algorithm, introduced in \cite{brsv08} and popularized in \cite{crsw13}, which is a derivative-free Metropolis-Hastings algorithm robust with respect to dimension (truncation level). The vanilla version of pCN is suitable for Gaussian priors, while, similarly to the previous subsection, in the Cauchy case we use the whitened version introduced in \cite{cdps18}, which is suitable for non-Gaussian priors admitting a so-called prior orthogonalizing transformation $f=T(\xi)$, {where $\xi$ is a sequence of i.i.d. standard normal variables}  and $f$ is a random draw from the non-Gaussian prior of interest, see Algorithm 2 in \cite{cdps18}. For all considered priors, we initialize the Markov chains using draws from the prior. For analyzing and synthesizing the Daubechies-8 wavelet expansions, we again employ Wavelab850 \cite{wavelab}.

In Figure \ref{fig-postSob-de-duplicate} (which is a duplicate of Figure \ref{fig-postSob-de} in the main article, included here for the reader's convenience) we present posterior sample means as well as 95\% credible regions, computed by taking the 95\% out of the 25000 draws (after burn-in) which are closest to the mean in $L^1$-sense, for the three considered priors. 
It appears that the oversmoothing Gaussian prior performs very poorly, both in terms of the posterior mean and the uncertainty quantification. The two Cauchy priors perform very well, with posterior means which {are largely competitive with} the kernel density estimates. Similarly to our simulations in the inverse regression setting, the HT$(\alpha)$ prior is again slightly overconfident for moderate sample sizes. As a benchmark for our results, in Figure \ref{fig-kde} we show kernel density estimates of $p_0$ using a Gaussian kernel with bandwidth selection based on Silverman's rule of thumb, for various observation sample sizes. Finally, in Figure \ref{fig-rhopostSob-de-CD2} we present $\rho$-posteriors for the OT prior with $\rho=1, 0.6, 0.2$;  
their performance is similar to the regular posteriors, albeit with more variability.

%There is also some evidence of multimodality in the shown posterior samples, which is to be expected since priors with heavier than exponential tails are known to lead to multimodal posteriors. Finally, $\rho$-posteriors perform similarly to regular posteriors, albeit with more variability.

%\ma{S: is it a problem that we use $L^2$ credible sets in density estimation?} \sbl{
%Indeed, in principle one could also report an $L^1$ credible ball by computing the $L^1$ distances to the posterior mean instead, which would fit better our rate results (although of course we do not claim any nonparametric UQ results); not sure it changes a lot, but worth trying I guess!}.

\begin{figure}[htbp]
    \centering
    \includegraphics[width=0.92\textwidth]{./plots/de/title.png} \quad

    \includegraphics[width=0.3\textwidth]{./plots/de/g-n2-L1.pdf}\;
        \includegraphics[width=0.3\textwidth]{./plots/de/cd1-n2-L1.pdf}\;
    \includegraphics[width=0.3\textwidth]{./plots/de/cd2-n2-L1.pdf}\vspace{0.4cm}

    \includegraphics[width=0.3\textwidth]{./plots/de/g-n4-L1.pdf}\;
        \includegraphics[width=0.3\textwidth]{./plots/de/cd1-n4-L1.pdf}\;
    \includegraphics[width=0.3\textwidth]{./plots/de/cd2-n4-L1.pdf}\vspace{0.4cm}
    
        \includegraphics[width=0.3\textwidth]{./plots/de/g-n6-L1.pdf}\;
        \includegraphics[width=0.3\textwidth]{./plots/de/cd1-n6-L1.pdf}\;
    \includegraphics[width=0.3\textwidth]{./plots/de/cd2-n6-L1.pdf}

    \caption{Density estimation: true density (black dashed), posterior mean (blue), 95\% credible regions (grey), for $n=10^2, 10^4, 10^6$  top to bottom and for the three considered priors left to right.}
    \label{fig-postSob-de-duplicate}
        
    \bigskip
    
        \includegraphics[width=0.3\textwidth]{./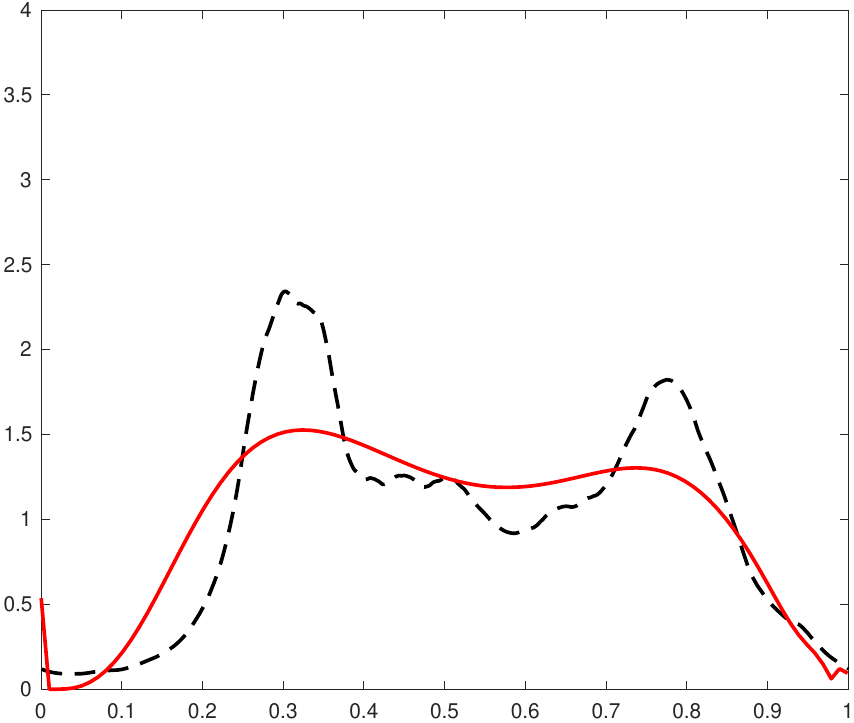}\;
    \includegraphics[width=0.3\textwidth]{./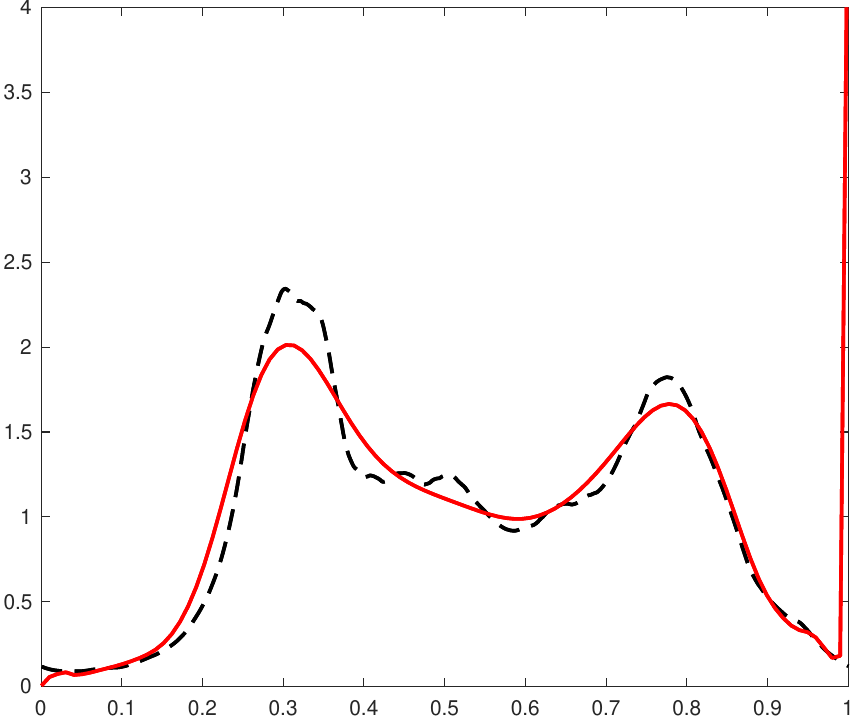}\;
    \includegraphics[width=0.3\textwidth]{./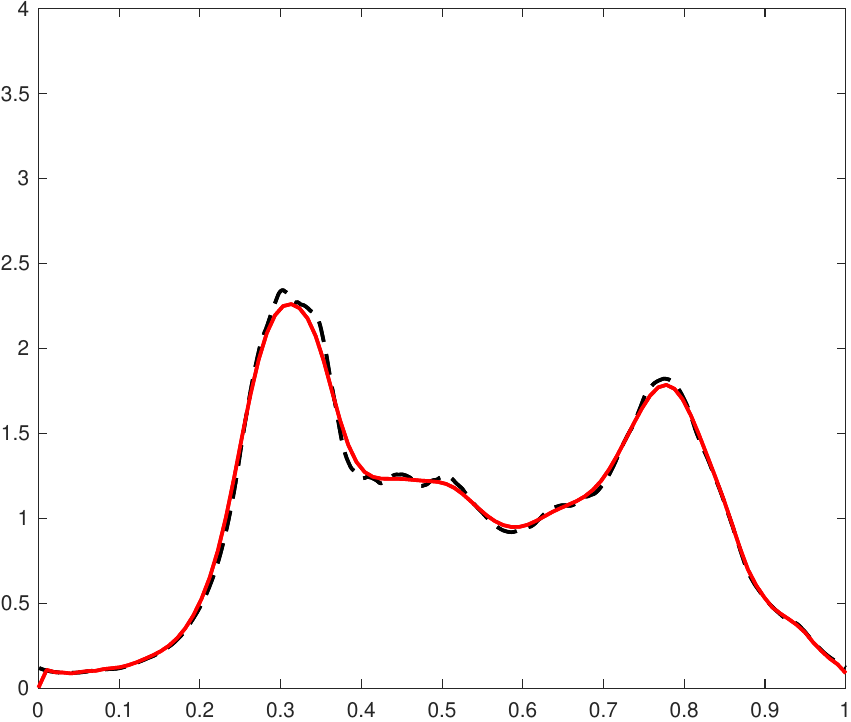}\vspace{.4cm}
    \caption{True density (blacked dashed) and kernel density estimator with Gaussian kernel (red) for $n=10^2, 10^4, 10^6$ left to right.}
    \label{fig-kde}
\end{figure}

\begin{figure}[htbp]
    \centering
     \includegraphics[width=0.935\textwidth]{./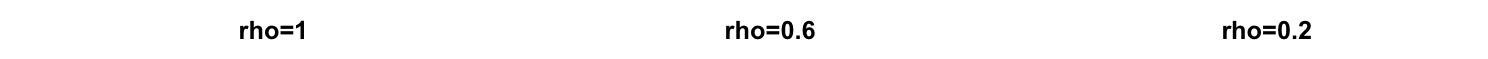} \quad\,

    \includegraphics[width=0.3\textwidth]{./plots/de/cd2-n2-L1.pdf}\;
    \includegraphics[width=0.3\textwidth]{./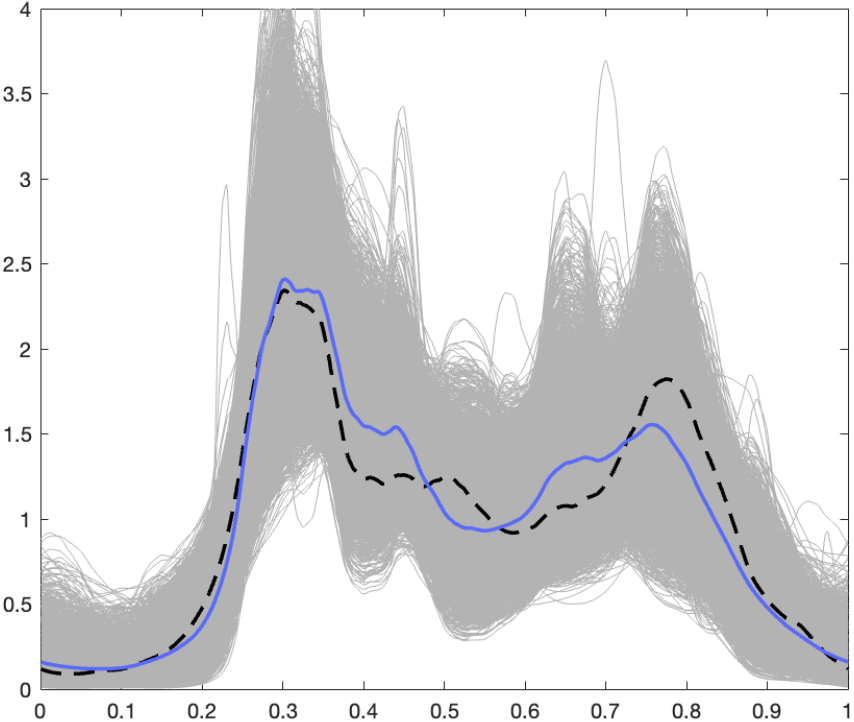}\;
    \includegraphics[width=0.3\textwidth]{./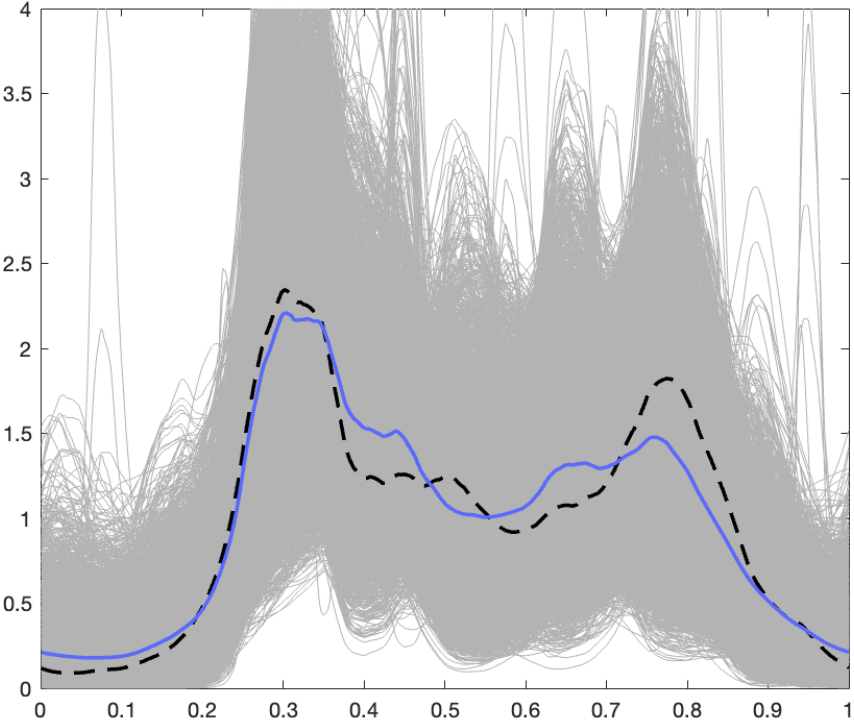}\\\vspace{.4cm}

    \includegraphics[width=0.3\textwidth]{./plots/de/cd2-n4-L1.pdf}\;
    \includegraphics[width=0.3\textwidth]{./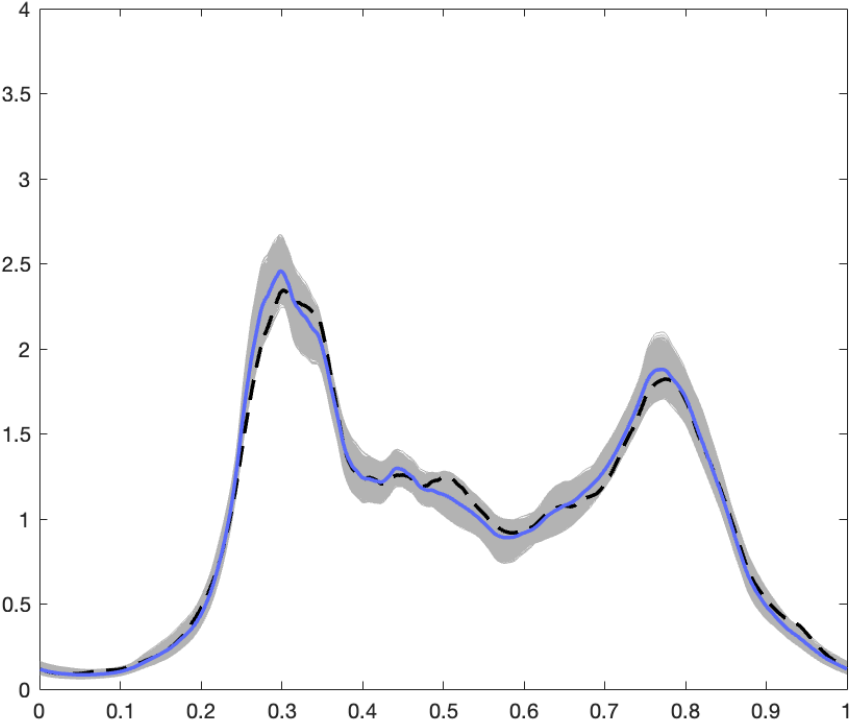}\;
    \includegraphics[width=0.3\textwidth]{./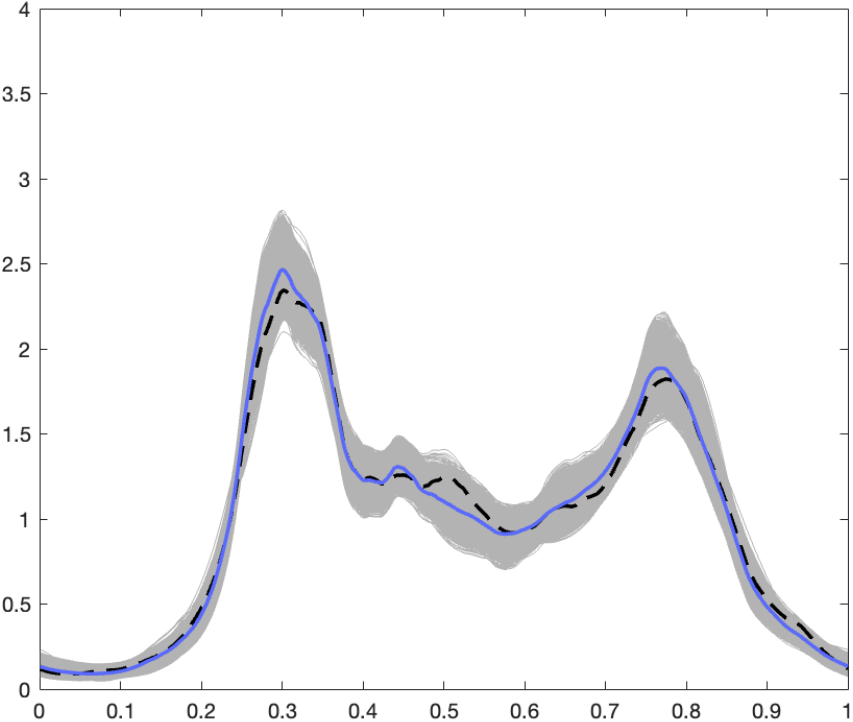}\\\vspace{.4cm}

    \includegraphics[width=0.3\textwidth]{./plots/de/cd2-n6-L1.pdf}\;
    \includegraphics[width=0.3\textwidth]{./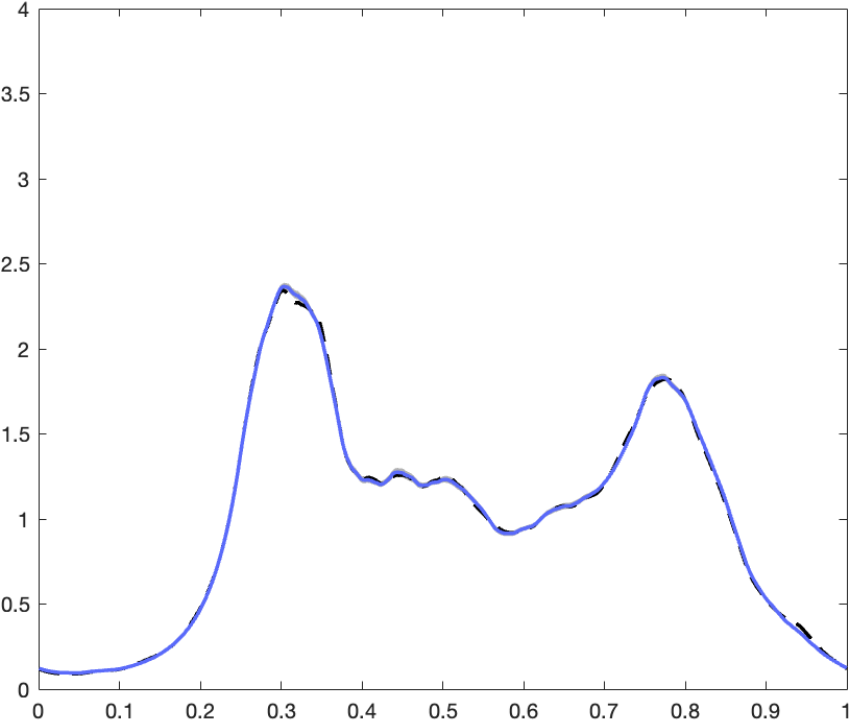}\;
    \includegraphics[width=0.3\textwidth]{./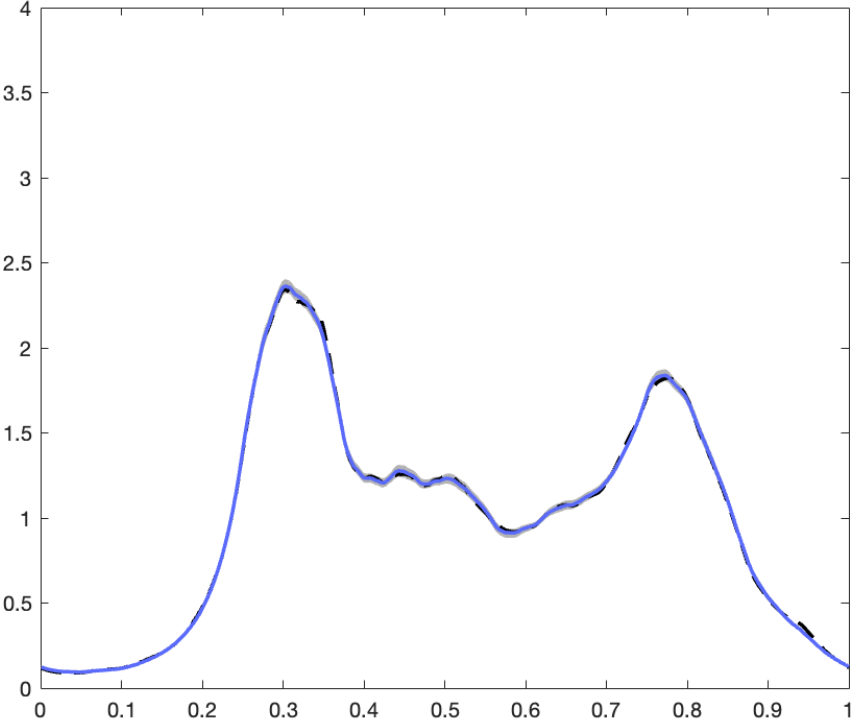}\vspace{.4cm}
    
    \caption{Density estimation: true density (black dashed), mean (blue), 95\% credible regions according  (grey) for the $\rho$-posterior arising from the Cauchy prior with $s_l=2^{-l^2}$, for $n=10^2, 10^4, 10^6$ top to bottom and $\rho=1, 0.6, 0.2$ left to right.}
    \label{fig-rhopostSob-de-CD2}
\end{figure}

\subsection{Binary classification with Sobolev-regular truth}\label{bin:sim} 
 We consider non-parametric binary classification based on observations $(X_1, Y_1),\dots,(X_n, Y_n)$, where the dependent variable $Y$ is binary and the predictor $X$ is uniformly distrbuted in the unit interval. We construct a prior $\Pi$ for the binary regression function $h_0(x)=P(Y=1|X=x)$, using the logistic link function $\Lambda(u)=1/(1+e^{-u})$ applied to a random function $f(\cdot):[0,1]\to\RR$ drawn from a wavelet prior
as in  (\ref{priord}), see Subsection \ref{sec:classif}. We use the Daubechies-8 wavelet basis \cite{daubechiesbook}, while as underlying true regression function we use $h_0=\Lambda(f_0)$ for $f_0$ the same function as in our simulations in the density estimation setting in the previous subsection. In particular, $f_0$ can be thought of as having Sobolev regularity (almost) $\beta=2$.
The true regression function and $n=10^2$ observations can be seen in Figure \ref{fig-bin}.

We consider the same three series priors on $f$ as in the previous subsection, truncated again up to $L=10$ and for sampling the resulting posteriors we again use the (whitened) pCN algorithm initialized using prior draws.

In Figure \ref{fig-postSob-bin} we present posterior sample means as well as 95\% credible regions, computed by taking the 95\% out of the 25000 draws (after burn-in) which are closest to the mean in $L^1$-sense, for the three considered priors. 
In Figure \ref{fig-rhopostSob-bin-CD2} we present $\rho$-posteriors for the OT prior with $\rho=1, 0.6, 0.2$. The conclusions are similar to the density estimation setting in the previous subsection.

\begin{figure}[htbp]
    \centering
    \includegraphics[width=0.5\textwidth]{./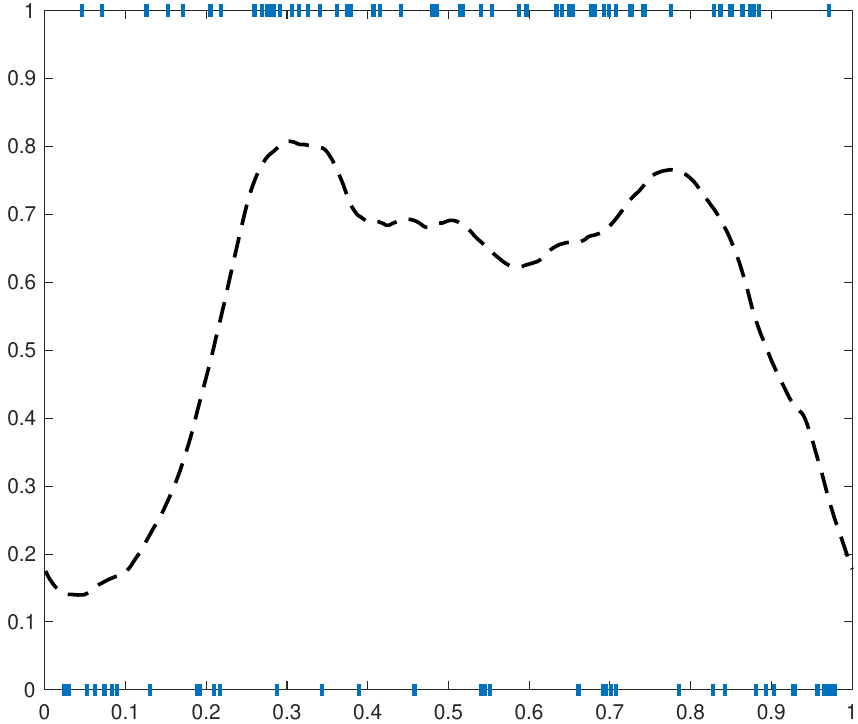}\;\vspace{0.4cm}
    \caption{True binary regression function (blacked dashed) and $n=10^2$ observations (blue vertical line segments).}
    \label{fig-bin}
    
\bigskip

    \includegraphics[width=0.92\textwidth]{./plots/de/title.png} \quad

    \includegraphics[width=0.3\textwidth]{./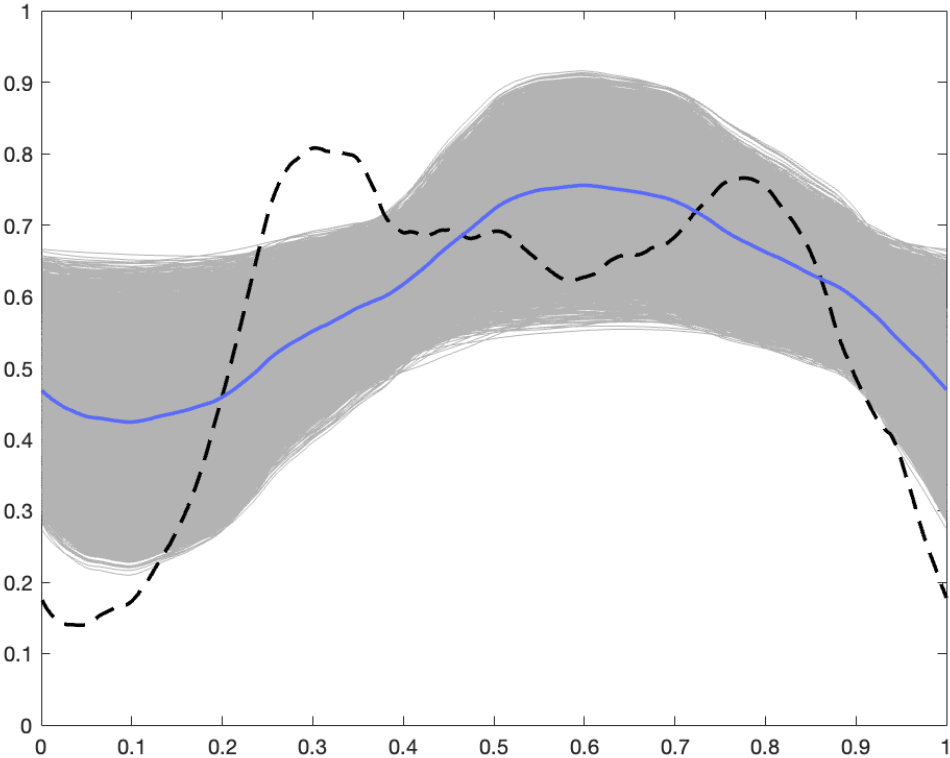}\;
        \includegraphics[width=0.3\textwidth]{./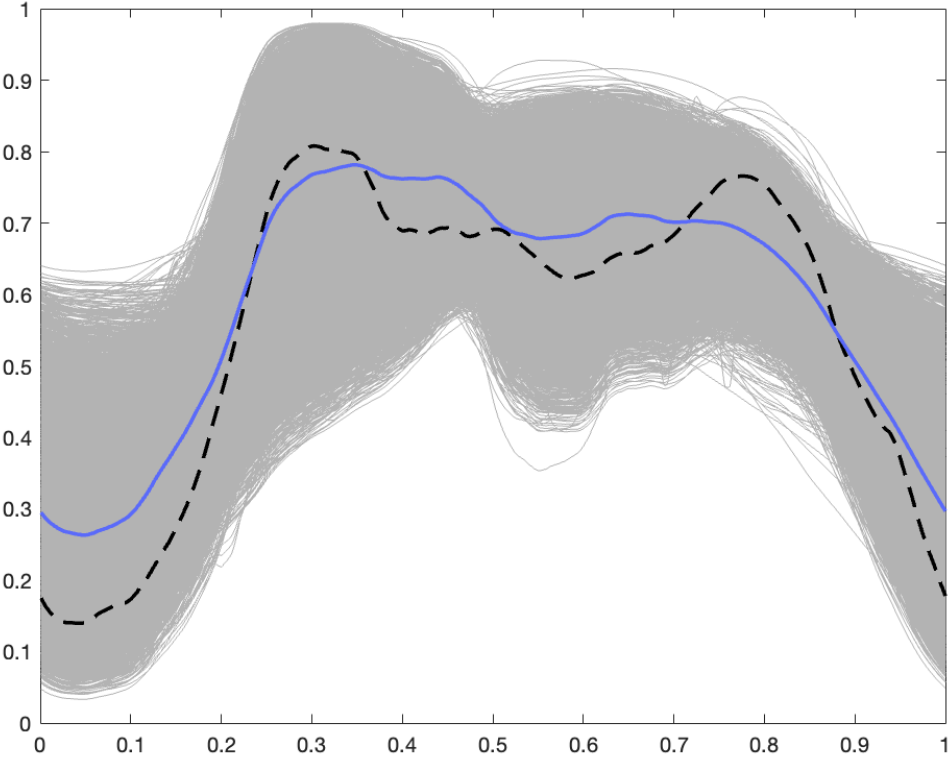}\;
    \includegraphics[width=0.3\textwidth]{./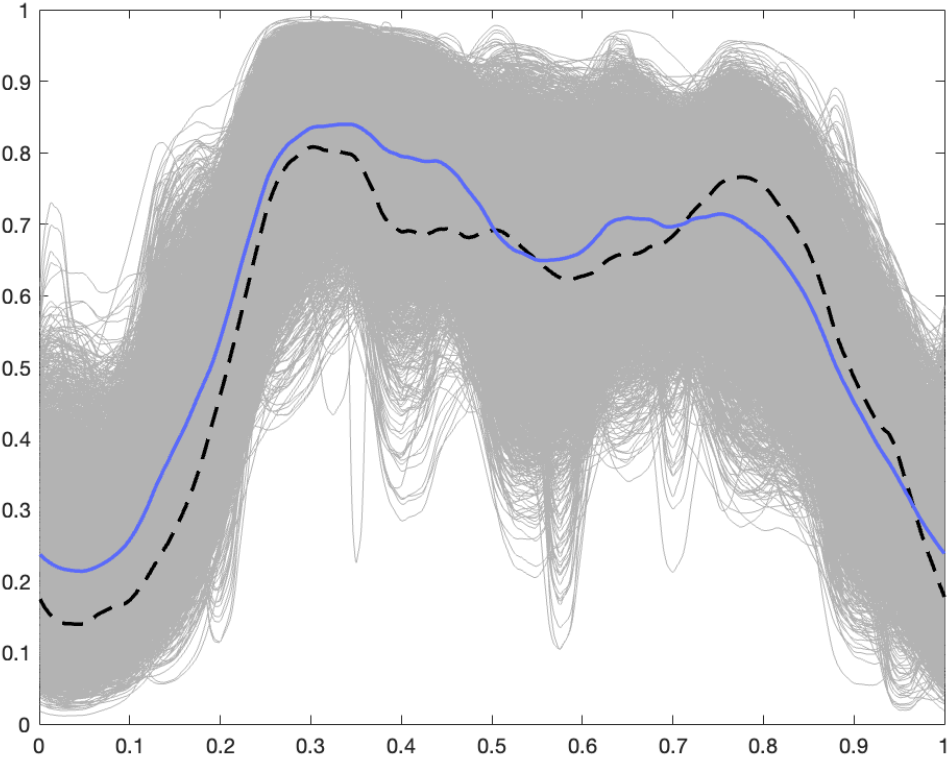}\vspace{0.4cm}

    \includegraphics[width=0.3\textwidth]{./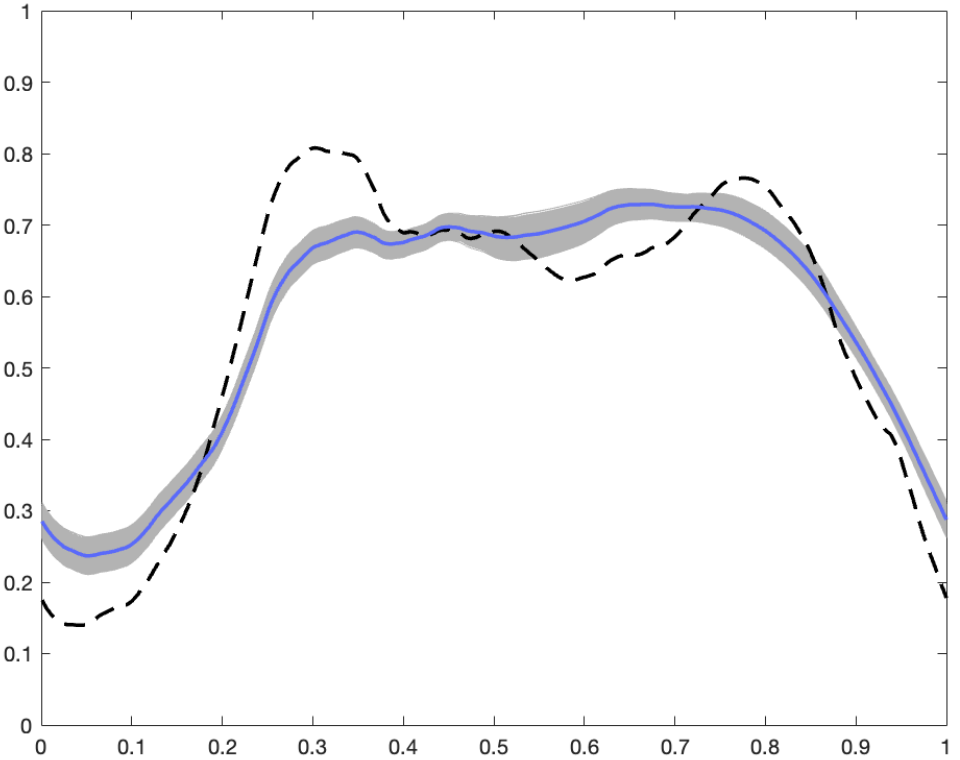}\;
        \includegraphics[width=0.3\textwidth]{./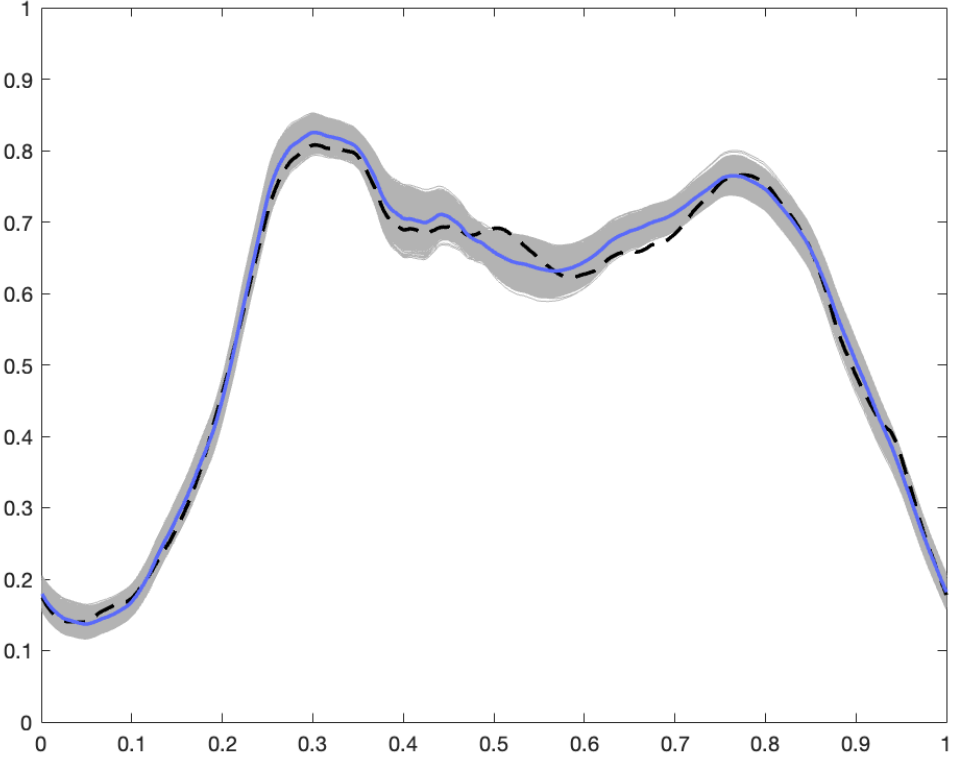}\;
    \includegraphics[width=0.3\textwidth]{./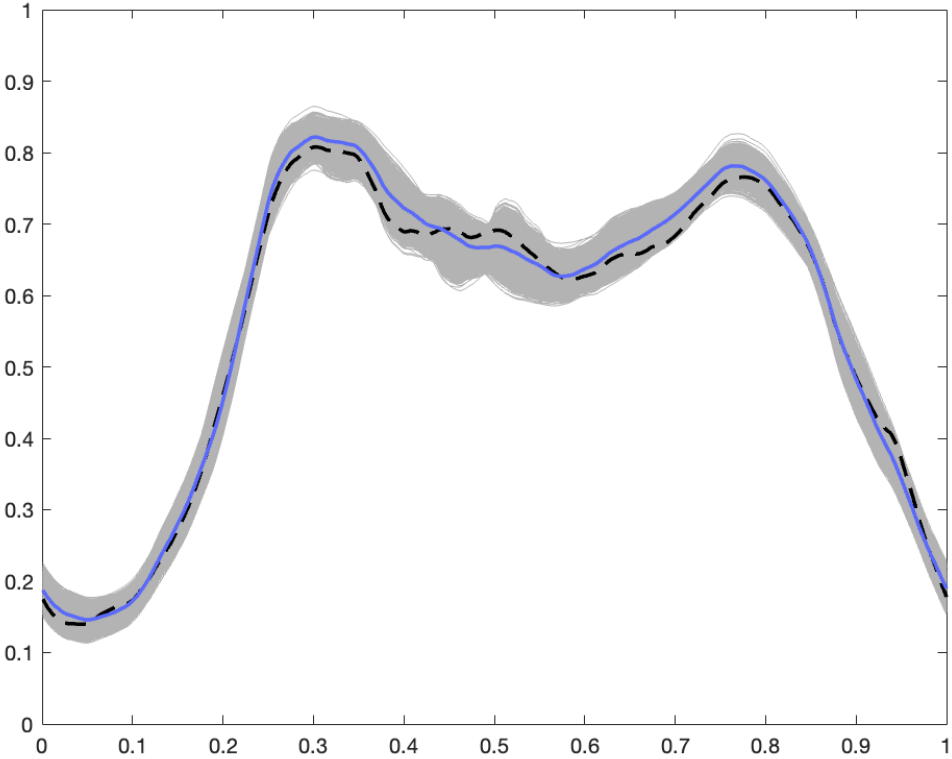}\vspace{0.4cm}
    
        \includegraphics[width=0.3\textwidth]{./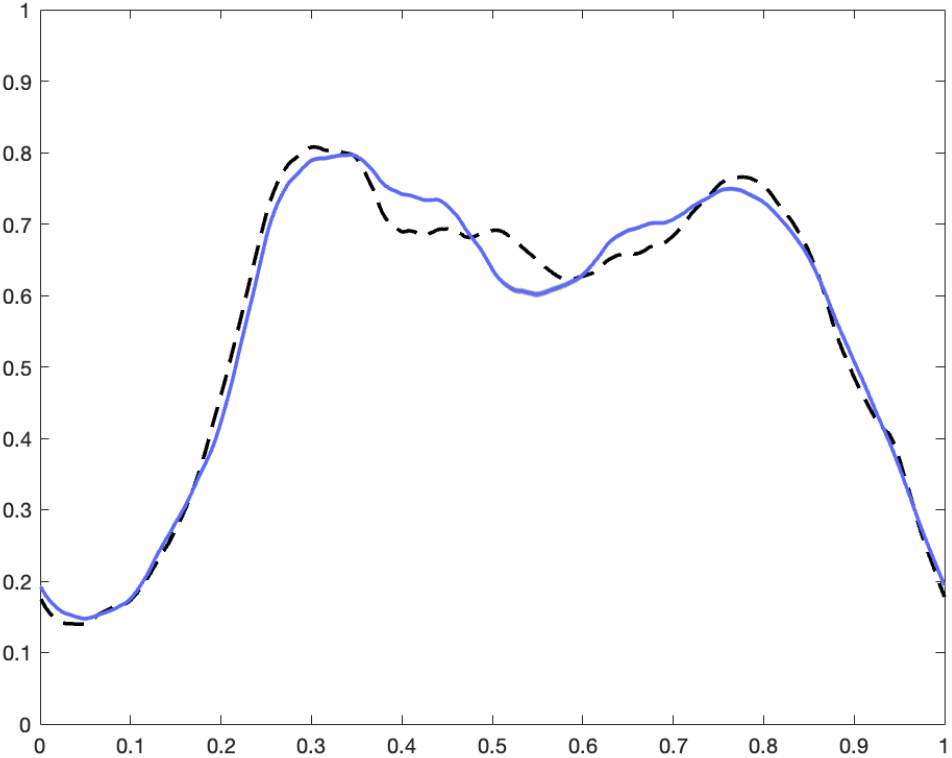}\;
        \includegraphics[width=0.3\textwidth]{./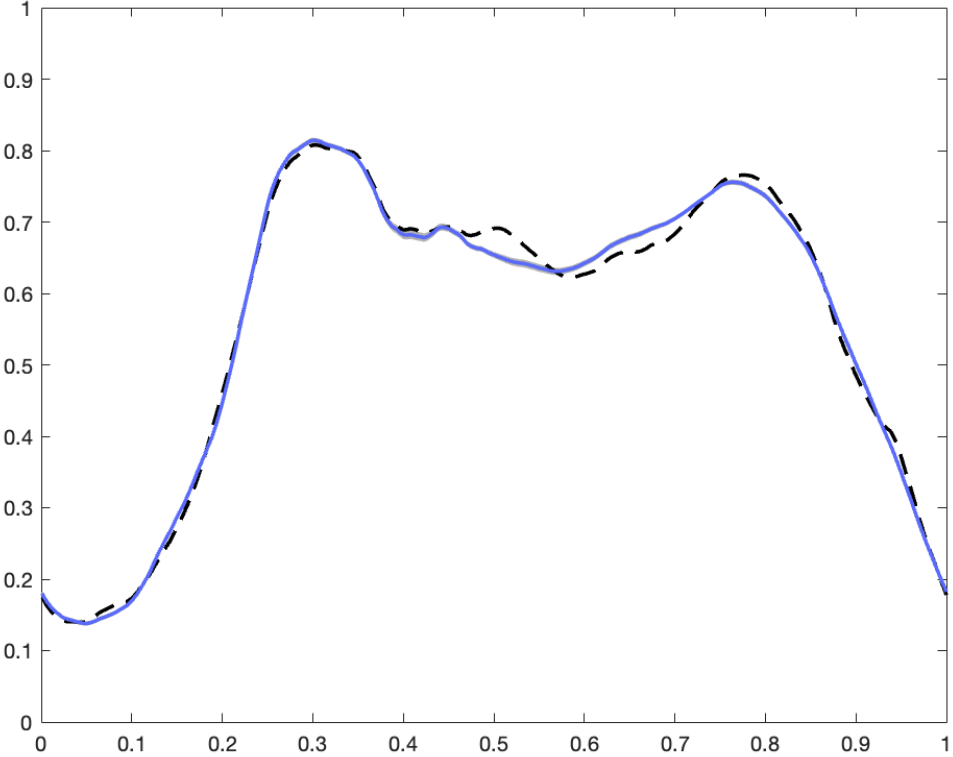}\;
    \includegraphics[width=0.3\textwidth]{./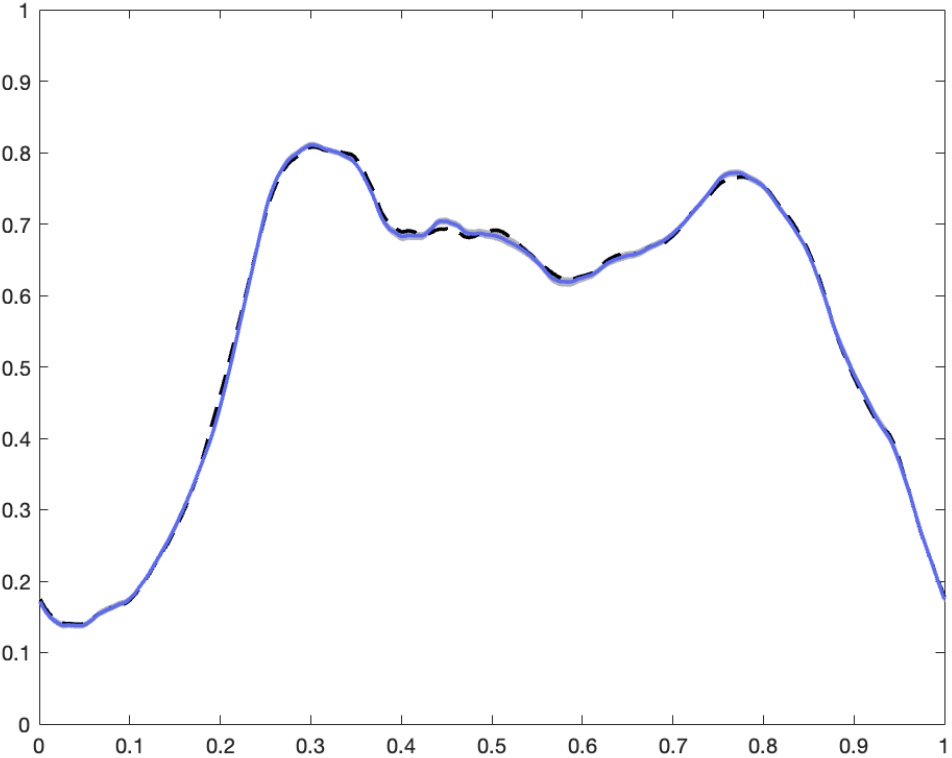}\vspace{0.4cm}

    \caption{Binary classification: true regression function (black dashed), posterior mean (blue), 95\% credible regions (grey), for $n=10^2, 10^4, 10^6$ top to bottom and for the three considered priors left to right.}
    \label{fig-postSob-bin}
\end{figure}

\begin{figure}
    \centering
        \includegraphics[width=0.935\textwidth]{./plots/bin/title2.png} \quad\,

    \includegraphics[width=0.3\textwidth]{./plots/bin/bin-c2-n2-L1.pdf}\;
    \includegraphics[width=0.3\textwidth]{./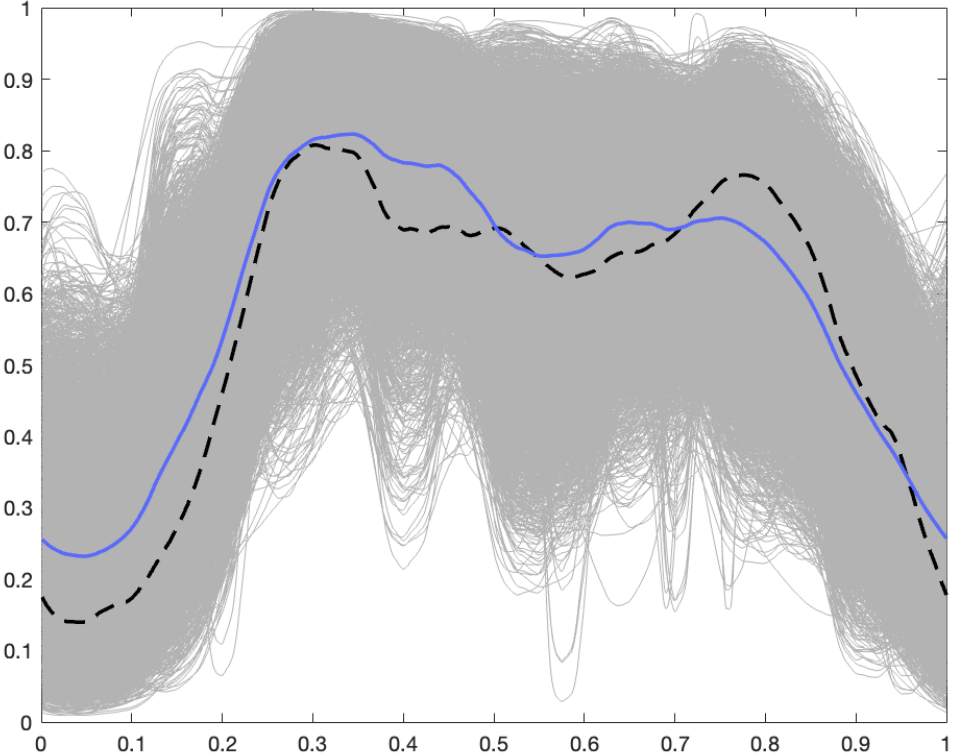}\;
    \includegraphics[width=0.3\textwidth]{./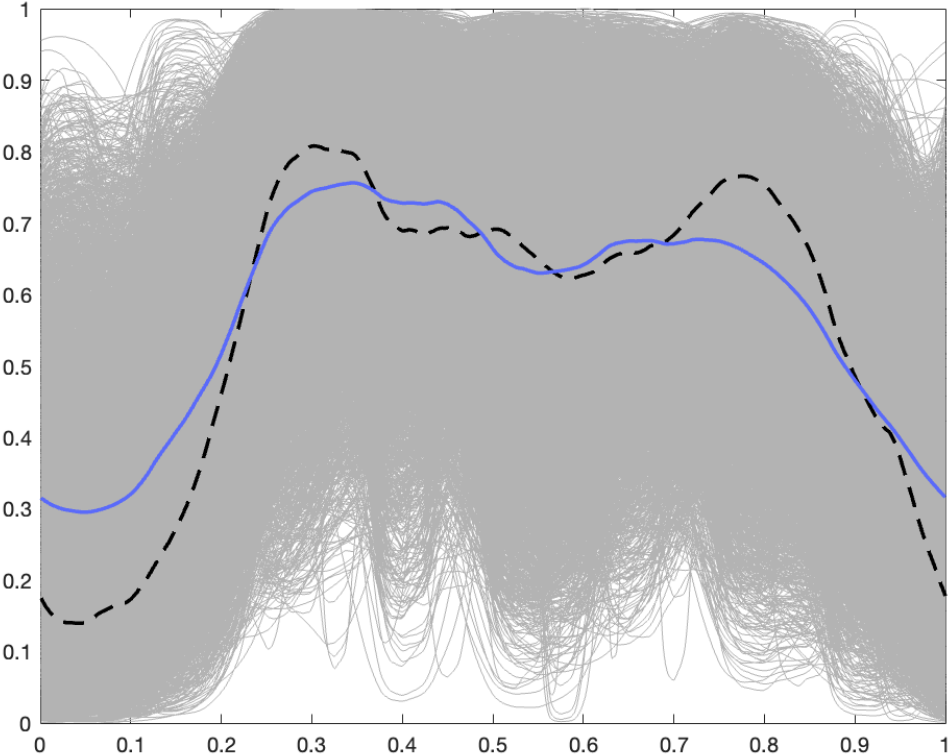}\\\vspace{.4cm}

    \includegraphics[width=0.3\textwidth]{./plots/bin/bin-c2-n4-L1.pdf}\;
    \includegraphics[width=0.3\textwidth]{./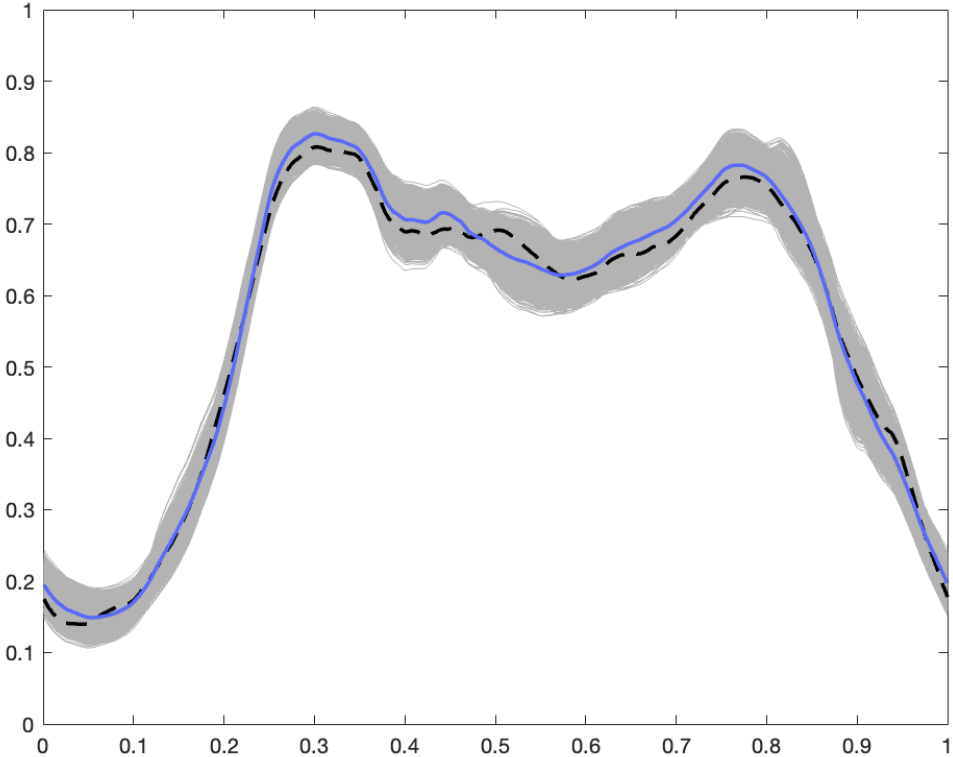}\;
    \includegraphics[width=0.3\textwidth]{./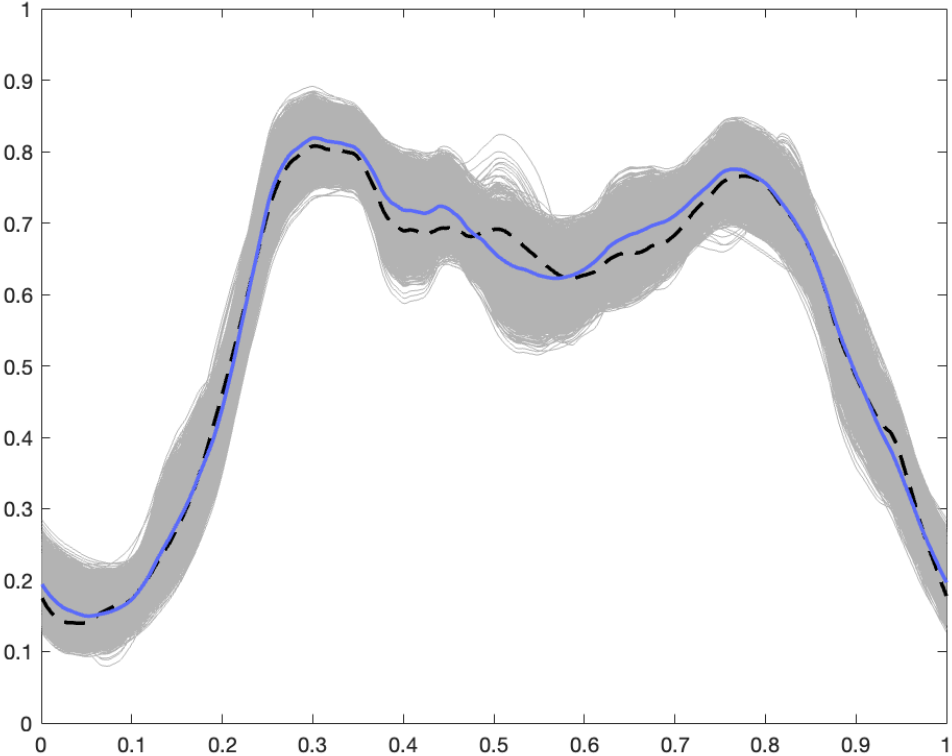}\\\vspace{.4cm}

    \includegraphics[width=0.3\textwidth]{./plots/bin/bin-c2-n6-L1.pdf}\;
    \includegraphics[width=0.3\textwidth]{./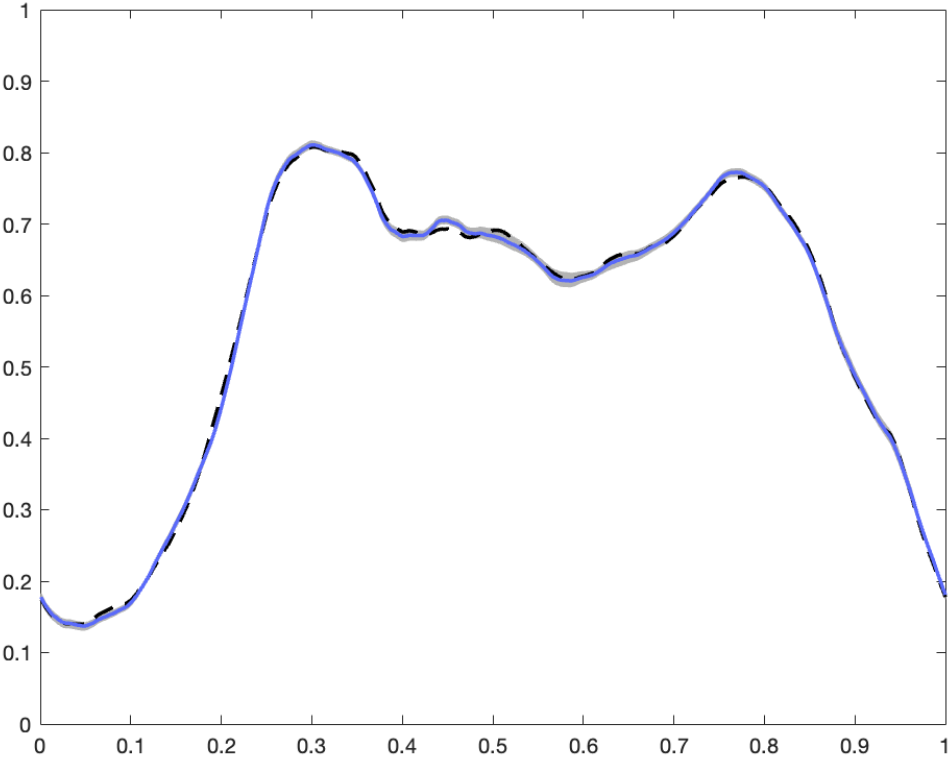}\;
    \includegraphics[width=0.3\textwidth]{./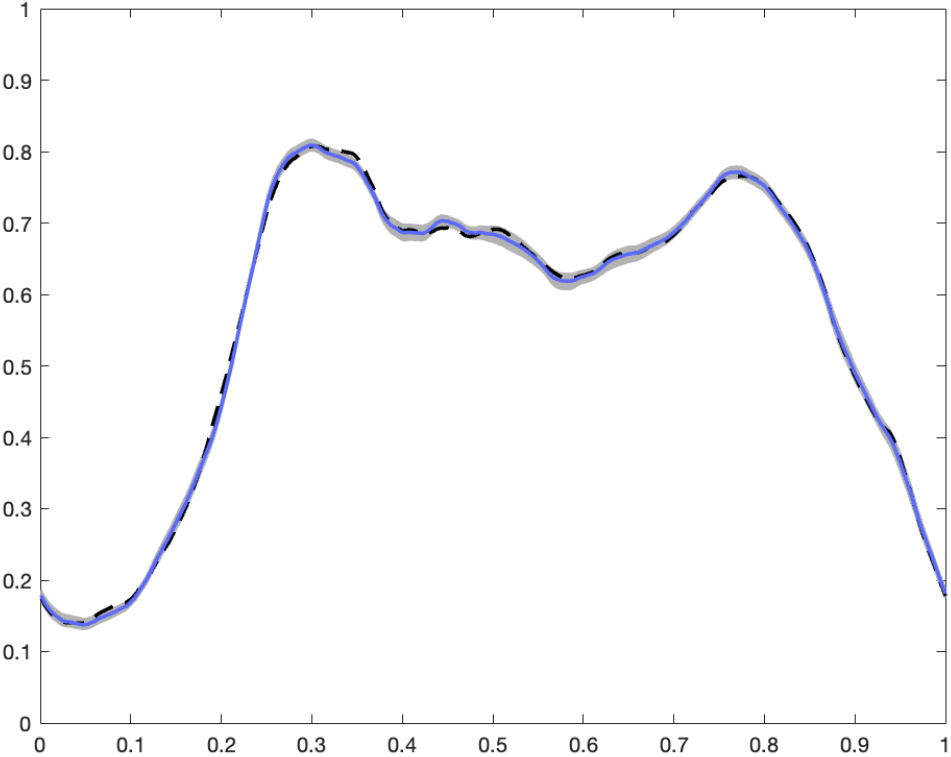}
    
    \caption{Binary classification: true regression function (black dashed), mean (blue), 95\% credible regions according  (grey) for the $\rho$-posterior arising from the Cauchy prior with $s_l=2^{-l^2}$, for $n=10^2, 10^4, 10^6$ top to bottom and $\rho=1, 0.6, 0.2$  left to right.}
    \label{fig-rhopostSob-bin-CD2}
\end{figure}

\subsection{Discussion}
In the provided illustrations, we have employed off the shelf Markov chain Monte Carlo (MCMC) samplers, not particularly tailored to heavy tailed priors and/or multimodal posteriors. In the recent work \cite{BMNW23}, it has been shown that, when used to sample posteriors arising from non-logarithmically concave priors (such as our heavy tailed priors), MCMC algorithms based on random walk or even gradient steps such as the (whitened) pCN and MALA algorithms employed in Subsections \ref{sim:spin}-\ref{bin:sim}, when initialized in the tail of the posterior (`cold started') can take an exponentially long time to reach the area on which the bulk of the posterior mass is located. The difficulty giving rise to this undesirable phenomenon is volumetric, and in particular can arise even if the posterior is unimodal. 

For our priors, we observed a behaviour that could resemble this type of phenomenon only in the simulation setting of Subsection \ref{sim:spin}, where we used whitened MALA to sample the posterior in the white noise model with spatially inhomogeneous truths. When initializing from a prior draw, the chains moved very slowly and the generated samples remained very far from the truth even after very long runs. We addressed this by initializing from the observed data (`warm start'), which resulted in a sequence of samples increasingly denoised and, eventually, convergence to the mean and credible regions seen in Figure \ref{fig-dj94-posterior}. In the density estimation and classification settings of Subsections \ref{de:sim} and \ref{bin:sim}, respectively, a start of the employed whitened pCN algorithm from a prior draw was sufficient, and the results were robust with respect to the initializer. In the linear inverse problem with Sobolev/spatially homogeneous truth setting of Subsection \ref{ip:sim} (see also Section \ref{sec:sim}), where we employed Stan to sample the univariate posteriors on each coefficient, we used Stan's default initialization from a uniform draw in $(-2,2)$ and the performance was again robust to the initializer. Note that the latter coefficient-wise approach can also be applied in the setting of Subsection \ref{sim:spin}, leading to similar results to the ones obtained by the `warm started' whitened MALA function space sampler. 

A number of MCMC methods have been proposed recently for sampling high-dimensional multimodal and/or heavy tailed posteriors, which may be better suited to our priors; we mention \cite{TMR21} and \cite{SSCF22} which employ tempering (or cooling/annealing) techniques and \cite{YLR22} which exploits a projection of the original problem on a sphere.
A more in-depth investigation of the numerical aspects of the proposed heavy tailed priors is certainly a very interesting future direction, nevertheless, this section illustrates that even with the employed generic algorithms one obtains very promising results.

\bibliographystyle{imsart-number}
\bibliography{bibht}

\end{document}